\newcommand {\mr}{\mathrm}
\def\HH{\mathscr H}
\renewcommand{\AA}{\mathscr A}
\newcommand{\PF}{\mathscr P_{\operatorname{fin}}} 
\newcommand{\covset}{\ensuremath{\mathscr{L}}} 
\newcommand{\covpos}{\ensuremath{\mathscr{F}}} 
\newcommand{\flatset}{\ensuremath{{L}}} 
\newcommand{\flatpos}{\ensuremath{{F}}} 
\newcommand{\central}{\ensuremath{\mathcal{K}}} 
\newcommand{\CC}{\central} 
\newcommand{\parclass}[1]{\pi(#1)}
\renewcommand{\SS}{\mathscr S} 
\newcommand{\ground}{E} 
\newcommand{\LL}{\mathcal{L}} 
\newcommand{\rk}{\ensuremath{\operatorname{rk}}} 
\newcommand{\rl}{\rk_\LL} 
\newcommand{\rc}{\rk} 
\newcommand{\cl}{\operatorname{cl}} 
\newcommand{\nn}{\mathbb N}
\def\ssq{\subseteq}
\newcommand{\separ}[2]{\delta_{#1} (#2)} 
\newcommand{\AC}{\texttt{AC}}
\newcommand{\SmC}{\texttt{SCAT}}
\newcommand{\POS}{\texttt{POS}}
\newcommand{\Ob}[1]{\operatorname{Ob}(#1)}
\newcommand{\Mor}[1]{\operatorname{Mor}(#1)}
\newcommand{\qc}{\sslash} 
\newcommand{\pareti}{B^{\square}}
\newcommand{\pcp}{sliding\xspace}
\renewcommand{\top}{\widehat{1}}
\renewcommand{\bot}{\widehat{0}}
\newcommand{\onemore}[1]{{#1^{(e)}}}
\newcommand{\Bext}{\widetilde{B}}
\newcommand{\stab}{\operatorname{stab}}
\newcommand{\wtop}[1]{
#1^{\wedge}
}
\newcommand{\wbot}[1]{
#1^{\vee}}
\newcommand{\wtb}[1]{
#1^{\wedge\!\!\vee}
}
\newcommand{\FF}{\mathcal F}
\def\signs{\{+,-,0\}} 
\newcommand{\fkz}{\covpos} 
\newcommand{\fkzb}{\covpos_{\Bext}} 
\newcommand{\fkzbsub}{(\fkzb)} 
\newcommand{\ze}[1]{{\operatorname{ze}(#1)}} 
\newcommand{\rcw}{K} 
\newtheorem{lemma}{Lemma}[section]
\newtheorem{proposition}[lemma]{Proposition}
\newtheorem{theorem}[lemma]{Theorem}
\newtheorem{theorem-definition}[lemma]{Theorem-Definition}
\newtheorem{corollary}[lemma]{Corollary}
\newtheorem{corollary-definition}[lemma]{Corollary-Definition}
\theoremstyle{definition}
\newtheorem{definition}[lemma]{Definition}
\newtheorem{remark}[lemma]{Remark}
\newtheorem{examplenew}[lemma]{Example}
\newtheorem{notation}[lemma]{Notation}
\title{Finitary affine oriented matroids}
\author{Emanuele Delucchi}
\address{E. Delucchi, University of Applied Arts and Sciences of Southern Switzerland (SUPSI), Lugano, Switzerland}
\email{emanuele.delucchi@supsi.ch}
\author{Kolja Knauer}
\address{K. Knauer, Aix Marseille Univ, Universit\'e de Toulon, CNRS, LIS, Marseille, France \\ Departament de Matem\`atiques i Inform\`atica,
Universitat de Barcelona, Spain}
\email{}
\begin{document}

\maketitle

\begin{abstract}
We initiate the axiomatic study of affine oriented matroids (AOMs) on arbitrary ground sets, obtaining fundamental notions such as minors, reorientations and a natural embedding into the frame work of Complexes of Oriented Matroids. The restriction to the finitary case (FAOMs) allows us to study tope graphs and covector posets, as well as to view FAOMs as oriented finitary semimatroids. We show shellability of FAOMs and single out  the FAOMs that are affinely homeomorphic to $\mathbb{R}^n$.
Finally, we study group actions on AOMs, whose quotients in the case of FAOMs are a stepping stone towards a general theory of affine and toric pseudoarrangements. Our results include applications of the multiplicity Tutte polynomial of group actions of semimatroids, generalizing enumerative properties
of toric arrangements to a combinatorially defined class of arrangements of submanifolds. This answers partially a question by Ehrenborg and Readdy.
\end{abstract}

\setcounter{tocdepth}{1}

\tableofcontents

\noindent\textbf{Keywords:} Affine Oriented Matroids, Semimatroids, finitary and toric arrangements

\noindent\textbf{MSC code:} 52C40, 52C30, 06A12, 57S12

\section{Introduction}

\subsection{Subject, results and structure of the paper}
In this paper we establish the natural generalization of finite affine oriented matroids to arbitrary ground sets and derive several results about their axiomatics, topology and geometry. Our motivation is twofold: on the one hand we aim at advancing the structural theory of oriented matroids and arithmetic matroids, on the other hand we have in mind applications to linear and toric arrangements, which we discuss below in \S \ref{2mot}, as well as to general manifold arrangements (see Remark \ref{rem:EhRe}). Let us here summarize our main results.

\begin{itemize}
\item We present axiom systems for covectors of {\bf Affine Oriented Matroids} ({AOM}s) over arbitrary ground sets (Section \ref{sec:AOMs}). These support canonical operations such as reorientation and taking minors (\S\ref{ssec:minors}). 
In particular our axiomatization, derived from results of Baum and Zhu~\cite{Bau-16}, allows us to see AOMs as part of the theory of Complexes of Oriented Matroids (COMs) -- a recent common generalization of oriented matroids and lopsided sets~\cite{Ban-18}. However, the extension to arbitrary non-finite ground sets is novel and many of our results extend to general COMs, shedding a first light into this direction.
Furthermore, we introduce a natural and geometrically meaningful notion of parallelism that defines an equivalence relation on the elements of the AOM (\S\ref{ssec:par}). It is crucial for the development of the subsequent results.

\item In order to obtain a theory that more closely encapsulates some of the geometric features of finitary affine hyperplane arrangements, in Section \ref{sec:FAOM} we state axioms for {\bf Finitary Affine Oriented Matroids} ({FAOM}s). These are AOMs with some local cardinality restrictions. A main theoretical feature of this restricted setting is that FAOMs are ``orientations of finitary semimatroids'', i.e.: the zero sets of covectors of an FAOM constitute the geometric semilattice of flats of a finitary semimatroid 
(e.g., in the sense of \cite{DeluRiedel}, generalizing the finite notion developed by Wachs and Walker \cite{WW} and by Ardila \cite{Ardila} and Kawahara \cite{Kawahara}).
We carry out a basic study of {\bf tope graphs and covector posets} of FAOMs (\S\ref{subsec:topes}) and then we focus on topological properties. We prove that order complexes of covector posets of FAOMs are shellable (\S\ref{ssec:shellings}) and explicitly describe their homeomorphism type (\S\ref{ssec:topcov}). Moreover, we derive some order-theoretic properties of the geometric parallelism relation in FAOMs (\S\ref{ssec:ordertype}). This allows us to single out a special class of FAOMs whose covector poset is affinely homeomorphic to Euclidean space $\mathbb R^n$ (see Section \ref{sec:frames}). 

\item  In Section \ref{sec:gas} we take FAOMs as a stepping stone in order to extend the theory of arrangements of pseudospheres (and -planes) beyond the Euclidean setting, towards {\bf pseudoarrangements in the torus}. See \S\ref{2mot} below for some motivating context from arrangements theory. In order to accomplish this we study {\bf group actions} on AOMs and, in particular, a class of group actions for which the quotient of the covector poset is homeomorphic to a torus. In this torus, the quotients of all one-element contractions of the given FAOM determine an arrangement of tamely embedded tori. Notice that such ``toric pseudoarrangements'' are strictly more general than toric arrangements defined by level sets of characters (which we call ``stretchable" extrapolating the Euclidean terminology), see Figure \ref{fig2}.  In any case, stretchable or not, the faces of the corresponding dissection of the torus are enumerated by the Tutte polynomial associated in \cite{DeluRiedel} to the induced group action on the underlying semimatroid, generalizing enumerative results by Moci on arithmetic Tutte polynomials associated to toric arrangements, see \cite{MoDa} and Remark \ref{rem:ArMat}. We also mention that Pagaria in \cite{PagOM} put forward a notion of orientable arithmetic matroid, asking for an interpretation in terms of pseudoarrangements on the torus. See \S\ref{tomsp} for how our work contributes to this line of research.

\begin{remark}\label{rem:EhRe} Ehrenborg and Readdy ask in \cite{EhReManifold} for a natural class of submanifold arrangements where an ``arithmetic" Tutte polynomial can be meaningfully defined. Our first answer to this question is the class of arrangements in Euclidean space or in tori obtained from (possibly trivial) ``sliding" group actions on FAOMs (Definition \ref{def:sliding}). Theorem~\ref{MT:toric} shows that the Tutte polynomial of the associated group action on the underlying semimatroid provides the desired topological enumeration, together with the algebraic-combinatorial properties studied in \cite{DeluRiedel}. In the case of ``standard'' toric arrangements we recover the arithmetic Tutte polynomial.
\end{remark}
\end{itemize}

The multi-pronged nature of our foundational work shows that infinite affine oriented matroids are at the crossroads of several topics in structural, algebraic and topological combinatorics. Thus AOMs offer new tools for existing {\bf open problems}, and create some new ones in their own right: we outline some of these connections and research directions in Section \ref{sec:OQ}.

\medskip

In order to make the paper reasonably self-contained we include an Appendix where we briefly summarize the topological and algebraic-combinatorial tools we need.

\subsection{Two motivating examples} \label{2mot}

We outline some of the motivation for our  work, and explain our contribution in these contexts.

\subsubsection{Arrangements in Euclidean space}\label{intro:hyperplanes}
Let $\AA:=\{H_e\}_{e\in E}$ be an {\em arrangement of hyperplanes}, i.e., a family of codimension $1$ affine subspaces of the Euclidean space $\mathbb R^d$. We call such an arrangement ``oriented'' if  for every $e\in E$ we are given a labeling by $H_e^+$ and $H_e^-$ of the two connected components of $\mathbb R^d\setminus H_e$. 

\begin{definition}
	Given an oriented arrangement $\AA:=\{H_e\}_{e\in E}$ of affine hyperplanes in $\mathbb R^d$ define, for every $x\in \mathbb R^d$ a sign vector  $\Sigma_x\in \{+,0,-\}^E$ as follows.
	$$
	\Sigma_x(e):=
	\left\{
	\begin{array}{ll}
	+ &\textrm{ if } x\in H_e^+ \\
		0 &\textrm{ if } x\in H_e \\
			- &\textrm{ if } x\in H_e^-
	\end{array}
	\right.
	$$
	Let then $\covset(\mathscr A):=\{\Sigma_x \mid x\in \mathbb R^d\}$.
\end{definition}

\def\dimensione{\footnotesize}

\begin{figure}[h!]
\includegraphics[scale=0.8]{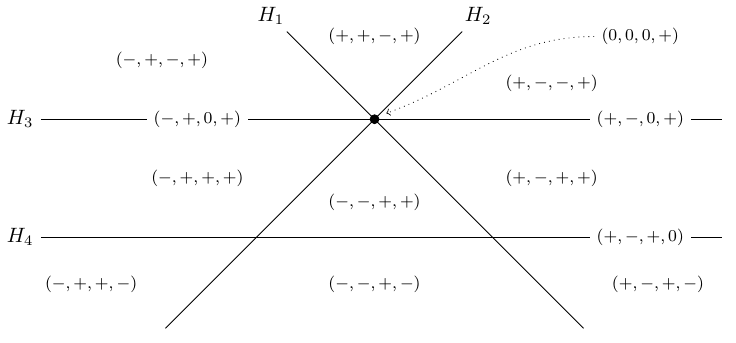}
\caption{An arrangement of hyperplanes in $\mathbb R^2$ with some cells labeled by the respective sign vector. }\label{fig:arrangement}
\end{figure}

The covector axioms of {\em oriented matroids} abstract some of the properties of  $\covset(\AA)$ in the case where $\AA$ is finite and $\cap \AA\neq\emptyset$, see \S\ref{FOMs}. Conversely, while not every oriented matroid arises from such an arrangement of hyperplanes, the powerful ``Topological Representation Theorem" of Folkman and Lawrence asserts that the system of covectors of every oriented matroid is the set of sign vectors determined by some  arrangement of oriented pseudospheres in the sphere (obtained as the boundary of the order complex of the covector poset, see~\cite[Chapter 5]{bjvestwhzi-93}).

If $\AA$ is finite, but $\cap\AA$ is not necessarily non-empty, then $\covset(\AA)$ is the set of covectors of a {\em finite affine oriented matroid}. Finite affine oriented matroids can be defined either intrinsically or as subsets of covector sets of oriented matroids, see~\cite{Kar-92,Bau-16}. The latter point of view allows us to interpret every finite affine oriented matroid as an arrangement of pseudoplanes in Euclidean space, again via the order complex of its covector poset, but it is an open problem to characterize which arrangements arise from finite affine oriented matroids, see~\cite{FoZa} and \S\ref{oq_pseudo} .
\begin{figure}[h]\label{fig2}
\centering
\includegraphics[scale=0.8]{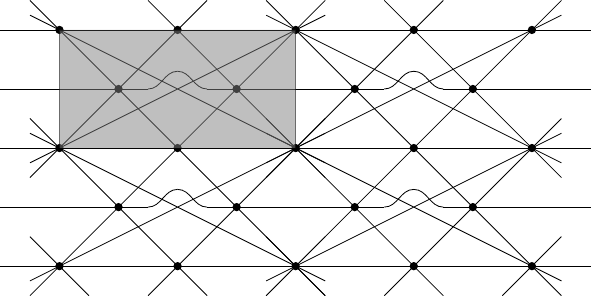}
\caption{A non-stretchable line arrangement with an action of $\mathbb Z^2$ defined by letting a lattice basis act as translations by the two sides of the shaded rectangle. (The picture should be thought of as being repeated in vertical and horizontal direction). Any orientation of it gives rise to a FAOM. 
}
\label{fig:pseudoperiodic}
\end{figure}

More generally, if $\AA$ is only assumed to be {\em finitary}, meaning that every $x\in \mathbb R^d$ has a neighborhood meeting finitely many $H_e$, then every  element of $\covset(\AA)$ indexes an open cell in $\mathbb R^d$. These open cells are the relative interiors of the  {\em faces} of the polyhedral subdivision of $\mathbb R^d$ induced by $\AA$. The faces of a polyhedral complex are naturally ordered by inclusion, and this partial order corresponds to the (abstract) natural order among sign vectors (see Definition~\ref{def:ursprung}). 

\begin{itemize}
\item Our ``Finitary Affine Oriented Matroids" axiomatize properties of the polyhedral stratification of Euclidean space induced by finitary hyperplane arrangements. Not every FAOM is realizable as $\covset(\AA)$ for a finitary arrangement. Still, some familiar geometric and topological features generalize nicely to the non-realizable case as well.
\item We view our topological representation of FAOMs as a step towards the currently open problem of a topological characterization of affine pseudoarrangements (see \S\ref{oq_pseudo}).
\end{itemize}

\subsubsection{Toric arrangements} \label{intro:toric} 
Let now $\AA$ be a finite family of level sets of characters of the compact torus $T=(S^1)^d$. Such {\em toric arrangements} have been in the focus of recent research originally motivated by work of De Concini, Procesi and Vergne on partition functions and splines, see \cite{DeCP}. A toric arrangement defines a polyhedral CW-structure $\rcw(\AA)$ on the torus. The face category of this cell complex is central in the study of the topology of the associated arrangement in the complex torus~\cite[\S2]{DaDeJEMS} and of arrangements in products of elliptic curves~\cite{DePag}. It can be regarded as the ``toric" counterpart of the poset of faces of a linear arrangement\footnote{The broadening from face posets to face categories is necessary since the CW-complex $\rcw(\AA)$ is not necessarily regular, see Appendix~\ref{sec:ACGA}.}. 

Notice that, by passing to the universal cover of the torus, a toric arrangement can be seen as a quotient of an infinite, periodic arrangement of hyperplanes by the action of the deck transformation group. 

\begin{figure}[h]
\centering
\includegraphics[scale=0.5]{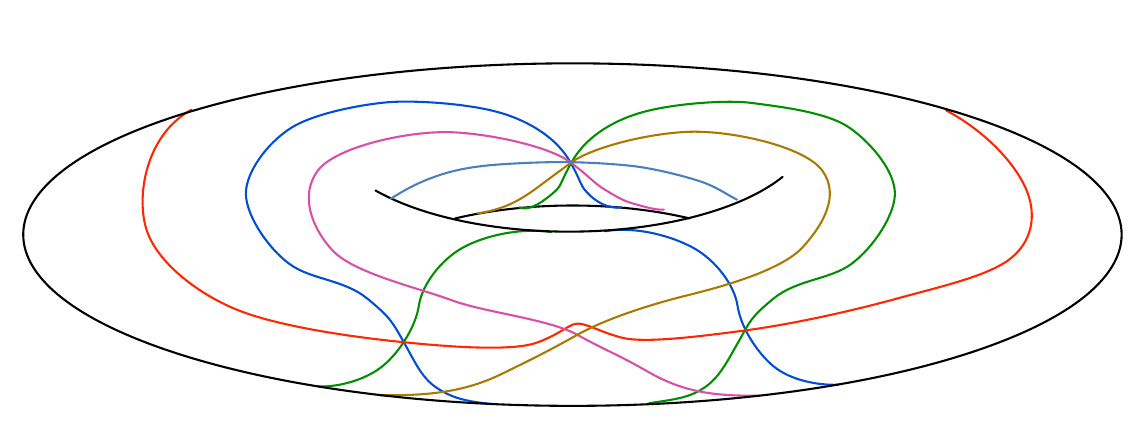}
\caption{
The quotient of the poset of covectors of the pseudoarrangement in Figure~\ref{fig2} is the face category of a (pseudo)arrangement in the $2$-dimensional torus (e.g., obtained by identifying opposite sides of the shaded rectangle), whose cells are counted by the Tutte polynomial of the group action on the underlying semimatroid.
}\label{fig:toricpseudo}
\end{figure}

The current impulse towards the combinatorial study of toric arrangements already led to substantial algebraic-combinatorial developments such as arithmetic Tutte polynomials and arithmetic matroids~\cite{MoDa}. However, the only available results about the structure of face categories to date are an explicit description in the case of toric Weyl arrangements by means of ``labelled necklaces"~\cite{AgPe}. 

\begin{itemize}
	\item We obtain an abstract characterization of the face category of toric arrangements as the quotient of the poset of covectors of an affine, infinite oriented matroid by a suitable class of group actions. This can be seen as an ``oriented" version of the theory of group actions on semimatroids~\cite{DeluRiedel} designed to describe toric arrangements on an ``unoriented", matroidal level.
	\item Accordingly, we obtain a notion of pseudoarrangements in the torus whose topology and geometry is amenable to treatment via the existing combinatorial toolkit. We leave the relationship to Pagaria's orientable arithmetic matroids to future research, see \S\ref{tomsp}.
\end{itemize}

\subsection{Acknowledgements} Emanuele Delucchi has been partially supported by the Swiss National Science Foundation professorship grant  PP00P2\_150552/1. Kolja Knauer was supported by the Spanish State Research Agency through grants RYC-2017-22701, PID2022-137283NB-C22 and the Severo Ochoa and María de Maeztu Program for Centers and Units of Excellence in R\&D (CEX2020-001084-M) by the French \emph{Agence nationale de la recherche} through project ANR-17-CE40-0015.

\section{Affine oriented matroids (AOM)}\label{sec:AOMs}
In the non-finite context it is essential to view AOMs with an intrinsic axiomatization instead of as halfspaces of oriented matroids as described in~\S\ref{intro:hyperplanes}. The goal of this section is to present the covector axiomatization of finite AOMs due to Karlander~\cite{Kar-92} whose proof was corrected recently by Baum and Zhu~\cite{Bau-16}. We state this axiomatization for arbitrary cardinalities and bring it into a simplified form, which puts AOMs into the context of (complexes) of oriented matroids, (C)OMs~\cite{Ban-18}. Moreover, we show that notions of minors and parallelism generalize straightforwardly to the infinite setting.
Indeed, for the purpose of the  present section no assumption on the ground set $E$ has to be made.

\begin{definition}\label{def:ursprung}
A {\em sign vector} (on a set $E$) is an element of $\signs^E$. A {\em system of sign vectors} is any subset $\covset \subseteq \signs^E$. We say system of sign vectors ``on $E$'', and write $(E,\covset)$, if specification is needed. Every system of sign vectors carries a natural partial order:
$$
X\leq Y \quad \textrm{ if and only if } X(e) \leq Y(e) \textrm{ for all }e\in E
$$
where we define $0 < +$, $0 < -$, $+$ and $-$ incomparable. 
The poset $(\covset,\leq)$ will be denoted by  $\covpos(\covset)$.
\end{definition}

We introduce some further standard notions, see e.g.~\cite{bjvestwhzi-93}.
The \emph{support} of a sign vector $X$ is  $\underline{X}:=\{e \in E\mid X(e)\neq 0\}$. The {\em zero set} of a sign vector $X$ is the complement of its support, i.e., $\ze{X}:=\{e\in E \mid X(e)=0\}$. Moreover, the \emph{separator} of two sign vectors $X,Y$ is $S(X,Y):=\{e\in \underline{X}\cap\underline{Y}\mid X(e)\neq Y(e)\}$ and the \emph{composition} of $X$ and $Y$ is the sign vector given by 
$$
X\circ Y(e):=\left\{\begin{array}{ll}
X(e) & \textrm{ if } e \in \underline{X}\\
Y(e) &\textrm{ otherwise.}
\end{array}\right.\textrm{ for all }e\in E.
$$

We now recall some notations that we take from the specific treatment of the affine case given in \cite{Bau-16}. Let $X,Y$ be any two sign vectors on $E$, $e\in E$, and $\covset$ a given system of sign vectors on $E$. Define 
$$I^=_e(X,Y;\covset):=\{Z\in\covset \mid Z(e) = 0, \forall f \notin S(X, Y ) : Z(f) = X(f) \}$$ and $$I_e(X,Y;\covset):=\{Z\in\covset \mid Z(e) = 0, \forall f \notin S(X, Y ) : Z(f) = X(f)\circ Y(f) \}.$$ Moreover, set 
\begin{displaymath}
I^=(X,Y;\covset):=\bigcup_{e\in S(X,Y)}I^=_e(X,Y;\covset)
\quad
\textrm{ and }
\quad
I(X,Y;\covset):=\bigcup_{e\in S(X,Y)}I_e(X,Y;\covset).
\end{displaymath} 
We will omit reference to $\covset$, writing simply $I(X,Y)$, $I^=(X,Y)$, etc., if no confusion can arise. The letter $I$ is established for the above sets, because these sets can be seen as intervals of sorts, see Figure~\ref{fig:annotated}.
Furthermore, write 
$$\mr{Asym}(\covset):=\{X\in\covset\mid-X\notin\covset\}$$ and let $X\oplus Y$ be the sign vector obtained from ``adding'' the signs of the sum of $X$ and $Y$ seen as integer vectors, i.e., 

$$
X\oplus Y(e):=\left\{\begin{array}{ll}
0 & \textrm{ if } e \in S(X,Y)\\
X\circ Y(e) &\textrm{ otherwise.}
\end{array}\right. \textrm{ for all }e\in E.
$$

We can now set
\begin{equation*}\mathcal{P}^=_{\mr{asym}}(\covset):=\{X\oplus (-Y)\mid X,Y\in \mr{Asym}(\covset),\, \underline{X}=\underline{Y},\, I^=(X,-Y;\covset)=I^=(-X,Y;\covset)=\emptyset\},
\end{equation*}
\begin{equation*}
\mathcal{P}(\covset):=\{X\oplus (-Y)\mid X,Y\in\covset,\, I(X,-Y;\covset)=I(-X,Y;\covset)=\emptyset\}.
\end{equation*}

\medskip

We are now able to state the first definition.

\begin{definition}[AOM, following~\cite{Kar-92,Bau-16}]\label{def:orig}
 A pair $(E,\covset)$ is the system of covectors of an affine oriented matroid if and only if
 \begin{itemize}
 \item[(C)] $\covset\circ\covset\subseteq\covset$, \hfill (composition)
  \item[(FS)] $\covset\circ(- \covset)\subseteq\covset$, \hfill (face symmetry)
  \item[(SE$^=$)] $X,Y\in\covset,\underline{X}=\underline{Y}\implies \forall e\in S(X,Y):I^=_e(X,Y)\neq\emptyset$,  \strut \\ \strut \hfill (strong elimination equal support)
  \item[(P$^=_{\mr{asym}}$)] $\mathcal{P}^=_{\mr{asym}}(\covset)\circ\covset\subseteq\covset$.   \hfill (peripheral composition equal support)
 \end{itemize}
 
 Then, the associated $\covpos(\covset)$ is called the {\em poset of covectors} of the given AOM.

\end{definition}

\begin{remark}\label{finiteok}
By \cite[Theorem 1.2]{Bau-16}, finite AOMs (i.e., AOMs $(E,\covset)$ with $\vert E \vert<\infty$) are exactly {affine oriented matroids} in the sense, e.g., of \cite{bjvestwhzi-93}.
\end{remark}

We propose the following simpler and (seemingly) stronger axiomatization.

\begin{proposition}[AOM]\label{prop:AOMCOM}
 A pair $(E,\covset)$ is the system of covectors of an affine oriented matroid if and only if
 \begin{itemize}
  \item[(FS)] $\covset\circ(- \covset)\subseteq\covset$,
  \item[(SE)] $X,Y\in\covset\implies \forall e\in S(X,Y):I_e(X,Y)\neq\emptyset$, \hfill (strong elimination)
  \item[(P)] $\mathcal{P}(\covset)\circ\covset\subseteq\covset$. \hfill (peripheral elimination)
 \end{itemize}
\end{proposition}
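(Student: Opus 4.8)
The plan is to prove the two axiom systems in Definition~\ref{def:orig} and Proposition~\ref{prop:AOMCOM} equivalent by establishing each bundle of implications separately, carefully tracking where the support-equality hypotheses can be dropped. First I would observe that (FS) is common to both lists, and that it already yields (C) as a consequence: given $X,Y\in\covset$, one has $X\circ Y = X\circ(-(-Y))$, and since $-(-Y)$ ranges over $-\covset$ as $Y$ ranges over $\covset$, (FS) gives $X\circ Y\in\covset$. Thus (FS)$\Rightarrow$(C), and the real content is to relate the strong-elimination and $\mathcal P$-type axioms across the two systems.

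For the direction ``Proposition $\Rightarrow$ Definition'': (C) follows from (FS) as above; (SE$^=$) is the restriction of (SE) to pairs with $\underline X=\underline Y$, so it is immediate that (SE)$\Rightarrow$(SE$^=$); and for (P$^=_{\mr{asym}}$) I would show $\mathcal P^=_{\mr{asym}}(\covset)\subseteq\mathcal P(\covset)$. This is the step requiring care: given $X,Y\in\mr{Asym}(\covset)$ with $\underline X=\underline Y$ and $I^=(X,-Y)=I^=(-X,Y)=\emptyset$, I must check that the defining conditions of $\mathcal P(\covset)$ hold for the same pair, i.e.\ that $I(X,-Y)=I(-X,Y)=\emptyset$. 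Here one uses that when $\underline X=\underline Y$ the separator $S(X,Y)$ is contained in $\underline X\cap\underline Y$ and outside $S$ one has $X(f)=Y(f)\neq 0$, so $X(f)\circ(-Y)(f)$ simplifies to $X(f)$ (note $-Y$ and $X$ agree off $S(X,-Y)=\underline X\setminus S(X,Y)$ in the relevant way), making the conditions $Z(f)=X(f)$ and $Z(f)=X(f)\circ(-Y)(f)$ coincide off the separator; hence $I_e^=(X,-Y)=I_e(X,-Y)$ for each relevant $e$, giving $\mathcal P^=_{\mr{asym}}(\covset)\subseteq\mathcal P(\covset)$ and thus (P)$\Rightarrow$(P$^=_{\mr{asym}}$) modulo (C).

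The harder direction is ``Definition $\Rightarrow$ Proposition'', where one must \emph{upgrade} (SE$^=$) to full (SE) and (P$^=_{\mr{asym}}$) to full (P) without the support-equality restriction. I expect this to be the main obstacle, and the natural tool is composition: given arbitrary $X,Y\in\covset$ and $e\in S(X,Y)$, one passes to $X':=X\circ Y$ and $Y':=Y\circ X$, which both lie in $\covset$ by (C), satisfy $\underline{X'}=\underline{Y'}=\underline X\cup\underline Y$, and have $S(X',Y')=S(X,Y)$; applying (SE$^=$) to the pair $(X',Y')$ and then composing the resulting covector appropriately (using (C) again) should produce the desired element of $I_e(X,Y)$. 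A parallel maneuver — replacing the generators of $\mathcal P(\covset)$ by support-equalized composites and invoking (P$^=_{\mr{asym}}$) together with (FS)/(C) to absorb the composition — should give (P). The delicate points will be verifying that the ``$Z(f)=X(f)\circ Y(f)$'' constraint is exactly what survives the composition trick, and checking the asymmetry bookkeeping in $\mathcal P^=_{\mr{asym}}$ versus $\mathcal P$ (the former quantifies over $\mr{Asym}(\covset)$, the latter over all of $\covset$, so one must argue that the extra generators of $\mathcal P(\covset)$ coming from non-asymmetric $X,Y$ either already lie in $\covset$ by (FS) or reduce, via composition, to the asymmetric case). I would expect that in the finite case this equivalence is essentially contained in the analysis of~\cite{Bau-16}, so a further option is to cite Remark~\ref{finiteok} for $|E|<\infty$ and then note that all four axioms in each system are of ``finite character'' — each is a closure condition whose failure is witnessed on a finite subset of $E$ — so that the equivalence for all finite $E$ propagates to arbitrary $E$ by a standard restriction/compactness argument on minors.
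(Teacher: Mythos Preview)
Your overall architecture matches the paper's: reduce to showing (FS)$\Rightarrow$(C), (SE)$\Leftrightarrow$(SE$^=$) via the composition trick $X'=X\circ Y$, $Y'=Y\circ X$, and (P)$\Leftrightarrow$(P$^=_{\mr{asym}}$) by comparing the generating sets. However, there is a genuine error and one loose end.

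\medskip

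\textbf{The (FS)$\Rightarrow$(C) step is wrong as written.} You argue that $X\circ Y = X\circ(-(-Y))$ and that ``$-(-Y)$ ranges over $-\covset$ as $Y$ ranges over $\covset$''. But $-(-Y)=Y$, which ranges over $\covset$, not $-\covset$; so (FS) does not apply and the identity you wrote is vacuous. To get $X\circ Y\in\covset$ from (FS) alone you need a genuine rewriting: the paper uses
\[
X\circ Y \;=\; (X\circ -X)\circ Y \;=\; X\circ(-X\circ Y) \;=\; X\circ\bigl(-(X\circ -Y)\bigr),
\]
applying (FS) twice (first to get $X\circ -Y\in\covset$, then to compose $X$ with the negative of that). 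Without (C), the rest of your plan stalls, since both upgrade arguments rely on (C) to form $X\circ Y$ and $Y\circ X$ inside $\covset$.

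\medskip

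\textbf{Minor remarks.} For (SE$^=$)$\Rightarrow$(SE), once you pass to $X'=X\circ Y$, $Y'=Y\circ X$ you do not need any further composition: one checks directly that $I_e(X,Y)=I^=_e(X',Y')$, so the witness from (SE$^=$) already lies in $I_e(X,Y)$. For (P$^=_{\mr{asym}}$)$\Rightarrow$(P), the paper handles the asymmetry bookkeeping you flag by first using (SE) to show that non-asymmetric pairs contribute only elements already in $\covset$, and then using observation~\eqref{i:sum}-type identities to show every $X\oplus(-Y)\in\mathcal P(\covset)$ equals $(X\circ -Y)\oplus(-(Y\circ -X))$ with equal supports. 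Your ``parallel maneuver'' is the right instinct; just be aware that the emptiness conditions $I(\cdot,\cdot)=\emptyset$ must be transported along the composition, which is where identity~(1) for separators is used.

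\medskip

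\textbf{On the compactness alternative.} The fallback via finite restrictions is not as clean as you suggest: membership in $\mathcal P(\covset)$ requires $I(X,-Y;\covset)=\emptyset$, a universal statement over $\covset$, and under restriction to $A\subseteq E$ the elimination sets can grow (cf.\ the paper's Lemma on $\mathcal P(\covset\setminus A)\subseteq\mathcal P(\covset)\setminus A$, which goes the other way). So ``finite character'' of (P) and (P$^=_{\mr{asym}}$) is not immediate, and the paper avoids this route entirely.
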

\begin{proof}

 First we collect some straightforward observations. For every $X,Y \in\{\pm, 0\}^E$, we have
 \begin{enumerate}
  \item\label{i:separator}  $S(X,Y)=S(X\circ Y,Y\circ X)$;
  \item\label{i:support}  $\underline{X\circ Y} = \underline{Y\circ X}$;
  \item\label{i:comp} if $\underline{X} = \underline{Y}$, then $X=X\circ Y$;
  \item\label{i:sum} $X\oplus Y=(X\circ Y)\oplus (Y\circ X)$.
 \end{enumerate}

We now move to prove the stated equivalence, in several steps.

\begin{itemize}
\item[--](FS)$\Rightarrow$(C), hence (C) can be removed from Definition~\ref{def:orig}.
\item[] {\em Proof.} It is enough to notice that $X\circ Y= (X\circ -X)\circ Y= X\circ (-X\circ Y)= X\circ -(X\circ -Y)$.
 
 \item[--] (SE)$\Leftrightarrow$(SE$^=$).
\item[] {\em Proof.} Clearly, (SE) implies (SE$^=$). Conversely, we get that with (C) the axiom (SE$^=$) implies (SE). Indeed, for $X,Y \in  \covset$ by~\ref{i:separator} we have $I_e(X,Y)=I_e(X\circ Y,Y\circ X)$ and both sets are defined for the same set of elements $e$. 
 By~\eqref{i:support} and~\eqref{i:comp} we have $X\circ Y(f)=(X\circ Y)\circ(Y\circ X)(f)$, which gives $I_e(X\circ Y,Y\circ X)=I^=_e(X\circ Y,Y\circ X)$. Thus, $\forall e\in S(X\circ Y,Y\circ X):I^=_e(X\circ Y,Y\circ X)\neq\emptyset$ implies $\forall e\in S(X,Y):I_e(X,Y)\neq\emptyset$.

\item[--] (P)$\Rightarrow$(P$^=_{\mr{asym}}$).
\item[] {\em Proof.} Since by~\eqref{i:comp} $I_e(X,Y)=I^=_e(X,Y)$ for sign vectors of equal support, we can write $\mathcal{P}^=_{\mr{asym}}(\covset)$ as $$\{X\oplus (-Y)\mid X,Y\in \mr{Asym}(\covset), \underline{X}=\underline{Y}, I(X,-Y)=I(-X,Y)=\emptyset\},$$ which gives $\mathcal{P}^=_{\mr{asym}}(\covset)\subseteq \mathcal{P}(\covset)$. 
 \item[--] Under (SE),  (P$^=_{\mr{asym}}$) $\Rightarrow$ (P).
\item[] {\em Proof.} First observe that (P$^=_{\mr{asym}}$) $\implies \{X\oplus (-Y)\mid X,Y\in \covset, \underline{X}=\underline{Y}, I(X,-Y)=I(-X,Y)=\emptyset\}\circ\covset\subseteq\covset$, i.e., we can drop the asymmetry condition.
 Indeed, suppose $X,-X, Y \in \covset$. Now, by (SE) $I(-X,Y) \neq \emptyset$ except if $S(-X,Y)=\emptyset$. But if $\underline{X}=\underline{Y}$ then $S(-X,Y)=\emptyset$ implies $Y=-X$, and $X\oplus (-Y)=X\oplus X=X\in \covset$. Thus, only trivially fulfilled conditions are added. The symmetric argument works for the case $X,Y,-Y \in \covset$.
 
 We proceed by showing that $$\{X\oplus (-Y)\mid X,Y\in \covset, \underline{X}=\underline{Y}, I(X,-Y)=I(-X,Y)=\emptyset\}\supseteq \mathcal{P}(\covset).$$ 
 Let $X\oplus (-Y)\in\mathcal{P}(\covset)$ and consider the vectors $X\circ (-Y)$ and $Y\circ (-X)$. We can compute $-(Y\circ (-X))=-Y\circ X$ and 
$$\underline{X\circ (-Y)}\stackrel{~\eqref{i:support}} = \underline{-Y\circ X}= \underline{-(Y\circ (-X))}=\underline{Y\circ (-X)},$$ 
where the last equality follows from $\underline{Z}=\underline{-Z}$. By~\eqref{i:separator} we get 
$$S(X,-Y)=S(X\circ (-Y),-Y\circ X)\text{ and }S(-X,Y)=S(-X\circ Y,Y\circ (-X)).$$
Which implies the equality of the elimination sets
$$I(X,-Y)=I(X\circ (-Y),-Y\circ X)
\text{ and }
I(-X,Y)=I(-X\circ Y,Y\circ (-X)).$$
Finally,~\eqref{i:sum} gives $X\oplus (-Y)=(X\circ (-Y))\oplus (-Y\circ X)$. Together we obtain that $X\oplus (-Y)$ is contained in the set on the left-hand side. This concludes the proof.
\end{itemize}

 \end{proof}

\begin{examplenew}
    In Figure~\ref{fig:annotated} we illustrate the operations involved in the covector axioms of AOMs in Proposition~\ref{prop:AOMCOM} on the example of the realizable AOM from Figure~\ref{fig:arrangement}. First, we choose two points $X,Y$ identified with the corresponding sign vectors and add the auxiliary (dashed) line $\ell$ defined by the two points. The fact that a point on $\ell$ close to $X$ towards $Y$ as well as away from $Y$ is also in the arrangement, respectively, corresponds to the covectors $X\circ Y\in\covset$ and $X\circ -Y\in\covset$, respectively. The intersection points of $\ell$ with the hyperplanes $H_2$ and $H_3$, respectively, correspond to the elements of $I(X,Y;\covset)$.

    Now, $Z,W$ are sign vectors corresponding to two maximal cells bounded by the parallel hyperplanes $H_3,H_4$. The sign vector $Z\oplus -W$ can be seen as the intersection point of the hyperplanes at infinity, i.e., with the auxiliary equator. The fact that $(Z\oplus -W)\circ \covset\subseteq\covset$ can be interpreted by saying that all points "close to $(Z\oplus -W)$" towards existing cells of the arrangement also form part of the arrangement. Hence, the name \emph{peripheral} composition.
\end{examplenew}

\begin{figure}
\centering
\includegraphics[scale=1]{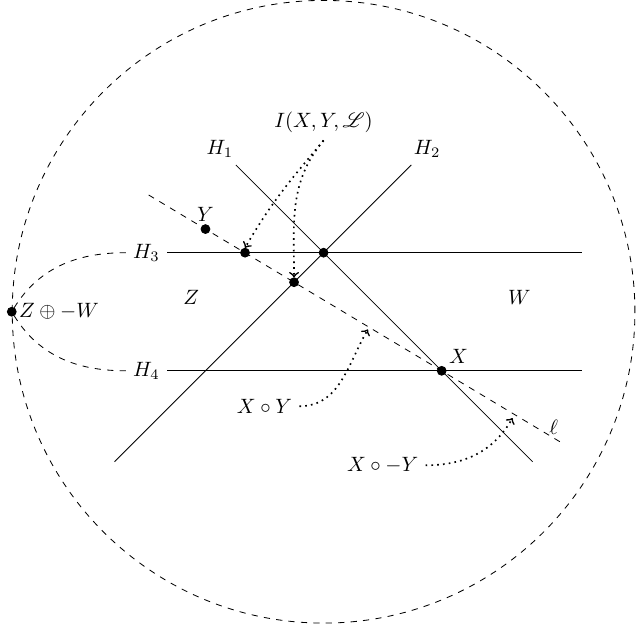}
\caption{Covector axioms illustrated in the example from Figure~\ref{fig:arrangement}.}\label{fig:annotated}
\end{figure}

Let us compare the axiomatization from Proposition~\ref{prop:AOMCOM} with axiomatizations for COMs and OMs, even if those are usually given for finite ground sets. Following~\cite{Ban-18} a COM is a system of sign vectors satisfying (FS) and (SE). Thus, from Proposition~\ref{prop:AOMCOM} one immediately deduces:

\begin{corollary}
 Every AOM is a COM.
\end{corollary}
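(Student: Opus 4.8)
The plan is to read the corollary off directly from Proposition~\ref{prop:AOMCOM}. That proposition shows that a pair $(E,\covset)$ is the system of covectors of an AOM if and only if it satisfies the three axioms (FS), (SE) and (P). On the other hand, following~\cite{Ban-18}, a COM is \emph{by definition} a system of sign-vectors satisfying (FS) and (SE) — read here over an arbitrary ground set, which is unproblematic since both axioms are quantified element-wise over $e\in E$ and over finite compositions/eliminations, with no finiteness hypothesis entering. Hence the defining axioms of a COM form a subset of the characterizing axioms of an AOM, and the implication ``AOM $\Rightarrow$ COM'' is immediate.

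Concretely, I would: (1) invoke Proposition~\ref{prop:AOMCOM} to replace the original Definition~\ref{def:orig} by the equivalent system $\{$(FS), (SE), (P)$\}$; (2) observe that (FS) and (SE) alone are exactly the COM axioms in the sense of~\cite{Ban-18}; (3) conclude that a system of sign-vectors satisfying all three \emph{a fortiori} satisfies the first two, so every AOM is a COM. No further work is required, because the non-trivial content — that (C), (SE$^=$) and (P$^=_{\mr{asym}}$) together with (FS) can be upgraded to the cleaner (SE) and (P) — has already been carried out in the proof of Proposition~\ref{prop:AOMCOM}.

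The only point that might deserve a word is that COMs are traditionally introduced over finite ground sets, so one is strictly speaking using the evident extension of the COM axioms to arbitrary $E$; as just noted, this extension is harmless and is precisely the framework in which the paper operates. I do not expect any genuine obstacle here: the corollary is a bookkeeping consequence of the axiom reformulation, and whatever difficulty there was has been entirely absorbed into Proposition~\ref{prop:AOMCOM}.
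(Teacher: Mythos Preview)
Your proposal is correct and matches the paper's approach exactly: the paper also simply observes that, by Proposition~\ref{prop:AOMCOM}, an AOM satisfies (FS), (SE) and (P), while a COM is defined by (FS) and (SE) alone, so the implication is immediate. Your side remark about extending the COM axioms to arbitrary ground sets is likewise anticipated by the paper, which notes that these axiomatizations ``are usually given for finite ground sets'' but uses them here without restriction.
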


The axiomatization of OMs given in~\cite{Ban-18} (see also Definition~\ref{def:OM}) is as a system of sign vectors satisfying (FS) and (SE) and 
\begin{itemize}
 \item[(0)] the all-zeroes vector $\mathbf{0}$ is in $\covset$. \hfill (zero-vector)
\end{itemize}

\begin{corollary}
 Every OM is an AOM.
\end{corollary}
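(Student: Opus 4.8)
The statement to prove is the final corollary: every OM is an AOM. The plan is to verify the three axioms of Proposition~\ref{prop:AOMCOM}.

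\medskip

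\textbf{Plan.} Let $(E,\covset)$ be an OM, so by the cited axiomatization it satisfies (FS), (SE), and (0). I need to show it satisfies (FS), (SE), and (P). The first two hold by hypothesis, so the only work is (P): namely $\mathcal{P}(\covset)\circ\covset\subseteq\covset$. My strategy is to show that in the presence of (0), the set $\mathcal{P}(\covset)$ collapses to something harmless — ideally I would show $\mathcal{P}(\covset)\subseteq\covset$, so that (P) follows immediately from (C), which in turn follows from (FS) as established in the proof of Proposition~\ref{prop:AOMCOM}.

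\medskip

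\textbf{Key step.} Take $X\oplus(-Y)\in\mathcal{P}(\covset)$, so $X,Y\in\covset$ with $I(X,-Y;\covset)=I(-X,Y;\covset)=\emptyset$. The point is that the all-zeroes vector $\mathbf 0$ lies in $\covset$ by (0), and $\mathbf 0$ is a natural candidate to belong to the elimination sets $I_e$. Recall $I_e(X,-Y;\covset)=\{Z\in\covset\mid Z(e)=0,\ \forall f\notin S(X,-Y):Z(f)=X(f)\circ(-Y)(f)\}$. So $\mathbf 0\in I_e(X,-Y)$ precisely when $X(f)\circ(-Y)(f)=0$ for all $f\notin S(X,-Y)$; but $f\notin S(X,-Y)$ and $X(f)\circ(-Y)(f)=0$ forces $X(f)=0=Y(f)$. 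Hence if there is \emph{any} $e\in S(X,-Y)$ with the property that $X(f)=0$ for all $f\notin S(X,-Y)$, then $\mathbf 0\in I(X,-Y)$, contradicting $I(X,-Y)=\emptyset$ — unless $S(X,-Y)=\emptyset$ as well. Working this through: $I(X,-Y)=\emptyset$ together with $\mathbf0\in\covset$ should force $S(X,-Y)=\emptyset$, i.e.\ $X$ and $-Y$ have disjoint supports or agree on the common support; combined with the symmetric condition from $I(-X,Y)=\emptyset$, I expect to deduce that $\underline X\cap\underline Y=\emptyset$ on the overlap up to sign, whence $X\oplus(-Y)=X\circ(-Y)\in\covset$ by (C)/(FS). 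Then $\mathcal{P}(\covset)\circ\covset\subseteq\covset\circ\covset\subseteq\covset$ by (C).

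\medskip

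\textbf{Main obstacle.} The delicate point is handling the support bookkeeping carefully: I must check that $S(X,-Y)=\emptyset$ really does follow, being careful about the clause ``$\forall f\notin S(X,-Y): Z(f)=X(f)\circ(-Y)(f)$'' when $\mathbf 0$ is plugged in, and that the conclusion $X\oplus(-Y)\in\covset$ is genuine rather than only covering a degenerate subcase. A cleaner route, which I would pursue in parallel, is simply to invoke that a finite OM is a finite AOM (this is classical, cf.\ Remark~\ref{finiteok}) and reduce the infinite case to finite ground sets by a local/finite-character argument — but since all three axioms (FS), (SE), (P) are of ``finite character'' (each instance involves only finitely many coordinates of finitely many sign vectors), the direct verification above is more self-contained and is the one I would write up.
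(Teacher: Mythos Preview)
Your approach has a genuine gap: the all-zeroes vector $\mathbf{0}$ is almost never a valid witness in $I_e(X,-Y)$. Membership in $I_e(X,-Y)$ requires $Z(f)=X(f)\circ(-Y)(f)$ for every $f\notin S(X,-Y)$, so $\mathbf{0}$ works only in the degenerate situation where $X$ and $-Y$ both vanish outside their separator. Your own phrasing (``if there is any $e\in S(X,-Y)$ with the property that $X(f)=0$ for all $f\notin S(X,-Y)$'') already restricts to this special case, and from it you cannot conclude $S(X,-Y)=\emptyset$ in general. As a concrete example, with $E=\{1,2\}$, $X=(+,+)$, $Y=(+,-)$ one has $S(X,-Y)=\{1\}$ but $\mathbf{0}\notin I_1(X,-Y)$ since $X(2)\circ(-Y)(2)=+\neq 0$.

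The paper's argument fixes this with one observation you missed: from (0) and (FS) one gets $-Y=\mathbf{0}\circ(-Y)\in\covset$ for every $Y\in\covset$, i.e.\ the OM is symmetric. Now $X$ and $-Y$ are \emph{both} in $\covset$, so (SE) applies directly to the pair $(X,-Y)$: whenever $S(X,-Y)\neq\emptyset$ there is some $Z\in I_e(X,-Y)$, hence $I(X,-Y)\neq\emptyset$. Thus $I(X,-Y)=\emptyset$ forces $S(X,-Y)=\emptyset$, whence $X\oplus(-Y)=X\circ(-Y)\in\covset$ by (FS). This gives $\mathcal{P}(\covset)\subseteq\covset\circ(-\covset)\subseteq\covset$, and (P) follows from (C). Your overall strategy (show $\mathcal{P}(\covset)\subseteq\covset$, then use (C)) is exactly right; the missing ingredient is symmetry, not the bare presence of $\mathbf{0}$.
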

\begin{proof}
 Note that (0) together with (FS) implies $X\in\covset\implies -X\in\covset$. This, together with (SE), yields that $I(X,-Y;\covset)=S(X,-Y)$ for all $X,Y\in\covset$. Hence, $\mathcal{P}(\covset)\subseteq\covset\circ -\covset\subseteq \covset$. Since (FS) implies $\covset\circ \covset\subseteq \covset$, (P) is fulfilled trivially . 
\end{proof}

We now proceed to define reorientations.

\begin{definition}\label{def:reorientation}
Let $\covset$ be a family of sign vectors. A \emph{reorientation} of $\covset$ is any set
	$$
	\covset^{(\tau)}:=\{ \tau \cdot X \mid X\in \covset \}
	$$ for a given $\tau\in \{+1,-1\}^E$, where multiplication is intended componentwise, i.e.,  $(\tau\cdot X)(e):=\tau(e)X(e)$.
	\end{definition}

\begin{remark}
It is straightforward to see that  every reorientation of an AOM is an AOM.
\end{remark}

\subsection{Minors}\label{ssec:minors}

The notion of minors is crucial in the study of OMs and COMs. Let us define the necessary ingredients here.
Let $(E,\covset)$ be any system of sign vectors. 

\begin{definition}\label{def:minors}
For any $A\subseteq E$ define the \emph{contraction} of $A$ in $\covset$ as
$$
\covset/A := \{X_{\vert E\setminus A } \mid X\in \covset, \,\, X(A)=\{0\}\},
$$
(notice that this is nonempty if and only if $A\in \central(\covset)$), and the \emph{deletion} of $A$ from $\covset$ as
$$
\covset\setminus A := \{X_{\vert E\setminus A } \mid X\in \covset\}.
$$
Moreover, we call \emph{restriction to $A$} the set $\covset [A]:=\covset\setminus (E\setminus A)$.

A system of sign vectors $(E',\covset')$ is a \emph{minor} of another system of sign vectors $(E,\covset)$ if there are disjoint sets $A,B\subseteq E$ such that $(E',\covset')=(E\setminus A\setminus B,\covset\setminus A/B)$.
\end{definition}

As an example consider the AOM in Figure~\ref{fig:arrangement}. Contracting the element $3$ yields $\{(-,+,+), (0,0,+), (+,-,+)\}$. Thus can be seen as considering only the cells on $H_3$ and removing the third coordinate. Deleting $3$, corresponds to removing $H_3$ from the arrangement. 

\begin{remark} \label{rem:injection_contraction}
Notice that there is a canonical order preserving injection
$$
\iota_A: \covset/A \hookrightarrow \covset,\quad \quad
\iota_A(X)(e):=\left\{
\begin{array}{ll}
X(e) & e\not \in A,\\
0 & e\in A.
\end{array}
\right.
$$
\end{remark}
\begin{remark}
One can show following~\cite{Ban-18}, that for countable families $\{A_i\}_{i\geq 1}$ and $\{B_i\}_{i\geq 1}$ of sets we have 
 $$\covset\setminus A_1/ B_1\setminus A_2/ B_2\ldots = 
 \covset
 \setminus \bigcup_{i\geq1}A_i 
 \left/ \bigcup_{i\geq1}B_i\right.,$$
i.e., the operations of contraction and deletion commute. 
We do not investigate further the case of uncountable families of sets.
\end{remark}

\begin{lemma}\label{lem:pdel} Let $(E,\covset)$ satisfy (SE) and let $A\subseteq E$. Then,
$$\mathcal P(\covset \setminus A)\subseteq \mathcal{P}(\covset)\setminus A.$$
\end{lemma}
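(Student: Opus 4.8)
The goal is to show that every element of $\mathcal{P}(\covset\setminus A)$ lies in $\mathcal{P}(\covset)\setminus A$; that is, given $\overline X,\overline Y\in\covset\setminus A$ with $I(\overline X,-\overline Y;\covset\setminus A)=I(-\overline X,\overline Y;\covset\setminus A)=\emptyset$, I must exhibit sign vectors $X,Y\in\covset$ restricting to $\overline X,\overline Y$ on $E\setminus A$ with $I(X,-Y;\covset)=I(-X,Y;\covset)=\emptyset$, so that $(X\oplus(-Y))_{|E\setminus A}=\overline X\oplus(-\overline Y)$. The natural candidates are lifts of $\overline X,\overline Y$; the point is to choose the values on $A$ well.

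First I would fix arbitrary lifts: pick $X_0,Y_0\in\covset$ with $(X_0)_{|E\setminus A}=\overline X$ and $(Y_0)_{|E\setminus A}=\overline Y$. The trouble is that the separators $S(X_0,-Y_0)$ and $S(-X_0,Y_0)$ may pick up extra elements inside $A$ where $X_0,Y_0$ disagree, and then the elimination sets $I(X_0,-Y_0;\covset)$ need not be empty even though their counterparts over $\covset\setminus A$ are. The remedy I expect to use is composition: replace the pair by $X:=X_0\circ(-Y_0)$ and $Y:=Y_0\circ(-X_0)$ — or perhaps $X:=X_0\circ Y_0$ and $Y:=Y_0\circ X_0$, whichever makes the supports on $A$ coincide. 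By (FS) these stay in $\covset$. By observation~\eqref{i:separator} in the proof of Proposition~\ref{prop:AOMCOM}, $S(X,-Y)=S(X_0,-Y_0)$, and by~\eqref{i:sum}, $X\oplus(-Y)=X_0\oplus(-Y_0)$, so the composed pair still restricts correctly off $A$. The gain is that on $A$ the two composed vectors now have the same support (both equal to $\underline{X_0}\cup\underline{Y_0}$ restricted to $A$, with signs from $X_0$), which I need in order to relate the separators and elimination sets over $\covset$ to those over $\covset\setminus A$.

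The key computation is then: for the composed pair, $S(X,-Y)\cap A$ consists exactly of the elements of $A$ where $X_0$ and $-Y_0$ disagree within their common support — but after composition both have support equal to the union, so actually I want to arrange that $X$ and $-Y$ agree on all of $A\cap(\underline{X_0}\cup\underline{Y_0})$; this forces $S(X,-Y)=S(\overline X,-\overline Y)$ as subsets of $E\setminus A$. Concretely, taking $X=X_0\circ(-Y_0)$ and $Y=(-X_0)\circ Y_0$ so that $-Y=X_0\circ(-Y_0)$ agrees with $X$ on $\underline{X_0}$, and on $A$ one checks the signs match. Then for $e\in S(X,-Y)\subseteq E\setminus A$, a covector $Z\in I_e(X,-Y;\covset)$ satisfies $Z(e)=0$ and $Z(f)=X(f)\circ(-Y)(f)$ for all $f\notin S(X,-Y)$, in particular for all $f\in A$; restricting $Z$ to $E\setminus A$ yields an element of $I_e(\overline X,-\overline Y;\covset\setminus A)$, contradicting emptiness of the latter. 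Hence $I(X,-Y;\covset)=\emptyset$, and symmetrically $I(-X,Y;\covset)=\emptyset$, so $X\oplus(-Y)\in\mathcal{P}(\covset)$ and its restriction is $\overline X\oplus(-\overline Y)$, as desired.

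The main obstacle I anticipate is bookkeeping the sign values on $A$ so that the chosen composition genuinely makes $X$ and $-Y$ agree on $A\cap(\underline{X_0}\cup\underline{Y_0})$ — equivalently, that the separator does not leak into $A$ — and simultaneously keeps the restrictions to $E\setminus A$ equal to $\overline X$ and $-\overline Y$. It is conceivable that no single composition achieves both and that one must instead argue more carefully, perhaps choosing the lift of one vector after fixing the other (e.g. first lift $\overline X$ arbitrarily to $X$, then lift $\overline Y$ to a $Y$ with $-Y$ agreeing with $X$ on $A$, which is possible because $\covset\setminus A$ being obtained by restriction means some lift exists and we may then compose it with $-X$ over $A$ only — but composition acts on all of $E$, so care is needed). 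I would also double-check the edge case $S(\overline X,-\overline Y)=\emptyset$, where $\overline X\oplus(-\overline Y)=\overline X\circ(-\overline Y)\in\covset\setminus A$ already by (FS), so the lifted pair's $\oplus$ lies in $\covset$ directly via (FS) on a composition, handled separately. Once the lifting lemma is set up cleanly the rest is the routine separator/elimination-set chase sketched above.
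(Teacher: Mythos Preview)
Your plan has a genuine gap, and it stems from two related issues.

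First, the lemma only assumes (SE), yet your adjustment step invokes (FS) (``By (FS) these stay in $\covset$''). Without (FS) you have no way to guarantee that $X_0\circ(-Y_0)$ or any similar composition lies in $\covset$, so the argument as written does not go through under the stated hypotheses.

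Second, even granting (FS), the composition you propose does not do what you need. Setting $X:=X_0\circ(-Y_0)$ gives $X_{|E\setminus A}=\overline X\circ(-\overline Y)$, not $\overline X$; so $X$ is no longer a lift of $\overline X$, and the pair $(X,Y)$ you build does not witness $\overline X\oplus(-\overline Y)\in\mathcal P(\covset)\setminus A$ in the way you claim. You note this difficulty yourself in the last paragraph, but the alternative you sketch (lift $\overline X$ first, then try to lift $\overline Y$ so that $-Y$ agrees with $X$ on $A$) is exactly the step that cannot be carried out with composition alone: there is no axiom guaranteeing a lift of $\overline Y$ with prescribed values on $A$.

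The paper takes a different route that uses only (SE). It fixes arbitrary lifts $\hat X,\hat Y$, and if $\hat X\oplus(-\hat Y)\notin\mathcal P(\covset)$, picks some $Z\in I_{\covset}(\hat X,-\hat Y)$. Using the emptiness of $I_{\covset\setminus A}(\overline X,-\overline Y)$ together with a further application of (SE), one shows that $Z$ must agree with $\hat X\circ(-\hat Y)$ on all of $E\setminus A$. The point is then that $Z$ itself can replace $\hat X$ in the pair: one checks $Z\oplus(-\hat Y)\in\mathcal P(\covset)$ and $(Z\oplus(-\hat Y))_{|E\setminus A}=\overline X\oplus(-\overline Y)$. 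Note that $Z$ is \emph{not} a lift of $\overline X$ --- it is a lift of $\overline X\circ(-\overline Y)$ --- but that does not matter, because the identity $(X\circ(-Y))\oplus(-Y)=X\oplus(-Y)$ holds at the level of sign vectors. The key idea you are missing is that one does not need lifts of $\overline X$ and $\overline Y$ individually; one only needs two covectors in $\covset$ whose $\oplus$ restricts correctly and has empty elimination sets, and (SE) applied to the obstruction itself furnishes such a pair.
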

\begin{proof}
Let $X\oplus -Y\in \mathcal{P}(\covset \setminus A)$, with $X,Y\in \covset\setminus A$ and $I_{\covset \setminus A}(X,-Y)=I_{\covset \setminus A}(X,-Y)=\emptyset$. 

We prove that there are $\hat{X}, \hat{Y}\in\covset$ such that $\hat{X}\setminus A=X$, $ \hat{Y}\setminus A=Y$ and $\hat{X}\oplus -\hat{Y}\in \mathcal{P}(\covset)$. Let $\hat{X}, \hat{Y}\in\covset$ such that $\hat{X}\setminus A=X, \hat{Y}\setminus A=Y$ and suppose that $\hat{X}\oplus -\hat{Y}\notin \mathcal{P}(\covset)$. Thus, without loss of generality there is a $Z\in I_{\covset}(\hat{X},-\hat{Y})$. 

Note that for $f\in S(\hat{X},-\hat{Y})\setminus A$ we have $Z(f)\neq 0$, since otherwise $Z\setminus A\in I_{\covset \setminus A}(X,-Y)$ with respect to $f$. Hence, $\ze{Z}=A\cup(\ze{X}\cap \ze{Y})$. 

Furthermore note that $Z(f)=\hat{X}_f$, since otherwise if $Z(f)=-\hat{X}(f)$ we can apply strong elimination to $Z$ and $\hat{X}$ with respect to $f$ and obtain a $\widetilde{Z}\in I_{\covset}(\hat{X},-\hat{Y})$ with $\widetilde{Z}(f)=0$. This contradicts the above.

We conclude that for all $g\in E$ we have $Z(g)=\begin{cases}
                       0 & \textrm{if }  g\in A\\
                       \hat{X}\circ -\hat{Y} &  \mathrm{otherwise.}
                      \end{cases}$ 
                      
Next, we show that $Z\oplus -\hat{Y}\in \mathcal{P}(\covset)$. 
Clearly, we have $Z, -\hat{Y}\in\covset$. Furthermore, we have that $I(Z,-\hat{Y})\subseteq \bigcup_{f\notin A}I_f(\hat{X},-\hat{Y})$ and $I(-Z,\hat{Y})\subseteq \bigcup_{f\notin A}I_f(-\hat{X},\hat{Y})$.
However, as argued above, $\bigcup_{f\notin A}I_f(\hat{X},-\hat{Y})=\bigcup_{f\notin A}I_f(-\hat{X},\hat{Y})=\emptyset$. This concludes the proof of this last claim.

Since we now know exactly how $Z$ arises from $\hat{X}$ and $\hat{Y} $ it is straightforward to check that $(Z\oplus -\hat{Y})\setminus A=X\oplus -Y$. This concludes the proof.
\end{proof}

\begin{lemma}\label{lem:pcont} Let $(E,\covset)$ any set of sign vectors and $A\subseteq E$. Then, $\mathcal P(\covset / A) \subseteq \mathcal P ( \covset) / A$.
\end{lemma}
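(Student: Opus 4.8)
The plan is a straightforward lift-and-descend argument; in fact no axioms of $\covset$ will be used, in line with the hypothesis that $(E,\covset)$ is an arbitrary system of sign vectors. First I would fix an element $X'\oplus(-Y')\in\mathcal P(\covset/A)$, witnessed by $X',Y'\in\covset/A$ with $I(X',-Y';\covset/A)=I(-X',Y';\covset/A)=\emptyset$, and lift it by setting $\hat X:=\iota_A(X')$ and $\hat Y:=\iota_A(Y')$. By Remark~\ref{rem:injection_contraction} together with the definition of contraction, $\hat X,\hat Y\in\covset$, they vanish on $A$, they restrict to $X'$ and $Y'$ on $E\setminus A$, and they are the only sign vectors with these last two properties. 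Since $\hat X,\hat Y$ vanish on $A$ we have $A\cap\underline{\hat X}=\emptyset$, hence $S(\hat X,-\hat Y)\ssq E\setminus A$ and in fact $S(\hat X,-\hat Y)=S(X',-Y')$; likewise $\hat X\oplus(-\hat Y)=\iota_A\bigl(X'\oplus(-Y')\bigr)$, so in particular $A\ssq\ze{\hat X\oplus(-\hat Y)}$. Thus it suffices to prove $\hat X\oplus(-\hat Y)\in\mathcal P(\covset)$: restricting this covector to $E\setminus A$ then yields exactly $X'\oplus(-Y')\in\mathcal P(\covset)/A$, as wanted.

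The crux is to verify $I(\hat X,-\hat Y;\covset)=\emptyset$, the equality $I(-\hat X,\hat Y;\covset)=\emptyset$ being entirely symmetric (apply the same reasoning to the lifted pair $-\hat X=\iota_A(-X')$, $\hat Y$, using $I(-X',Y';\covset/A)=\emptyset$). I would argue by contradiction: assume $Z\in I_e(\hat X,-\hat Y;\covset)$ for some $e\in S(\hat X,-\hat Y)$. The load-bearing observation is that the defining condition of $I_e$ pins down $Z$ on $A$: for each $f\in A$ we have $f\notin S(\hat X,-\hat Y)$, whence $Z(f)=\hat X(f)\circ(-\hat Y)(f)=0\circ 0=0$. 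Consequently $Z$ vanishes on $A$, so $Z_{\vert E\setminus A}\in\covset/A$. It then remains to check, directly from the definitions, that $Z_{\vert E\setminus A}\in I_e(X',-Y';\covset/A)$: indeed $Z_{\vert E\setminus A}(e)=0$, and for every $f\in(E\setminus A)\setminus S(X',-Y')$ one computes $Z_{\vert E\setminus A}(f)=Z(f)=\hat X(f)\circ(-\hat Y)(f)=X'(f)\circ(-Y')(f)$, using that the separator, the compositions and $Z$ all agree with their unrestricted counterparts off $A$. This contradicts $I(X',-Y';\covset/A)=\emptyset$.

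Once both $I(\hat X,-\hat Y;\covset)$ and $I(-\hat X,\hat Y;\covset)$ are shown empty, $\hat X\oplus(-\hat Y)\in\mathcal P(\covset)$ by definition; since it vanishes on $A$, its restriction $X'\oplus(-Y')$ lies in $\mathcal P(\covset)/A$, which closes the argument. I do not expect a genuine obstacle here: the only things requiring care are the bookkeeping between $E$ and $E\setminus A$ and the (essential) remark that any elimination witness over $\covset$ for the lifted pair is automatically zero on $A$ and therefore descends to the contraction — which is exactly why contraction, unlike deletion in Lemma~\ref{lem:pdel}, needs no auxiliary hypothesis such as (SE).
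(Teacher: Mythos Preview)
Your proof is correct and follows essentially the same approach as the paper: lift $X',Y'$ to $\hat X,\hat Y\in\covset$ vanishing on $A$, observe that any elimination witness for $(\hat X,-\hat Y)$ in $\covset$ is forced to vanish on $A$ and hence descends to an elimination witness in $\covset/A$, and conclude. Your write-up is considerably more detailed than the paper's terse version, but the argument is the same.
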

\begin{proof}
Let $X\oplus -Y\in P(\covset / A)$, with $X,Y\in \covset/ A$ and $I_{\covset / A}(X,-Y)=I_{\covset / A}(X,-Y)=\emptyset$. Thus, that there are $\hat{X},\hat{Y}\in\covset$ with $A\subseteq \ze{\hat{X}}\cap \ze{\hat{Y}}$ that otherwise coincide with $X$ and $Y$, respectively. In particular, $I_{\covset / A}(X,-Y)\cong I_{\covset}(\hat{X},-\hat{Y})$, since all covectors in $I_{\covset}(\hat{X},-\hat{Y})$ are $0$ on $A$. The same holds for $I_{\covset}(\hat{X},-\hat{Y})$ and we have $I_{\covset}(\hat{X},-\hat{Y})=I_{\covset}(\hat{X},-\hat{Y})=\emptyset$. 

This means that $X\oplus Y =(\hat{X}\oplus \hat{Y})/A\in \mathcal P ( \covset) / A$.
\end{proof}

We take the following from~\cite{Ban-18} and we review its proof in order to ensure that it does not rely on  finiteness assumptions.
\begin{lemma}\label{lem:minorclosed}
COMs are closed under minors, i.e., the properties {\normalfont (FS)} and {\normalfont (SE)} are closed under deletion and contraction.
\end{lemma}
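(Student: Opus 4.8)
The plan is to verify directly that the defining properties (FS) and (SE) of a COM are preserved under each of the two elementary minor operations, deletion and contraction, and then conclude for arbitrary minors since these are compositions of such operations. Throughout I would work element-wise with sign vectors, exploiting that deletion and contraction are both given by restriction of the domain $E$ to $E \setminus A$ (with contraction additionally selecting those covectors vanishing on $A$).

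First I would treat \textbf{deletion}. For (FS): given $X_{|E\setminus A}, Y_{|E\setminus A} \in \covset \setminus A$ coming from $X, Y \in \covset$, property (FS) in $\covset$ yields $X \circ (-Y) \in \covset$, and since restriction commutes with composition and negation, $(X \circ (-Y))_{|E\setminus A} = X_{|E\setminus A} \circ (-Y_{|E\setminus A})$, which therefore lies in $\covset \setminus A$. For (SE): take $X_{|E\setminus A}, Y_{|E\setminus A} \in \covset\setminus A$ and $e \in S(X_{|E\setminus A}, Y_{|E\setminus A}) \subseteq S(X,Y)$; applying (SE) in $\covset$ gives $Z \in I_e(X,Y;\covset)$, and one checks that $Z_{|E\setminus A}$ agrees with $X_{|E\setminus A} \circ Y_{|E\setminus A}$ off $S(X_{|E\setminus A},Y_{|E\setminus A})$ and vanishes at $e$, hence $Z_{|E\setminus A} \in I_e(X_{|E\setminus A}, Y_{|E\setminus A}; \covset\setminus A)$ — the only subtlety being to confirm the defining condition of $I_e$ survives restriction, which it does because $S(X,Y) \setminus A = S(X_{|E\setminus A}, Y_{|E\setminus A})$ on the common support.

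Next, \textbf{contraction}. Here I would use that $\covset/A$ is (via the injection $\iota_A$ of Remark~\ref{rem:injection_contraction}) order-isomorphic to the subset $\{X \in \covset \mid X(A) = \{0\}\}$, and that composition and negation of two sign vectors vanishing on $A$ again vanish on $A$. Thus (FS) for $\covset/A$ follows from (FS) for $\covset$ restricted to this sub-system. For (SE): if $X, Y \in \covset$ both vanish on $A$ and $e \in S(X,Y) \subseteq E\setminus A$, the eliminator $Z \in I_e(X,Y;\covset)$ agrees with $X \circ Y$ off $S(X,Y)$, hence in particular agrees with $0$ on $A$ (as $A \cap S(X,Y) = \emptyset$ and $X\circ Y$ is $0$ on $A$), so $Z$ itself vanishes on $A$ and descends to an element of $I_e(X_{|E\setminus A}, Y_{|E\setminus A}; \covset/A)$.

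Finally, a general minor is $\covset \setminus A / B$ for disjoint $A, B$; since I have shown each of deletion and contraction preserves (FS) and (SE), the composite does as well, completing the proof. I expect the main obstacle to be purely bookkeeping: making sure that the set $I_e(\cdot,\cdot)$ — whose definition quantifies over $f \notin S(X,Y)$ — behaves correctly under passing between $E$ and $E \setminus A$, i.e. that no element outside the restricted ground set is silently needed. This is where finiteness could conceivably have been used in the original argument, so I would take care to note that the element-wise verification above never appeals to $E$ being finite; all the identities used ($S$, $\underline{\,\cdot\,}$, $\circ$, $\oplus$ commuting with restriction) are local and hold verbatim for arbitrary $E$.
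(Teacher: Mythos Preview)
Your proof is correct and follows essentially the same approach as the paper's: both arguments verify (FS) and (SE) elementwise for deletion and for contraction by lifting to covectors of $\covset$, applying the axiom there, and checking that the resulting covector restricts back into the minor. Your explicit attention to why no finiteness assumption is needed is exactly the point of the paper's review of this proof.
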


\begin{proof}
We first prove the statement for deletion. To see (FS) let $X\setminus A,Y\setminus A\in \covset\setminus A$. 
Then $X\circ (- Y)\in \covset$ and $(X\circ (- Y))\setminus A= X\setminus A\circ (- Y\setminus A)\in\covset\setminus A$. To check (SE) let $X\setminus A,Y\setminus A\in \covset\setminus A$ and $e$ an element separating $X\setminus A$ and $Y\setminus A$.  
Then there is $Z\in\covset$ with $Z(e)=0$ and $Z(f)=X\circ Y(f)$ for all $f\in E\setminus S(X,Y)$. Clearly,
$Z\setminus A\in \covset\setminus A$ satisfies (SE) with respect to $X\setminus A,Y\setminus A$.

Now, we prove  the statement for contraction. Let $X\setminus A,Y\setminus A\in \covset/A$, i.e.,
$\underline{X}\cap A=\underline{Y}\cap A=\varnothing$. Hence $\underline{X\circ (- Y)}\cap A=\varnothing$ and
therefore $X\setminus A\circ (- Y\setminus A)\in \covset/A$, proving (FS). Towards proving (SE), let $X\setminus A,Y\setminus A\in \covset/ A$ and $e$ an element separating $X\setminus A$
and $Y\setminus A$. 
Then there is $Z\in\covset$ with $Z(e)=0$ and $Z(f)=X\circ Y(f)$ for all $f\in E\setminus S(X,Y)$.
In particular, if $X(f)=Y(f)=0$, then $Z(f)=0$. Therefore, $Z\setminus A\in \covset/A$ and it satisfies (SE).
\end{proof}

\begin{theorem}\label{thm:minorclosed}
 AOMs are closed under minors.
\end{theorem}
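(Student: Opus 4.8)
The plan is to deduce the closure of AOMs under minors by verifying that each of the three axioms in the simplified characterization from Proposition~\ref{prop:AOMCOM} --- namely (FS), (SE) and (P) --- survives both deletion and contraction. Since Lemma~\ref{lem:minorclosed} already establishes that (FS) and (SE) are closed under deletion and contraction, and since every minor is obtained by a deletion followed by a contraction (Definition~\ref{def:minors}), it suffices to treat axiom (P). Moreover, because an arbitrary minor $\covset\setminus A/B$ can be reached by first deleting $A$ and then contracting $B$, and both intermediate systems satisfy (SE) by Lemma~\ref{lem:minorclosed}, I only need to check that (P) is preserved by a single deletion and by a single contraction, applied to a system that already satisfies (FS) and (SE).

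For the deletion step, suppose $(E,\covset)$ is an AOM and $A\subseteq E$. I want to show $\mathcal{P}(\covset\setminus A)\circ(\covset\setminus A)\subseteq\covset\setminus A$. Take $P\in\mathcal{P}(\covset\setminus A)$ and $W\setminus A\in\covset\setminus A$ with $W\in\covset$. By Lemma~\ref{lem:pdel} (which applies since $\covset$ satisfies (SE)), we have $P\in\mathcal{P}(\covset)\setminus A$, so there is $\widehat{P}\in\mathcal{P}(\covset)$ with $\widehat{P}\setminus A=P$. Then axiom (P) for $\covset$ gives $\widehat{P}\circ W\in\covset$, and restricting to $E\setminus A$ yields $(\widehat{P}\circ W)\setminus A=(\widehat{P}\setminus A)\circ(W\setminus A)=P\circ(W\setminus A)\in\covset\setminus A$, as desired. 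The contraction step is entirely analogous, using Lemma~\ref{lem:pcont} in place of Lemma~\ref{lem:pdel}: given $P\in\mathcal{P}(\covset/A)$ and $W\setminus A\in\covset/A$ (so $W\in\covset$ with $\underline{W}\cap A=\varnothing$), lift $P$ to $\widehat{P}\in\mathcal{P}(\covset)$ with $A\subseteq\ze{\widehat{P}}$ via Lemma~\ref{lem:pcont}, apply (P) to get $\widehat{P}\circ W\in\covset$, note that $\underline{\widehat{P}\circ W}\cap A=\varnothing$ since both factors vanish on $A$, and conclude $(\widehat{P}\circ W)/A=P\circ(W\setminus A)\in\covset/A$.

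The one point that needs a little care --- and which I expect to be the main (though still minor) obstacle --- is checking that the restriction map is compatible with composition in the precise sense used above, i.e.\ that $(U\circ V)\setminus A=(U\setminus A)\circ(V\setminus A)$ for sign vectors on $E$, and that the lifts produced by Lemmas~\ref{lem:pdel} and~\ref{lem:pcont} can be chosen compatible with the given $W$ when forming the composition. The first identity is immediate from the definition of composition since the defining case distinction is made coordinatewise. For the second, in the deletion case no compatibility between $\widehat{P}$ and $W$ is needed because $W$ is already a vector of $\covset$ and (P) is applied directly; in the contraction case one must additionally observe that a representative $W\in\covset$ of $W\setminus A\in\covset/A$ automatically has $\underline{W}\cap A=\varnothing$ by definition of contraction, so the composite $\widehat{P}\circ W$ indeed vanishes on $A$ and descends. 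Assembling these observations, I would then state: by Lemma~\ref{lem:minorclosed} the axioms (FS) and (SE) pass to deletions and contractions, and by the two computations above so does (P); hence by Proposition~\ref{prop:AOMCOM} every deletion and every contraction of an AOM is an AOM, and therefore so is every minor.
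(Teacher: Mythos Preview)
Your proof is correct and follows essentially the same approach as the paper's: invoke Lemma~\ref{lem:minorclosed} for (FS) and (SE), then use Lemmas~\ref{lem:pdel} and~\ref{lem:pcont} together with the coordinatewise compatibility of composition with restriction to push (P) through deletion and contraction. The paper compresses your element-wise argument into the set-level chains $\mathcal P(\covset\setminus A)\circ(\covset\setminus A)\subseteq(\mathcal P(\covset)\setminus A)\circ(\covset\setminus A)=(\mathcal P(\covset)\circ\covset)\setminus A\subseteq\covset\setminus A$ (and analogously for contraction), but the content is identical.
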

\begin{proof}
 Let $A,B\subseteq E$ be disjoint  and let $(E, \covset)$ be an AOM. Consider the minor $(E\setminus A\setminus B, \covset\setminus A/B)$. We prove that this is an AOM. For axioms (FS) and (SE) this follows directly from Lemma~\ref{lem:minorclosed}. Since AOMs satisfy (SE) we can apply Lemma~\ref{lem:pdel} and compute $\mathcal P(\covset \setminus A)\circ \covset \setminus A\subseteq \mathcal{P}(\covset)\setminus A\circ \covset \setminus A=(\mathcal{P}(\covset)\circ \covset)\setminus A\subseteq \covset\setminus A.$ Similarly, using Lemma~\ref{lem:pcont} we get $\mathcal P(\covset / A)\circ \covset / A\subseteq \mathcal{P}(\covset)/ A\circ \covset / A=(\mathcal{P}(\covset)\circ \covset)/ A\subseteq \covset/ A.$
\end{proof}

\begin{examplenew}
    In the realizable setting deletion and contraction of elements correspond to removing a hyperplane or restricting to a hyperplane. E.g., in the example from Figure~\ref{fig:arrangement}, deleting $2$ would result in the arrangement where $H_2$ is removed. Contracting $2$ would result in an arrangement inside $H_2\cong \mathbb{R}^1$ yielding $5$ covectors.
\end{examplenew}

Theorem~\ref{thm:minorclosed} implies immediately the following corollary.

\begin{corollary}
Finite restrictions of AOMs are finite AOMs.
\end{corollary}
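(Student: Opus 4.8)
The plan is to deduce this directly from Theorem~\ref{thm:minorclosed} together with Remark~\ref{finiteok}. First I would observe that a \emph{finite restriction} of an AOM $(E,\covset)$ is, by definition, a set of the form $(A,\covset[A]) = (A, \covset\setminus(E\setminus A))$ with $A\subseteq E$ finite; this is a minor of $(E,\covset)$ (take the deletion set to be $E\setminus A$ and the contraction set to be $\varnothing$). Hence by Theorem~\ref{thm:minorclosed} the pair $(A,\covset[A])$ satisfies the AOM axioms (FS), (SE) and (P), i.e.\ it is an AOM in the sense of Proposition~\ref{prop:AOMCOM}. Since $|A|<\infty$, this makes $(A,\covset[A])$ a \emph{finite} AOM, and by Remark~\ref{finiteok} (i.e.\ \cite[Theorem 1.2]{Bau-16}) it is an affine oriented matroid in the classical sense. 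That is essentially the whole argument.

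The one point that deserves a sentence of care is the passage between the two axiomatizations: Theorem~\ref{thm:minorclosed} is phrased for AOMs as defined in Definition~\ref{def:orig}, while the conclusion we want invokes Remark~\ref{finiteok}, which also refers to Definition~\ref{def:orig}; Proposition~\ref{prop:AOMCOM} guarantees these coincide, so there is nothing to reconcile. I would therefore simply write: by Definition~\ref{def:minors} a finite restriction of an AOM is a minor of it, which is again an AOM by Theorem~\ref{thm:minorclosed}; being finite, it is a finite AOM, hence an affine oriented matroid by Remark~\ref{finiteok}.

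There is no real obstacle here — the corollary is a one-line consequence and the only thing to get right is matching the definitions. If one wanted to be fully explicit one could also note that $\central(\covset)$ always contains $\varnothing$ (as $\ze{X}\supseteq\varnothing$ for any $X$, or because $\flatset(\covset)\neq\varnothing$ forces at least one zero set), so the contraction by $\varnothing$ in the definition of ``minor'' is unproblematic; but this is automatic and need not be belabored.
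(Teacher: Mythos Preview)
Your proof is correct and matches the paper's own argument: the paper states that the corollary follows immediately from Theorem~\ref{thm:minorclosed}, and you have spelled out exactly that deduction (a finite restriction is a deletion-only minor, hence an AOM on a finite ground set). The additional remarks about Remark~\ref{finiteok} and the two axiomatizations are accurate but not needed for the statement as phrased.
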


\subsection{Parallelism in AOMs}\label{ssec:par}

\begin{definition}\label{defpar}
	Given two elements $e,f \in E$, we say that $e$ and $f$ are parallel, written $e \parallel f$, if there is no $X\in \covset$ with $e,f\in \ze{X}$.
\end{definition}

\begin{examplenew} In Figure \ref{fig:basis_inf} the pseudolines labeled $a_{-1},\ldots,a_{3}$ correspond to parallel elements in the associated FAOM.
\end{examplenew}

\begin{figure}[h]
\includegraphics[scale=1]{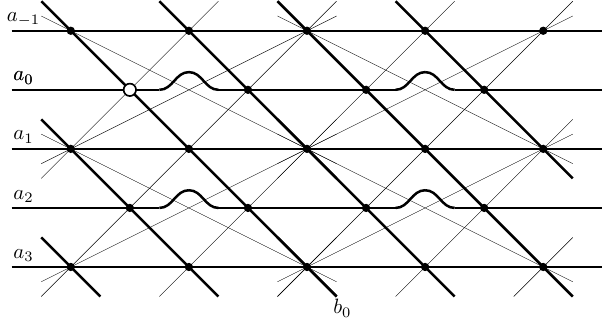}
\caption{An example of an infinite (but periodic) pseudoline arrangement. The point highlighted by a hollow bullet corresponds to the covector $X_B$ described in Example \ref{ex:xbwhite}. }\label{fig:basis_inf}
\end{figure}

Note that a different notion of parallelism in systems of sign vectors appears in the literature, that we call here \emph{equivalence} in order to avoid confusion, and that is defined by $e\sim f$ if $X(e)=X(f)$ for all $X\in \covset$ or $X(e)=-X(f)$ for all $X\in \covset$. Note that this is an equivalence relation on $E$. Another notion is that of \emph{redundant} elements, i.e., $e$ is redundant if $X(e)=Y(e)$ for all  $X\in \covset$. 

\begin{definition}\label{def:simple}
 An AOM is called \emph{simple} if all equivalence classes with respect to $\sim$ are trivial and there are no {redundant elements}.
Every AOM can be reduced to a simple one by deleting all redundant elements and all but one elements of each class of equivalent elements. The resulting AOM is a minor, that is unique up to isomorphism, called the \emph{simplification}.
\end{definition}

\begin{remark}\label{remsimplicity}
Let $\covset'$ be the simplification of an AOM $\covset$. We have $F(\covset')\cong F(\covset)$ and $\covpos(\covset')\cong \covpos(\covset)$.
\end{remark}

\begin{corollary}\label{PER}
In every simple $AOM$ with ground set $E$, the reflexive closure of parallelism is an equivalence relation on $E$. We call $\parclass{e}$ the parallelism class of $e\in E$.
\end{corollary}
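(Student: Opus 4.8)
The plan is to prove that, in a simple AOM, the relation ``$e \parallel f$ or $e = f$'' is transitive; reflexivity and symmetry being immediate from the definition. So suppose $e \parallel f$ and $f \parallel g$ with $e \neq g$, and assume for contradiction that $e \not\parallel g$, i.e.\ there exists $X \in \covset$ with $e, g \in \ze{X}$. By hypothesis every covector of $\covset$ that vanishes on $\{e,f\}$ does not exist, and likewise for $\{f,g\}$; in particular our $X$ has $X(f) \neq 0$. After possibly replacing $X$ by $-X$ (which lies in $\covset$ only if $\covset$ is an OM, so instead I will simply carry the sign along), assume $X(f) = +$.

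First I would try to produce, from the single covector $X$ together with the structure axioms, a second covector witnessing $e \parallel f$ or $f \parallel g$ being violated. The natural tool is strong elimination (SE): if I can find a covector $Y \in \covset$ with $Y(f) = -$ and $e \in \ze{Y}$ or $g \in \ze{Y}$, then $f \in S(X,Y)$, and eliminating $f$ produces $Z \in \covset$ with $Z(f) = 0$ and $Z(h) = X\circ Y(h)$ off the separator; choosing $Y$ so that $Z$ also vanishes on $e$ (resp.\ $g$) would contradict $e \parallel f$ (resp.\ $f \parallel g$). The key step is therefore: exhibit such a $Y$. Here is where simplicity enters. Since $e \not\sim f$ (equivalence classes are trivial in a simple AOM) and $e \ne f$, there is some covector on which $e$ and $f$ ``disagree''; combined with composition (C), which holds by Proposition~\ref{prop:AOMCOM}'s proof, and with (FS), one can compose covectors to adjust signs on coordinates outside $\{e,f,g\}$ while controlling the signs on $e$, $f$, $g$. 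Concretely, I would look for a covector $W$ with $W(f) = -$ (such exists because $f$ is not redundant, so some covector differs from $X$ at $f$... careful: redundancy is about a single element taking a constant value, which is exactly what would fail here), then compose $X \circ W$ and $W \circ X$ appropriately to land in a position where SE applies cleanly.

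The main obstacle I anticipate is precisely controlling what happens at the third coordinate while manipulating the first two: eliminating $f$ between $X$ and a chosen $Y$ controls coordinates \emph{outside} $S(X,Y)$ as $X \circ Y$, but $e$ or $g$ might lie in $S(X,Y)$, in which case the eliminated covector's value there is unconstrained. To handle this I would first use composition to arrange that $Y$ agrees with $X$ on whichever of $e, g$ I want to keep zero — e.g.\ replace $Y$ by $X \circ Y$, which is still in $\covset$ by (C) and has the same support-behaviour needed, and which now agrees with $X$ precisely on $\underline{X}$, hence keeps $e$ (and $g$) at $0$ since $X(e) = X(g) = 0$; the only subtlety is ensuring $(X \circ Y)(f)$ is still $-$, which holds iff $X(f) = 0$ — but $X(f) \ne 0$, so this naive fix fails and instead I should compose the other way or eliminate before composing. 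The correct bookkeeping is: pick $Y \in \covset$ with $Y(f) = -Y'(f)$ appropriate and $Y(e) = Y(g) = 0$ directly, which is what we are trying to construct, so the argument must instead proceed by contradiction on a minimal-support covector.

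So the refined plan: among all covectors $Z \in \covset$ with $e \in \ze{Z}$ and $g \in \ze{Z}$ (nonempty by assumption), pick one, call it $X$; necessarily $X(f) \neq 0$. Since the AOM is simple, $f$ is not redundant and $\{f\}$ is not equivalent to $\{e\}$ nor to $\{g\}$, so there is a covector $Y$ with $Y(f) = -X(f)$. If also $Y(e) = 0$, then $f \in S(X,Y)$ (as $e,g$ lie in $\ze X$, and $f$ in both supports with opposite signs), and applying (SE) to $X,Y$ at $f$ yields $Z$ with $Z(f) = 0$, $Z(e) = X \circ Y (e) = Y(e) = 0$ if $e \in S(X,Y)$ is excluded — hmm, again $e$ could be in $S(X,Y)$. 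The genuinely robust route, which I would write up, is to invoke (SE) in the contracted system $\covset / \{e\}$ (legitimate since $\{e\} \in \central(\covset)$ as $e \in \ze X$): in $\covset/\{e\}$ the image of $X$ still vanishes at $g$, $f$ is still non-redundant (simplicity passes to relevant minors via Remark~\ref{remsimplicity}-type reasoning, or is checked directly), and now eliminating $f$ against a suitable $Y$ produces a covector of $\covset/\{e\}$ vanishing at both $f$ and $g$ — but lifting via $\iota_{\{e\}}$ of Remark~\ref{rem:injection_contraction} gives a covector of $\covset$ vanishing at $e$ and $f$, contradicting $e \parallel f$. I expect the delicate point throughout to be this interplay of separators with the two ``protected'' coordinates, and I would resolve it by always contracting one protected coordinate first so that only one coordinate needs to be tracked through the elimination.
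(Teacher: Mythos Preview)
Your proposal is exploratory rather than a complete argument, and the ``robust route'' you settle on at the end has a genuine gap. You contract $\{e\}$ and then plan to eliminate $f$ inside $\covset/\{e\}$; for this you need two covectors of $\covset/\{e\}$ with opposite signs on $f$. But there are none: every covector $Z\in\covset$ with $Z(e)=0$ has the \emph{same} nonzero sign on $f$. Indeed, if $Z_1(e)=Z_2(e)=0$ and $Z_1(f)=-Z_2(f)\neq 0$, then $f\in S(Z_1,Z_2)$ and (SE) yields $W$ with $W(f)=0$ and $W(e)=Z_1\circ Z_2(e)=0$, contradicting $e\parallel f$. So $f$ is redundant in $\covset/\{e\}$, precisely because of the hypothesis $e\parallel f$; your parenthetical ``simplicity passes to relevant minors'' is false here, and the elimination you need cannot be carried out. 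The same obstruction blocks contracting $g$ instead (using $f\parallel g$), and it is also what foils the direct-(SE) attempts earlier in your sketch: any $Y$ with $Y(e)=0$ already has $Y(f)=X(f)$, so $f\notin S(X,Y)$.

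The paper sidesteps this entirely. It restricts to $\covset[\{e,f,g\}]$, which is a finite AOM (Theorem~\ref{thm:minorclosed}), and then invokes the underlying semimatroid structure: the zero sets form a geometric semilattice (Lemma~\ref{lem:undersemifinite}). Simplicity makes $\{e\},\{f\},\{g\}$ atoms; the assumed covector with zero set $\{e,g\}$ makes $\{e\},\{g\}$ an independent pair of atoms with a rank-$2$ join, and axiom (GSL2) forces one of $\{f\}\vee\{e\}$ or $\{f\}\vee\{g\}$ to exist --- i.e.\ a covector vanishing on $\{e,f\}$ or on $\{f,g\}$, contradicting one of the parallelism hypotheses. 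The essential ingredient you are missing is this appeal to (GSL2) (equivalently, to axiom (P) via the semimatroid structure), which produces the needed covector without any elimination.
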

\begin{proof} Symmetry of $\parallel$ being evident from the definition, we have to check transitivity. 
By way of contradiction consider three elements with $e\parallel f$, $e\parallel g$, but $f$ and $g$ not parallel. 
The restriction $\covset[\{e,f,g\}]$ is a finite AOM and so $\LL:=\ze{\covset[\{e,f,g\}]}$ is the geometric semilattice of flats of a (finite) semimatroid, see Lemma~\ref{lem:undersemifinite}.
Now since there is a covector $X$ such that $\ze{X}=\{f,g\}$, we can find $x\in\LL$ with $f,g\in x$.
Now since $\covset$ is simple, so is its restriction to $\{e,f,g\}$. In particular, all of $\{e\},\{f\},\{g\}$ are atoms of $\LL$  and $x$ has rank at least $2$ in $\LL$, so that $\{f\},\{g\}$ is an independent set of atoms. Now Axiom (GSL2) in Definition~\ref{df:GS} ensures that one among the joins $\{e\} \vee \{f\}$ and $\{e\}\vee\{g\}$ exists in $\LL$. But this implies existence of a covector $Y\in \covset$ with either $\{e,f\}\subseteq \ze{Y}$ or $\{e,g\}\subseteq \ze{Y}$, contradicting our parallelism assumption. 
\end{proof}

\begin{remark} Our notion of parallelism could be redefined so as to allow for Corollary~\ref{PER} to work also in non-simple AOMs, e.g., by adding to Definition~\ref{defpar} the requirement that $e,f$ are not loops and that $\vert\{e,f\}\cap \ze{X}\vert\neq 1$ for all $X\in \covset$ (the latter condition implying that $e,f$ are not parallel ``in matroid sense'' in the underlying semimatroid). Our choice of definition is enough since we will need it in order to study posets of covectors of AOMs, for which considering the ``simple'' case is no restriction of generality (see Remark~\ref{remsimplicity}).
\end{remark}

\begin{remark}\label{constantsign}
	Notice that $e\parallel f$ if and only if, for all $X,Y\in \covset$,  $X(f)=Y(f)=0$ implies $X(e)=Y(e)\neq 0$. This sign we can then denote by $\sigma_f(e)$.
\end{remark}
\begin{proof}
	The condition in the Remark's statement directly implies $e\parallel f$. For the other implication, suppose $e\parallel f $ and consider $X,Y\in \covset$ with $X(f)=Y(f)=0$. Clearly $X(e)\neq 0$ and $Y(e)\neq 0$ otherwise parallelism is immediately violated. It remains to prove $X(e)=Y(e)$. Indeed, if $X(e)=-Y(e)$ then $e\in S(X,Y)$ and we can eliminate $e$ obtaining $Z$ such that $Z(e)=0$ and $Z(f)=X(f)\circ Y(f)=0$, a contradiction to parallelism.
\end{proof}

\begin{definition}
Following Huntington~\cite{Huntington}, we say that a ternary relation $[\cdot,\cdot,\cdot]$ on a set $X$ is a {\em betweenness relation} if it satisfies the following requirements for all $a,b,c,x\in X$.
\begin{enumerate}
\item[(BR1)] $[a,b,c]$ implies that $a,b,c$ are distinct.
\item[(BR2)] $[\omega(a),\omega(b),\omega(c)]$ holds for some permutation $\omega$ of $\{a,b,c\}$.
\item[(BR3)] $[a,b,c]$ implies $[c,b,a]$
\item[(BR4)] $[a,b,c]$ and $[a,c,b]$ are mutually exclusive
\item[(BR5)] $[a,b,c]$ implies at least one of $[a,b,x]$ and $[x,b,c]$, whenever $x\not\in \{a,b,c\}$.
\end{enumerate}
\end{definition}

\begin{proposition}\label{prop:betweenness} Let $\pi\subseteq E$ denote a parallelism class of a simple AOM. The ternary relation on $\pi$ defined by 
\begin{center}
	\begin{minipage}{.2\textwidth}
		\begin{flushright}
		$[f,g,h]\Leftrightarrow$
		\end{flushright}
	\end{minipage}
	\begin{minipage}{.75\textwidth}
		\begin{flushleft}
		$f,g,h$ are pairwise distinct and, for all $X,Y\in \covset$,\\  $X(f)=0$ and $Y(h)=0$ imply $X(g)=-Y(g)$,
		\end{flushleft}
	\end{minipage}
\end{center}
 is a betweenness relation and is invariant under reorientation.
\end{proposition}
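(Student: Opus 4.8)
The plan is to verify the five betweenness axioms (BR1)--(BR5) for the relation $[f,g,h]$ on a parallelism class $\pi$, treating it as a statement about finite restrictions. The crucial preliminary observation is that for a parallelism class $\pi$ and distinct $f,g\in\pi$, by Remark~\ref{constantsign} whenever $X(f)=0$ we have $X(g)=\sigma_f(g)$ a fixed nonzero sign; so ``$X(f)=0$ and $Z(h)=0$ imply $X(g)=-Z(g)$'' is equivalent to the purely combinatorial condition $\sigma_f(g)=-\sigma_h(g)$, \emph{provided} covectors vanishing on $f$ (resp.\ $h$) exist at all. Since $\pi$ is a parallelism class, for each pair of \emph{distinct} elements $a,b\in\pi$ there is no $X$ with $a,b\in\ze{X}$; but for a single $f\in\pi$ there is some $X$ with $f\in\ze{X}$ (else $f$ would be a loop, excluded by simplicity). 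This reduces everything to the three-element restriction $\covset[\{f,g,h\}]$, which is a finite AOM by the Corollary after Theorem~\ref{thm:minorclosed}, and whose zero-set poset is the geometric semilattice of a finite rank-$2$ semimatroid as in the proof of Corollary~\ref{PER}. Concretely, picking covectors $X,Z$ with $f\in\ze X$, $h\in\ze Z$ forces $g\in\ze X\cup\ze Z$ impossible, so $X(g),Z(g)\neq 0$; hence exactly one of $\sigma_f(g)=\sigma_h(g)$ or $\sigma_f(g)=-\sigma_h(g)$ holds. This already gives (BR1) (distinctness is built in), (BR3) (the condition $\sigma_f(g)=-\sigma_h(g)$ is symmetric in $f,h$), and (BR4) ($[f,g,h]$ says $\sigma_f(g)=-\sigma_h(g)$ while $[f,h,g]$ would say $\sigma_f(h)=-\sigma_g(h)$; I expect these to be incompatible, see below).

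For (BR2) I would argue that among the three ``labelings'' obtained by reading the signs $\sigma_a(b)$ for $\{a,b\}\subseteq\{f,g,h\}$, some cyclic arrangement puts the middle element ``between'' the other two. The natural way is geometric: in the rank-$2$ semimatroid on $\{f,g,h\}$ each element $e$ corresponds to an atom, the covectors with $e\in\ze{\cdot}$ correspond to the ``line at infinity through $e$'', and after a reorientation we may normalize so that all relevant covectors have a prescribed sign on one element; then the three elements sit on an affine line in cyclic order and the middle one is between. More combinatorially, one can check the three mutually exclusive cases (by (BR4)-type reasoning) exhaust all sign patterns: define $\epsilon_{ab}:=\sigma_a(b)\in\{+,-\}$; the obstruction to $[f,g,h]$, $[g,h,f]$, $[h,f,g]$ all failing is a sign condition on the six numbers $\epsilon_{ab}$ that I expect to be ruled out by a single application of strong elimination (SE) to two covectors vanishing on different elements of $\pi$. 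This is the step I expect to be the main obstacle: making the ``some permutation works'' claim airtight requires pinning down exactly which sign patterns on a three-element affine oriented matroid of rank $2$ are realizable, which is a small but genuine case analysis.

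For (BR5), given $[f,g,h]$ and $x\in\pi\setminus\{f,g,h\}$, I want to show $[f,g,x]$ or $[x,g,h]$. Translating: $\sigma_f(g)=-\sigma_h(g)$ and I must show $\sigma_f(g)=-\sigma_x(g)$ or $\sigma_x(g)=-\sigma_h(g)$. If $\sigma_x(g)=-\sigma_f(g)$ we are done with the first; otherwise $\sigma_x(g)=\sigma_f(g)=-\sigma_h(g)$, which gives the second. So (BR5) is in fact immediate from the reformulation via $\sigma$ — the only content is that $\sigma_x(g)\in\{+,-\}$, i.e.\ that a covector vanishing on $x$ does not vanish on $g$, which holds because $x\parallel g$ (both in $\pi$). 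I would write this out cleanly once the $\sigma$-reformulation is established.

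Finally, reorientation invariance: a reorientation $\covset^{(\tau)}$ sends $X\mapsto \tau\cdot X$, which preserves zero sets exactly, so $\ze{X}=\ze{(\tau\cdot X)}$; thus ``$X(f)=0$'' is unchanged, while $X(g)=-Z(g)$ becomes $\tau(g)X(g)=-\tau(g)Z(g)$, i.e.\ the same condition after cancelling $\tau(g)\in\{\pm1\}$. Hence $[f,g,h]$ holds in $\covset$ iff it holds in $\covset^{(\tau)}$, and since parallelism classes are also preserved by reorientation (zero sets are preserved), the statement transfers verbatim. I would present the argument in the order: (i) the $\sigma$-reformulation and existence of witnessing covectors via simplicity plus the finite-AOM/geometric-semilattice input from Corollary~\ref{PER}; (ii) (BR1),(BR3),(BR5) as quick consequences; (iii) (BR4) and (BR2) via the rank-$2$ case analysis using (SE); (iv) reorientation invariance.
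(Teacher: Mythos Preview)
Your overall architecture matches the paper's proof: reformulate $[a,b,c]$ as $\sigma_a(b)=-\sigma_c(b)$ via Remark~\ref{constantsign}, dispatch (BR1), (BR3), (BR5) immediately from this reformulation, and treat (BR2), (BR4) as the substantive steps. Your arguments for (BR5) and reorientation invariance are exactly what the paper does.

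There is, however, a genuine gap in your plan for (BR2). You write that the obstruction to all three of $[f,g,h]$, $[g,h,f]$, $[h,f,g]$ failing should be ``ruled out by a single application of strong elimination (SE)''. This is false: the system
\[
\covset=\{(0,+,+),\,(+,0,+),\,(+,+,0),\,(+,+,+),\,(-,+,+),\,(+,-,+),\,(+,+,-)\}
\]
on $\{a,b,c\}$ satisfies (FS) and (SE), has all three elements pairwise parallel, and has $\sigma_y(x)=+$ for every ordered pair, so $\sigma_a(b)=\sigma_c(b)$, $\sigma_b(c)=\sigma_a(c)$, $\sigma_c(a)=\sigma_b(a)$ and no permutation gives betweenness. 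What fails here is precisely axiom (P): one computes $(0,+,+)\oplus -(+,0,+)=(-,+,0)\in\mathcal P(\covset)$, yet $(-,+,0)\circ(+,+,0)=(-,+,0)\notin\covset$. The paper's proof of (BR2) uses exactly this: assuming $\sigma(x)=+$ for all $x\in\{a,b,c\}$, it shows $I(X,-Y)=I(-X,Y)=\emptyset$, so $X\oplus(-Y)\in\mathcal P(\covset)$, and then composes with $Z$ via (P) to produce a covector contradicting the well-definedness of $\sigma_c(a)$. Your fallback of ``pinning down which sign patterns on a three-element rank-$2$ AOM are realizable'' would ultimately succeed, but only because finite AOMs satisfy (P); so either way (P) is the missing ingredient, and you should invoke it directly rather than hope (SE) alone does the job.

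For (BR4) your plan to use (SE) is correct and matches the paper: from $\sigma_f(g)=-\sigma_h(g)$ and $\sigma_f(h)=-\sigma_g(h)$ one eliminates $g$ between a covector vanishing on $f$ and one vanishing on $h$ to obtain a covector $W$ with $W(g)=0$ whose value at $h$ contradicts $\sigma_g(h)$ being well-defined.
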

\begin{proof}
Properties (BR1) and (BR3) hold trivially. In order to prove the others let $a,b,c$ be distinct elements of $\pi$, recall the definition of the signs $\sigma_f(e)$ from Remark~\ref{constantsign} and for $x\in \{a,b,c\}$ write $$\sigma(x):=\prod_{y\in \{a,b,c\}\setminus \{x\}}\sigma_y(x).$$ Notice that  $[a,b,c]$ is equivalent to $\sigma_a(b)=-\sigma_c(b)$, hence to $\sigma(b)=-$. 
\begin{itemize}
\item[(BR2)] It is enough to find $x\in \{a,b,c\}$ with $\sigma(x) = -$. 
Since the AOM is simple and $a,b,c$ are parallel, in the restriction of the AOM to the set $\{a,b,c\}$ we can pick covectors $X$, $X'$, $Y$ with $\{a\} = \ze{X}$, $\{b\}= \ze{X'}$, $\{c\}= \ze{Y}$.
Now assume by way of contradiction that $\sigma(x)=+$ for all $x\in \{a,b,c\}$. Then $X(c)=X'(c)$, $X(b)=Y(b)$, $X'(a)=Y(a)$ and we can compute
$$
(X\oplus -X')(a)=-Y(a), \quad (X\oplus -X')(b)=Y(b), \quad (X\oplus -X')(c) = 0.
$$
Moreover, $I(X,-X') = I(-X,X') =\emptyset$ (Notice that $I(X,-X') = I_{c}(X,-X')$, hence any $W\in I(X,-X')$ must have $W(c)=0$, $W(a)=-Y(a)$ and thus would witness $\sigma(a)=\sigma_b(a)\sigma_c(a)=X'(a)(-Y(a))=X'(a)(-X'(a))=-$, a contradiction. For $I(X',-X)$ the reasoning is analogous and contradicts $\sigma(b)=+$.)   Thus, by (P) the family $\covset$ should contain
the covector $W':=(X\oplus -X') \circ Y$, which satisfies $W'(a)=-Y(a)$ and $W'(c)=0$ and
 would witness again $\sigma(a)=X'(a)(-Y(a))=-$, a contradiction.
\item[(BR4)] Let us argue again in the restriction to $\{a,b,c\}$ and pick covectors $X$, $X'$ and $Y$ as in the proof of (BR2).  If both $[a,b,c]$ and $[a,c,b]$ hold, then $\sigma(b)=\sigma(c)=-$ and so  $X(b)=-Y(b)$ and $X(c)=-X'(c)$. In particular, by (SE) the set $I(X,Y)=I_b(X,Y)$ contains some $W$ with $W(b)=0$ and $W(c)=X(c)\circ Y(c)=X(c)$, but the latter is opposite to $X'(c)$, a contradiction to $\sigma_b(c)$ being well-defined.
\item[(BR5)] Recall that $[a,b,c]$ means $\sigma_a(b)=-\sigma_c(b)$. Then, for every $x\not\in\{a,b,c\}$ either $\sigma_x(b)=\sigma_a(b)$, in which case $[x,b,c]$, or $\sigma_x(b)=\sigma_c(b)$, in which case $[a,b,x]$.
\end{itemize}
Invariance under reorientation is apparent from the definition of $[\cdot,\cdot,\cdot]$.
\end{proof}

\begin{corollary}\label{totalorder}
	Let $\pi\subseteq E$ be a parallelism class of the given AOM. Then there is a total order $<_\pi$ on $\pi$, unique up to order reversal,  such that $a<_\pi b <_\pi c$ if and only if $[a,b,c]$. In particular, this ordering is independent on the reorientation of the AOM. Moreover, there is a reorientation of $\pi$ such that
	\begin{center}
		for all $x,y\in \pi$, $x<_\pi y$ if and only if $\sigma_y(x)=+$,
	\end{center}
			where $\sigma_y(x)$ is defined in Remark \ref{constantsign}.
\end{corollary}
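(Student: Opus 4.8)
The plan is to derive the first assertion directly from Proposition~\ref{prop:betweenness} together with the classical fact, due to Huntington~\cite{Huntington}, that a set carrying a betweenness relation in the sense of (BR1)--(BR5) admits a total order, unique up to reversal, which induces that relation (i.e.\ $[a,b,c]$ holds exactly when $b$ lies strictly between $a$ and $c$ in the order). For completeness one can reprove this: fix distinct $a_0,a_1\in\pi$, set $a_0<_\pi a_1$, and place any further $x\in\pi$ according to which of $[x,a_0,a_1]$, $[a_0,x,a_1]$, $[a_0,a_1,x]$ holds --- exactly one does, by (BR1)--(BR4) --- then verify transitivity using (BR5); nothing here depends on the cardinality of $\pi$. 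That $<_\pi$ is unaffected by reorientations is then immediate: reorientations do not change zero sets, hence leave $\pi$ and the relation $[\cdot,\cdot,\cdot]$ unchanged (Proposition~\ref{prop:betweenness}), and $<_\pi$ is determined by $[\cdot,\cdot,\cdot]$ up to reversal.

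For the ``moreover'' part I would first record the transformation law for the constant signs of Remark~\ref{constantsign}: if $\covset^{(\tau)}=\{\tau\cdot X\mid X\in\covset\}$ and $e\parallel f$, then the covectors $X'\in\covset^{(\tau)}$ with $X'(f)=0$ are precisely the $\tau\cdot X$ with $X\in\covset$, $X(f)=0$, whence $X'(e)=\tau(e)X(e)=\tau(e)\,\sigma_f(e)$; so $\sigma_f^{(\tau)}(e)=\tau(e)\,\sigma_f(e)$ (in particular this is independent of $\tau(f)$). Thus the task is to produce $\tau\in\{+1,-1\}^E$, with $\tau\equiv+1$ off $\pi$, such that $\tau(x)=\sigma_y(x)$ for every $x<_\pi y$ in $\pi$ and $\tau(x)=-\sigma_y(x)$ for every $y<_\pi x$ in $\pi$.

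The crux is the claim: \emph{for fixed $x\in\pi$, the map $y\mapsto\sigma_y(x)$ is constant on $\{y\in\pi: y>_\pi x\}$ and on $\{y\in\pi: y<_\pi x\}$, and takes opposite values on these two sets.} To prove it, take pairwise distinct $x,y,z\in\pi$; since the AOM is simple, in its finite restriction to $\{x,y,z\}$ every singleton is an atom of the geometric semilattice (as in the proof of Proposition~\ref{prop:betweenness}), so there is a covector of $\covset$ vanishing on any prescribed one of $x,y,z$. Combining this with Remark~\ref{constantsign} one rewrites the definition of $[\cdot,\cdot,\cdot]$ as $[y,x,z]\Leftrightarrow\sigma_y(x)=-\sigma_z(x)$. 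By the first part, $[y,x,z]$ holds iff $x$ lies strictly $<_\pi$-between $y$ and $z$, so $\sigma_y(x)=-\sigma_z(x)$ iff $y,z$ lie on opposite sides of $x$ --- which is exactly the claim. Now set $\tau(e):=+1$ for $e\notin\pi$, and for $x\in\pi$: $\tau(x):=\sigma_y(x)$ for any $y>_\pi x$ if $x$ is not the $<_\pi$-maximum of $\pi$; $\tau(x):=-\sigma_y(x)$ for any $y<_\pi x$ if $x$ is the maximum and $|\pi|\ge 2$; and $\tau(x):=+1$ if $|\pi|=1$. The claim makes this well-defined and, using its ``opposite values'' part in the non-maximal case, yields $\tau(x)=\sigma_y(x)$ whenever $x<_\pi y$ and $\tau(x)=-\sigma_y(x)$ whenever $y<_\pi x$. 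Hence in $\covset^{(\tau)}$ we get $\sigma_y^{(\tau)}(x)=\tau(x)\sigma_y(x)=+$ precisely when $x<_\pi y$, as required.

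The first assertion is essentially a citation to Huntington's theorem once Proposition~\ref{prop:betweenness} is in hand, so I expect the real work to sit in the ``moreover'' part, and specifically in the displayed claim: it forces one to translate the rather indirect definition of $[\cdot,\cdot,\cdot]$ back into the language of the constant signs $\sigma_{\cdot}(\cdot)$, to keep careful track of \emph{which} of the betweenness relations on a triple actually holds, and --- since $\pi$ may be infinite --- to deal with the possible absence of a $<_\pi$-maximum or -minimum when defining the reorientation $\tau$.
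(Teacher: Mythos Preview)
Your proof is correct and follows essentially the same approach as the paper: both invoke Huntington's result on betweenness relations for the existence and uniqueness (up to reversal) of $<_\pi$, and both construct the reorientation by flipping each $x\in\pi$ according to its $<_\pi$-position relative to a reference element. Your treatment of the ``moreover'' part is in fact more detailed than the paper's, which simply fixes $e<_\pi f$ and declares the reorientation by the rule $\sigma_e(x)=-$ iff $e<_\pi x$ without spelling out the verification; your explicit claim that $y\mapsto\sigma_y(x)$ is constant on each $<_\pi$-side of $x$ (equivalent to the observation $[y,x,z]\Leftrightarrow\sigma_y(x)=-\sigma_z(x)$, already noted in the proof of Proposition~\ref{prop:betweenness}) makes this verification transparent.
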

\begin{proof}
If $\pi$ has less than $3$ elements, the claim is trivial. Otherwise, recall the betweenness relation on $\pi$ defined in Proposition~\ref{prop:betweenness} and choose two distinct elements $e,f\in \pi$. In~\cite[\S 3.1]{Huntington} it is proved that the condition ``$a<_\pi b <_\pi c$ if and only if $[a,b,c]$'' determines a pair of opposite total orderings on $\pi$, and thus letting $e<_\pi f$ fully determines a total ordering of $\pi$ with the desired properties. 

The desired reorientation is obtained by reorienting  $e,f$ so that $\sigma_f(e) = + $ and $\sigma_e(f)=-$, as well as  reorienting every other $x\in \pi$ so that $\sigma_e(x)=-$ if and only if $e<_\pi x$.
\end{proof}

Notice that the total order $<_\pi$ obtained in Corollary~\ref{totalorder} is unique up to order reversal. In particular, the following definition is well-posed (where we assume, after possibly reversing the order, that if an extremum exists, it is a minimum).

\begin{definition}\label{def:totalorder} Write $1_\pi$ resp. $0_\pi$ for the unique maximal (resp. minimal) element of $<_\pi$ when they exist. We will assume, after possibly reversing the order, that if an extremum exists, then a minimum exists. 
Corollary~\ref{totalorder} allows us then to define the following partition of the ground set of an AOM, independently from the reorientation.
\begin{align*}
E^{01}:=& \{e \in E \mid \textrm{both }1_{\pi(e)} \textrm{ and } 0_{\pi(e)} \textrm{ exist}\},\\
E^{0*}:=& \{e \in E \mid 0_{\pi(e)} \textrm{ exists}\} \setminus E^{01}\\
E^{**}:=&  E\setminus (E^{01} \cup E^{0*}) 
\end{align*}
\end{definition}

\begin{examplenew}
Figure \ref{examplesab} illustrates different types of elements of the ground sets of FAOMS represented by affine line arrangements.
\end{examplenew}

\begin{figure}[h!]
\centering
\begin{tikzpicture}
\foreach \x in {0,1,2,3,4,5}
    \draw (.52*\x-1.4,-.3) -- (.52*\x-1.4,3);
\foreach \x in {-1,0,1,2,3,4}    
    \node (\x) at (.52*\x-.8,-.7) {$h_{\x}$};
    \node (ph) at (2,-.7) {$\cdots$};
    \node (phl) at (-2,-.7) {$\cdots$};    
\foreach \y in {0}
    \draw (-2,.45*\y+.9) -- (1.7,.45*\y+.9);
\foreach \y in {0}    
    \node (\y) at (-2.3,.45*\y+.9) {$l$};
\node (a) at (0,-1.5) {(a)};
\end{tikzpicture}
\quad
\begin{tikzpicture}
\foreach \x in {0,1,2,3,4,5}
    \draw (.52*\x-1.4,-.3) -- (.52*\x-1.4,3);
\foreach \x in {-1,0,1,2,3,4}    
    \node (\x) at (.52*\x-.8,-.7) {$h_{\x}$};
    \node (ph) at (2,-.7) {$\cdots$};
    \node (phl) at (-2,-.7) {$\cdots$};    
\foreach \y in {0,1,2,3}
    \draw (-1.7,.45*\y+.9) -- (2,.45*\y+.9);
\foreach \y in {0,1,2,3}    
    \node (\y) at (-2,.45*\y+.9) {$l_{\y}$};
\node[anchor=center] (pvl) at (-.1,2.9) {$\vdots$};
\node[anchor=center] (pv) at (-2,3) {$\vdots$};
\node (b) at (0,-1.5) {(b)};
\end{tikzpicture}
\quad
\begin{tikzpicture}
\foreach \x in {0,1,2,3,4,5}
    \draw (.52*\x-1.4,-.3) -- (.52*\x-1.4,3);
\foreach \x in {-1,0,1,2,3,4}    
    \node (\x) at (.52*\x-.8,-.7) {$h_{\x}$};
    \node (ph) at (2,-.7) {$\cdots$};
    \node (phl) at (-2,-.7) {$\cdots$};    
\foreach \y in {-2,-1,0,1,2,3}
    \draw (-1.7,.45*\y+.9) -- (2,.45*\y+.9);
\foreach \y in {-2,-1,0,1,2,3}    
    \node (\y) at (-2,.45*\y+.9) {$l_{\y}$};
\node[anchor=center] (pvl) at (0,2.9) {$\vdots$};
\node[anchor=center] (pv) at (-2.5,3) {$\vdots$};
\node (b) at (0,-1.5) {(c)};
\end{tikzpicture}
\caption{Three examples of infinite line arrangements. The FAOM associated to example (a) has $E^{01}=\{l\}$, $E^{\ast\ast} = \{h_i\}_{i\in \mathbb Z}$, $E^{0\ast}=\emptyset$; the one associated to example (b) has $E^{01}=\emptyset$, $E^{\ast\ast} = \{h_i\}_{i\in \mathbb Z}$, $E^{0\ast}=\{l_j\}_{j\in \mathbb N}$; the one of example (c) has $E^{01}=E^{0\ast} = \emptyset$, $E^{\ast\ast}=\{l_j\}_{j\in \mathbb N} \cup \{h_j\}_{j\in \mathbb N}$. } 
\label{examplesab}
\end{figure}

\section{Finitary affine oriented matroids (FAOM)}
\label{sec:FAOM}
We now move to a more restrictive definition, especially in order to approach a topological study of covector posets and to connect our theory with that of the unoriented ``affine" version of matroids, i.e., semimatroid theory.

\begin{definition}[FAOM]
 A \emph{Finitary} Affine Oriented Matroid
 is a pair $(E,\covset)$ that is an AOM (i.e., it satisfies (FS), (SE), and (P)) that furthermore fulfills: 
 \begin{itemize}
  \item[(S)] $X,Y\in\covset\implies |S(X,Y)|<\infty$ \hfill(finite separators),
  \item[(Z)] $X\in\covset\implies |\ze{X}|<\infty$ \hfill(finite zero sets),
  \item[(I)] $\vert \covpos(\covset)_{\leq X}\vert < \infty$ \hfill(finite intervals).
 \end{itemize}
\end{definition}

\begin{remark} \label{thepowerofZ}Axiom (Z) implies that $\covset$ is closed under infinite composition, since $|\ze{X\circ Y}|<|\ze{X}|$, unless $X\circ Y=X$. Thus, any result of an infinite composition can be expressed as a finite composition. Moreover, (Z) also implies that $\vert \covpos(\covset)_{\geq X}\vert < \infty$ for every $X\in \covset$.
\end{remark}

\begin{remark} 
	Axiom (Z) might be weakened to 
	\begin{itemize}
	\item[(Z')] $X\in\covset\implies |\ze{X}\setminus\bigcap_{Y\in\covset}\ze{Y}|<\infty$. 
	\end{itemize}
	Most of the statements and proofs remain valid with some technical adjustments. We choose the stronger axiom in order to fit the ``finitary" nature of the extant literature on matroids. For instance, (Z') would allow for infinitely many elements of rank $0$ in the underlying semimatroid. Again, as we are interested in the structure of $\covpos(\covset)$, omitting loops does not restrict generality.
\end{remark}

The next lemma follows from the more general property that faces of COMs are OMs.

\begin{lemma}[{See Lemma 4 in \cite{Ban-18}}]\label{locOM}
	Let $\covset$ be the set of covectors of an FAOM and let $X\in \covset$. Then the poset $\covpos(\covset)_{\geq X}$ is isomorphic to the poset of covectors of an oriented matroid. More precisely, $\mathscr O:=\{Y_{\vert \ze{X}} \mid Y\geq X\} = \covset[\ze{X}]$ is the set of covectors of an oriented matroid on the ground set $\ze{X}$.
\end{lemma}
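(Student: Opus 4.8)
The plan is to reduce the statement to facts already established: that every AOM is a COM, that COMs are closed under minors (Lemma~\ref{lem:minorclosed}), and that a COM containing the all-zeroes vector is exactly the covector set of an oriented matroid.

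\emph{Step 1: identify $\covset[\ze X]$.} First I would prove the displayed equality $\covset[\ze X]=\{Y_{\vert\ze X}\mid Y\geq X\}$. The inclusion $\supseteq$ is immediate, since every such $Y$ lies in $\covset$. For $\subseteq$, given $Z\in\covset$ set $Y:=X\circ Z$; this lies in $\covset$ because (FS) implies (C) (see the proof of Proposition~\ref{prop:AOMCOM}). For $e\notin\ze X$ one has $X(e)\neq 0$, so $Y(e)=X(e)$, whence $Y\geq X$; for $e\in\ze X$ one has $X(e)=0$, so $Y(e)=Z(e)$, whence $Y_{\vert\ze X}=Z_{\vert\ze X}$. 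This yields $\mathscr O=\covset[\ze X]$.

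\emph{Step 2: the restriction map is a poset isomorphism.} Consider $\rho\colon \covpos(\covset)_{\geq X}\to\covpos(\mathscr O)$, $Y\mapsto Y_{\vert\ze X}$. Surjectivity onto $\mathscr O$ is Step 1. The key observation is that $Y\geq X$ forces $Y(e)=X(e)$ for every $e\notin\ze X$; hence for $Y,Y'\geq X$ one has $Y\leq Y'$ in $\covpos(\covset)$ if and only if $Y(e)\leq Y'(e)$ for all $e\in\ze X$, i.e.\ if and only if $\rho(Y)\leq\rho(Y')$. In particular $\rho$ is injective and both $\rho$ and $\rho^{-1}$ preserve order, so $\rho$ is a poset isomorphism.

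\emph{Step 3: $\mathscr O$ is an OM.} Since $\covset[\ze X]=\covset\setminus(E\setminus\ze X)$ is a minor of the COM $(E,\covset)$, Lemma~\ref{lem:minorclosed} shows it satisfies (FS) and (SE), so it is a COM on the ground set $\ze X$, which is finite by axiom (Z). Moreover $X_{\vert\ze X}=\mathbf 0$, so $\mathbf 0\in\covset[\ze X]$, i.e.\ axiom (0) holds; a system of sign vectors satisfying (FS), (SE) and (0) is precisely the covector set of an oriented matroid (see Definition~\ref{def:OM}). Together with Step 2 this gives $\covpos(\covset)_{\geq X}\cong\covpos(\mathscr O)$, the poset of covectors of this (finite) OM. There is no serious obstacle here: the lemma is the specialization to the present setting of the general COM fact \cite[Lemma~4]{Ban-18}, and the only point requiring attention is that the ingredients (closure under minors, the COM/OM axiomatics) have been arranged so as not to assume $E$ finite — finiteness of $\ze X$, which is what makes $\mathscr O$ a \emph{finite} OM, being supplied by axiom (Z). Alternatively one could cite \cite[Lemma~4]{Ban-18} directly, once it is observed that $\covpos(\covset)_{\geq X}$ is the face of the COM $\covset$ at $X$, read off on $\ze X$.
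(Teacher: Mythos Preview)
Your proof is correct and follows exactly the approach the paper defers to: the paper gives no argument of its own but cites \cite[Lemma~4]{Ban-18}, noting that ``faces of COMs are OMs'', and your three steps spell out precisely this fact in the present setting (identify the face $\covpos(\covset)_{\geq X}$ with the restriction $\covset[\ze X]$, observe the latter is a COM by minor-closure, and check axiom~(0)). If anything, you supply more detail than the paper, which simply points to the reference; your closing remark that one could cite \cite[Lemma~4]{Ban-18} directly is exactly what the paper does.
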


\subsection{Topes, convex sequences and rank}\label{subsec:topes}
 \newcommand{\topes}[2]{\mathscr T(#1,#2)}
  \newcommand{\tope}{\mathscr T}
  
 \newcommand{\convset}{\mathscr C}
  \newcommand{\res}{\operatorname{res}}

\begin{definition}
 The set of \emph{topes} $\tope$ of simple FAOM is constituted by the elements of $\covset$ of full support, i.e., with $X_e\neq 0$ for all $e\in E$. 
 \end{definition}
 
 \begin{remark}\label{topexist}
 Note that a tope exists in a simple FAOM, since for every $e\in E$, there is an $X^e\in\covset$ with $X^e(e)\neq 0$ and we can obtain a tope by composing all $\{X^e\mid e\in E\}$ by Remark~\ref{thepowerofZ}.
 Moreover, every $X\in\covset$ is below some tope in in the face poset -- just take any $X\circ T$, with $T\in\tope$.
 \end{remark}
 
 \begin{definition}
 The \emph{tope graph} $G_{\covset}$ is the simple graph with the set $\tope$ as its vertices and where a pair of vertices $T$, $T'$ form an edge if and only if $|S(T,T')|=1$. In other words, $G_{\covset}$ is an induced subgraph of the \emph{hypercube} $Q_E$.
 \end{definition}

Note that (I) implies that all topes have a finite number of neighbors, since the edges of a tope $T$ are in $\covpos(\covset)_{\leq T}$. Together with (S), this implies the following statement.

\begin{remark}\label{lifeisgood}
 In the tope graph $G_{\covset}$ of a simple FAOM , every vertex has finite degree and the distance of any two vertices is finite.
\end{remark}

\begin{remark}
Remark \ref{lifeisgood} entails in particular that the distance in $G_{\covset}$ between any two topes $T,T'\in\tope$ satisfies $d(T,T')=|S(T,T')|$. In particular, $G_{\covset}$ is an isometric subgraph of $Q_E$, i.e., a \emph{partial cube} (of finite degree and finite distances), see~\cite[Proposition 2]{Ban-18}. Moreover,  $\covset$ is uniquely determined by $\tope$ and up to isomorphism by $G_{\covset}$, see~\cite[Corollary 4.10]{Kna-17}. 
\end{remark}

\def\balt{\mathscr B}

A subset of vertices of a graph $G$ is called {\em convex} if it contains all vertices of every shortest path between any two if its vertices. The \emph{convex hull} of a subset of vertices of $G$ is the smallest convex subgraph containing it, if it exists. A subset $\convset\subseteq \tope$ is called \emph{convex} if it is convex as a set of vertices of $G_\covset$.
 
 \begin{remark}\label{rem:convex}
It is well-known, see e.g.~\cite{Alb-16}, that convex subsets in partial cubes coincide with intersections of halfspaces, this is $\convset\subseteq \tope$ is convex if and only if there is a sign vector $X$ (not necessarily in $\covset$), such that $\convset=\{T\in\tope\mid T\geq X\}$. Moreover, for a finite subset $\balt$ its convex hull $\mathrm{conv}(\balt)$ can be represented by 

$$
X_{\balt}(e):=\left\{\begin{array}{ll}
0 & \textrm{ if } e \in S(T,T') \textrm{ for some } T,T'\in\balt\\
\bigcirc_{T\in\balt} T(e) &\textrm{ otherwise.}
\end{array}\right.
$$

 \end{remark}
 
%
\begin{lemma}\label{lemseq}
 	Let $\covset$ be a simple FAOM and $C\in \tope$. There is an increasing sequence 
	$$
	\convset_1\subsetneq \convset_2\subsetneq \ldots
	$$
	of {\bf finite} convex subsets of $\covset$ such that $C\in \convset_1$ and $\covset = \cup_i\convset_i$.
 \end{lemma}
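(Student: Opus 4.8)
The idea is to exhaust $\covset$ by convex hulls of larger and larger finite sets of topes, using $B$ as a basepoint and the finiteness axioms (S), (Z), (I) to ensure each hull is finite. Concretely, I would fix an enumeration — or at least a nested exhaustion by finite sets — of the ground set or of the topes, and build the $\convset_i$ so that each one is the convex hull (in $G_\covset$) of finitely many topes including $B$.

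Let me spell out the main steps. First, by Remark~\ref{topexist} there is at least one tope, and $B$ is one by hypothesis. Since $\covset$ is uniquely determined by $\tope$ and every $X\in\covset$ lies below some tope, it suffices to find an increasing sequence of finite convex $\convset_i\subseteq\tope$ with $B\in\convset_1$ and $\bigcup_i\convset_i=\tope$; then reading off $\covset=\bigcup_i\covset_i$ where $\covset_i$ is the set of covectors below topes in $\convset_i$ follows (and one checks convexity as subsets of $\covset$ via halfspaces, Remark~\ref{rem:convex}). Second, I would enumerate $\tope=\{B=T^{(0)},T^{(1)},T^{(2)},\ldots\}$ — this is possible because, by Remark~\ref{lifeisgood}, $G_\covset$ is connected with all finite degrees and all finite distances, hence countable (if $\tope$ were finite the lemma is immediate by taking all $\convset_i$ equal from some point on, or rather one takes the trivial stabilizing sequence). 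Third, set $\balt_i:=\{T^{(0)},\ldots,T^{(i)}\}$ and $\convset_i:=\mathrm{conv}(\balt_i)$, the convex hull computed by the explicit sign vector $X_{\balt_i}$ of Remark~\ref{rem:convex}. Then $B\in\convset_1$, the sequence is increasing, and $\bigcup_i\convset_i\supseteq\{T^{(i)}\}_i=\tope$, so equality holds. Fourth — and this is the only real content — I must check each $\convset_i$ is \emph{finite}. The convex hull of two topes $T,T'$ is the interval of the hypercube $Q_E$ between them, which has $2^{|S(T,T')|}$ vertices, finite by (S); intersecting with $\tope$ keeps it finite. For $i+1$ topes, $\mathrm{conv}(\balt_i)$ is contained in the convex hull of $\balt_i$ viewed in $Q_E$, namely the subcube $\{X\in\signs^E: X(e)=X_{\balt_i}(e)\text{ whenever }X_{\balt_i}(e)\neq0\}$, whose vertices number $2^{|\ze{X_{\balt_i}}|}$; and $\ze{X_{\balt_i}}=\bigcup_{T,T'\in\balt_i}S(T,T')$ is a finite union of finite sets by (S), hence finite. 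So $\convset_i$ is finite. To ensure $\convset_i\subsetneq\convset_{i+1}$ \emph{strictly}, I would pass to a subsequence: discard any index where no growth occurs, which is harmless since we only need a strictly increasing exhausting sequence and infinitely many strict inclusions must occur because $\tope$ is infinite (in the finite case strictness is impossible, but then the statement as literally phrased with $\subsetneq$ would need the trivial reading, or one simply assumes $\covset$ infinite — I'd add a sentence noting that if $\tope$ is finite one takes $\convset_i=\tope$ for all $i$, or the claim is vacuous as stated).

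The main obstacle is pinning down that the convex hull of a finite set of topes is finite and convex \emph{as a subset of $\covset$} and not merely as a subset of $\tope\subseteq Q_E$; this is exactly what Remark~\ref{rem:convex} provides, since the hull is cut out by the halfspaces $\{T\geq X_{\balt_i}\}$, and the corresponding down-set in $\covpos(\covset)$ is finite by combining (S) (the zero set of $X_{\balt_i}$ is finite) with axiom (I) or Remark~\ref{thepowerofZ} (intervals and up-sets are finite). A minor subtlety to handle carefully is that $\mathrm{conv}(\balt_i)$ as defined in Remark~\ref{rem:convex} is the convex hull \emph{in $G_\covset$}, so I should confirm it actually consists of topes of $\covset$ and not just hypercube vertices — but that is precisely the assertion of that remark, so I may invoke it directly. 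With these pieces the proof is short.
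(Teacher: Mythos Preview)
Your proposal is correct and follows essentially the same approach as the paper: take convex hulls in $G_\covset$ of an increasing family of finite sets of topes containing $B$, and use Remark~\ref{rem:convex} together with axiom (S) to confine each hull in a finite subcube of $Q_E$, hence finite. The only difference is cosmetic: the paper takes $\balt_i$ to be the ball of radius $i$ around $B$ in $G_\covset$ (finite by Remark~\ref{lifeisgood}, and automatically exhausting since distances are finite), whereas you take initial segments of an enumeration of $\tope$; the finiteness and exhaustion arguments are otherwise identical.
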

 \begin{proof}
 
 We consider the tope graph $G_{\covset}$ of $\covset$. Take the sequence of balls $\balt_i$ of radius $i$ around $C$, starting with $\balt_0=\{C\}$. If we define $\convset_i=\mathrm{conv}(\balt_i)$ as the smallest convex subgraph of $G_\covset$ that contains $\balt_i$, which is finite because 
  all degrees are finite. In particular, $\balt_i$ is a subgraph of a finite partial cube and by Remark \ref{rem:convex} $\convset_i$ is a subgraph of the same cube, hence finite.  
  Moreover, by definition the subgraph induced by $\convset_i$ is convex and since distances are finite the sequence eventually exhausts the entire graph $G_\covset$.
 \end{proof}

\begin{examplenew} \label{ex:matrioshka}
In Figure \ref{fig:deltas} an example of a sequence $\mathscr C_1,\mathscr C_2,\ldots$ associated to the base tope $C$ is shaded in decreasing hue of gray.
\end{examplenew}

\begin{figure}[h]
    \begin{tikzpicture}[x=3.5em,y=3.5em]
\foreach \x in {0,1,2,3,4,5}
    \draw (.52*\x-1.4,-.3) -- (.52*\x-1.4,3);
\foreach \x in {-1,0,1,2,3,4}    
    \node (\x) at (.52*\x-.8,-.7) {$h_{\x}$};
    \node (ph) at (2,-.7) {$\cdots$};
    \node (phl) at (-2,-.7) {$\cdots$};    
\foreach \x in {0,1,2,3,4,5}    
    \node (\x) at (.52*\x-1.3,0) {$\to$};    
\foreach \y in {0,1,2,3}
    {
    \draw (-2,.45*\y+.9) -- (2.5+\y,.45*\y+.9);
    }
\foreach \y in {1,2,3}
    {    
    \fill[gray,opacity=0.2] (-2,.45*\y+.9) -- (2.5+\y,.45*\y+.9) -- (2.5+\y,0.3) -- (-2,0.3) -- (-2,.45*\y+.9);    
    \node (C\y) at (2.2+\y,.45*\y+.5) {$\mathscr C_{\y}$};
    }
\foreach \y in {0,1,2,3}    
    \node (\y) at (-2.5,.45*\y+.9) {$l_{\y}$};
\foreach \y in {0,1,2,3}    
    \node (\y) at (-1.8,.45*\y+1) {$\uparrow$};    
\node[anchor=center] (pvl) at (0,2.9) {$\vdots$};
\node[anchor=center] (pv) at (-2.5,3) {$\vdots$};
\fill[pattern=north west lines, pattern color=black] (-0.1,1.1) -- (-0.1,1.1) -- (-0.1,1.1) -- (-0.1,1.1) -- (-0.1,1.1);
\node[anchor=center] (B) at (-0.1,1.1) {$C$};
\node[anchor=center] (X) at (-0.55,1.5) {$X$};
\draw[ultra thick] (1.04-1.4,.9+.9) -- (1.04-1.4,.45+.9);
\end{tikzpicture}
\caption{Picture for Example \ref{ex:matrioshka} and Example \ref{ex:delta}. }\label{fig:deltas}
\end{figure}

 Lemma \ref{lemseq} implies immediately that FAOMs can only have countably many topes. This, together with $(I)$, yields the following corollary.

\begin{corollary}\label{cor:countable}
 	The set of covectors of a FAOM has countable cardinality.
\end{corollary}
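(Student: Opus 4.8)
The plan is to combine Lemma~\ref{lemseq} with axiom (I). First I would fix a simple FAOM; by Remark~\ref{remsimplicity} the simplification is a minor with isomorphic covector poset, so it suffices to bound the cardinality in the simple case (and trivially adding back finitely-many-per-class equivalent and redundant elements does not change countability, since we may realize each such element by one of at most two sign choices relative to a fixed representative). In the simple case we invoke Lemma~\ref{lemseq}: picking any tope $B$ (which exists by Remark~\ref{topexist}), we obtain finite convex subsets $\convset_1\subsetneq\convset_2\subsetneq\cdots$ of topes with $\covset=\bigcup_i\convset_i$ at the level of topes, hence $\tope=\bigcup_i\convset_i$ is a countable union of finite sets and so $\tope$ is countable.

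Next I would pass from topes to all covectors. By Remark~\ref{topexist}, every $X\in\covset$ lies below some tope $T$ in $\covpos(\covset)$, so $\covset=\bigcup_{T\in\tope}\covpos(\covset)_{\leq T}$. Axiom (I) says each interval $\covpos(\covset)_{\leq T}$ is finite, and we have just shown $\tope$ is countable; therefore $\covset$ is a countable union of finite sets, hence countable. This is exactly the two-line argument indicated in the paragraph preceding the corollary, so the proof is essentially a bookkeeping assembly of Lemma~\ref{lemseq}, Remark~\ref{topexist}, and axiom (I).

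There is no serious obstacle here: the only point requiring a moment's care is the reduction to the simple case, i.e. checking that simplification does not affect whether the ground set (and hence, a priori, $\covset$) is countable. But this is immediate: in the simplification we delete redundant elements and all-but-one representative of each $\sim$-class, and $\covpos(\covset')\cong\covpos(\covset)$ by Remark~\ref{remsimplicity}, so the cardinality of $\covset$ is unchanged. (One does not even need to worry about the size of $E$ itself, since the statement concerns $\vert\covset\vert$.) Thus I would simply write: "By Remark~\ref{remsimplicity} we may assume $\covset$ is simple. By Remark~\ref{topexist} the set $\tope$ of topes is nonempty and $\covset=\bigcup_{T\in\tope}\covpos(\covset)_{\leq T}$. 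By Lemma~\ref{lemseq}, $\tope$ is a countable union of finite sets, hence countable; by axiom (I) each $\covpos(\covset)_{\leq T}$ is finite. Therefore $\covset$ is countable."

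\begin{proof}
By Remark~\ref{remsimplicity} we may assume that $\covset$ is simple, since the simplification is a minor with $\covpos(\covset')\cong\covpos(\covset)$, in particular $\vert\covset'\vert=\vert\covset\vert$. By Remark~\ref{topexist} the set of topes $\tope$ is nonempty and every $X\in\covset$ lies below some tope, so that
$$
\covset=\bigcup_{T\in\tope}\covpos(\covset)_{\leq T}.
$$
Fix $B\in\tope$. By Lemma~\ref{lemseq} there is an increasing sequence $\convset_1\subsetneq\convset_2\subsetneq\cdots$ of finite convex subsets of $\covset$ with $\covset=\bigcup_i\convset_i$; in particular $\tope=\bigcup_i(\convset_i\cap\tope)$ is a countable union of finite sets, hence $\tope$ is countable. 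Finally, axiom (I) guarantees that each interval $\covpos(\covset)_{\leq T}$ is finite. Thus $\covset$ is a countable union of finite sets, and therefore countable.
\end{proof}
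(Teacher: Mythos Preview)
Your proposal is correct and follows essentially the same route as the paper: the paper simply observes (in the sentence immediately preceding the corollary) that Lemma~\ref{lemseq} gives countably many topes, and that together with (I) this yields the result. You have merely made explicit the reduction to the simple case (needed since Lemma~\ref{lemseq} is stated for simple FAOMs) and the fact that every covector lies below some tope (Remark~\ref{topexist}); these are details the paper leaves implicit. One cosmetic point: since the $\convset_i$ are already subsets of $\tope$, writing $\convset_i\cap\tope$ is redundant.
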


 For $C\in\tope$ we define $\topes{\covset}{C}$ to be the \emph{tope poset based at $C$}, i.e, for topes $T,T'\in\tope$ we have $T\leq T'$ if and only if $S(C,T)\subseteq S(C,T')$.

 \begin{lemma}\label{lem:finconv}
	Let $\covset$ be a simple FAOM, $\convset\subseteq \tope$ convex and $C\in\convset$. Then $\convset$ is a lower ideal of the poset $\topes{\covset}{C}$. Moreover, if $\vert \convset \vert < \infty$, then 
	there is a finite subset $E_{\convset}\subseteq E$ such that the restriction map $\res_\convset: \covset\to\covset[E_\convset]$ restricts to order isomorphisms between
	\begin{itemize}
	\item[(1)] 
	$\covpos(\covset)_{\leq {\convset}}$ and $ \covpos(\covset[E_\convset])_{\leq \res_\convset(\convset)}$, as well as,
	\item[(2)] the induced subposets $\convset\subseteq \topes{\covset}{C} $ and $\res_\convset(\convset)\subseteq\topes{\covset[E_\convset]}{\res_{\convset}(C)}$.
	\end{itemize}
 \end{lemma}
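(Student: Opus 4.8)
The plan is to first establish the structural ``lower ideal'' claim, then to produce the finite set $E_\convset$ by a support/separator bookkeeping argument, and finally to verify that restriction to $E_\convset$ is injective and order-reflecting on the relevant down-sets. For the first assertion, suppose $T\in\convset$ and $T'\leq T$ in $\topes{\covset}{B}$, i.e., $S(B,T')\subseteq S(B,T)$. Since $\convset$ is convex in $G_\covset$ and contains both $B$ and $T$, it contains every vertex on a geodesic from $B$ to $T$; because $G_\covset$ is a partial cube, the topes on such geodesics are exactly those $T''$ with $S(B,T'')\subseteq S(B,T)$ whose separator from $B$ can be reached by flipping coordinates one at a time staying inside $\covset$. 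The point is that $T'$ itself lies on some geodesic from $B$ to $T$ (flip the coordinates in $S(B,T')$ first, then the rest), so $T'\in\convset$. Hence $\convset$ is a lower ideal of $\topes{\covset}{B}$.

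Now assume $\vert\convset\vert<\infty$. Using Remark \ref{rem:convex}, write $\convset=\{T\in\tope\mid T\geq X_\convset\}$ where $X_\convset$ is the finite-support sign vector recording the coordinates not separated within $\convset$. Since $\convset$ is finite, (S) gives that $\bigcup_{T,T'\in\convset}S(T,T')$ is finite; call this set $E_1$. For part (1) I want the restriction map to see all of $\covpos(\covset)_{\leq\convset}:=\bigcup_{T\in\convset}\covpos(\covset)_{\leq T}$. By (I), each $\covpos(\covset)_{\leq T}$ is finite, so $\covpos(\covset)_{\leq\convset}$ is a finite poset; let $E_2:=\bigcup\{\,\underline{Y}\mid Y\in\covpos(\covset)_{\leq\convset}\,\}$, which is finite. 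Set $E_\convset:=E_1\cup E_2$. The key observation is that for $Y\leq T$ with $T\in\convset$ we have $E\setminus E_2\subseteq\ze{Y}$ on the coordinates in $E\setminus E_1$ all the topes of $\convset$ agree, so $Y$ restricted to $E\setminus E_\convset$ is constant and determined; thus $\res_{E_\convset}$ is injective on $\covpos(\covset)_{\leq\convset}$. Order-preservation is immediate from the definition of $\leq$ on sign vectors, and order-reflection holds because two sign vectors below topes of $\convset$ that agree on $E_\convset$ agree everywhere. To land in $\covpos(\covset[E_\convset])_{\leq\res_\convset(\convset)}$ one checks that $\covset[E_\convset]=\covset\setminus(E\setminus E_\convset)$ is again a (finite) AOM by Theorem \ref{thm:minorclosed}, and that restriction commutes with the order; surjectivity onto the down-set follows since any covector of the restriction below some $\res_\convset(T)$ lifts via composition with $T$ (using (C)/(FS) and Remark \ref{thepowerofZ}) to a covector of $\covset$ below $T$, whose support lies in $E_\convset$ by construction of $E_2$, hence which restricts back correctly.

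For part (2), note that $\res_\convset$ maps topes of $\covset$ to sign vectors of full support on $E_\convset$, and two distinct topes of $\convset$ differ already within $E_1\subseteq E_\convset$, so $\res_\convset$ is injective on $\convset$; its image consists of topes of $\covset[E_\convset]$ (full support on $E_\convset$). Since $S(B,T)\cap E_\convset$ determines $S(B,T)$ for $T\in\convset$ (again because $E\setminus E_1$ is constant across $\convset$ and $B\in\convset$), the condition $S(B,T)\subseteq S(B,T')$ is equivalent to $S(\res_\convset(B),\res_\convset(T))\subseteq S(\res_\convset(B),\res_\convset(T'))$, giving the order isomorphism $\convset\cong\res_\convset(\convset)$ as induced subposets of the respective based tope posets.

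I expect the main obstacle to be the surjectivity/well-definedness half of part (1): one must argue that every covector of the finite restriction $\covset[E_\convset]$ lying below some $\res_\convset(T)$, $T\in\convset$, is actually the restriction of a covector of $\covset$ below $T$ with support inside $E_\convset$ — in other words, that truncating to $E_\convset$ has not created spurious covectors and that lifting via composition with $T$ does not push the support outside $E_\convset$. This is where the choice of $E_2$ as the union of supports of the (finitely many, by (I)) covectors in $\covpos(\covset)_{\leq\convset}$ is essential, and where one invokes that $\covpos(\covset)_{\geq Y}$ is the covector poset of an oriented matroid (Lemma \ref{locOM}) to control how covectors below a given tope restrict and lift. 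The ``lower ideal'' claim and part (2) are comparatively routine partial-cube bookkeeping.
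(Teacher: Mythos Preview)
Your overall strategy matches the paper's: prove the lower-ideal claim via the halfspace/geodesic description of convex sets, then choose a finite set $E_\convset$ large enough that restriction is injective on $\covpos(\covset)_{\leq\convset}$, and check that the image is exactly the down-set in the finite restriction. The lower-ideal argument and part (2) are fine.

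There is, however, a genuine error in your definition of $E_2$. You set $E_2:=\bigcup\{\underline{Y}\mid Y\in\covpos(\covset)_{\leq\convset}\}$, the union of \emph{supports}. But every tope $T\in\convset$ lies in $\covpos(\covset)_{\leq\convset}$ and, since $\covset$ is simple, has $\underline{T}=E$. Hence $E_2=E$ and $E_\convset=E$, which is not finite in general. The paper instead takes $E_\convset:=\bigcup_{Y\leq T\in\convset}\ze{Y}$, the union of \emph{zero sets}; this is finite because each $\ze{Y}$ is finite by (Z) and there are only finitely many such $Y$ by (I). With that choice one gets the correct statement: for $e\notin E_\convset$ every $Y\in\covpos(\covset)_{\leq\convset}$ has $Y(e)\neq 0$, hence $Y(e)=T(e)$ for any $T\in\convset$ above $Y$, and since $e\notin\ze{X_\convset}\subseteq E_\convset$ this value is the common value $X_\convset(e)$. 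Your sentence ``$E\setminus E_2\subseteq\ze{Y}$'' reveals the mix-up: with supports the inclusion is vacuous, whereas what you actually need (and what the paper uses) is $e\notin E_\convset\Rightarrow e\notin\ze{Y}$.

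Once $E_\convset$ is defined via zero sets, your surjectivity sketch needs adjustment too: ``lifting via composition with $T$'' produces $W\circ T$, which is a tope and does not restrict back to $Z$. The paper's route is different: it uses that $\res_\convset:\covset\to\covset[E_\convset]$ is surjective by definition of deletion, and then argues that any preimage $W$ of $Z\leq\res_\convset(T)$ must already satisfy $W(e)\in\{0,X_\convset(e)\}$ for all $e\notin\ze{X_\convset}$ (so $W\circ T\in\convset$ and $W\leq W\circ T$), because the elements of $E_\convset\setminus\ze{X_\convset}$ are already constrained by $Z\leq\res_\convset(T)$ and the elements outside $E_\convset$ are handled by showing that a violation there would force some $e\in\ze{X_\convset}\subseteq E_\convset$ to separate $W$ from every tope of $\convset$, contradicting $Z\leq\res_\convset(T)$. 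You correctly flag surjectivity as the delicate point; fixing $E_2$ to use zero sets and replacing the composition-with-$T$ idea by this halfspace analysis would complete the argument.
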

 
 \begin{proof}
 Let $\convset\subseteq \tope$ be convex and $C\in \convset$. Then by definition the set $\convset$ is convex in $G_{\tope}$.  By Remark~\ref{rem:convex}, there is a sign vector $X_\convset$ such that $\convset=\{T''\in\tope\mid T''\geq X_\convset\}$. Let $T\leq T'$ with respect to $\topes{\covset}{C}$ and $T'\in\convset$. Since $S(C,T)\subseteq S(C,T')$ and $C,T'\geq X_\convset$ also $T\geq X_\convset$. Thus, $\convset$ is an order ideal of $\topes{\covset}{C}$.

 Now let $\convset$ be finite, then since distances are finite, separators of elements of $\convset$ are finite and $\ze{X_{\convset}}$ is finite by Remark~\ref{rem:convex}. 
 We define $E_\convset$ as the union $\bigcup_{Y\leq T\in \convset}\ze{Y}$. Note that this is a finite set by (I) and (Z) contains $\ze{X_{\convset}}$.
 
The representation of $\convset$ as intersection of halfspaces $\{T''\in\tope\mid T''\geq X_{\convset}\}$ yields (2). In particular, since all members of $\convset$ are identical on the complement of $E_{\convset}$, $\res_\convset$  induces an injection from $\convset$ whose restriction to $\res_\convset(\convset)$ is then bijection. 
To see (1) note that for two covectors below $\convset$ with an element $e$ in their separator, there are also two topes $T,T'\in\convset$ with $e\in S(T,T')$ and $e\in \ze{X_{\convset}}$. Thus, all members of $\covpos(\covset)_{\leq {\convset}}$ are identical on the complement of $E_{\convset}$. This proves injectivity.
Moreover, all elements of $\covpos(\covset)\setminus\covpos(\covset)_{\leq {\convset}}$ have an element $e\in E_\convset$ that is in the separator with all elements of $\convset$. Thus, their restriction cannot be in $ \covpos(\covset[E_\convset])_{\leq \res_\convset(\convset)}$. Since $\res_\convset$ from $\covpos(\covset)$ to $ \covpos(\covset[E_\convset])$ is surjective by definition, this proves surjectivity.

 
 
 \end{proof}

 \begin{corollary}\label{Franked}
 	The poset $\wtb{\covpos(\covset)}$ (obtained from  $\covpos(\covset)$ by adding a global minimum and a global maximum, cf.\ Appendix \ref{a:posets}) is graded of finite length. 
 \end{corollary}
 \begin{proof}
 	We have to prove that any two maximal chains in $\wtb{\covpos(\covset)}$ have the same, finite length. Let $\omega,\omega'$ be two such chains  and write $X:=\max(\omega\setminus \{\top\})$, $X':=\max(\omega'\setminus \{\top\})$. Let $i$ be such that $X,X'\in \mathscr C_i$ as in Lemma~\ref{lemseq}. Then by Lemma~\ref{lem:finconv} both $\omega$ and $\omega'$ are maximal chains in the poset  $\wtb{\covpos(\covset[E_{\mathscr C_i}])}$ that is graded of finite length by~\cite[Theorem 4.5.3]{bjvestwhzi-93}.
 \end{proof}
 
 \begin{remark}
 From Corollary~\ref{Franked} follows immediately that $\covpos(\covset)$ is ranked of finite length.
 \end{remark}
 

\begin{corollary}\label{cor:rcw}
	The poset $\covpos(\covset)$ is the poset of cells of a  regular CW-complex that we call $\rcw(\covset)$. The dimension of $\rcw(\covset)$ is the length of $\covpos(\covset)$.
\end{corollary}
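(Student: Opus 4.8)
The plan is to appeal to the standard fact (see the Appendix, or \cite[\S4.7]{bjvestwhzi-93}) that a poset $P$ with a minimum $\widehat 0$ is the face poset of a regular CW-complex precisely when, after removing $\widehat 0$, every open interval $(\widehat 0, x)$ is homeomorphic to a sphere of the appropriate dimension --- equivalently, when $P$ is graded and every interval of length two has exactly two elements in its middle, and each order complex $\Delta((\widehat 0, x))$ is a sphere. Rather than verifying these conditions by hand on the possibly infinite poset $\covpos(\covset)$, I would bootstrap from the finite case via the exhaustion already constructed. First I would note that $\covpos(\covset)$ has a minimum if and only if $\bigcap_{X \in \covset} \ze{X}$ is realized by a covector; in general one works with $\covpos(\covset)$ as it is and uses that, by Remark~\ref{thepowerofZ} and Remark~\ref{topexist}, every element lies below a tope and above a minimal covector, and every interval $\covpos(\covset)_{\leq X}$ is finite by (I).

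The key step is localization to finite restrictions. Given $X \in \covset$, pick via Lemma~\ref{lemseq} an index $i$ with $X \in \mathscr C_i$, and then by Lemma~\ref{lem:finconv}(1) the restriction map $\res_{\mathscr C_i}$ induces an isomorphism between $\covpos(\covset)_{\leq \mathscr C_i}$ --- which contains $\covpos(\covset)_{\leq X}$ as the interval below $X$ --- and the corresponding down-set in $\covpos(\covset[E_{\mathscr C_i}])$, which is the covector poset of a \emph{finite} AOM by the Corollary following Theorem~\ref{thm:minorclosed}. For finite AOMs the statement is classical: the poset of covectors of a finite affine oriented matroid is the face poset of a regular CW-complex (a PL ball or the relevant piecewise-linear cell decomposition coming from its representation as an arrangement of pseudoplanes, cf.\ \cite[Chapter 4]{bjvestwhzi-93} together with \cite{Bau-16}). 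Hence every interval $(\bot', X) \subseteq \covpos(\covset[E_{\mathscr C_i}])$ below such an $X$ is a sphere of dimension $\rk(X) - 2$, and since intervals are preserved under the isomorphism of Lemma~\ref{lem:finconv}(1), the same holds in $\covpos(\covset)$. Gradedness and finiteness of length are exactly Corollary~\ref{Franked} (via the identification of $\covpos(\covset)$ with the proper part of $\wtb{\covpos(\covset)}$). Assembling these: $\covpos(\covset)$ is graded of finite length, and for each $X$ the lower interval $\covpos(\covset)_{\leq X}$ is the face poset of a regular CW-ball; this is precisely the recognition criterion for $\covpos(\covset)$ to be the face poset of a regular CW-complex, which we name $\rcw(\covset)$. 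The dimension equals the top rank, i.e.\ the length of $\covpos(\covset)$, by construction.

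The main obstacle is a bookkeeping one rather than a conceptual one: one must be careful that the regular CW-complex is built by gluing the finite regular complexes $\rcw(\covset[E_{\mathscr C_i}])$ along the isomorphisms of Lemma~\ref{lem:finconv}, so the candidate space is the colimit of the system $\rcw(\covset[E_{\mathscr C_1}]) \hookrightarrow \rcw(\covset[E_{\mathscr C_2}]) \hookrightarrow \cdots$ of inclusions of subcomplexes, and one should check that these maps really are closed embeddings of CW-subcomplexes (not merely poset inclusions) --- this follows because Lemma~\ref{lem:finconv}(1) identifies down-sets, and a down-set in a face poset corresponds to a subcomplex. The colimit is then a regular CW-complex whose face poset is $\varinjlim \covpos(\covset[E_{\mathscr C_i}]) = \covpos(\covset)$, using that every cell and every incidence already appears at some finite stage. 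I would also remark that the cell corresponding to $X$ has closed cell equal to $\rcw(\covset[\ze{X}]^{\ast})$, i.e.\ governed by the oriented matroid of Lemma~\ref{locOM}, which gives the promised local PL-ball structure and makes the dimension count transparent.
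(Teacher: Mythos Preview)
Your argument is essentially the paper's own: invoke the recognition criterion \cite[Proposition~4.7.23]{bjvestwhzi-93}, use Lemmas~\ref{lemseq} and~\ref{lem:finconv} to transport each lower interval into a finite AOM, and then quote the finite theory to get the required sphere. Two small cleanups: $\mathscr C_i$ consists of topes, so you should choose a tope $T\geq X$ and an $i$ with $T\in\mathscr C_i$ rather than ``$X\in\mathscr C_i$''; and the colimit discussion, while not wrong, is redundant once you have invoked \cite[Proposition~4.7.23]{bjvestwhzi-93}, which already produces the regular CW-complex from the local sphere condition.
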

\begin{proof}
	By~\cite[Proposition 4.7.23]{bjvestwhzi-93}, it is enough to check that, for every $X\in \covpos(\covset)$, the interval $I_X:=\covpos(\covset)_{<X}$ is homeomorphic to a $\rk(X)$-sphere. Now by Lemma~\ref{lem:finconv}   $I_X$ is an order ideal in $\covpos(\covset[E_{\mathscr C}])$, for some finite, convex set of topes $\mathscr C$ (whose existence is ensured by Lemma~\ref{lemseq}). In particular, $I_X$ is an order ideal in the ``bounded complex'' of the finite affine oriented matroid $\covpos(\covset[E_{\mathscr C}])$ (see~\cite[Definition 4.5.1]{bjvestwhzi-93}). With~\cite[Discussion before 4.5.7, \S 4.3]{bjvestwhzi-93} the claim follows.
	\end{proof}

\subsection{Shellings} \label{ssec:shellings}
As a stepping stone towards determining the topology of covector posets, we prove their shellability. A brief introduction to shellability as well as to the parts of the theory we need here are sketched in the Appendix (\S\ref{app:sc}, \S\ref{app:oc}), where we also point to some literature for further background.

 \begin{proposition}\label{prop:ShCo}
	Let $\covset$ be the set of covectors of an FAOM.
 Then, the poset $\wtb{\covpos(\covset)}$ admits a recursive coatom ordering without critical chains. 
	In particular, both the regular CW-complex $\rcw(\covset)$ and its barycentric subdivision, the simplicial complex $\Delta(\covpos(\covset))$, are shellable and contractible.
 \end{proposition}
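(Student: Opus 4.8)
The plan is to reduce the infinite statement to the known finite case via the exhaustion from Lemma~\ref{lemseq} and the restriction isomorphisms from Lemma~\ref{lem:finconv}. First I would recall that for finite affine oriented matroids the relevant shellability statement is classical: by~\cite[Theorem 4.5.7 and its proof, \S4.3]{bjvestwhzi-93} (building on the lexicographic/recursive coatom orderings of oriented matroids), the poset $\wtb{\covpos(\covset')}$ of a finite AOM $\covset'$ admits a recursive coatom ordering without critical chains, whence $\rcw(\covset')$ and $\Delta(\covpos(\covset'))$ are shellable. So the whole content here is to lift such an ordering through an increasing union.

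The key steps, in order: (i) fix a base tope $B\in\tope$ and the sequence $\convset_1\subsetneq\convset_2\subsetneq\cdots$ of finite convex subsets with $\cup_i\convset_i=\covset$ from Lemma~\ref{lemseq}, together with the finite restriction sets $E_{\convset_i}\subseteq E$ and the order isomorphisms $\res_{\convset_i}$ of Lemma~\ref{lem:finconv}, so that the principal ideal below any fixed $X$ in $\wtb{\covpos(\covset)}$ agrees, for $i$ large enough, with the corresponding ideal in the finite poset $\wtb{\covpos(\covset[E_{\convset_i}])}$; (ii) observe that the coatoms of $\wtb{\covpos(\covset)}$ are precisely the topes $\tope$ (each $T\in\tope$ is covered only by $\top$, since $\ze{T}=\emptyset$), and that these are countable by Corollary~\ref{cor:countable}; (iii) enumerate $\tope=\{T_1=B,T_2,T_3,\dots\}$ so that each initial segment $\{T_1,\dots,T_{n_i}\}$ is exactly $\convset_i$ — possible because the $\convset_i$ are finite and nested and exhaust $\tope$; (iv) build the recursive coatom ordering of $\wtb{\covpos(\covset)}$ by transfinite bootstrapping on this enumeration: for the coatoms lying in $\convset_i$ use the recursive coatom ordering of the finite poset $\wtb{\covpos(\covset[E_{\convset_i}])}$ (which exists by the finite case), checked to be compatible with the ordering already produced for $\convset_{i-1}$ because $\res_{\convset_{i-1}}$ and $\res_{\convset_i}$ agree on the smaller ideal. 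Finally I would invoke the definition of recursive coatom ordering to conclude: the conditions to verify — that the already-ordered coatoms $T_1,\dots,T_k$ form the coatoms of a shellable sub-ideal and that the "intersection" condition on $T_{k+1}$ with the previous ones holds — are statements about the finite ideals $\covpos(\covset)_{\le X}$ and about pairwise intersections $\covpos(\covset)_{\le T_j}\cap\covpos(\covset)_{\le T_{k+1}}$, all of which live inside some finite $\covpos(\covset[E_{\convset_i}])$ by Lemma~\ref{lem:finconv} and hence are inherited from the finite recursive coatom ordering. The "no critical chains" clause transfers the same way, since criticality of a maximal chain is a local condition testable in a finite truncation. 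Once $\wtb{\covpos(\covset)}$ has a recursive coatom ordering without critical chains, shellability of $\rcw(\covset)$ and of its barycentric subdivision $\Delta(\covpos(\covset))$ is immediate from the standard theory (Appendix~\S\ref{app:sc}, \S\ref{app:oc}), and contractibility follows because $\wtb{\covpos(\covset)}$ has a top and a bottom, so the shelling collapses (no critical cells means the shellable complex is collapsible onto a point, equivalently the order complex of the open interval is contractible).

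The main obstacle I expect is purely a bookkeeping/coherence issue: making precise that a recursive coatom ordering of the finite poset $\wtb{\covpos(\covset[E_{\convset_{i}})]}$ can be chosen to \emph{extend} the one already built at stage $i-1$, rather than merely existing. This requires knowing that recursive coatom orderings can be constructed greedily — i.e. that once a compatible partial ordering of a sub-ideal is fixed, it extends to a full recursive coatom ordering of the larger finite poset. This is true for the lexicographic shellings of oriented matroid covector posets used in~\cite[\S4.3, \S4.5]{bjvestwhzi-93} because those orderings are induced from a linear ordering of the ground set and a generic linear functional, and one has enough freedom in choosing the ordering of $E_{\convset_i}\setminus E_{\convset_{i-1}}$ and the functional to respect the earlier choices; I would state this compatibility as the one technical lemma to check carefully, reducing everything else to a direct appeal to the finite theory.
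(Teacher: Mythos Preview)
Your strategy is essentially the paper's: fix a base tope $B$, use the exhaustion $\convset_1\subsetneq\convset_2\subsetneq\cdots$ of Lemma~\ref{lemseq} and the isomorphisms of Lemma~\ref{lem:finconv} to reduce everything to the finite case, and argue that criticality of a chain is detected in a finite truncation.

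The one substantive difference is precisely the point you flag as the ``main obstacle''. You propose to build the recursive coatom ordering stage by stage and then prove a compatibility lemma showing that the ordering chosen at stage $i-1$ extends to one at stage $i$, appealing to the freedom in lexicographic shellings of oriented matroid covector posets. The paper avoids this bookkeeping entirely by fixing the global ordering first: it chooses a single linear extension $\prec$ of the tope poset $\topes{\covset}{B}$ compatible with the filtration (so that $\convset_i$-topes precede $\convset_j$-topes for $i<j$), and then invokes \cite[Proposition~4.5.6]{bjvestwhzi-93}, which says that \emph{any} linear extension of the tope poset of a finite affine oriented matroid is a recursive coatom ordering of its bounded covector poset. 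Hence the restriction of $\prec$ to each $\convset_i$ (transferred via $\res_{\convset_i}$ and extended so that $\res_{\convset_i}(\convset_i)$ comes first) is automatically a valid recursive coatom ordering of $\wtb{\covpos}(\covset[E_{\convset_i}])$, and the distinguished sets $Q_T$ are compatible across stages by construction. So your worry dissolves once you use the right citation: no separate compatibility lemma is needed. Your route via lexicographic freedom would work too, but it is strictly more labor for the same conclusion.
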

 \begin{proof}
 	Let $C$ be any tope of $\covset$ and choose a sequence $\convset_1\subsetneq \convset_2\subsetneq \ldots$ according to Lemma~\ref{lemseq}. Since for all $i$ the set $\convset_i$ is a lower ideal of $\topes{\covset}{C}$, it is possible to  choose a linear extension $\prec$ of $\topes{\covset}{C}$  such that $T_i\prec T_{j}$ if $T_i\in \convset_i$, $T_j\in \convset_j$ and $ i<j$. Then $\prec$ is a total order, and it is a well-ordering because for any $\mathscr X \subseteq \topes{\covset}{C}$ the set $\{i\mid \convset_i\cap \mathscr X\neq \emptyset\}\subseteq \mathbb N$ has a minimum, say $i_0$, and the set $\convset_{i_0}\cap \mathscr X$ is finite, hence has a $\prec$-minimum $T$, that is also the minimum of $\mathscr X$. 
	
	We show that $\prec$ defines a recursive coatom ordering on the poset $\wtb{\covpos(\covset)}$. The set of coatoms of $\wtb{\covpos}(\covset)$ is exactly the set of topes. So let $T$ be a tope, and let $i$ be such that $T\in \convset_i $. For $T',T'' \in \res_{\convset_i}(\convset_i)$ let $T'\prec_i T''$ if and only if  $\res_{\convset_i}^{-1}(T')\prec\res_{\convset_i}^{-1}(T'')$.
	Since $\res_{\convset_i}(\convset_i)$ is a lower ideal in $\topes{\covset[E_{\convset_i}]}{\res_{\convset_i}(C)}$, the order $\prec_i$ can be extended to a linear extension of $\topes{\covset[E_{\convset_i}]}{C}$ where the elements of $\res_{\convset_i}(\convset_i) $ come first. By~\cite[Proposition 4.5.6]{bjvestwhzi-93}, this defines a recursive coatom ordering $\prec_i$ of  $\wtb{\covpos}(\covset[E_{\convset_i}])$. For every tope $T'$ of $\covset[E_{\convset_i}]$ we let $Q_{T'}^i$ be the associated distinguished set of coatoms of $\covpos(\covset[E_{\convset_i}])_{\leq T'}$, uniquely determined by $\prec_i$. 
	Now, for any $T\in\topes{\covset}{C}$ we can set $Q_T:= \res_{\convset_i}^{-1}Q^{i}_{\res_{\convset_i}(T)}$ where $i$ is such that $T\in \convset_i$ (\footnote{indeed for every $j<i$ and every $T\in \convset_j$, $\res_{\convset_j}^{-1}Q^{j}_{\res_{\convset_j}(T)}= \res_{\convset_i}^{-1}Q^{i}_{\res_{\convset_i}(T)}$, since $\prec_j$ equals the restriction of $\prec_i$ to $\convset_j$.}).

	Thus, by Remark~\ref{shellingrem}, $\Vert {\covpos}(\covset) \Vert$ is shellable. Now if some chain $\omega\subseteq {\covpos}(\covset)$ is critical, then it is critical also in the shelling of ${\covpos}(\covset[E_{\convset_i}])$, where $i$ is such that $\max\omega\in \convset_i$. But we know~\cite[Theorem 4.5.7]{bjvestwhzi-93} that ${\covpos}(\covset[E_{\convset_i}])$ is contractible, hence no shelling of the latter poset has critical chains. Therefore the obtained shelling of $\Vert {\covpos}(\covset) \Vert$ has no critical cells either, and this complex is contractible as well. The claim about $\rcw(\covset)$ follows with~\cite[Lemma 4.7.18]{bjvestwhzi-93}.
\end{proof}

\subsection{Topology of covector posets}
\label{ssec:topcov}
We study the topology of the order complexes of posets of covectors of FAOMs. For basic terminology and notations about combinatorial topology we refer again to the Appendix~\ref{sec:simplicial}.

\begin{lemma}\label{lem:diamond}
	Let $\covset$ be the set of covectors of an FAOM. Let $\omega$ be a maximal chain in ${\covpos(\covset)}$, let $X\in \omega$ and write $\omega':=\omega\setminus\{X\}$. 
	Let $\mathscr Y$ be the set of all $Y\in \covset$ such that $\omega'\cup\{Y\}$ is a chain in $\covpos(\covset)$. Then 
	\begin{itemize}
	\item[(1)] $\vert \mathscr Y \vert \leq 2$, and
	\item[(2)] the boundary of $\Vert\covpos(\covset) \Vert$ is generated by all chains of the form $\omega'$ with $\vert \mathscr Y \vert =1$.
	\end{itemize}
\end{lemma}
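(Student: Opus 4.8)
The key structural tool is Lemma~\ref{lem:finconv} together with Corollary~\ref{cor:rcw}: the interval structure of $\covpos(\covset)$ near any maximal chain $\omega$ is locally that of a finite affine oriented matroid. Concretely, fix the maximal chain $\omega$ and the element $X\in\omega$; write $B$ for any tope above $\max(\omega)$ and pick, via Lemma~\ref{lemseq}, a finite convex set of topes $\convset_i$ containing $B$ and all topes relevant to the chains in question (so that $\max(\omega\setminus\{\top\})\in\convset_i$). By Lemma~\ref{lem:finconv}(1), the restriction map $\res_{\convset_i}$ identifies $\covpos(\covset)_{\leq\convset_i}$ order-isomorphically with an order ideal of $\covpos(\covset[E_{\convset_i}])$, which is the bounded complex of a \emph{finite} affine oriented matroid on the ground set $E_{\convset_i}$. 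Hence both statements reduce to the corresponding facts about bounded complexes of finite AOMs, which are available from~\cite[\S4.5]{bjvestwhzi-93}.

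For part~(1): since $\covpos(\covset)$ is the face poset of a regular CW-complex $\rcw(\covset)$ (Corollary~\ref{cor:rcw}) of pure dimension $n$ (Corollary~\ref{Franked}), a maximal chain $\omega$ corresponds to a complete flag of faces $\emptyset\subsetneq F_0\subsetneq F_1\subsetneq\cdots\subsetneq F_n$, and removing $X$ corresponds to removing one face $F_k$ from the flag. By the regularity and the fact that $I_{F_{k+1}}=\covpos(\covset)_{<F_{k+1}}$ is a sphere (the computation in Corollary~\ref{cor:rcw}), the faces $Y$ of dimension $k$ with $F_{k-1}\subsetneq Y\subsetneq F_{k+1}$ are exactly the $k$-faces lying in the interval $(F_{k-1},F_{k+1})$, which is a rank-$1$ interval; after passing to the finite localization $\covpos(\covset[E_{\convset_i}])$ this interval has at most two elements — exactly two if it sits in the interior and one if $F_{k+1}$ is a maximal element of the bounded complex — by the ``diamond property'' of face posets of regular CW-complexes together with~\cite[Proposition 4.7.23]{bjvestwhzi-93}. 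The edge cases $k=0$ (where $X$ is an atom and $\mathscr Y$ counts vertices of an edge of $\rcw(\covset)$, so $|\mathscr Y|\le 2$, with $1$ only if the edge is a ``half-open'' edge of the unbounded complex) and $k=n$ (where $X=\max\omega$ is a maximal cell and $\mathscr Y=\{X\}$ unless $X$ is a facet of $\rcw(\covset)$ shared by two cells of $\covset[E_{\convset_i}]$, impossible at top dimension, giving $|\mathscr Y|=1$) must be checked separately but follow the same pattern.

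For part~(2): the boundary $\partial\Vert\covpos(\covset)\Vert$ is, by definition of the boundary of a pseudomanifold-like complex, generated by those codimension-$1$ chains $\omega'$ that are contained in a unique maximal chain. A codimension-$1$ chain has the form $\omega\setminus\{X\}$ for a unique $X$, and the number of maximal chains containing it is precisely $|\mathscr Y|$. So the boundary is generated by the $\omega'$ with $|\mathscr Y|=1$; the geometric meaning, via the localization to $\covpos(\covset[E_{\convset_i}])$, is that these are exactly the codimension-$1$ faces lying on the topological boundary of the bounded complex of the finite AOM, for which the statement is classical~\cite[\S4.5]{bjvestwhzi-93}. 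The main subtlety — and the step I expect to require the most care — is making sure that the localization $\convset_i$ can be chosen \emph{uniformly} large enough to simultaneously detect all the second elements $Y$ of $\mathscr Y$: a priori some $Y\in\mathscr Y$ might differ from $X$ only outside $E_{\convset_i}$. This is resolved by enlarging $\convset_i$ using Lemma~\ref{lemseq} to include a tope above each candidate $Y$ (there are at most finitely many by part~(1), or one argues directly that $Y\circ T$ for $T\in\tope$ witnesses membership and pushes $Y$ into some later $\convset_j$), after which Lemma~\ref{lem:finconv}(1) applies verbatim and transfers both the count and the boundary description from the finite case.
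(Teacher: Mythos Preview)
Your approach to part~(1) is essentially the paper's: localize to a finite AOM via Lemma~\ref{lem:finconv} and invoke the thinness/diamond property there (the paper cites \cite[Theorem~4.1.14]{bjvestwhzi-93}). However, your edge-case analysis is both unnecessary and partly mistaken. In the case $k=n$, where $X=\max\omega$ is a tope, $\mathscr Y$ consists of the topes above the rank-$(n-1)$ element $W:=\max\omega'$; there can be either one or two such topes, not always one. Two top-dimensional cells certainly can share a codimension-$1$ face, so your ``impossible at top dimension'' claim is false. (Similarly, in the $k=0$ case Corollary~\ref{cor:rcw} already forces $\vert\mathscr Y\vert=2$, so there are no ``half-open edges''.) The paper avoids all of this by using the finite result uniformly across ranks, with no case distinction.

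For part~(2) there is a genuine gap. You write that the boundary ``is, by definition of the boundary of a pseudomanifold-like complex,'' the set of ridges contained in a unique facet, but this presupposes exactly what needs to be established: that $\Vert\covpos(\covset)\Vert$ is a PL manifold (with boundary), so that ``boundary'' has its intended meaning. The paper proves this as the content of~(2): for every chain $\gamma$ it shows that $\operatorname{Lk}(\gamma)$ is finite (by (Z) and (I)), pure, shellable (since shellability passes to links, using Proposition~\ref{prop:ShCo}), and has every ridge in at most two facets (by~(1)); then \cite[Proposition~4.7.22]{bjvestwhzi-93} forces each link to be a PL sphere or ball. Only after this can one read off the boundary as the locus where the link is a ball, which translates back to $\vert\mathscr Y\vert=1$. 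Your localization-to-the-bounded-complex argument does not supply this link analysis, and without it you have neither a well-defined boundary nor the PL-manifold conclusion that Theorem~\ref{thm:PLball} subsequently relies on.
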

\begin{proof}
	For (1) first notice that by (Z) we have $\vert \mathscr Y \vert <\infty$. By Lemma~\ref{lemseq} we can find a finite convex set $\mathscr C$ such that $\omega\cup\mathscr Y \subseteq \covset[E_{\mathscr C}]$.
	Now the claim follows with Lemma~\ref{lem:finconv} from the corresponding property in  the finite affine oriented matroid $\covset[E_{\mathscr C}]$ (see~\cite[Theorem 4.1.14]{bjvestwhzi-93}, and recall that the  poset of covectors of a finite AOM is a filter inside that of a finite oriented matroid).
	
	Now we prove (2).  Consider any chain $\gamma$ in $\covpos$. Recall that shellability of $\covpos(\covset)$ implies shellability of links, that $\covpos(\covset)$ being ranked implies that the link of $\gamma$ is pure of dimension $k:=d-\dim\Vert\gamma\Vert$,  and that (1) implies that every ridge of $\operatorname{Lk}(\gamma)$ is contained is at most two facets of the link (see \S\ref{app:sc}). Moreover, by (Z) and (I) the complex $\operatorname{Lk}(\gamma)$ is finite. Now~\cite[Proposition 4.7.22]{bjvestwhzi-93} applies, implying that $\operatorname{Lk}(\gamma)$ is either a sphere or a closed ball, the second case entering exactly if there is one ridge that is contained in only one facet. Equivalently (e.g., by~\cite[p.\ 7]{Rourke-Sanderson}) $\Vert\gamma\Vert$ is in the boundary of $\Vert\covpos(\covset) \Vert$ if and only if  $\gamma\subseteq \omega \setminus\{X\}$ for a maximal chain $\omega\subseteq \covpos(\covset)$ and $X\in \omega$ such that $\vert \mathscr Y \vert =1$.
\end{proof}

\begin{theorem}\label{thm:PLball}
	Let $\covset$ be the set of covectors of an FAOM. Then $\Vert {\covpos}(\covset)\Vert$ is a shellable, contractible PL $d$-manifold whose boundary is described in Lemma~\ref{lem:diamond}.(2). Moreover,
	\begin{itemize}
	\item[(1)] If $\covset$ is finite, then $\Vert {\covpos}(\covset)\Vert$ is a PL-ball.
	\item[(2)] If $\Vert {\covpos}(\covset)\Vert$ has no boundary, then it is PL-homeomorphic to $\mathbb R^{\rk\covset}$.
	\end{itemize}
\end{theorem}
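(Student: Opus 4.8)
The plan is to reduce everything to the corresponding statements about finite affine oriented matroids, using the exhaustion $\convset_1\subsetneq\convset_2\subsetneq\cdots$ from Lemma~\ref{lemseq} together with the localization Lemma~\ref{lem:finconv}. First I would record what is already in hand: by Proposition~\ref{prop:ShCo} the complex $\Vert\covpos(\covset)\Vert$ is shellable and contractible; by Corollary~\ref{cor:rcw} it is the underlying space of a regular CW-complex of dimension $d:=\rk\covset$; and by Lemma~\ref{lem:diamond}(1) every codimension-one face of a maximal simplex lies in at most two maximal simplices, with the boundary described in Lemma~\ref{lem:diamond}(2). The first real task is the PL $d$-manifold claim. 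For this I would check the local condition: for each vertex $X\in\covpos(\covset)$ (and more generally each simplex), the link $\operatorname{Lk}(X)$ is, by Lemma~\ref{lem:finconv}, an order ideal in the order complex of a \emph{finite} affine oriented matroid $\covpos(\covset[E_{\convset_i}])$ for suitable $i$; since posets of covectors of finite AOMs are filters in those of finite OMs, such links are PL-spheres or PL-balls by the classical theory (\cite[\S4.7]{bjvestwhzi-93}), and this is precisely the statement that $\Vert\covpos(\covset)\Vert$ is a PL $d$-manifold with boundary. Together with contractibility (Proposition~\ref{prop:ShCo}) this already gives everything except the two itemized refinements.

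Item (1) is immediate: if $\covset$ is finite then $\covpos(\covset)$ is the poset of covectors of a finite AOM, hence a filter in the face poset of a finite oriented matroid, and its order complex is the bounded complex, which is a shellable PL-ball by \cite[Theorem~4.5.7, \S4.7]{bjvestwhzi-93}. Item (2) is the substantive one. Assume $\Vert\covpos(\covset)\Vert$ has empty boundary; by the previous paragraph it is then a contractible PL $d$-manifold \emph{without} boundary. I would argue it is also an \emph{open} manifold — i.e.\ non-compact — unless $d=0$: indeed by Corollary~\ref{cor:countable} the complex is countable and, when infinite, each simplex is properly contained in a larger one (Remark~\ref{topexist} and the exhaustion), so the space is not compact; the compact case $d=0$ is trivial. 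Now invoke the characterization of $\mathbb R^n$ among manifolds: a contractible open PL $n$-manifold is PL-homeomorphic to $\mathbb R^n$ provided it is \emph{simply connected at infinity} (Stallings' theorem, for $n\geq 5$; Edwards/Freedman and Perelman-type input handle $n=3,4$, and $n\leq 2$ is classical). So the crux is to verify simple connectedness at infinity.

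The hard part will be establishing simple connectedness at infinity, and I would handle it via the exhaustion by the finite convex pieces. Set $P_i:=\covpos(\covset[E_{\convset_i}])$; by Lemma~\ref{lem:finconv} the inclusions $\Vert\covpos(\covset)_{\leq\convset_i}\Vert\hookrightarrow\Vert\covpos(\covset)\Vert$ realize $\Vert\covpos(\covset)\Vert$ as an increasing union of finite PL-balls $\Vert P_i\Vert$ (by item (1) applied to each $\covset[E_{\convset_i}]$), with $\bigcup_i\Vert P_i\Vert=\Vert\covpos(\covset)\Vert$. A loop near infinity eventually misses a given $\Vert P_i\Vert$; I would show it can be capped off within $\Vert P_j\Vert\setminus\Vert P_i\Vert$ for $j\gg i$ by using that $\Vert P_j\Vert$ is a ball containing the ball $\Vert P_i\Vert$ in its interior (the containment in the interior follows because the boundary faces of $P_i$ produced by Lemma~\ref{lem:diamond}(2) are, by the no-boundary hypothesis, interior faces of $\covpos(\covset)$, hence eventually interior to $P_j$), so the region between two nested balls is an annulus $S^{d-1}\times[0,1]$, which for $d\geq 4$ is simply connected. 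This gives simple connectedness at infinity; combined with contractibility and openness, Stallings' characterization yields $\Vert\covpos(\covset)\Vert\cong_{\mathrm{PL}}\mathbb R^{d}$, completing the proof. The delicate points to get right are the low-dimensional cases $d\leq 3$ (where one must cite the appropriate three-manifold results, the case $d\leq 2$ being elementary) and the verification that the nested balls $\Vert P_i\Vert$ can be chosen with $\Vert P_i\Vert$ in the interior of $\Vert P_{i+1}\Vert$, which is exactly where the hypothesis of empty boundary of $\Vert\covpos(\covset)\Vert$ enters.
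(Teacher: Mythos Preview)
Your treatment of shellability, contractibility, the PL-manifold property, the boundary description, and item~(1) matches the paper's argument essentially verbatim.

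For item~(2), however, the paper takes a much shorter route than you do. It simply observes that axioms~(Z) and~(I) make $\Vert\covpos(\covset)\Vert$ \emph{finitary} in the sense of Bj\"orner's paper on infinite shellability, and then cites \cite[Theorem~1.5.(ii)]{BjoInf}, which states directly that a finitary shellable PL $d$-manifold without boundary is PL-homeomorphic to $\mathbb{R}^d$. No simple-connectedness-at-infinity argument, no Stallings, no low-dimensional casework. Bj\"orner's proof exploits the shelling order itself (each facet is glued along a ball in its boundary), so the exhaustion by balls is already built into the shelling machinery rather than being reconstructed by hand.

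Your proposed approach, while headed toward a correct conclusion, has real gaps as written. First, a minor slip: the nested pieces inside $\Vert\covpos(\covset)\Vert$ are the order complexes of $\covpos(\covset)_{\leq\convset_i}$, not of the full restriction posets $\covpos(\covset[E_{\convset_i}])$; Lemma~\ref{lem:finconv} only identifies the former with an order ideal of the latter. These pieces are still PL balls, but by the convex-set result \cite[Proposition~4.3.6]{bjvestwhzi-93}, not by invoking item~(1). More seriously, the key step ``the region between two nested PL $d$-balls is $S^{d-1}\times[0,1]$'' is precisely the PL annulus theorem, which you use without comment; it is not a tautology, and in any case is far heavier than what the situation demands. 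Finally, routing the low-dimensional cases through Perelman and Freedman is both imprecise (Stallings' theorem is a statement about simple connectedness at infinity, and the $3$-dimensional analogue predates geometrization) and wildly disproportionate to the problem. The moral: the shellability you already established is doing more work than you are giving it credit for, and the right move is to cash it in via Bj\"orner's theorem rather than to rebuild the recognition of $\mathbb{R}^d$ from scratch.
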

\begin{proof}
	The complex $\Vert {\covpos}(\covset)\Vert$ is shellable and contractible by Proposition~\ref{prop:ShCo}. 
	Moreover, it is a PL-manifold  (Lemma~\ref{lem:diamond}.(1)) with the stated boundary (Lemma~\ref{lem:diamond}.(2)). For the itemized claims: (1) is~\cite[Theorem 4.5.7.(i)]{bjvestwhzi-93}, and (2) follows from ~\cite[Theorem 1.5.(ii)]{BjoInf}, since axioms (Z), (I) imply that $\Vert {\covpos}(\covset)\Vert$  is finitary. 
\end{proof}

\begin{examplenew} When the FAOM arises from an arrangement in Euclidean space, the cell complex $\Vert \covpos (\covset)\Vert$ is isomorphic to the barycentric subdivision of the dual complex of the induced stratification of the ambient space. t
With this we can see that if $\covset$ is the FAOM of the arrangement in  Figure \ref{fig:arrangement} then $\Vert \covpos (\covset)\Vert$ is homeomorphic to a $2$-disk. On the other hand, if $\covset$ is the FAOM of the arrangement in  Figure \ref{examplesab}.(b)  then $\Vert \covpos (\covset)\Vert$ is a tiling of the plane by squares, i.e., it is homeomorphic to $\mathbb R^2$. Figure \ref{examplesab}.(a) shows an example outside the scope of Theorem \ref{thm:PLball}: there, $\Vert \covpos (\covset)\Vert$ is the poset of cells of an infinite row of juxtaposed squares. See Figure \ref{fig:cells}.
\end{examplenew}

\begin{figure}[h]
\centering
\begin{tikzpicture}[x=10em,y=12em]
\node (T) at (-1,-.3) {
    \includegraphics[scale=.8]{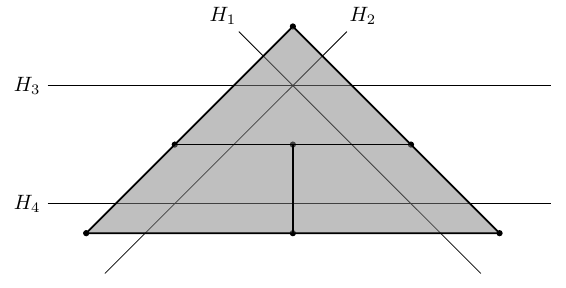}
    };
\node (S) at (-1,-1) {
    \includegraphics[scale=1]{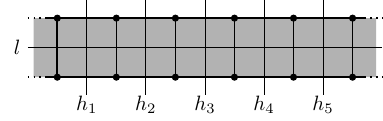}
    };
\node (B) at (1,-.6) {
    \includegraphics[scale=1]{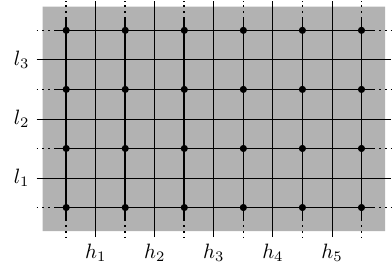}
    };
\end{tikzpicture}
\caption{Three examples of polyhedral complexes realizing $\Vert \covpos (\covset)\Vert$. From top left clockwise, they correspond to the FAOMs in Figure \ref{fig:arrangement},  Figure \ref{examplesab}.(c) and  Figure \ref{examplesab}.(a). The resulting cell complexes (whose barycentric subdivision is $\Vert \covpos(\covset)\Vert$) are shaded gray: they are homeomorphic to a closed disk, $\mathbb R^2$ and $\mathbb R^1\times [0,1]$, respectively.}\label{fig:cells}
\end{figure}


\subsection{The underlying semimatroid}
\label{ssec:us}

We show that zero sets of covectors of an FAOM define a semimatroid on the same ground set. We point to the Appendix~\ref{sec:semimatroids} for some basic notions and notations of semimatroid theory that we will be using in this section. 

\begin{definition}\label{def:KL}

Given a system of sign vectors $(E,\covset)$ define
$$
\flatset(\covset) := \{\ze{X} \mid X\in \covset\}\quad\quad
\central(\covset) := \bigcup_{A\in \flatset(\covset)} 2^A.
$$
 Elements of $\flatset(\covset)$ are called {\em flats}, elements of $\central(\covset)$ ``central sets'' of $\covset$.
For $A,B\in \flatset(\covset)$, let $A\leq B :\Leftrightarrow A\subseteq B$ and define
$$
\flatpos(\covset) := (\flatset(\covset), \leq).
$$
\end{definition}
\begin{remark} \label{rem:zeromap}
Notice that $X\leq Y$ implies $\ze{X} \supseteq \ze{Y}$, where $\leq$ is the partial order of $\covpos$. Therefore, taking zero sets induces an order reversing poset map $\ze{\cdot}: \covpos(\covset)\to \flatpos(\covset)$.
\end{remark}

\begin{examplenew} Figure \ref{fig:twoposets} depicts the poset $\covpos(\covset)$ and $\flatset(\covset)$ where $\covset$ is the set of covectors of the line arrangement in Figure \ref{fig:arrangement}.


\begin{figure}[h]
\begin{tikzpicture}[x=2em,y=3em]
\foreach \x in {2,3,4,5,6,7,8,9,10}
    \node[circle,minimum size=4pt,inner sep=0,fill] (t\x) at (\x,0) {};
\foreach \x in {1,2,3,4,5,6,7,8,9,10,11}
    \node[circle,minimum size=4pt,inner sep=0,fill] (l\x) at (\x,-1) {};   
\foreach \x in {3,6,9}
    \node[circle,minimum size=4pt,inner sep=0,fill] (p\x) at (\x,-2) {};  
\node[anchor=north] (P) at (p3) {$P$};    
\node[anchor=north] (Q) at (p6) {$Q$};    
\node[anchor=north] (R) at (p9) {$R$};
\foreach \x in {1,2,3,4,5,6}
\draw (p3.center) -- (l\x);
\foreach \x in {5,7,8,9}
\draw (p6.center) -- (l\x);
\foreach \x in {6,9,10,11}
\draw (p9.center) -- (l\x);
\draw (l1.center) -- (t2.center) -- (l2.center) -- (t3.center) -- (l3.center) -- (t4.center) -- (l4.center) -- (t5.center) -- (l5.center) -- (t6.center) -- (l6.center) -- (t7.center) -- (l1.center);
\draw (t4.center) -- (l7.center) -- (t8.center) -- (l8.center) -- (t9.center) -- (l9.center) -- (t6.center) -- (l5.center) -- (t5.center);
\draw (t6.center) -- (l9.center) -- (t9.center) -- (l10.center) -- (t10.center) -- (l11.center) -- (t7.center) -- (l6.center) -- (t6.center);
\end{tikzpicture}
\quad
\begin{tikzpicture}[x=2em,y=3em]
\foreach \x in {2,4,6}
    \node[circle,minimum size=4pt,inner sep=0,fill] (t\x) at (\x,0) {};
\foreach \x in {1,2,3,4}
    {
    \node[circle,minimum size=4pt,inner sep=0,fill] (l\x) at (2*\x-1,-1) {};
    }
\foreach \x in {2,3,4}
    {    
    \node[anchor=west] (H\x) at (l\x) {$H_{\x}$};
    }
    \node[anchor=east] (H1) at (l1) {$H_{1}$};    
\foreach \x in {3}
    \node[circle,minimum size=4pt,inner sep=0,fill] (p\x) at (\x,-2) {};  
\node[anchor=south] (P) at (t2) {$P$};    
\node[anchor=south] (Q) at (t4) {$Q$};    
\node[anchor=south] (R) at (t6) {$R$};
\foreach \x in {1,2,3,4}
\draw (p3.center) -- (l\x);
\foreach \x in {1,2,3}
\draw (t2.center) -- (l\x);
\foreach \x in {2,4}
\draw (t4.center) -- (l\x);
\foreach \x in {1,4}
\draw (t6.center) -- (l\x);
\node[anchor=north] (X) at (p3) {
$\mathbb R^2$
};
\end{tikzpicture}
\caption{The poset $\covpos(\covset)$ (l.-h.s.) and the poset $\flatset(\covset)$ (r.-h.s.) for the FAOM $\covset$ corresponding to the arrangement of Figure \ref{fig:arrangement}.}
\label{fig:twoposets}
\end{figure}
\end{examplenew}

\begin{lemma}\label{lem:undersemifinite}
	Let $(E,\covset)$ be a finite AOM. Then $\flatpos(\covset)\subseteq 2^E$ is the geometric semilattice of flats of a finitary semimatroid $\SS(\covset)$. This underlying semimatroid is unique up to isomorphism. 
\end{lemma}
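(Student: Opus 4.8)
The plan is to verify, for the finite sign-vector system $\covset$ of a finite AOM, that $\flatpos(\covset)$ satisfies the axioms (GSL0)--(GSL2) (Definition~\ref{df:GS}) that characterize geometric semilattices, and then invoke the known correspondence between geometric semilattices and finitary semimatroids to obtain $\SS(\covset)$. First I would recall from Remark~\ref{finiteok} that a finite AOM in our sense is an affine oriented matroid in the classical sense of \cite{bjvestwhzi-93}, hence can be realized as a convex region inside the covector system of a finite oriented matroid $\mathcal M$ on a ground set $\ground\cup\{g\}$ (the extra element being the ``affine'' one), via the standard localization construction. The zero sets of $\covset$ are then exactly the sets $F\cap \ground$ where $F$ is a flat of the matroid underlying $\mathcal M$ that is not contained in the hyperplane at infinity and meets the relevant open region; this already gives a handle on $\flatpos(\covset)$ as a truncation/restriction of the (geometric) lattice of flats of a matroid.

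The key steps, in order: (i) show $\flatpos(\covset)$ is a meet-semilattice, i.e. any two flats with a common lower bound have a meet --- this follows because intersections of zero sets of covectors that have a common refinement are again zero sets, using composition (C)/(FS) to produce a covector vanishing on the intersection; (ii) show every interval $[\,\widehat 0_{\le A},A\,]$ in $\flatpos(\covset)$ is a geometric lattice --- here I would use Lemma~\ref{locOM}, which tells us that $\covpos(\covset)_{\ge X}$ is the covector poset of an \emph{oriented matroid} on $\ze X$, so that the corresponding interval of flats below $\ze{X}$ is the geometric lattice of flats of that OM's underlying matroid; (iii) verify the covering/semimodularity-type axiom (GSL2) relating joins of independent atoms, which is exactly the property exploited in the proof of Corollary~\ref{PER}; this is again inherited from the matroid-of-flats structure on each interval together with the truncation behaviour at the top. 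Assembling (i)--(iii) shows $\flatpos(\covset)$ is a geometric semilattice in the sense of Wachs--Walker, and by the equivalence recalled in the Appendix (geometric semilattices $\leftrightarrow$ finitary/finite semimatroids, as in \cite{WW,Ardila,Kawahara,DeluRiedel}) there is a semimatroid $\SS(\covset)$ with $\flatpos(\SS(\covset))=\flatpos(\covset)$. Uniqueness up to isomorphism then follows because a finitary semimatroid is determined by its geometric semilattice of flats (again by the cited equivalence), so any two semimatroids realizing $\flatpos(\covset)$ are isomorphic.

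I expect the main obstacle to be step (iii): checking the precise exchange axiom (GSL2) for $\flatpos(\covset)$ directly from the covector axioms of Definition~\ref{def:orig}, rather than from the geometric realization. The clean route is to \emph{not} argue intrinsically but to pass through the topological representation / realization of the finite AOM as a region in a finite oriented matroid (Remark~\ref{finiteok} plus \cite[Ch.~4]{bjvestwhzi-93}): then $\flatpos(\covset)$ is identified with the poset of flats of the underlying matroid $M$ that are ``finite'' (not above the flat at infinity) and non-empty in the region, which is visibly a geometric semilattice because it is obtained from the geometric lattice $\mathcal L(M)$ by deleting the principal filter above one flat --- a construction that is classically known to yield geometric semilattices (this is essentially how geometric semilattices arise from affine arrangements). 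With that identification in hand, (i), (ii) and (iii) become restatements of standard matroid facts, and the only real work is bookkeeping to check that ``zero set of a covector of $\covset$'' matches ``finite flat of $M$ meeting the region'', which uses (FS) to move between covectors and that every covector lies below a tope (cf. Remark~\ref{topexist}) corresponding to a chamber of the arrangement.
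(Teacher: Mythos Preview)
Your proposal is correct, and the ``clean route'' you identify in the final paragraph is precisely the paper's proof: use Remark~\ref{finiteok} to embed the finite AOM into an oriented matroid $\mathscr O$ on $E\cup\{e\}$, identify $\flatpos(\covset)$ with the geometric lattice $\flatpos(\mathscr O)$ minus the principal filter above the atom $\overline e$, and invoke the Wachs--Walker theorem that deleting a principal filter from a geometric lattice yields a geometric semilattice. The direct verification of (GSL1)--(GSL2) that you sketch first is not needed once you take this route, and the paper does not pursue it.
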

\begin{proof}
	By Remark \ref{finiteok},  $(E,\covset)$ is a finite affine oriented matroid in the sense of~\cite{bjvestwhzi-93}. In this case, $\covset$ can be embedded into the set  $\onemore{\covset}$ of covectors of an oriented matroid  on the set $E\cup\{e\}$ where $e$ is a new element that is not a loop, so that there is a unique atom $\overline e$  of $\flatpos(\onemore{\covset})$ containing $\{e\}$. Moreover, e.g. by~\cite[Section 10.1]{bjvestwhzi-93}, 
	 $$
	 \flatpos(\covset)=\flatpos(\onemore{\covset})\setminus \flatpos(\onemore{\covset})_{\geq \overline{e}}.
	 $$
	 Since $\flatpos(\onemore{\covset})$ is a geometric lattice~\cite[Proposition 4.1.13]{bjvestwhzi-93}, by~\cite[Theorem 3.2]{WW} $\flatpos(\covset)$ is a geometric semilattice.

\end{proof}

\begin{lemma}\label{flatrestriction} Let $\covset$ be the set of covectors of an AOM on the ground set $E$.
	\begin{itemize}
	\item[(1)] For every $F_1,F_2\in \flatset(\covset)$, $F_1\cap F_2\in \flatset(\covset)$.
	\item[(2)] For every $A\subseteq E$,
	$\flatset(\covset[A]) = \{ G\cap A \mid G\in \flatset(\covset)\}.$
	\item[(3)] For all $A\in \flatset(\covset)$, $\flatpos(\covset[A])=\flatpos(\covset)_{\leq A}$.
	\item[(4)] For all $A\subseteq E$, the assignment $X\mapsto \iota(X):=\min_{\leq}\{G\in \flatpos(\covset) \mid G\cap A = X \}$ defines an order preserving embedding $\iota: \flatpos(\covset[A])\hookrightarrow\flatpos(\covset)$. 
	\item[(5)] For all $A\subseteq E$ and all $F_1,F_2\in \flatpos(\covset[A])$, if the join $F_1\vee F_2$ exists in $\flatpos(\covset[A])$, then $\iota(F_1)\vee\iota(F_2)$ exists in $\flatpos(\covset)$.
	\end{itemize}
\end{lemma}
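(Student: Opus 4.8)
The five items are of increasing depth, so I would prove them in order, each feeding the next. For (1), given $F_1 = \ze{X_1}$ and $F_2 = \ze{X_2}$, the natural candidate is $\ze{X_1 \circ X_2}$: an element $e$ lies in $\ze{X_1 \circ X_2}$ iff $X_1(e) = 0$ and $X_2(e) = 0$, which is exactly $F_1 \cap F_2$; since $\covset \circ \covset \subseteq \covset$ (which holds by (FS), see the first bullet of the proof of Proposition~\ref{prop:AOMCOM}), $X_1 \circ X_2 \in \covset$ and hence $F_1 \cap F_2 \in \flatset(\covset)$. For (2), one inclusion is immediate from the definition of restriction ($\covset[A] = \covset \setminus (E \setminus A)$ just truncates sign vectors to $A$, so $\ze{X_{|A}} = \ze{X} \cap A$); the reverse inclusion is likewise definitional. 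Item (3) combines (2) with the observation that for $A \in \flatset(\covset)$, the flats of $\covset$ contained in $A$ are precisely the sets $G \cap A$ with $G \in \flatset(\covset)$, using (1) to see that $G \cap A$ is again a flat and $G \cap A \subseteq A$; conversely any flat below $A$ is already of the form $G \cap A$ with $G$ itself. One has to be slightly careful that the order relations match, but since both posets are ordered by inclusion this is transparent.

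For (4), the key point is that the set $\{G \in \flatpos(\covset) \mid G \cap A = X\}$ is nonempty (it contains some $G$ because $X \in \flatset(\covset[A])$ means $X = \ze{Y} \cap A$ for some $Y \in \covset$, and then $\ze{Y}$ is such a $G$) and closed under intersection by item (1), hence has a unique minimum $\iota(X)$ provided the intersection of the whole (possibly infinite) family is again in the family — here I would invoke axiom (Z), or rather the fact that each $\ze{Y}$ is finite, so the family of candidate flats all sit inside a fixed finite set and the intersection stabilizes after finitely many steps, landing back in $\flatset(\covset)$ by repeated application of (1). Order-preservation: if $X_1 \subseteq X_2$ in $\flatpos(\covset[A])$, then any $G$ with $G \cap A = X_2$ satisfies $G \cap A \supseteq X_1$, and intersecting such a $G$ with $\iota(X_1)$ (using (1)) produces a flat whose trace on $A$ is still $X_1$ but which is contained in $G$; minimality of $\iota(X_1)$ then forces $\iota(X_1) \subseteq G$ for every such $G$, hence $\iota(X_1) \subseteq \iota(X_2)$. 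Injectivity is clear since $\iota(X) \cap A = X$ recovers $X$.

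For (5), suppose $F_1 \vee F_2$ exists in $\flatpos(\covset[A])$. I want $\iota(F_1) \vee \iota(F_2)$ to exist in $\flatpos(\covset)$. The natural candidate is $\iota(F_1 \vee F_2)$. One shows it is an upper bound for $\iota(F_1)$ and $\iota(F_2)$ using order-preservation of $\iota$ from (4), since $F_i \subseteq F_1 \vee F_2$ gives $\iota(F_i) \subseteq \iota(F_1 \vee F_2)$. The real content is that it is the \emph{least} upper bound: given any flat $H \in \flatpos(\covset)$ with $H \supseteq \iota(F_1)$ and $H \supseteq \iota(F_2)$, I must show $H \supseteq \iota(F_1 \vee F_2)$. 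Intersecting with $A$, $H \cap A$ is a flat of $\covset[A]$ (by (2)) containing $F_1 = \iota(F_1) \cap A$ and $F_2 = \iota(F_2) \cap A$, hence $H \cap A \supseteq F_1 \vee F_2$; but then $H$ is one of the flats in the family defining $\iota(F_1 \vee F_2)$ — well, more precisely $H \cap A \supseteq F_1 \vee F_2$, so $H$ restricted to $A$ need not \emph{equal} $F_1 \vee F_2$. To fix this I would instead intersect $H$ with $\iota(F_1 \vee F_2)$: by (1) this is a flat, its trace on $A$ is $(H \cap A) \cap (F_1 \vee F_2) = F_1 \vee F_2$ since $H \cap A \supseteq F_1 \vee F_2$, so it is a member of the family defining $\iota(F_1 \vee F_2)$, and by minimality $\iota(F_1 \vee F_2) \subseteq H \cap \iota(F_1 \vee F_2) \subseteq H$. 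This shows $\iota(F_1) \vee \iota(F_2) = \iota(F_1 \vee F_2)$ exists.

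\textbf{Main obstacle.} I expect the only genuine subtlety is the well-definedness of $\iota$ in (4) when the ground set is infinite: one must ensure the defining intersection of an a priori infinite family of flats is again a flat. This is where finiteness of zero sets, i.e.\ axiom (Z) (here the AOM need not be finitary, so I would instead note that every flat is a zero set and intersect within one fixed $\ze{Y}$), together with closure under finite intersection from (1), must be used; everything else is a routine manipulation of zero sets and the composition operation. Once $\iota$ is in hand, (5) is a short lattice-theoretic argument as sketched, with the one trick of intersecting an arbitrary upper bound with $\iota(F_1 \vee F_2)$ rather than with $A$ to land back inside the defining family.
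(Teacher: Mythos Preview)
Your approach is essentially the paper's: (1) via $\ze{X_1\circ X_2}$, (2) by unwinding the definition of restriction, (3) by combining (1) and (2), (4) via closure under intersection plus the left-inverse $G\mapsto G\cap A$, and (5) by exhibiting $\iota(F_1\vee F_2)$ as an upper bound. Two small remarks. First, for (5) the paper is terser than you: it only notes that the set of upper bounds of $\iota(F_1),\iota(F_2)$ is nonempty and, by (1), closed under intersection, hence has a unique minimum; your argument goes further and actually identifies that minimum as $\iota(F_1\vee F_2)$, which is a slight strengthening. Second, you correctly locate the one genuine subtlety in the well-definedness of $\iota$ in (4)---the paper simply writes ``well-defined by (1)'' with no further comment---but your parenthetical fix (``intersect within one fixed $\ze{Y}$'') still needs that fixed $\ze{Y}$ to be finite in order to conclude that the descending intersections stabilise, i.e.\ it needs (Z) after all, contrary to what your aside suggests. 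Since the lemma is only ever invoked under (Z) (in Theorem~\ref{thm:undersemi}), this gap is harmless in context, and your instinct to flag it is sound.
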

\begin{proof}
	In order to check (1) let $X_1,X_2\in\covset$ such that $F_i=\ze{X_i}$ for $i=1,2$. Then axiom (C) ensures $X_1\circ X_2\in \covset$. Thus $F_1\cap F_2=\ze{X_1\circ X_2}\in \flatset(\covset)$.

	Now, by definition, $F \in \flatset(\covset[A])$ if and only if there is $X\in \covset$ with 
	$F=\ze{X_{\vert A}}=\ze{X}\cap A$, 
	i.e., if and only if $F=G\cap A$ for some $G\in \flatset(\covset)$ (indeed, $G\in\flatset(\covset)$ if and only if there is $X\in \covset$ with $\ze{X}=G$). This proves (2). 
	Item (3) follows from (2) since, by (1), $\flatpos(\covset)_{\leq A} = \{G\cap A \mid G\in \flatset(\covset)\}$.

	The assignment in (4) is well-defined by (1) and clearly determines an order preserving function.
	 Its injectivity follows from the existence of the right-sided inverse $G\mapsto G\cap A$ (this is well-defined by (2)). 

	 For item (5) let $F_1,F_2$ have a join $F_1\vee F_2$ in $\flatpos(\covset[A])$. Then $\iota(F_1\vee F_2)\geq \iota (F_i)$ for $i=1,2$, hence the set of upper bounds of $\iota(F_1)$ and $\iota(F_2)$ is nonempty and, by (1), has a unique minimal element. 
\end{proof}

\begin{theorem-definition}\label{thm:undersemi}
	Let $(E,\covset)$ be an AOM satisfying (Z). Then $\flatpos(\covset)\subseteq 2^E$ is the geometric semilattice of flats of a finitary semimatroid $\SS(\covset)$. This underlying semimatroid is unique up to isomorphism. 
\end{theorem-definition}
\begin{proof}
	By definition $\flatpos(\covset)$ is partially ordered by inclusion. By (Z) $\flatpos(\covset)$ is a chain-finite subposet of $\PF(E)$. By Theorem~\ref{thm:gsl}, it is enough to prove that $\flatpos(\covset)$ is a geometric semilattice whose order ideals are lattices of flats of matroids.	 
	If $\vert E \vert <\infty$, the claim is proved as Lemma~\ref{lem:undersemifinite}. The proof for general $E$ follows from the fact that the obstructions to 
	the claim can be detected in a finite restriction. 
	
	More precisely,
	 first notice that $\flatpos(\covset)$ is bounded below, with $\bot_{\flatpos(\covset)}=\{e\in E \mid \forall X\in \covset\mid X(e)=0\}$, which is well-defined because topes exist (by Remark \ref{topexist}). Now checking that $\flatpos(\covset)$ is graded and that order ideals are lattices of flats of matroids (in particular, then it satisfies (GSL1)) amounts to checking statements about order ideals of the type $\flatpos(\covset)_{\leq U}$ which, by Lemma~\ref{flatrestriction}.(3), are isomorphic to order ideals of (finite) geometric semilattices $\flatpos(\covset[U])$, where the claims hold. 

The poset $\flatpos(\covset)$ is a meet-semilattice by Lemma~\ref{flatrestriction}.(1). In order to check (GSL2) let $A_1,\ldots,A_k$ be an independent set of atoms in $\flatpos(\covset)$ that joins to some $U=\vee A_i\in \flatpos(\covset)$ and let $Y\in \flatpos(\covset)$ have rank less than $U$. Notice that $A_1,\ldots,A_k$ is an independent set of atoms in $\flatpos(\covset[U\cup Y])$: indeed for all $I\subseteq [k]$ the inclusion $\vee_{i\in I} A_i \subseteq U$ implies $\vee_{i\in I}A_i=\iota(\vee_{i\in I}A_i)\in\flatpos(\covset[U\cup Y])$ by Lemma~\ref{flatrestriction}.(2,4).  Thus, by Lemma~\ref{lem:undersemifinite} in the finite restriction $\flatpos(\covset[U\cup Y])$ there is some $i$ such that $A_i\vee Y$ exists in $\flatpos(\covset[U\cup Y])$, and so by Lemma~\ref{flatrestriction}.(5) this join also exists in $\flatpos(\covset)$.
\end{proof}

\begin{corollary-definition}\label{centralunder} 
For every AOM $(E,\covset)$ satisfying (Z), the set $\CC(\covset)$ from Definition \ref{def:KL} is the set of central sets of the underlying semimatroid, i.e.,  $\CC(\covset)=\CC(\SS(\covset))$.
\end{corollary-definition}
\begin{proof}
	Immediate from Theorem~\ref{thm:gsl}.
\end{proof}

\begin{corollary}
	Let $(E,\covset)$ be an AOM satisfying (Z) and let $B$ be any basis of the semimatroid $\SS(\covset)$. For every $b\in B$ choose $b'\in \pi(b)$. Then $B':=\{b'\mid b\in B\}$ is a basis of $\SS(\covset)$.
\end{corollary}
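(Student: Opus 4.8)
The statement is that bases of the underlying semimatroid $\SS(\covset)$ are invariant under replacing each element by a parallel one. The plan is to work entirely inside the geometric semilattice $\flatpos(\covset)$ of flats, using the characterization of bases of a semimatroid in terms of independent sets of atoms and rank (via Theorem-Definition~\ref{thm:undersemi}), and the key geometric fact established in Corollary~\ref{PER} and its surrounding discussion: for $e\parallel f$ in a simple AOM, $e$ and $f$ never both lie in the zero set of a covector, which is exactly the statement that the atoms $\{e\}$ and $\{f\}$ do not have a common upper bound among rank-$1$ flats being \emph{joined}; more precisely, $\pi(e)=\pi(f)$ means the atoms corresponding to $e$ and $f$ ``sit on the same flat'' in a controlled way. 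First I would reduce to the simple case by Remark~\ref{remsimplicity}, since $\flatpos(\covset')\cong\flatpos(\covset)$ and the parallelism classes are unaffected; so we may assume every singleton is an atom of $\flatpos(\covset)$.

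The heart of the argument is the following claim, which I would prove by induction on $|B|$ (finite, since bases of a finitary semimatroid are finite): if $B=\{b_1,\dots,b_n\}$ is a basis and $b_i'\in\pi(b_i)$, then $B'=\{b_1',\dots,b_n'\}$ is again a basis. It suffices to show one can exchange elements one at a time, i.e.\ that if $\{b_1,\dots,b_{k-1},b_k',b_{k+1},\dots,b_n\}$ is a basis then so is the set obtained by additionally replacing $b_{k+1}$ by $b_{k+1}'$. So it is enough to handle the single replacement $B\rightsquigarrow (B\setminus\{b\})\cup\{b'\}$ with $b'\in\pi(b)$. For this I would pass to the restriction $\flatpos(\covset[A])$ where $A:=(B\setminus\{b\})\cup\{b,b'\}$, which by Lemma~\ref{flatrestriction}.(3) is an order ideal $\flatpos(\covset)_{\leq (B\cup\{b'\})^{\mathrm{cl}}}$ — but more usefully it is a \emph{finite} geometric semilattice by Theorem-Definition~\ref{thm:undersemi} (or Lemma~\ref{lem:undersemifinite}) applied to the finite restriction. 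In this finite semilattice one has matroid-style basis exchange available on order ideals (they are geometric lattices, hence matroids), and the parallelism relation $b\parallel b'$ forces $\{b\}$ and $\{b'\}$ to be parallel elements \emph{in the matroid sense} of the relevant order ideal (since $\pi(b)=\pi(b')$ precisely says no flat contains both, combined with simplicity which rules out loops): indeed if $\{b\},\{b'\}$ were matroid-independent there would be a flat of rank $2$ containing both, hence a covector vanishing on $\{b,b'\}$, contradicting $b\parallel b'$. A matroid basis containing $b$ remains a basis after swapping $b$ for a parallel element $b'$, which gives the single-exchange step.

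\textbf{Main obstacle.} The delicate point is the bookkeeping of which restriction to use and the transfer of ``independent set of atoms / basis'' between $\flatpos(\covset)$ and its finite restrictions. The direction ``basis of $\SS(\covset)$ restricts to something controlled in $\flatpos(\covset[A])$'' needs care: a basis of $\SS(\covset)$ is a maximal independent set of atoms of $\flatpos(\covset)$, and I must argue that after restricting to $A$ the image is still a basis of the restricted (finite) semimatroid — this is where Lemma~\ref{flatrestriction}.(2,4,5) does the work, exactly as in the proof of Theorem-Definition~\ref{thm:undersemi} when checking (GSL2). Conversely, a basis of the finite restriction that happens to be contained in $A$ and is maximal there lifts back to a basis of $\SS(\covset)$ because $A$ was chosen to contain the closure of $B\cup\{b'\}$, so no atom outside is independent from it. The second subtlety is verifying that $b\parallel b'$ survives restriction: since $b,b'\in A$, Definition~\ref{defpar} and Lemma~\ref{flatrestriction}.(2) give that no flat of $\flatpos(\covset[A])$ contains both $b$ and $b'$, so $b\parallel b'$ holds in the restriction too. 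Once these transfers are in place, the finite matroidal exchange argument closes the proof; I would present the finite step as the one explicit computation and leave the transfer lemmas to citations of Lemma~\ref{flatrestriction} and Theorem-Definition~\ref{thm:undersemi}.
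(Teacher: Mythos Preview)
Your plan has a genuine gap in the single-exchange step. You claim that, in the finite restriction $\covset[A]$ with $A=B\cup\{b'\}$, the elements $b$ and $b'$ become ``parallel in the matroid sense'' in some order ideal of the geometric semilattice $\flatpos(\covset[A])$, i.e.\ $\rk\{b,b'\}=1$. But this is precisely what fails. AOM-parallelism $b\parallel b'$ says (Definition~\ref{defpar}) that no covector vanishes on both $b$ and $b'$, hence $\{b,b'\}\notin\CC(\covset)$; by Lemma~\ref{flatrestriction}.(2) this is inherited by the restriction, so $\{b,b'\}\notin\CC(\covset[A])$ either. Thus there is \emph{no} flat $X$ with $b,b'\leq X$, and in particular no order ideal $\flatpos(\covset[A])_{\leq X}$ in which both $b$ and $b'$ live as matroid elements. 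The dichotomy you invoke (``either $\{b,b'\}$ is independent of rank~$2$ or matroid-parallel of rank~$1$'') overlooks the third possibility specific to semimatroids: $\{b,b'\}$ is not central at all. Consequently the matroid-swap argument never gets off the ground, and you have not shown that $(B\setminus\{b\})\cup\{b'\}$ is even central, let alone a basis.

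The paper's approach is different and avoids this trap: it argues by induction on the rank $r$ of $\SS(\covset)$, first using the inductive hypothesis on the restriction $\covset[\bigcup_{b\neq b_0}\pi(b)]$ to get that $B'\setminus\{b_0'\}$ is independent of rank $r-1$, and then invoking the semimatroid axiom (CR2) directly with the pair $B'\setminus\{b_0'\}$ and $B$. The point is that (CR2) guarantees some element of $B$ can be added to $B'\setminus\{b_0'\}$ while staying central, and AOM-parallelism (in the form ``$\{b_i,b_i'\}\notin\CC$'') rules out every $b_i$ with $i\neq 0$. This uses exactly the non-centrality of parallel pairs, rather than trying to reinterpret it as matroid-parallelism. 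If you want to salvage your one-step-at-a-time iteration, the single exchange itself must be argued via (CR2) in the semimatroid, not via a matroid swap.
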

\begin{proof}
	Induction on the rank of $\SS(\covset)$. If the rank is $0$ or $1$, there is nothing to prove. Let then $\SS(\covset)$ have rank $r>1$ and let  $B$, $B'$ be as in the claim. Choose $b_0\in B$. By induction hypothesis applied to $\covset[\bigcup_{b\neq b_0}\pi(b)]$, $B'\setminus b_0'$ is an independent set of $\SS(\covset)$ of rank $r-1$. Now apply (CR2) to the sets $B$  and  $B'\setminus b_0'$. Since every element of $B$ except $b_0$ is parallel to some element of $B'\setminus b_0'$, the only way for (CR2) to hold is that $B'$ is central in $\SS(\covset)$ (cf.\ \S\ref{sec:semimatroids} for terminology) and has rank $r$. 
\end{proof}

\begin{corollary}\label{corBo}
	Let $(E,\covset)$ be an AOM satisfying (Z) and let $B$ be any basis of the semimatroid $\SS(\covset)$. Then there is a unique $X_B\in \max\covpos (\covset)$ with $B\subseteq \ze{X_B}$. 
\end{corollary}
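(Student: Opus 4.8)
The plan is to prove existence and uniqueness separately. For \textbf{existence}, recall that $B\in\central(\covset)$ means $B$ is a central set, so by Corollary-Definition~\ref{centralunder} there is some $X\in\covset$ with $B\subseteq\ze{X}$. Using Remark~\ref{topexist} (composition with a tope, available by Remark~\ref{thepowerofZ}) one may as well pick $X$ with $\ze{X}$ minimal among zero sets containing $B$; equivalently, pass to a maximal element $X_B$ of $\covpos(\covset)_{\geq X}$. I then claim $\ze{X_B}=B$ in fact holds because $B$ is a basis: if $\ze{X_B}\supsetneq B$, then $\ze{X_B}$ is a flat of $\SS(\covset)$ (by Theorem-Definition~\ref{thm:undersemi}) strictly containing the basis $B$, which would force $\rk(\ze{X_B})>\rk(\SS(\covset))$, impossible. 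So any maximal covector above some covector vanishing on $B$ has zero set exactly $B$, and in particular $X_B\in\max\covpos(\covset)$ with $B\subseteq\ze{X_B}$.

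For \textbf{uniqueness}, suppose $X,X'\in\max\covpos(\covset)$ both satisfy $B\subseteq\ze{X}$ and $B\subseteq\ze{X'}$. By the previous paragraph $\ze{X}=\ze{X'}=B$. The key tool is Lemma~\ref{locOM}: for a maximal covector $X$, the interval $\covpos(\covset)_{\geq X}$ is trivial (just $\{X\}$), but more usefully, $\covset[\ze{X}]=\covset[B]$ is the set of covectors of an oriented matroid $\mathscr O$ on ground set $B$, and $X_{\vert B}$ is a \emph{maximal} covector of $\mathscr O$ — but $\mathscr O$ is an oriented matroid on the \emph{basis} $B$, hence has rank $|B|$, so $\mathscr O$ is a Boolean arrangement and its only covector with full zero set $B$ is the all-zeroes covector $\mathbf 0_B$. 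Thus $X_{\vert B}=X'_{\vert B}=\mathbf 0_B$. It remains to see that $X$ is determined by its restriction to $B=\ze{X}$ together with being in $\covpos$: since $X$ is a tope-like maximal covector with $\underline X=E\setminus B$, and $B$ is a basis (so $E\setminus B$ is contained in no flat other than... ) — here I would argue via parallelism: every $e\in E\setminus B$ is, by the basis exchange properties, parallel to some element of $B$, hence by Remark~\ref{constantsign} the sign $X(e)=\sigma_{b}(e)$ is forced once we know that $X$ vanishes on $b\in B$ with $e\parallel b$. Concretely, choosing for each $e\notin B$ an element $b_e\in B$ with $e\parallel b_e$ (possible since $B$ is a basis and adding $e$ creates a circuit, so $e$ depends on $B$; the parallel class argument of Corollary~\ref{PER} / the order structure of Corollary~\ref{totalorder} pins down $b_e$), we get $X(e)=\sigma_{b_e}(e)=X'(e)$ for all $e\notin B$, while $X$ and $X'$ agree (and equal $0$) on $B$. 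Hence $X=X'$.

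The \textbf{main obstacle} I anticipate is the uniqueness step, specifically justifying that a maximal covector with zero set a basis $B$ is rigid off $B$. The clean way is probably not the ad hoc parallelism argument above but rather: reduce to a finite restriction. By Lemma~\ref{lemseq} and Lemma~\ref{lem:finconv}, choose a finite convex $\convset\ni$ (a tope above $X$) and $\ni$ (a tope above $X'$); then $X,X'$ and all of $\covpos(\covset)_{\le\convset}$ restrict isomorphically into the finite AOM $\covset[E_\convset]$, where $B\subseteq E_\convset$ is still a basis of the underlying (finite) semimatroid $\SS(\covset[E_\convset])$ by Lemma~\ref{flatrestriction}, and there the statement is the classical fact that a finite affine oriented matroid has a unique face whose closure is a given vertex — i.e.\ in the associated pseudo-arrangement a flat of full rank is a point, lying in a unique (minimal) cell. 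Transporting this back via the restriction isomorphism gives $X=X'$ in $\covset$.
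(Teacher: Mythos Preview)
Your existence argument has the order direction reversed. Going up in $\covpos(\covset)$ makes zero sets \emph{smaller} (Remark~\ref{rem:zeromap}), so a maximal element of $\covpos(\covset)_{\geq X}$ is a tope, with empty zero set in the simple case --- you lose the condition $B\subseteq\ze{X_B}$ rather than preserving it. (In fact the ``$\max$'' in the statement is a typo for ``$\min$'', as its later use in Lemma~\ref{lemresB} confirms; a covector whose zero set contains a basis is a cocircuit.) With the correct reading, existence is immediate: any $X$ with $B\subseteq\ze{X}$ has $\ze{X}$ a flat containing $B$, hence $\ze{X}\supseteq\cl(B)$, but $\cl(B)$ is already a maximal flat, so $\ze{X}=\cl(B)$ and $X$ is minimal.

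Your primary uniqueness argument has a genuine error: the claim that every $e\in E\setminus B$ is parallel to some element of $B$ is false. Parallelism here means the two elements share no common zero (Definition~\ref{defpar}); for instance, take three lines in general position in the plane and $B=\{\ell_1,\ell_2\}$ --- the third line $\ell_3$ meets both $\ell_1$ and $\ell_2$ and is parallel to neither. So Remark~\ref{constantsign} does not determine $X(e)$.

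Your fallback via finite restriction could in principle be made to work, but it is overkill and, as written, invokes Lemmas~\ref{lemseq}--\ref{lem:finconv}, which need the full FAOM axioms while the corollary only assumes (Z). The paper's argument is a direct application of (SE): if $X\neq Y$ both satisfy $B\subseteq\ze{X}\cap\ze{Y}$, then (as above) $\ze{X}=\ze{Y}=\cl(B)$, so they differ at some $e$ with $X(e)=-Y(e)\neq 0$; eliminating $e$ yields $Z$ with $B\cup\{e\}\subseteq\ze{Z}$, whence $\ze{Z}$ is a flat strictly containing the maximal flat $\cl(B)$, a contradiction.
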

\begin{examplenew}\label{ex:xbwhite} In Figure \ref{fig:basis_inf} the elements $a_0$ and $b_0$ form a basis $B=\{a_0,b_0\}$ of the FAOM represented by the given pseudoline arrangement. The covector $X_B$ corresponds to the vertex highlighted in white.
\end{examplenew}
\begin{proof}[Proof of {Corollary \ref{corBo}}]
	Since $B$ is a basis, $\cl( B)$ exists and is a maximal element in $\flatpos(\covset)$. In particular, there is $X_B\in \max\covpos(\covset)$ with $B\subseteq \ze{X_B}$. In order to prove uniqueness consider any $Y\in \covset$ with $B\subseteq \ze{Y}$. If $Y\neq X$, there is some $e\in E$ with $Y(e)=-X(e)\neq 0$. Elimination of $e$ from $X$ and $Y$ would give a $Z\gneq X$ in $\covpos(\covset)$, contradicting maximality of $X$. 
\end{proof}

\subsection{Rank} We briefly compare the different notions of rank that have appeared so far and set some notation for the remainder of the paper.

\begin{proposition}
	The order reversing map $\ze{\cdot}: \covpos(\covset) \to \flatpos(\covset)$ from Remark~\ref{rem:zeromap} is rank-preserving.
\end{proposition}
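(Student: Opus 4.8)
The plan is to reduce the statement to the already-established finite case via the exhaustion by finite convex sets from Lemma~\ref{lemseq}, just as was done for gradedness in Corollary~\ref{Franked}. Concretely, I would first recall that $\covpos(\covset)$ is ranked of finite length (Remark after Corollary~\ref{Franked}), so it makes sense to talk about $\rk(X)$ for $X\in\covpos(\covset)$; and likewise $\flatpos(\covset)$ is graded because it is a geometric semilattice (Theorem-Definition~\ref{thm:undersemi}), so $\rk_\LL(\ze{X})$ is defined. The goal is the identity $\rk(X) + \rk_\LL(\ze{X}) = d$, where $d = \rk\covset$ is the length of $\covpos(\covset)$ --- equivalently, that $\ze{\cdot}$ sends a rank-$k$ element to an element of corank $k$.

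The key step is to fix $X\in\covset$ and pick, using Lemma~\ref{lemseq}, a finite convex set of topes $\convset$ with $X$ below $\convset$ (e.g.\ take any tope above $X$ and include it in some $\convset_i$). By Lemma~\ref{lem:finconv}.(1), restriction $\res_\convset$ gives an order isomorphism between $\covpos(\covset)_{\leq X}$ and the corresponding interval in $\covpos(\covset[E_\convset])$; since rank in a ranked poset of finite length is computed from lengths of maximal chains in principal lower intervals, and the full length $d$ is realized inside $\covpos(\covset[E_\convset])$ (Corollary~\ref{Franked}), this shows $\rk_{\covpos(\covset)}(X) = \rk_{\covpos(\covset[E_\convset])}(\res_\convset(X))$ and that the two posets have the same length. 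On the flats side, by Lemma~\ref{flatrestriction}.(3) we have $\flatpos(\covset[E_\convset]) = \flatpos(\covset)_{\leq A}$ for $A=E_\convset$ when $A$ is a flat --- but in general I would instead invoke Lemma~\ref{flatrestriction}.(4): the embedding $\iota\colon\flatpos(\covset[E_\convset])\hookrightarrow\flatpos(\covset)$ is rank-preserving (one checks $\iota$ preserves covers, using that $\flatpos(\covset)$ is geometric and that atoms of $\flatpos(\covset[E_\convset])$ map to atoms), and $\ze{X}$, having $X$ below $\convset$, lies in the image with $\iota^{-1}(\ze{X}) = \ze{\res_\convset(X)}$. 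One must also ensure $E_\convset$ can be chosen so that $\rk_\LL(E_\convset)$ equals the corank bound, i.e.\ that $\flatpos(\covset[E_\convset])$ has the same rank $d$ as $\flatpos(\covset)$; enlarging $\convset$ to include a tope realizing a maximal flat takes care of this. Then the desired equality is exactly the known statement for the finite affine oriented matroid $\covset[E_\convset]$: in a finite AOM, realized as a filter in a finite oriented matroid on $E_\convset\cup\{e\}$ (see the proof of Lemma~\ref{lem:undersemifinite} and \cite[\S4.1, \S10.1]{bjvestwhzi-93}), the zero-set map from the covector poset to the geometric semilattice of flats is rank-reversing, since in the ambient OM the covector poset is the face poset of a sphere arrangement and $\rk(X)+\rk(\ze{X})$ is constant.

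The main obstacle I anticipate is purely bookkeeping rather than conceptual: making sure that a single finite convex set $\convset$ simultaneously witnesses (a) the correct rank of $X$, (b) the correct corank of $\ze{X}$, and (c) the full length $d$ of both posets, and that all three restriction/embedding maps ($\res_\convset$ on covectors, $\iota$ on flats, and the zero-set maps) are compatible with the square one wants to commute. This is handled by choosing $\convset$ large enough --- a finite union of finitely many of the $\convset_i$ --- to contain a tope above $X$, a tope whose zero set is a maximal flat, and enough topes to realize a longest chain through $X$; finiteness of all relevant intervals (axioms (I), (Z)) guarantees such a finite $\convset$ exists. Once $\convset$ is fixed, verifying that $\iota$ is rank-preserving is the one genuinely non-formal point, and it follows because $\flatpos(\covset)$ and $\flatpos(\covset[E_\convset])$ are both geometric semilattices, $\iota$ maps atoms to atoms, and a rank-lowering step of $\iota$ would produce, via Lemma~\ref{flatrestriction}.(5), a join in $\flatpos(\covset)$ violating the atom count. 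With that in place the statement reduces to \cite[Proposition 4.1.13, \S10.1]{bjvestwhzi-93} applied to $\covset[E_\convset]$.
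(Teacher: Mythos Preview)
Your approach is correct in spirit and can be completed, but it is considerably more elaborate than what the paper does. The paper's proof is a short \emph{local} argument: to show the map takes covers to covers, take $X\lessdot Y$ in $\covpos(\covset)$ and suppose there were a flat strictly between $\ze{Y}$ and $\ze{X}$; composing a witnessing covector with $X$ places the obstruction inside $\covpos(\covset)_{\geq X}$, which by Lemma~\ref{locOM} is the covector poset of the finite oriented matroid $\covset[\ze{X}]$, where \cite[Proposition~4.1.13.(ii)]{bjvestwhzi-93} forbids it. No exhaustion by convex sets, no global embedding of $\flatpos$ is needed.

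Your route through Lemma~\ref{lemseq} and Lemma~\ref{lem:finconv} also works, but the bookkeeping you flag as ``the one genuinely non-formal point'' --- that $\iota\colon\flatpos(\covset[E_\convset])\hookrightarrow\flatpos(\covset)$ is rank-preserving --- deserves more care than you give it. The cleanest justification is not via ``$\iota$ maps atoms to atoms'' plus Lemma~\ref{flatrestriction}.(5) (which requires an additional argument that independent atom sets stay independent under $\iota$), but simply to observe that for any $F\in\flatpos(\covset[E_\convset])$ with $F\subseteq E_\convset$ one has $\iota(F)=\cl_{\SS(\covset)}(F)$, so the semimatroid rank of $F$ is the same in $\SS(\covset)$ and in its restriction $\SS(\covset[E_\convset])$. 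Once that is said, your reduction to the finite case is fine. The trade-off: your argument reuses the exhaustion machinery already built for Corollary~\ref{Franked}, at the cost of having to control both the covector and the flat side simultaneously; the paper's argument bypasses all of this by exploiting that the question is intrinsically local (everything happens inside $\covset[\ze{X}]$).
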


\begin{proof}
In analogy with the proof of~\cite[Proposition 4.1.13.(ii)]{bjvestwhzi-93}, it is enough to consider two $X,Y\in\covset$ with $X\lessdot Y$ in $\covpos(\covset)$ and to prove $\ze{X}\gtrdot \ze{Y}$ in $\flatpos(\covset)$. Now, for such $X,Y$ clearly $\ze{X}\supsetneq \ze{Y}$. If there is a covector $Z\in \covset$ with $\ze{X}\supsetneq \ze{Z}\supsetneq \ze{Y}$, then we can in fact choose this covector to be in the upper interval $\covpos(\covset)_{\geq X}$ (take for instance $X\circ Z$), contradicting~\cite[Proposition 4.1.13.(ii)]{bjvestwhzi-93} for the OM $\covset[\ze{X}]$ (cf.\ Lemma~\ref{locOM}).
\end{proof}

\begin{definition}
	For any given FAOM $\covset$, we will henceforth write $\rk$ for both the rank function of $\covpos(\covset)$ and for the rank function of its underlying semimatroid. We write $\rk(\covset)=\rk(\SS(\covset))$ for the rank of either (i.e., the length of $\covpos(\covset)$ and $\flatpos(\covset)$).
\end{definition}

\begin{corollary} \label{minorrank}
Let $\covset$ be an FAOM and let $A\subseteq E$. Then, $\rk(\covset[A])=\rk(A)$ and $\rk(\covset/A)=\rk(\covset)-\rk(A)$.
\end{corollary}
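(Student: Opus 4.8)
The plan is to reduce both identities to statements about the underlying semimatroid $\SS(\covset)$, where rank of a set, rank of a restriction, and rank of a contraction behave as in the standard theory of (finitary) semimatroids; the only real work is to check that the operations of restriction and contraction on the AOM side are compatible with the corresponding operations on the semimatroid side.

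First I would treat the restriction. By Theorem-Definition~\ref{thm:undersemi} the FAOM $\covset[A]$ has an underlying semimatroid $\SS(\covset[A])$ whose geometric semilattice of flats is $\flatpos(\covset[A])$, and by Lemma~\ref{flatrestriction}.(3) (applied after first passing to the flat $\cl(A)$, or more directly by Lemma~\ref{flatrestriction}.(2)) this semilattice is, as a poset, exactly the truncation of $\flatpos(\covset)$ obtained by intersecting flats with $A$. This is precisely the restriction $\SS(\covset)[A]$ in the sense of semimatroid theory, so $\SS(\covset[A])\cong \SS(\covset)[A]$, and hence $\rk(\covset[A]) = \rk(\SS(\covset)[A]) = \rk_{\SS(\covset)}(A) = \rk(A)$ by definition of the rank of a set in a semimatroid as the rank of its closure. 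I would spell out the identification $\flatpos(\covset[A]) \cong \{G\cap A\mid G\in\flatset(\covset)\}$ from Lemma~\ref{flatrestriction}.(2), observe this is the flat poset of $\SS(\covset)[A]$, and invoke uniqueness of the underlying semimatroid.

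Next, the contraction. Here I would argue that $\flatpos(\covset/A)$ is isomorphic to the upper interval $\flatpos(\covset)_{\ge \cl(A)}$: a covector $X$ of $\covset$ with $X(A)=\{0\}$ satisfies $A\subseteq\ze{X}$, hence $\cl(A)\subseteq\ze{X}$, and conversely every flat containing $\cl(A)$ is the zero set of such a covector; the map $X\mapsto \ze{X}$ then identifies $\flatpos(\covset/A)$ with $\flatpos(\covset)_{\ge\cl(A)}$ (using Remark~\ref{rem:injection_contraction} to keep track of supports). On the semimatroid side this interval is exactly the flat poset of the contraction $\SS(\covset)/A$, so $\SS(\covset/A)\cong\SS(\covset)/A$; since contraction of a semimatroid drops the rank by $\rk(A)$, we get $\rk(\covset/A)=\rk(\SS(\covset))-\rk(A)=\rk(\covset)-\rk(A)$. (One should note $\covset/A$ is nonempty only when $A$ is central, i.e.\ $A\in\CC(\covset)$, by Definition~\ref{def:minors}; outside that case the statement is vacuous, and $\cl(A)$ is understood in $\SS(\covset)$.)

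The main obstacle I anticipate is purely bookkeeping: matching the definitions of restriction/contraction of a \emph{semimatroid} (as recalled in the Appendix, \S\ref{sec:semimatroids}) against the operations $\covset[A]$ and $\covset/A$ on systems of sign vectors, and making sure the rank functions agree on the nose rather than merely up to the shift being claimed — in particular that $\rk$ on $\covpos(\covset)$, $\rk$ on $\flatpos(\covset)$, and the semimatroid rank all coincide, which is exactly what the preceding Proposition and Definition in the excerpt set up. Once the two isomorphisms $\SS(\covset[A])\cong\SS(\covset)[A]$ and $\SS(\covset/A)\cong\SS(\covset)/A$ are in hand, the rank formulas are immediate from the corresponding (known) identities for finitary semimatroids, so no genuinely new combinatorial input is needed.
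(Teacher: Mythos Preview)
Your proposal is correct and follows exactly the route the paper takes: reduce to the underlying semimatroid and invoke the standard rank identities for restriction and contraction (the paper's proof is a one-line reference to \cite[\S1.1]{DeluRiedel}). The only difference is that you explicitly verify the compatibilities $\SS(\covset[A])\cong\SS(\covset)[A]$ and $\SS(\covset/A)\cong\SS(\covset)/A$ via the flat posets, whereas the paper leaves this implicit.
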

\begin{proof} These identities are known on the semimatroid level, see, e.g., \cite[\S1.1]{DeluRiedel}
\end{proof}

\subsection{Order type of parallelism classes}
\label{ssec:ordertype}

In this section we show that  the isomorphism type of the natural total orderings of parallelism classes of an FAOM  (described in Corollary~\ref{totalorder})   is restricted. Recall the notation and conventions of Definition~\ref{def:totalorder}.

\begin{lemma}\label{ordertype} Suppose $\covset$ is the set of covectors of an AOM satisfying (S) and let $e\in E$.  Then $(\pi(e),\leq_{\pi(e)})$ is order isomorphic to $\mathbb Z$ if $e\in E^{**}$, to $\mathbb N$ if $e\in E^{0*}$, and to the segment $\{0,1,\ldots,n_{\vert\pi(e)\vert}\}\subseteq \mathbb N$ if $e\in E^{01}$.
%
\end{lemma}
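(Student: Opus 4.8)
The statement asserts that a parallelism class $\pi(e)$, equipped with the total order $\leq_{\pi(e)}$ from Corollary~\ref{totalorder}, is order-isomorphic to $\mathbb Z$, $\mathbb N$, or a finite segment, according to whether neither, exactly one, or both of $0_{\pi(e)}, 1_{\pi(e)}$ exist. The plan is to show that the order $(\pi(e), \leq_{\pi(e)})$ is \emph{locally finite}, i.e., every closed interval $[a,b]_{\pi(e)}$ is finite; a countable locally finite total order with the prescribed pattern of extrema is then forced to be one of the three listed types by an elementary argument. (Countability of $\pi(e)$ follows from Corollary~\ref{cor:countable}, though it will also drop out of the construction.)

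\textbf{Key steps.} First I would recall from Corollary~\ref{totalorder} that, after a suitable reorientation of the class $\pi(e)$, we may assume $x <_{\pi(e)} y$ if and only if $\sigma_y(x) = +$; fix this reorientation throughout (the order type is reorientation-independent by Corollary~\ref{totalorder}, so this is harmless). Next, the crucial claim: for any $a <_{\pi(e)} b$ in $\pi(e)$, the interval $\{c \in \pi(e) \mid a \leq_{\pi(e)} c \leq_{\pi(e)} b\}$ is finite. To prove this, pick covectors $X_a, X_b \in \covset$ with $X_a(a) = 0$ and $X_b(b) = 0$; since $a \parallel b$ we have $a \in \ze{X_b}$ forced to... more carefully: consider the composition $W := X_a \circ X_b$ (or a tope refining both) and use that $S(X_a, X_b)$ is finite by axiom (S). For any $c$ strictly between $a$ and $b$, the betweenness relation $[a,c,b]$ holds, which by Proposition~\ref{prop:betweenness} and the translation $\sigma(c) = -$ means $\sigma_a(c) = -\sigma_b(c)$, i.e.\ $X_a(c)$ and $X_b(c)$ have opposite signs (using Remark~\ref{constantsign} to identify $X_a(c)=\sigma_a(c)$ after reorientation, and similarly for $b$); hence $c \in S(X_a, X_b)$. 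Therefore the open interval $(a,b)_{\pi(e)}$ injects into the finite set $S(X_a,X_b)$, and the closed interval is finite.

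\textbf{Concluding the trichotomy.} Once local finiteness of the total order is established, I would argue as follows. If both extrema $0_{\pi(e)}$ and $1_{\pi(e)}$ exist (case $e \in E^{01}$), then $\pi(e) = [0_{\pi(e)}, 1_{\pi(e)}]_{\pi(e)}$ is a single finite interval, hence order-isomorphic to $\{0,1,\dots,n\}$ with $n = |\pi(e)| - 1$. If exactly the minimum $0_{\pi(e)}$ exists (case $e \in E^{0*}$, using the convention of Definition~\ref{def:totalorder} that a lone extremum is taken to be the minimum), then starting from $0_{\pi(e)}$ and repeatedly passing to the successor — which exists and is reached in finitely many steps because every initial segment $[0_{\pi(e)}, x]_{\pi(e)}$ is finite and there is no maximum — yields an order isomorphism with $\mathbb N$. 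If neither extremum exists (case $e \in E^{**}$), fix any $x_0 \in \pi(e)$; local finiteness gives that $[x_0, y]$ and $[z, x_0]$ are finite for all $y > x_0 > z$, and absence of extrema lets us extend indefinitely in both directions, so successors and predecessors are always defined and every element is reached from $x_0$ in finitely many successor/predecessor steps, giving an order isomorphism with $\mathbb Z$.

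\textbf{Main obstacle.} The only real content is the finiteness claim for intervals, and within it the precise bookkeeping identifying "$c$ is between $a$ and $b$" with "$c \in S(X_a, X_b)$" via the sign functions $\sigma_{\cdot}(\cdot)$ of Remark~\ref{constantsign} and the characterization of betweenness as $\sigma(c) = -$ in the proof of Proposition~\ref{prop:betweenness}. One must be careful that $X_a, X_b$ can indeed be chosen so that their restriction to $\pi(e)$ realizes the sign pattern $\sigma_a$, $\sigma_b$ respectively on all of $\pi(e)$ — this is exactly the well-definedness in Remark~\ref{constantsign}, applied elementwise. After that, invoking axiom (S) to bound $|S(X_a,X_b)|$ is immediate, and the rest is routine order theory.
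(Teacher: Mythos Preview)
Your proposal is correct and follows essentially the same approach as the paper's proof: both establish that the open interval $(a,b)$ in $\pi(e)$ injects into the finite separator $S(X_a,X_b)$ for covectors $X_a,X_b$ vanishing at $a,b$ respectively, and then build the order isomorphism via successors and predecessors. The paper argues the key inclusion slightly more directly (asserting $X_a(z)=-X_b(z)\neq 0$ for $a<z<b$ without explicitly invoking the betweenness formalism), but the content is identical; your aside about the composition $W$ is unnecessary and can be dropped.
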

\begin{examplenew} Recall the FAOMs of Figure \ref{examplesab}. In case (a) we have that $\vert \pi(l)\vert =1$. In both cases $\leq_{\pi(h_0)}$ has the order type of $\mathbb Z$. In case (b) the order type of $\leq_{\pi(l_0)}$ is $\mathbb N$
\end{examplenew}
\begin{proof}[Proof of Lemma \ref{ordertype}] Write $\pi$ for $\pi(e)$ and $<$ for $<_{\pi(e)}$. Recall that by Corollary~\ref{totalorder} we have that $\pi$ is a total order.

Given $x<y$ in $\pi$ choose covectors $X$ and $Y$  with $x\in \ze{X}$ and $y\in \ze{Y}$. Now any $z\in \pi$ with  $x< z< y$ must have $X(z)=-Y(z)\neq 0$ and thus $z\in S(X,Y)$. By (S) there are at most finitely many such $z$'s. Now, if $e\in E^{01}$ the claim follows by taking $x=\bot_\pi$, $y=\top_\pi$. Otherwise, for every $x\in \pi$ there is some $y\in \pi$ with $y>x$ and so the assignment
	$$
	s(x):=\min\{z\in \pi \mid x < z \leq y\}
	$$
	determines a well-defined ``successor'' function $s:\pi\to\pi$ (the set on the r.h.-s.\ is finite by the previous discussion, nonempty since it contains at least $y$, and independent on the choice of $y$ since  $s(x)$ is an immediate successor of $x$ and $\leq$ is a total order). Analogously, for every $y\in \pi$, $y\neq \bot_\pi$, we can find an element $s^{-1}(y):=\max\{z\in \pi \mid \bot_\pi \leq z < y\}$ so that $s^{-1}(s(x)) = x$ for all $x\in \pi$ and thus $s$ is injective.
	\begin{itemize}
		\item[]{\em Claim.} For every $x\in \pi$, the function $f: \mathbb N\to \pi_{\geq x}$, $n\mapsto s^n(x)$ is bijective.
		\item[]{\em Proof.} Injectivity of $s$ implies injectivity of $f$. In order to prove surjectivity, let $y\in \pi_{\geq x}$. As above, by (S) there is a finite number, say $k$, of  $z\in \pi$, $x<z\leq y$, therefore $s^k(x)=y$.
 	\end{itemize}
	If $e\in E^{0*}$, in the claim above we can choose $x=\bot_\pi$ and we obtain an order isomorphism between $(\pi,\leq _{\pi})$ and $\mathbb N$   with the natural order.
	If $e\in E^{**}$, the Claim above gives an order isomorphism between $\mathbb N$ and $\pi_{\geq e}$  and an order antiisomorphism between $\mathbb N$ and $\pi_{\leq e}$, that combine to an order isomorphism between $(\pi,\leq_{\pi})$ and $\mathbb Z$ with the standard ordering, where $e$ is mapped to $0$.
\end{proof}

\begin{corollary}\label{corsepdelta}
	Suppose that  $\covset$ is an AOM satisfying (S) and let $X\in \covset$. Then {there is a reorientation of $\covset$} such that for every parallelism class $\pi$, there is a unique element $\separ{X}{\pi} \in \pi$ such that, for every $e\in \pi$,
	$$
	e <_{\pi} \separ{X}{\pi} \Rightarrow X(e)=+,\quad\quad 	e >_{\pi} \separ{X}{\pi} \Rightarrow X(e)=-, \quad\quad X(\separ{X}{\pi})\in\{0,-\}
	$$
\end{corollary}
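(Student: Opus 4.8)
The plan is to use the total order structure on each parallelism class $\pi$ from Corollary~\ref{totalorder} together with the finiteness of separators from axiom (S). First I would recall from Remark~\ref{constantsign} that for a fixed $X \in \covset$ and a fixed parallel class $\pi$, whenever $e, f \in \pi$ with $X(f) = 0$, the value $X(e)$ is forced to equal $\sigma_f(e)$; and from Corollary~\ref{totalorder} one can reorient $\pi$ so that $x <_\pi y$ if and only if $\sigma_y(x) = +$. So the key observation is that, once we have arranged this reorientation, the sign pattern of $X$ along $\pi$ is monotone: if $X(f) = 0$ for some $f \in \pi$, then for $e <_\pi f$ we get $X(e) = \sigma_f(e) = +$ and for $e >_\pi f$ we get $X(e) = \sigma_f(e) = -$.

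The main work is to handle the parallel classes $\pi$ for which $X$ has \emph{no} zero, i.e.\ $X(e) \neq 0$ for all $e \in \pi$. Here I would argue that the sequence of signs $(X(e))_{e \in \pi}$, read in the order $<_\pi$, can have at most one sign change, and that the change (if present) goes from $+$ to $-$. Indeed, suppose $e <_\pi f$ with $X(e) = -$ and $X(f) = +$; I would derive a contradiction using (SE): eliminate over an element of the separator. More carefully, pick covectors in the finite restriction $\covset[\{e,f\} \cup \{g\}]$ or work directly with $X$ and a covector $Y$ having $e$ or $f$ in its zero set — using that $\pi$ is a parallel class, such $Y$ exist with prescribed zeros in $\pi$, and these give $\sigma$ values which by the chosen reorientation are $+$ exactly below. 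Combining $X$ with such a $Y$ via strong elimination over $S(X,Y)$ (which is finite by (S), and consists of elements of $\pi$ strictly between the relevant indices) produces a covector witnessing an impossible $\sigma$ relation, contradicting the monotonicity of $<_\pi$. Thus the sign string is $+\cdots+-\cdots-$, possibly with one of the two blocks empty.

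With monotonicity established, I would define $\separ{X}{\pi}$ as follows: if $X$ vanishes somewhere on $\pi$, set $\separ{X}{\pi}$ to be that (necessarily unique, since two zeros $f <_\pi f'$ would force $X(e) = -$ and $X(e) = +$ for $f <_\pi e <_\pi f'$ unless no such $e$ exists — but even then elimination over $S$ gives a contradiction, or one uses that the underlying semimatroid has simple parallel classes) element with $X(\separ{X}{\pi}) = 0$. If $X$ has no zero on $\pi$, then the monotone string is $+^a -^b$; if $b \geq 1$ let $\separ{X}{\pi}$ be the $<_\pi$-least element with $X$-value $-$, so that $X(\separ{X}{\pi}) = - \in \{0,-\}$, everything strictly below is $+$, everything strictly above is $-$. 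If $b = 0$, i.e.\ $X$ is identically $+$ on $\pi$, then $\pi$ must have a maximum $\top_\pi$ (otherwise we are in $E^{0*}$ or $E^{**}$ with an infinite ascending chain, and one can produce a covector zero high up in $\pi$ contradicting $X \equiv +$ — here I would invoke Lemma~\ref{ordertype} and axiom (Z)/(S) to see that an all-$+$ pattern on an upward-unbounded class is impossible), and we put $\separ{X}{\pi} := \top_\pi$. Symmetrically for the all-$-$ case after noting our reorientation convention. Finally I would check that these local reorientations on the various classes $\pi$ are mutually independent (parallel classes partition $E$), so they assemble into a single global reorientation $\tau \in \{+1,-1\}^E$, and that the displayed three conditions hold by construction.

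The hard part will be the all-monotone-with-no-sign-change-and-no-zero cases, specifically ruling out an all-$+$ (or all-$-$) pattern on a class of order type $\mathbb{N}$ or $\mathbb{Z}$ and pinning down $\separ{X}{\pi}$ there: this is where one genuinely needs the interaction of (S), (Z), and the order-type classification of Lemma~\ref{ordertype}, rather than purely formal sign-vector manipulation. The one-sign-change argument via strong elimination is the technical heart but is a finite, local computation inside a finite affine oriented matroid restriction, so it should go through smoothly once set up.
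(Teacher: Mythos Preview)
Your approach is sound and shares its skeleton with the paper's: fix the reorientation from Corollary~\ref{totalorder}, note that if $X$ vanishes at some $f\in\pi$ then the pattern on $\pi$ is forced by $\sigma_f$, and otherwise argue that the sign string along $<_\pi$ is of the form $+\cdots+\,-\cdots-$. The paper's argument for the latter is more direct than yours: instead of using (SE) to rule out a local inversion, it simply picks any $Y\in\covset$ with $Y(f)=0$ for one fixed $f\in\pi$ (so that $Y$ already has the monotone pattern $+$ below $f$, $-$ above $f$) and observes that $\{e>f:X(e)=+\}\subseteq S(X,Y)$, which is finite by (S); hence $m_\pi(X):=\max\{e\in\pi:X(e)=+\}$ exists, and one sets $\separ{X}{\pi}:=s(m_\pi(X))$. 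Your (SE)-argument for monotonicity is correct as well and in fact supplies a verification the paper leaves implicit (that nothing strictly below $m_\pi(X)$ can carry a $-$). You should also drop the appeal to (Z): only (S) is assumed, and it alone rules out an all-$+$ pattern on an upward-unbounded class via the same comparison with $Y$.

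There is one genuine slip in your boundary case. If $X$ is identically $+$ on a (necessarily finite) class $\pi$ you set $\separ{X}{\pi}:=\top_\pi$, but then $X(\separ{X}{\pi})=+\notin\{0,-\}$, so the third displayed condition fails; the situation is \emph{not} symmetric to the all-$-$ case because the statement itself is asymmetric in $+$ and $-$. The remedy is to use the freedom in the reorientation (which the statement permits to depend on $X$): on each finite class where $X$ comes out identically $+$ under the Corollary~\ref{totalorder} orientation, reverse that orientation (and hence $<_\pi$), turning the pattern into all-$-$, after which $\separ{X}{\pi}:=\hat 0_\pi$ does the job. The paper's proof, which takes $\separ{X}{\pi}:=\hat 0_\pi$ whenever $X(\pi)\subseteq\{0,-\}$ and otherwise the successor of $m_\pi(X)$, tacitly relies on the same manoeuvre for the successor to exist.
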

\begin{proof} {Consider the reorientation of $\covset$ given in Corollary~\ref{totalorder}}. 
First note that if $X(\pi)\subseteq\{0,-\}$ then $\separ{X}{\pi}:=\hat{0}_\pi$ will do. Otherwise, we prove that the maximum
	$$
	m_\pi(X):=\max_{\leq_\pi} \{e\in \pi \mid X(e) = +\}
	$$
	exists. For this, choose an $f\in \pi$ and any $Y\in \covset$ with $f\in \ze{Y}$. Then by Corollary~\ref{totalorder} we know that $Y(e)=+$ if $e<_{\pi}f$ and $Y(e)=-$ if $e>_{\pi}f$. Now assume by way of contradiction that the stated maximum $m_\pi(X)$ does not exist. Then, either $X(e)=-$ for all $e\in\pi$ (a case that we excluded at the beginning), or there are infinitely many $e>_{\pi}f$ with $X(e)=+$, violating (S) between $X$ and $Y$.

	Now, the element $\separ{X}{\pi}:=s(m_\pi(X))$, the successor of $\separ{X}{\pi}$, satisfies the claim.
\end{proof}

\begin{examplenew}\label{ex:delta}
    In Figure \ref{fig:deltas} with the pictured reorientation and choosing the ordering $l_i<_{\pi(l_0)}l_j$, resp $h_i<_{\pi(h_0)}h_j$ if and only if $i<j$, for the covector $X$ corresponding to the bold segment we have $\delta_X(\pi(h_0))=h_1$ and $\delta_X(\pi(l_0))=l_1$. 
    For the tope $C$ we have $\delta_C(\pi(h_0))=h_1$ and $\delta_C(\pi(l_0))=l_0$.
\end{examplenew}

\addtocontents{toc}{\protect\setcounter{tocdepth}{1}}

\section{Basis frames and embeddings into Euclidean space}
\label{sec:frames}

This section contains some fundamentals that will be needed at a later stage, and can be skipped in a first reading. 
The goal is to study the homeomorphism type of covector posets of FAOMs by comparing them with those of restrictions to unions of parallelism classes of elements of a basis.

\subsection{Covectors of restrictions to basis frames} Throughout this section suppose that $\covset$ is the set of covectors of an AOM on the ground set $E$ satisfying (S) and (Z). In particular, (Z) ensures that there is a well-defined underlying  semimatroid $\SS(\covset)$ (see Corollary \ref{centralunder}), while (S) implies that the canonical ordering of parallelism classes has the order type of subsets of $\mathbb Z$ (see Lemma \ref{ordertype}).

\begin{definition}[Basis frame]
	Let $B$ be a basis of the underlying semimatroid $\SS(\covset)$. The associated {\em basis frame} is
	$$
	\Bext := \bigcup_{b\in B} \pi(b) \subseteq E,
	$$
	the union of all parallelism classes of elements of $B$.
\end{definition}

\begin{examplenew} \label{ex:basisframe}
Recall from Example \ref{ex:xbwhite} that
the pseudolines labeled $a_0, b_0$ in Figure \ref{fig:basis_inf} form a basis $B=\{a_0,b_0\}$ of the associated FAOM. The pseudolines corresponding to the basis frame $\widetilde{B}$ for are drawn in a bold stroke in Figure \ref{fig:basis_inf}.
\end{examplenew}

Throughout this section let $B$ be a basis of the semimatroid $\SS(\covset)$. We start by describing an explicit model of the restriction $\covset[\Bext]$.  Without loss of generality, suppose that $(E,\covset)$ is reoriented so to satisfy Corollary~\ref{totalorder}. 
	For every parallelism class $\pi$ of $\covset$ we fix an order isomorphism
\begin{equation}\label{eq:defot}
	j_\pi:\pi\to 
	\left\{\begin{array}{ll}
	\mathbb Z &\textrm{ if } \pi\subseteq E^{**}\\
	\mathbb N &\textrm{ if } \pi\subseteq E^{0*}\\
	\{0,1,\ldots, {n_\pi-1}\} &\textrm{ if } \pi\subseteq E^{01}\textrm{ where } n_\pi:=\vert \pi \vert\\
	\end{array}\right.
\end{equation}

and an index set
\begin{equation}\label{eq:indexset}
	\mathbb I_\pi := \left\{\begin{array}{ll}
	\frac{1}{2}\mathbb Z &\textrm{ if } \pi\subseteq E^{**}\\
	\{-\frac{1}{2}\}\uplus\frac{1}{2}\mathbb N &\textrm{ if } \pi\subseteq E^{0*}\\
	\{-\frac{1}{2},0,\frac{1}{2},1,\ldots,{n_\pi}, , \frac{2n_\pi+1}{2}\} &\textrm{ if } \pi\subseteq E^{01}\textrm{ where } n_\pi:=\vert \pi \vert\\
	\end{array}\right.
\end{equation}

Now consider the product

\begin{equation}
\mathbb I (B):= \prod_{b\in B} \mathbb I_{\pi(b)}
\end{equation}
For every $i\in \mathbb I(B)$ we define a sign vector $X_i\in \signs^{\Bext}$ as
\begin{equation}
X_i(e):=
\left\{
\begin{array}{ll}
	+ & \textrm{ if } i({\pi(e)}) > j_{\pi(e)}(e) \\
	0 & \textrm{ if } i({\pi(e)}) = j_{\pi(e)}(e) \\
	- & \textrm{ if } i({\pi(e)}) < j_{\pi(e)}(e) \\		
\end{array}
\right.
\textrm{ for every }e\in \Bext.
\end{equation}
Moreover, our chosen reorientation is the one that yields Corollary~\ref{corsepdelta} and so for every $X\in  \signs^{\Bext}$  we can define a vector $i_X\in \frac{1}{2}\mathbb Z^{\pi(B)}$ as
\begin{equation}
i_X (\pi):=
\left\{
\begin{array}{ll}
j_{\pi}(\separ{X}{\pi}) & \textrm{ if } X(\separ{X}{\pi})=0\\
j_{\pi}(\separ{X}{\pi}) -\frac{1}{2} & \textrm{ if } X(\separ{X}{\pi})=-
\end{array}
\right.
\end{equation}

\begin{lemma}\label{invcheck} With the definitions above, the following hold.
\begin{itemize}
	\item[(1)] For every $X\in  \signs^{\Bext}$, $X_{i_X}=X$.
	\item[(2)] For every $i\in \frac{1}{2}\mathbb Z^{\pi(B)}$, $i_{X_i}=i$.
\end{itemize}
\end{lemma}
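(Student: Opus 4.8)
\emph{Proof plan.} The plan is to reduce both identities to a statement about a single parallelism class and then verify it by a short case analysis. All the maps involved split as products over $\pi(B)$: the value $X_i(e)$ depends only on the coordinate $i(\pi(e))$ of $i$ and on $j_{\pi(e)}(e)$, while the coordinate $i_X(\pi)$ depends only on the restriction of $X$ to $\pi$ (through $\separ{X}{\pi}$). So it suffices to fix one class $\pi\in\pi(B)$, abbreviate $j:=j_\pi$, and prove each identity in the coordinate indexed by $\pi$. Throughout we use that $j$ is an order isomorphism of $(\pi,<_\pi)$ onto one of $\mathbb Z$, $\mathbb N$, $\{0,\dots,n_\pi-1\}$ (see~\eqref{eq:defot}), so that for $e,f\in\pi$ one has $e<_\pi f\iff j(e)<j(f)\iff j(e)\le j(f)-1$, the last step because $j$ is integer-valued.

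For~(1), let $X$ be a sign vector on $\Bext$ for which $i_X$ is defined, which by Corollary~\ref{corsepdelta} includes every $X\in\covset[\Bext]$; put $d:=\separ{X}{\pi}$ and distinguish the two cases in the definition of $i_X(\pi)$. If $X(d)=0$, then $i_X(\pi)=j(d)$, and for $e\in\pi$ the three cases $e<_\pi d$, $e=d$, $e>_\pi d$ give respectively $j(e)<i_X(\pi)$, $j(e)=i_X(\pi)$, $j(e)>i_X(\pi)$, hence $X_{i_X}(e)=+,\,0,\,-$; by Corollary~\ref{corsepdelta} this equals $X(e)$. If $X(d)=-$, then $i_X(\pi)=j(d)-\tfrac12$, so $j(d)>i_X(\pi)$, every $e>_\pi d$ satisfies $j(e)\ge j(d)+1>i_X(\pi)$ and every $e<_\pi d$ satisfies $j(e)\le j(d)-1<i_X(\pi)$; thus $X_{i_X}(e)=-$ for $e\ge_\pi d$ and $X_{i_X}(e)=+$ for $e<_\pi d$, which again is $X(e)$ by Corollary~\ref{corsepdelta}. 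Hence $X_{i_X}=X$.

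For~(2), fix $i$ and set $t:=i(\pi)\in\mathbb I_\pi$. By construction $X_i$ is $+$ on $\{e\in\pi:j(e)<t\}$, is $0$ on $\{e\in\pi:j(e)=t\}$, and is $-$ on $\{e\in\pi:j(e)>t\}$. If $t$ is an integer in the range of $j$, say $t=j(d)$, then $X_i(d)=0$ and $X_i$ is separating on $\pi$ with $\separ{X_i}{\pi}=d$, so $i_{X_i}(\pi)=j(d)=t$. If $t$ is a half-integer with $m:=t+\tfrac12$ in the range of $j$, say $m=j(d)$ --- this also covers $t=-\tfrac12$ with $d=\hat{0}_\pi$ whenever $\pi$ is bounded below --- then $X_i$ has no zero on $\pi$, is $+$ on $\{e:j(e)\le m-1\}$ and $-$ on $\{e:j(e)\ge m\}$, so $\separ{X_i}{\pi}=d$ and $X_i(d)=-$ (using, when $d=\hat{0}_\pi$, the convention $\separ{X}{\pi}=\hat{0}_\pi$ for $X(\pi)\subseteq\{0,-\}$ from Corollary~\ref{corsepdelta}); therefore $i_{X_i}(\pi)=j(d)-\tfrac12=t$. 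The only remaining value of $t$ is the maximal symbol of $\mathbb I_\pi$ in the case $\pi\subseteq E^{01}$: then $X_i$ is identically $+$ on $\pi$, and the convention for $\separ{\cdot}{\pi}$ dual to the one just used returns precisely this maximal symbol. Hence $i_{X_i}=i$.

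The argument is essentially bookkeeping; the one point that genuinely needs care is the interface between the half-integer shift ``$-\tfrac12$'' in the definition of $i_X$ and the extremal slots of the index sets $\mathbb I_\pi$ from~\eqref{eq:indexset} --- namely, checking that the convention ``$\separ{X}{\pi}=\hat{0}_\pi$ when $X(\pi)\subseteq\{0,-\}$'' of Corollary~\ref{corsepdelta} is matched with the slot $t=-\tfrac12$, and dually that the all-positive sign vector on a bounded class $\pi$ corresponds to the maximal symbol of $\mathbb I_\pi$. Once this matching is pinned down, both identities follow from the elementary comparisons of (half-)integers carried out above.
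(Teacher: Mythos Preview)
Your proof is correct and follows essentially the same coordinatewise verification as the paper; the only cosmetic difference is that the paper compresses your case split in~(2) into the single observation $j_\pi(\separ{X_i}{\pi})=\lceil i(\pi)\rceil$, with equality to $i(\pi)$ exactly when $X_i(\separ{X_i}{\pi})=0$. Your treatment of the extremal slots of $\mathbb I_\pi$ is in fact more careful than the paper's, which tacitly relies on the standing assumption $B\subseteq E^{**}$ used later in \S\ref{ssec:embeddings}.
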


\begin{proof}
We check both identities elementwise. 
\begin{itemize}
	\item[(1)]
	Fix $e\in \Bext$. By definition, $X_{i_X}(e)= + $ if and only if
	$ i_X({\pi(e)}) > j_{\pi(e)}(e)$, i.e.,  $j_{\pi}(\separ{X}{\pi}) > j_{\pi(e)}(e)$ or, equivalently, $\separ{X}{\pi} > e$, and by Corollary~\ref{corsepdelta} the latter means $X(e)=+$. The cases $X_{i_X}(e)=0$ and $X_{I_X}(e)= - $ are analogous. 
	\item[(2)] Notice first that $j_\pi (\separ{X_i}{\pi})=\lceil i(\pi)\rceil$ for all $i$, with $j_\pi (\separ{X_i}{\pi})= i(\pi)$ if and only if $X_i(\separ{X_i}{\pi})=0$. Now we compute with the definition, for every parallelism class $\pi$:
	$$i_{X_i}(\pi)= \left\{
	\begin{array}{ll}
	\lceil i(\pi)\rceil = i(\pi) & \textrm{ if } X_i(\separ{X_i}{\pi})=0\\
	\lceil i(\pi)\rceil -\frac{1}{2} = i(\pi) & \textrm{ if } X_i(\separ{X_i}{\pi})=-
	\end{array}
	\right. $$
	\end{itemize}
\end{proof}

\begin{definition}\label{def:orderi}
	Define a partial order $\preceq$ on $\frac{1}{2}\mathbb Z$ by setting $p\prec q$ if and only if $p\in \mathbb Z$, $q=\pm\frac{1}{2}$. (See Figure \ref{figfence}.)
	This restricts to a partial order on $\mathbb I_\pi$, for every parallelism class $\pi$, and induces a partial order on $\mathbb I(B)$ by taking Cartesian product of the orderings of all $\mathbb I_\pi$.
	This ordering on $\mathbb I(B)$ we also call $\preceq$, and it can be explicitly described as
	$$
	i\preceq i' \textrm { if and only if } i_{\pi(b)}\preceq i'_{\pi(b)} \textrm{ for all }b\in B.
	$$
\end{definition}

\begin{figure}[h]
\begin{tikzpicture}
\foreach \x in {1,3,5}
    \node (t\x) at (\x,1) {};
\foreach \x in {0,2,4,6}
    \node (b\x) at (\x,0) {};    
\node[anchor=south] (t1l) at (t1) {$-\frac{1}{2}$};
\node[anchor=south] (t3l) at (t3) {$\frac{1}{2}$};
\node[anchor=south] (t5l) at (t5) {$\frac{3}{2}$};
\node[anchor=north] (b0l) at (b0) {$-1$};
\node[anchor=north] (b2l) at (b2) {$0$};
\node[anchor=north] (b4l) at (b4) {$1$};
\node[anchor=north] (b6l) at (b6) {$2$};
\draw (b0.center) -- (t1.center) -- (b2.center) -- (t3.center) -- (b4.center) -- (t5.center) -- (b6.center);
\draw[dotted] (b0.center) -- (-.5,.5);
\draw[dotted] (b6.center) -- (6.5,.5);
\end{tikzpicture}
\caption{The partial order defined in Definition \ref{def:orderi}.}\label{figfence}
\end{figure}

\begin{proposition}\label{prop:LIbij}
	The assignment
	$$
	\mathbb I (B) \to \covset[\Bext],\quad\quad i\mapsto X_i
	$$
	is a well-defined bijection with inverse $X\mapsto i_X$.
\end{proposition}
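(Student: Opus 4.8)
The plan is to reduce everything, via Lemma~\ref{invcheck}, to two containments: that $X_i\in\covset[\Bext]$ for every $i\in\mathbb I(B)$ (so $i\mapsto X_i$ is well defined), and that $i_X\in\mathbb I(B)$ for every $X\in\covset[\Bext]$. Granting these, Lemma~\ref{invcheck} gives $X_{i_X}=X$ and $i_{X_i}=i$, so $i\mapsto X_i$ and $X\mapsto i_X$ are mutually inverse bijections. Throughout I keep the reorientation of Corollary~\ref{totalorder} (hence of Corollary~\ref{corsepdelta}) fixed.

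For $i_X\in\mathbb I(B)$: every covector of $\covset[\Bext]$ is the restriction of a covector of $\covset$, so Corollary~\ref{corsepdelta} applies and gives that $X|_{\pi(b)}$ has ``threshold form'' for each $b\in B$, i.e.\ $X|_{\pi(b)}=X_{i_X(\pi(b))}|_{\pi(b)}$ with $i_X(\pi(b))$ equal to $j_{\pi(b)}(\separ{X}{\pi(b)})$ or $j_{\pi(b)}(\separ{X}{\pi(b)})-\tfrac12$. Reading off the value set of $j_{\pi(b)}$ from Lemma~\ref{ordertype} in each of the cases $\pi(b)\subseteq E^{**},E^{0*},E^{01}$ shows $i_X(\pi(b))\in\mathbb I_{\pi(b)}$, the extremal values of $\mathbb I_{\pi(b)}$ corresponding to $X$ being constantly $-$ (resp.\ $+$) on $\pi(b)$, which can occur only when $0_{\pi(b)}$ (resp.\ $1_{\pi(b)}$) exists. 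Hence $i_X\in\mathbb I(B)$.

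The substantial part is $X_i\in\covset[\Bext]$, which I would establish by reducing to a finite restriction. For each $b\in B$ let $D_b\subseteq\pi(b)$ be the element at position $i(\pi(b))$ when $i(\pi(b))\in\mathbb Z$, the two consecutive elements flanking $i(\pi(b))$ when it is an interior half-integer, and $0_{\pi(b)}$ (resp.\ $1_{\pi(b)}$) when it is the minimal (resp.\ maximal) half-integer of $\mathbb I_{\pi(b)}$; put $E':=\bigcup_{b\in B}D_b$, a finite set, so $\covset[E']$ is a finite AOM (Theorem~\ref{thm:minorclosed}). Since $E'$ contains a transversal of the classes $\pi(b)$, which is a basis of $\SS(\covset)$ (replacing each element of a basis by a parallel one again gives a basis, established above), Corollary~\ref{minorrank} yields $\rk(\covset[E'])=|B|$; and $\covset[E']$ has exactly $|B|$ parallelism classes, namely the $D_b$ (distinct $D_b$'s stay non-parallel because a pair $u\in D_b$, $u'\in D_{b'}$ lies in a common flat of $\covset$, a property preserved by restriction; the two elements of a size-two $D_b$ stay parallel). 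So $\covset[E']$ is a ``generic'' finite AOM. Assuming the claim $X_i|_{E'}\in\covset[E']$ for such AOMs, pick any $Z\in\covset$ with $Z|_{E'}=X_i|_{E'}$: by Corollary~\ref{corsepdelta} each $Z|_{\pi(b)}$ has threshold form, and its agreement with $X_i$ on $D_b$ pins that threshold to position $i(\pi(b))$ (e.g.\ two consecutive flanking elements carrying signs $+,-$ force the threshold to lie strictly between them); therefore $Z|_{\Bext}=X_i$.

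It remains to treat the finite generic case, and this is where the main obstacle lies. A finite AOM of rank $n$ with exactly $n$ parallelism classes of size at most two is, up to reorientation, the affine oriented matroid of an arrangement of $n$ hyperplanes of $\mathbb R^n$ in general position with some of them doubled by a parallel translate — a rigidity that is elementary, since the singleton classes force a Boolean/coordinate structure and a parallel translate is a unique affine single-element extension — and the covectors of such an arrangement are exactly the $X_i|_{E'}$. Alternatively one can realize the non-extremal positions directly inside $\covset$: if every coordinate of $i$ is an element position or an interior half-integer, choose transversals $U,V$ of the classes so that the associated maximal covectors $X_U,X_V$ (Corollary~\ref{corBo}) vanish, on each $\pi(b)$, at the element just below resp.\ just above position $i(\pi(b))$, and check $(X_U\circ X_V)|_{\Bext}=X_i$ using axiom~(C); the extremal positions are produced by passing to the oriented matroid $\covpos(\covset)_{\ge X^0}\cong\covset[\ze{X^0}]$ at a suitable maximal covector $X^0$ (Lemma~\ref{locOM}), observing that $0_{\pi(b)}$ (resp.\ $1_{\pi(b)}$) is an atom of $\flatpos(\covset)_{\le\ze{X^0}}$, hence not a loop, and using that oriented-matroid covectors are closed under negation to obtain a covector $\ge X^0$ that is constantly $-$ (resp.\ $+$) on $\pi(b)$. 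The hard point is doing this on \emph{all} parallelism classes at once with a single covector — a direct composition locks in only one full-support class at a time — which is precisely why it is cleanest to transport the whole question to the finite restriction $\covset[E']$, where the generic structure makes the simultaneous realization automatic.
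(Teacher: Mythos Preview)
Your proposal is essentially correct but takes a considerably more roundabout route than the paper, and the primary branch of your argument (the ``generic finite AOM'' classification) is left as an unproved rigidity claim. Your \emph{alternative} branch using $X_U\circ X_V$ is close to the paper's idea, but the paper executes it more cleanly and uniformly, without splitting into extremal and non-extremal cases or passing to a finite restriction $E'$ at all.

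Here is the paper's argument for $X_i\in\covset[\Bext]$. Pick any $i^*$ adjacent to $i$ in the cube order with all integer coordinates (each $i^*(\pi)$ is one of the integers flanking $i(\pi)$, or $i(\pi)$ itself if already integer). By Corollary~\ref{corBo} there is a (minimal) covector $X_{i^*}\in\covset[\Bext]$ with zero set the transversal $B^*:=\{j_{\pi(b)}^{-1}(i^*(\pi(b))):b\in B\}$, and the threshold form of Corollary~\ref{corsepdelta} forces $X_{i^*}$ to agree with the abstract definition. Since $B^*$ is a basis, the restriction $\covset[B^*]$ is the full Boolean oriented matroid $\{+,-,0\}^{B^*}$; in particular $(X_i)|_{B^*}$ is a covector there and hence lifts to some $Y\in\covset[\Bext]$. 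Finally, because $|i(\pi)-i^*(\pi)|\le\tfrac12$, the sign comparison defining $X_i(e)$ and $X_{i^*}(e)$ coincides at every $e\notin B^*$, so $X_i=X_{i^*}\circ Y\in\covset[\Bext]$ by~(C).

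The ``hard point'' you flag---getting the right sign on all parallelism classes simultaneously---dissolves: $X_{i^*}$ already carries the correct sign at every $e\notin B^*$, and the remaining $|B|$ coordinates on $B^*$ are completely free because the restriction to a basis is Boolean. No classification of finite generic AOMs is needed, and extremal half-integer positions require no separate treatment. Your check that $i_X\in\mathbb I(B)$ is a useful explicit step that the paper relegates to ``clearly well-defined''; keep it, but replace your well-definedness argument for $i\mapsto X_i$ with the two-line Boolean-restriction trick above.
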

\begin{proof}
	We first prove that the assignment is well-defined. By Corollary~\ref{corBo}, if every coordinate of $i$ is an integer then $X_i\in \covset[\Bext]$. Otherwise,  there is an $i^*\preceq i$ with all integer coordinates, and we have $X_{i^*}\in \covset[\Bext]$, with $X_{i^*}(e)=X_{i}(e)$ if $e\not\in \ze{X_{i^*}}$. Now $B^*:=\ze{X_{i^*}}$ is a basis of $\covset[\Bext]$ and thus $\covset[B^*]$  is a Boolean (finite) oriented matroid, i.e., $\covset[B^*]=\signs^{B^*}$ and, in particular, 
	 there is $Y\in\covset[\Bext]$ with $Y_{\vert B^*}=(X_i)_{\vert B^*}$. Since $X_i=X_{i^*} \circ Y$, by (C) we conclude 
	$ X_i\in \covset[\Bext]$ as desired. This proves that the function is well-defined. The stated inverse is clearly well-defined, and Lemma~\ref{invcheck}.(2) shows that it is, in fact, an inverse.
	
\end{proof}

We use this bijection in order to decompose the covector poset into a Cartesian product.

\begin{theorem}\label{corIF}
	The function $i\mapsto X_i$ defines an isomorphism of posets 
	$$(\mathbb I(B),\preceq) \cong \covpos(\covset[\Bext]).$$ 
\end{theorem}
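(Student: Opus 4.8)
The bijection $i \mapsto X_i$ and its inverse $X \mapsto i_X$ were already established in Proposition~\ref{prop:LIbij}, so what remains is to verify that both the assignment and its inverse are order preserving with respect to the product order $\preceq$ on $\mathbb I(B)$ (Definition~\ref{def:orderi}) and the natural order on $\covpos(\covset[\Bext])$. Since the product order on $\mathbb I(B)$ is computed coordinatewise over $b \in B$, and the natural order on sign vectors is computed coordinatewise over $e \in \Bext$, and the coordinates $e \in \Bext$ are grouped into the parallelism classes $\pi(b)$, the entire statement reduces to a per-parallelism-class computation: for a single class $\pi = \pi(b)$, one must show that $i_\pi \preceq i'_\pi$ in $\mathbb I_\pi$ if and only if $X_i(e) \leq X_{i'}(e)$ for every $e \in \pi$.

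\textbf{Key steps.} First I would unwind the definition of $X_i$ restricted to a single class $\pi$: the sign vector $X_i$ on $\pi$ is entirely determined by the real number $i(\pi) \in \mathbb I_\pi$, via $X_i(e) = \operatorname{sign}(i(\pi) - j_\pi(e))$ under the identification $j_\pi : \pi \to \mathbb Z$ (or $\mathbb N$, or a finite segment). So I would work entirely inside $\frac12\mathbb Z$ (or the relevant $\mathbb I_\pi$), translating sign vectors on $\pi$ into "cuts'' of the integer line: a half-integer $p$ determines the sign pattern "$+$ below $\lceil p \rceil$ if $p \notin \mathbb Z$, $0$ at $p$ if $p \in \mathbb Z$, $-$ above''. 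Then I would check the key equivalence: for $p, q \in \frac12\mathbb Z$, the sign pattern of $p$ is $\leq$ the sign pattern of $q$ (coordinatewise, with $0 < \pm$) if and only if $p \prec q$ or $p = q$, i.e. $p \preceq q$. The forward direction: if the pattern of $p$ is below that of $q$, then $q$'s pattern has strictly fewer (in fact zero, unless $p=q$) coordinates equal to $0$ that $p$'s pattern does not, forcing $q$ to be non-integer whenever $p \neq q$, and the zero of $p$ (if any) must lie at a spot where $q$ is already nonzero — a short case analysis on whether $p \in \mathbb Z$ shows $\lfloor q \rfloor \leq p \leq \lfloor q \rfloor + 1$ with the integrality constraints pinning down exactly $p \in \{\lfloor q \rfloor, \lceil q \rceil\}$, which is the definition of $p \prec q$. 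The reverse direction is immediate from the definition of $\prec$ by writing out both sign patterns.

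\textbf{Assembling.} Once the single-class equivalence is in hand, the theorem follows formally: $i \preceq i'$ iff $i_\pi \preceq i'_\pi$ for all classes $\pi = \pi(b)$, $b \in B$ (Definition~\ref{def:orderi}), iff $X_i(e) \leq X_{i'}(e)$ for all $e \in \pi(b)$ and all $b \in B$ (the per-class equivalence), iff $X_i \leq X_{i'}$ in $\covpos(\covset[\Bext])$ (since $\Bext = \bigsqcup_{b\in B}\pi(b)$). Combined with Proposition~\ref{prop:LIbij}, which gives bijectivity, this yields the claimed poset isomorphism.

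\textbf{Main obstacle.} There is no deep obstacle here — the content is bookkeeping — but the one place demanding care is the boundary behavior of $\mathbb I_\pi$ in the cases $\pi \subseteq E^{0*}$ and $\pi \subseteq E^{01}$, where the index set is not all of $\frac12\mathbb Z$ but is truncated with extra "half-integer'' endpoints like $-\frac12$ and $\frac{2n_\pi+1}{2}$ (see~\eqref{eq:indexset}). One must confirm that $\prec$ restricted to these truncated sets still exactly mirrors the sign-pattern order on $\pi$ — in particular that the extra endpoints correspond to the "all $+$'' and "all $-$'' covectors on $\pi$, which exist precisely because of the extremum conventions fixed in Definition~\ref{def:totalorder} and Corollary~\ref{corsepdelta}. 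I would handle this by noting that the inclusion $\mathbb I_\pi \hookrightarrow \frac12\mathbb Z$ (resp. into a suitable truncation) is an order embedding for $\prec$, so the single-class equivalence proven over $\frac12\mathbb Z$ restricts without change, and the surjectivity onto $\covset[\Bext]$ already dealt with the endpoint covectors in Proposition~\ref{prop:LIbij}.
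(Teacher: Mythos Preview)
Your proposal is correct and follows essentially the same approach as the paper: reduce to a per-parallelism-class computation and verify that $i \preceq i'$ in $\mathbb I_\pi$ is equivalent to $X_i \leq X_{i'}$ on $\pi$, then assemble over $b \in B$ and invoke Proposition~\ref{prop:LIbij} for bijectivity. The paper phrases this as two separate order-preservation checks (one for $i\mapsto X_i$, one for $X\mapsto i_X$) rather than a single biconditional, and it omits your explicit discussion of the truncated index sets in the $E^{0*}$ and $E^{01}$ cases, but the substance is the same.
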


\begin{proof}
	We prove that both $i\mapsto X_i$ and $X\mapsto i_X$ are order preserving.
	
	First suppose $i\preceq i' $ in $\mathbb I(B)$. We prove  $X_i\leq X_{i'}$ componentwise.
	Consider a parallelism class $\pi$. If $i_\pi = i'_\pi$, then obviously $X_i(e)=X_{i'}(e)$ for all $e\in \pi$. Otherwise, (1) $i_\pi\in \mathbb Z$ and (2) $i'_\pi = i_\pi\pm \frac{1}{2}$. This means that, with $e:=j_\pi^{-1}(i_\pi)$, we have (1) $X_i(e)=0$  and (2) $X_{i'}(e)\neq 0$, with $X_{i'}(f)=X_i(f)$ for all $f\in \pi\setminus \{e\}$. Therefore $(X_i)_{\vert \pi} \leq (X_{i'})_{\vert \pi}$, and repeating the argument for every $\pi$ proves $X_i\leq X_{i'}$.
	
	Now suppose $X\leq Y$ in $\covpos(\covset)$, i.e.,  $X(e)\leq Y(e)$ for all $e\in E$. In particular, $X(e)=Y(e)$ whenever $X(e)\neq 0$. We prove the inequality $i_X\preceq i_Y$ componentwise. Fix a parallelism class $\pi$. If $X(e)\neq 0$ for all $e\in \pi$, then $i_X(\pi) = i_Y (\pi)$. Otherwise there is $e\in \pi$ with $X(e)=0$ (which means $i_X({\pi}) = j_{\pi}(e)$), and  by definition of parallelism, $X(f)\neq 0$ (hence $Y(f)=X(f)$) for all $f\in \pi\setminus\{e\}$. $Y(e)$ can now be $0$, $+$ or $-$ and, depending on those cases, we will have $i_Y ({\pi})= j_{\pi}(e)$, $i_Y ({\pi})= j_{\pi}(e) + \frac{1}{2}$, $i_Y({\pi})= j_{\pi}(e)-\frac{1}{2}$. In any case, $i_X ({\pi}) \preceq i_X({\pi})$.

\end{proof}

\begin{examplenew}
    If $\covset$ is the FAOM of the line arrangement of Figure \ref{fig:pseudoperiodic} and $B=\{a_0,b_0\}$ is, e.g., the basis described in Example \ref{ex:xbwhite}, then Theorem \ref{corIF} states the isomorphism between the FAOM $\covpos(\covset(\widetilde B))$ (represented by the bold-faced lines in Figure \ref{fig:basis_inf}) and   $(\mathbb I(B),\preceq)$, which is the FAOM of the arrangement in Figure \ref{examplesab}.(b). 
\end{examplenew}

\subsection{Embeddings into Euclidean space}
\label{ssec:embeddings}

Throughout this section let $\covset$ be the covector set of a simple FAOM of rank $d$ and let $B$ be a basis of the semimatroid $\SS(\covset)$ such that $B\subseteq E^{**}$. For every $\pi\in\pi(B)$ we fix an arbitrary order isomorphism $j_\pi:\pi\to \mathbb Z$ and, thus, an isomorphism $\covpos(\covset[\Bext])\to \mathbb I(B)$.

\begin{notation}
For brevity, in this section we  write $\fkz$ for $\covpos(\covset)$ and $\fkzb$ for $\covpos(\covset[\Bext])$.
\end{notation}

\begin{lemma} \label{homeo3}\label{L31}		
	There is an isomorphism of simplicial complexes $\Delta(\fkzb) \cong\Delta( \mathbb I(B))$ and, in particular, a homeomorphism 
	$\Vert \fkzb \Vert \cong\Vert \mathbb I(B) \Vert$.
\end{lemma}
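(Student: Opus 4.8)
The plan is to read this directly off Theorem~\ref{corIF}. That theorem already does the substantive work: it exhibits the assignment $i\mapsto X_i$, together with its inverse $X\mapsto i_X$, as an isomorphism of posets $(\mathbb I(B),\preceq)\cong\fkzb$.

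First I would recall that the order complex is functorial. For a poset $P$, the complex $\Delta(P)$ has the elements of $P$ as vertices and the finite chains of $P$ as faces; an order isomorphism $\varphi\colon P\to Q$ carries chains to chains, while $\varphi^{-1}$ carries them back, so $\varphi$ induces an isomorphism of abstract simplicial complexes $\Delta(P)\to\Delta(Q)$. Applying this to the poset isomorphism of Theorem~\ref{corIF} yields $\Delta(\fkzb)\cong\Delta(\mathbb I(B))$. No finiteness or boundedness of $\fkzb$ or $\mathbb I(B)$ is needed here: the order complex of an arbitrary poset is defined through its finite chains, and this is precisely the meaning of $\Delta$ already in force, e.g.\ in Proposition~\ref{prop:ShCo}.

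For the topological clause I would then invoke the standard fact that an isomorphism of abstract simplicial complexes induces a homeomorphism --- in fact a simplicial, hence PL, homeomorphism --- of geometric realizations, obtained by extending the vertex bijection affinely over each simplex. This gives $\Vert\fkzb\Vert\cong\Vert\mathbb I(B)\Vert$.

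I do not expect a genuine obstacle: the entire content sits in Theorem~\ref{corIF}, and the lemma is merely the transport of that isomorphism through the functorial passage from posets to order complexes to geometric realizations. The only points deserving a word of care are bookkeeping ones --- confirming that $\Delta$ denotes the order complex of finite chains, consistent with its use in Proposition~\ref{prop:ShCo}, and that $\preceq$ on $\mathbb I(B)$ is the partial order of Definition~\ref{def:orderi} --- and both are already in place.
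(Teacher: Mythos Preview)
Your proposal is correct and follows exactly the paper's approach: invoke the poset isomorphism of Theorem~\ref{corIF}, pass to chains to get the simplicial-complex isomorphism, and then to geometric realizations for the homeomorphism. The paper's proof is a one-sentence version of what you wrote.
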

\begin{proof} The poset-isomorphism from
	Theorem~\ref{corIF}  establishes a one-to-one correspondence between chains and, thus, an isomorphism of simplicial complexes. 
\end{proof}

\begin{lemma}\label{lem_concreteR}
	The natural inclusion $\mathbb I(B)\hookrightarrow \mathbb R^d$ extends affinely to a homeomorphism
	$$
	\Vert \mathbb I (B) \Vert \simeq \mathbb R^d.
	$$
\end{lemma}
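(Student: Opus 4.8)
The plan is to reduce to the one-dimensional case by exploiting the product structure of $(\mathbb I(B),\preceq)$. Since throughout this section $B\subseteq E^{**}$, every factor $\mathbb I_{\pi(b)}$ equals $\tfrac12\mathbb Z$, so by Definition~\ref{def:orderi} the poset $(\mathbb I(B),\preceq)$ is the $d$-fold Cartesian product of copies of $(\tfrac12\mathbb Z,\preceq)$, one for each $b\in B$; moreover, after the chosen identifications $j_{\pi(b)}\colon\pi(b)\to\mathbb Z$, the natural embedding $\mathbb I(B)\hookrightarrow\mathbb R^d$ is precisely the inclusion $(\tfrac12\mathbb Z)^d\hookrightarrow\mathbb R^d$. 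It therefore suffices to prove that the map $\Phi\colon\Vert\mathbb I(B)\Vert\to\mathbb R^d$ which sends each vertex $i$ of the order complex to the point $i\in\mathbb R^d$ and is affine on every simplex is a homeomorphism.

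For $d=1$: the poset $(\tfrac12\mathbb Z,\preceq)$ is the bi-infinite ``fence'' whose minimal elements are the integers and in which each half-integer $n+\tfrac12$ covers exactly $n$ and $n+1$. Since a fence has height one, $\Delta(\tfrac12\mathbb Z,\preceq)$ has no $2$-simplices and is simply the graph on vertex set $\tfrac12\mathbb Z$ whose edges are the comparable pairs, i.e. the bi-infinite path $\cdots-(n)-(n+\tfrac12)-(n+1)-\cdots$. Placing the vertex $p$ at $p\in\mathbb R$, the map $\Phi$ carries each edge affinely and order-preservingly onto the corresponding closed interval $[n,n+\tfrac12]$ or $[n+\tfrac12,n+1]$; these intervals cover $\mathbb R$ and pairwise meet only in common endpoints, so $\Phi$ is a proper PL homeomorphism $\Vert\Delta(\tfrac12\mathbb Z,\preceq)\Vert\xrightarrow{\sim}\mathbb R$.

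For general $d$ I would invoke the standard ``staircase'' triangulation of a product: for any posets $P_1,\dots,P_d$, the complex $\Delta(P_1\times\cdots\times P_d)$ is, under the diagonal vertex placement, a triangulation of $\Vert\Delta(P_1)\Vert\times\cdots\times\Vert\Delta(P_d)\Vert$, i.e. the piecewise-affine map sending a vertex $(p_1,\dots,p_d)$ to the tuple of the corresponding vertices is a homeomorphism onto the product. On a single prism $\sigma_1\times\cdots\times\sigma_d$ with each $\sigma_i$ a chain this is the classical decomposition of a product of simplices into the simplices indexed by monotone lattice paths; since each $\Delta(\tfrac12\mathbb Z,\preceq)$ is locally finite (every vertex lies in at most two edges), the product complex is locally finite and the homeomorphisms over the compact prisms glue to a global one --- alternatively one exhausts both sides by the finite subcomplexes associated with products of intervals $\{-N,\dots,N\}\subseteq\tfrac12\mathbb Z$ and passes to the colimit. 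Composing with the $d$-fold product of the $d=1$ homeomorphism and observing that the composite is exactly $\Phi$ (it sends vertex $i$ to $i$ and is affine on simplices) finishes the proof.

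The main obstacle is this product--triangulation step, and within it two points needing genuine (if routine) verification: that the images of the simplices of $\Delta(\prod_iP_i)$ under the diagonal placement cover the product and overlap only in faces --- the monotone-lattice-path triangulation of a product of simplices --- and that this passes correctly to the non-compact, locally finite setting. Everything else is bookkeeping with the explicit coordinates. (One could instead obtain the mere existence of \emph{some} homeomorphism $\Vert\mathbb I(B)\Vert\simeq\mathbb R^d$ from Theorem~\ref{corIF} together with Theorem~\ref{thm:PLball}.(2), but that argument does not produce the affine model asserted here.)
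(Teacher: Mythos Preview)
Your argument is correct, but the paper's proof is shorter and avoids the product--triangulation machinery. The paper simply observes that $(\mathbb I(B),\preceq)$ \emph{is} the face poset of the standard cubical tiling $\mathcal Q$ of $\mathbb R^d$ by integer unit cubes: each $i\in(\tfrac12\mathbb Z)^d$ is the barycenter of a unique face of $\mathcal Q$, and $i\preceq i'$ holds precisely when the face with barycenter $i$ is contained in the face with barycenter $i'$. Then $\Delta(\mathbb I(B))$ is, by the general face-poset/barycentric-subdivision correspondence (Remark~\ref{rem:faceposet}), the barycentric subdivision of $\mathcal Q$, realized geometrically by the affine extension of the vertex inclusion.

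Your route via the $d=1$ case plus the staircase triangulation of $\Delta(P_1\times\cdots\times P_d)\simeq\Vert\Delta(P_1)\Vert\times\cdots\times\Vert\Delta(P_d)\Vert$ is of course also valid, and in fact unwinds to the same geometric picture (the face poset of a product of cell complexes is the product of the face posets, so the barycentric subdivision of the cube tiling is exactly what your product triangulation produces). The trade-off is that you take on the burden of the monotone-lattice-path triangulation and the local-finiteness/colimit passage, both of which the paper's one-step identification sidesteps entirely. On the other hand, your approach makes the product structure manifest, which could be useful if one wanted to treat the cases $\pi\subseteq E^{0*}$ or $\pi\subseteq E^{01}$ factorwise rather than assuming $B\subseteq E^{**}$.
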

\begin{proof}
	The elements of $ \mathbb I (B)$ are the barycenters of the cells of the cube complex $\mathcal Q$ of integer unit cubes, with set of vertices $\mathbb Z^d\subseteq \mathbb R^d$, and $i_1\preceq i_2 $ in $ \mathbb I (B)$ if and only if the cell with barycenter $i_1$ is contained in the cell with barycenter $i_2$. Therefore $\Vert \mathbb I (B) \Vert$ is realized as a geometric simplicial complex by the barycentric subdivision of $\mathcal Q$, see Remark~\ref{rem:faceposet}. In particular, the homeomorphism in the claim can be defined by affine extension of the inclusion of  $ \mathbb I (B)$ into $\mathbb R^d$.
\end{proof}

\begin{lemma}\label{lemresB}
	Let $\covset$ be the covector set of a simple FAOM and let $B$ be a basis of the semimatroid $\SS(\covset)$ such that $B\subseteq E^{**}$.  Consider the natural restriction map
	$$
	\phi: \covset \to \covset[\Bext],\quad X\mapsto X_{\vert\Bext},
	$$
	which induces an order preserving, surjective map $\fkz\to \fkzb$.
	For every $Y\in\fkzb$, the following hold.
	\begin{itemize}
	\item[(1)] 
	The preimage $\phi^{-1}(Y)$ is finite.
	\item[(2)] 
The poset $\fkz_{\leq \phi^{-1}(Y)}$ has length equal to $\rk(Y)$.
	\item[(3)] The order complex $\Vert \fkz_{\leq \phi^{-1}(Y)} \Vert$ is a subdivision of a $\rk(Y)$-dimensional PL-ball with boundary $\Vert \phi^{-1}(\fkzbsub_{<Y} )\Vert$.
\end{itemize}
\end{lemma}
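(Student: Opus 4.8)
The plan is to analyze the restriction map $\phi\colon\fkz\to\fkzb$ fiber by fiber, reducing everything to finite affine oriented matroids where the corresponding statements are available from \cite{bjvestwhzi-93} and from the results established earlier in Section~\ref{sec:FAOM}. The key observation is that for a fixed $Y\in\fkzb$, every covector in $\phi^{-1}(Y)$, together with everything below it, lives in a single finite convex restriction $\covset[E_{\mathscr C}]$ as produced by Lemmas~\ref{lemseq} and~\ref{lem:finconv}; once inside a finite AOM, parts (1)--(3) become essentially classical facts about bounded complexes.

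\medskip

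For part (1): the set $\phi^{-1}(Y)$ consists of covectors $X\in\covset$ with $X_{\vert\Bext}=Y$. Since $Y$ has finite support off $\ze{X_i}$-type flats and $\rk(Y)$ is finite, I would argue that any two elements $X,X'\in\phi^{-1}(Y)$ satisfy $|S(X,X')|<\infty$ (axiom (S)) and that in fact $\phi^{-1}(Y)$ is contained in a single interval-type finite subposet: pick any $X_0\in\phi^{-1}(Y)$, then every $X\in\phi^{-1}(Y)$ differs from $X_0$ only on $E\setminus\Bext$, and using (Z), (I) together with the fact that $X\circ X_0\geq X_0$-style compositions stay within a finite set, one sees $\phi^{-1}(Y)$ is finite. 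Concretely, I would take the finite convex set $\mathscr C$ of topes guaranteed by Lemma~\ref{lemseq} large enough that $\phi^{-1}(Y)\subseteq\covpos(\covset)_{\leq\mathscr C}$ (possible because each element of $\phi^{-1}(Y)$ lies below some tope, and there are finitely many relevant ones), whence $\phi^{-1}(Y)\subseteq\covset[E_{\mathscr C}]$ is finite.

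\medskip

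For parts (2) and (3): working inside the finite AOM $\covset[E_{\mathscr C}]$ from Lemma~\ref{lem:finconv}, the poset $\fkz_{\leq\phi^{-1}(Y)}$ is identified (via the restriction isomorphism of Lemma~\ref{lem:finconv}.(1)) with the order ideal below $\phi^{-1}(Y)$ in $\covpos(\covset[E_{\mathscr C}])$. The restriction of $\phi$ to this finite AOM is the restriction map onto $\covset[E_{\mathscr C}\cap\Bext]$, and the fiber over $Y$ in the finite setting is a ``fiber'' of an affine restriction map between bounded complexes of finite oriented matroids. Here I would invoke the rank-preserving property of $\ze{\cdot}$ and Corollary~\ref{minorrank} together with the structure of bounded complexes: the covectors below $\phi^{-1}(Y)$ are exactly those $X$ with $\ze{X}\supseteq$ some flat mapping to $\ze{Y}$, and the length count gives $\rk(Y)$. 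For (3), the order complex $\Vert\fkz_{\leq\phi^{-1}(Y)}\Vert$ is then a union of (barycentrically subdivided) cells of the regular CW-complex $\rcw(\covset)$ forming the closed star-type region over $Y$; I would cite \cite[Theorem 4.5.7, \S4.5]{bjvestwhzi-93} on bounded complexes being PL-balls and \cite[Lemma 4.7.18]{bjvestwhzi-93} to pass between the CW-complex and its barycentric subdivision, identifying the boundary as the preimage $\Vert\phi^{-1}(\fkzbsub_{<Y})\Vert$ by the diamond property (Lemma~\ref{lem:diamond}.(1)) applied within $\covset[E_{\mathscr C}]$.

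\medskip

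The main obstacle I anticipate is part (3): correctly identifying the PL-ball structure and, in particular, pinning down the boundary as precisely $\Vert\phi^{-1}(\fkzbsub_{<Y})\Vert$ rather than something larger. This requires knowing that a covector $X$ below $\phi^{-1}(Y)$ restricts to something in $\fkzbsub_{<Y}$ iff the corresponding cell of $\rcw(\covset)$ sits on the topological boundary of the fiber region — which combinatorially amounts to the diamond-counting of Lemma~\ref{lem:diamond} relativized to the fiber. I would handle this by reducing to the finite case $\covset[E_{\mathscr C}]$, where the fiber region is the preimage of a single cell of $\covpos(\covset[E_{\mathscr C}\cap\Bext])$ under an affine projection of bounded complexes, and such preimages are known to be PL-balls with the expected boundary (this is the content of the subdivision statements around \cite[\S4.5--4.7]{bjvestwhzi-93}); the non-realizable case follows since all relevant combinatorial data is captured by the finite AOM.
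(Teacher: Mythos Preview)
Your argument for part (1) is circular. You propose to ``take the finite convex set $\mathscr{C}$ of topes \ldots\ large enough that $\phi^{-1}(Y)\subseteq\covpos(\covset)_{\leq\mathscr{C}}$ (possible because each element of $\phi^{-1}(Y)$ lies below some tope, and there are finitely many relevant ones)''. But the claim that there are only finitely many relevant topes \emph{presupposes} that $\phi^{-1}(Y)$ is finite, which is precisely what (1) asserts. The compositions $X\circ X_0$ you mention do lie in the finite set $\covpos(\covset)_{\geq X_0}$, but distinct $X$'s can yield the same $X\circ X_0$, so this does not bound $|\phi^{-1}(Y)|$. The real difficulty is that elements of $\phi^{-1}(Y)$ agree on $\Bext$ but may differ on the (possibly infinite) set $E\setminus\Bext$, and nothing in axioms (S), (Z), (I) alone rules out infinitely many such covectors a priori.

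The paper's proof of (1) is where the substance lies. After reducing to the case where $Y$ is a tope of $\covset[\Bext]$, one identifies the $2^{|B|}$ cocircuits $Z_\epsilon$ of $\covset$ lying over the vertices of the ``cube'' $\fkzbsub_{\leq Y}$. The key step is the claim $(\dagger)$: for every $W\in\phi^{-1}(Y)$ and every $e\in\ze{W}$, the element $e$ lies in $S(Z_{\epsilon'},Z_{\epsilon''})$ for some $\epsilon',\epsilon''$. This is proved by working in the finite restriction $\covset[\pareti\cup\{e\}]$, using boundedness (via Lemma~\ref{lem:bounded}) and the fact that bounded topes are compositions of cocircuits \cite[Proposition 3.7.2]{bjvestwhzi-93}. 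Since separators are finite by (S), $(\dagger)$ bounds the set of possible zero-coordinates; an induction on rank then gives finiteness of $\phi^{-1}(Y)$.

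Once (1) is in hand, your outline for (2) and (3) is closer to the paper's, but still imprecise: the poset $\fkz_{\leq\phi^{-1}(Y)}$ is not literally a bounded complex of some AOM, so Theorem~4.5.7 does not apply directly. The paper instead exhibits it as a \emph{convex} subset (via explicit halfspace conditions $(\ddagger)$) inside a finite OM obtained by coning, and then invokes \cite[Proposition~4.3.6]{bjvestwhzi-93}. The boundary identification is done by a subthin-poset argument (Proposition~\ref{propthin}), checking element by element which length-$2$ upper intervals have three versus four elements.
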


\begin{proof} Let $\covset$, $B$ and $Y$ be as in the claim. Moreover, we assume that $\covset$ is reoriented as to satisfy Corollary~\ref{corsepdelta}. 
As all three items are claims regarding preimages of $\fkzbsub_{\leq Y}$, by passing to contractions and simplifying, it is enough to consider the case $Y\in \max \fkzb$, i.e., $Y$ is a tope of $\covset[\Bext]$. Then $\ze{Y}=\emptyset$ because $\covset$ is simple. Thus if $k \in \mathbb Z^{\pi(B)}$ is defined by $k_\pi:=j_\pi(\delta_Y(\pi))-1$ for all $\pi\in\pi(B)$, Corollary~\ref{corsepdelta} yields, for all $e\in E$,
	$$ Y(e)=+ \quad\textrm{ if }\quad j_{\pi(e)} (e) \leq k_\pi, \quad\quad  Y(e)=- \quad\textrm{ if }\quad j_{\pi(e)} (e) > k_\pi.$$
	In particular, since $B\subseteq E^{**}$, the minimal elements of $\fkzbsub_{\leq Y}$ are given as
	$$
	X_{k + \epsilon},\quad\quad\textrm{ where }
	\epsilon\textrm{ ranges in }\{0,1\}^{\pi(B)} 
	$$
	 and 
	 $Y= \bigcirc_{\epsilon\in\{0,1\}^{\pi(B)}} X_{k+\epsilon}$ (the composition taken in any order), as can be explicitly verified e.g., via the isomorphism of Theorem~\ref{corIF}. 
	 	
Now, for every $\epsilon\in\{0,1\}^{\pi(B)}$ let $B_\epsilon := \ze{X_{k + \epsilon}}$. Every $B_\epsilon$ is a basis of the semimatroid $\SS(\covset[\Bext])$, and hence also of $\SS(\covset)$, because $\Bext$ has maximal rank.
Thus by Corollary~\ref{corBo} for every $\epsilon$ there is a unique cocircuit $Z_\epsilon\in \min\covpos(\covset)$ with $\phi({Z_\epsilon}) = X_{k+\epsilon}$. See the top part of Figure \ref{FigBigLemma} for an illustration of the setup.

\begin{figure}[h]
\begin{tikzpicture}
\node [anchor=east] (big) at (-.5,0) {
\includegraphics[scale=.8]{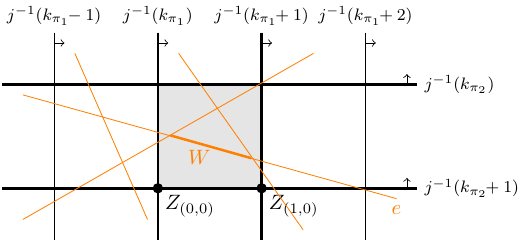}
};
\node [anchor=west] (B) at (.5,0) {
\includegraphics[scale=.8]{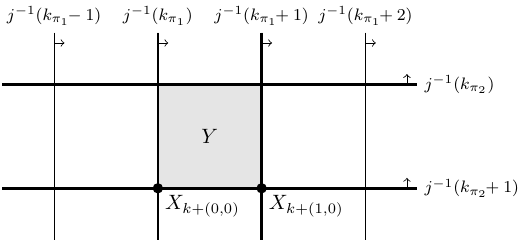}
};
\node [anchor=east] (big1) at (-.5,-6) {
\includegraphics[scale=.8]{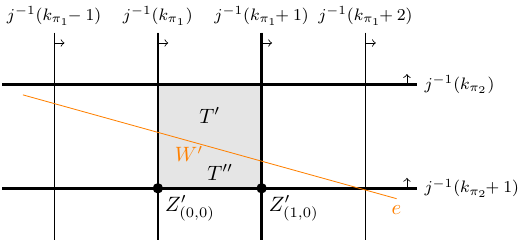}
};
\node [anchor=west] (Bb) at (.5,-6) {
\includegraphics[scale=.8]{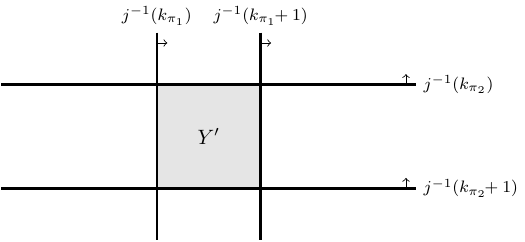}
};

\node [anchor=center] (L) at (-2.5,-2.2) {$\covset$};
\node [anchor=center] (LB) at (1.5,-2.2) {$\covset[\Bext]$};
\node [anchor=center] (LBe) at (-2.5,-3.8) {$\covset[\pareti\cup\{e\}]$};
\node [anchor=center] (Bbox) at (1.5,-3.8) {$\covset[\pareti]$};
\draw[ ->] (L.east) -- (LB.west);
\draw[ ->] (LBe.east) -- (Bbox.west);
\draw[ ->] (L.south) -- (LBe.north);
\draw[ ->] (LB.south) -- (Bbox.north);
\node [anchor=south] (phi) at (-.5,-2.2) {$\phi$};

\end{tikzpicture}
\caption{A picture for the setup of the proof of Lemma \ref{lemresB}.}
\label{FigBigLemma}
\end{figure}


	Let $\pareti=\bigcup_{\pi\in \pi(B) 
	}j_{\pi}^{-1}(\{k_\pi,k_\pi+1\})$.  
It is apparent that $\covset\left[\pareti\right]$ is the set of covectors of the arrangement given by the hyperplanes $x_{\pi}=k_{\pi}$ and $x_\pi=k_\pi+1$ in $\mathbb R^{\pi(B)}$ (``the facet-defining planes of a cube"). In this interpretation, the restriction $Y':=Y_{\vert \pareti}$  is the only bounded tope of $\covset\left[\pareti\right]$ (cf.\ Definition~\ref{bdfc}). 
 Now by Lemma~\ref{lem:bounded} we have that anything that maps to $Y'$ (in particular, $Y$ itself as well as every covector in $\covset_{\leq \phi^{-1}(Y)}$) is bounded in $\covset$.

\begin{itemize}
	\item[Ad (1).] We argue by induction on the rank of $\covset$. If the rank is $0$, then $B=\emptyset$ and the claim is trivial. Let then $\covset$ have rank $r>0$ and consider the following statement.
	\begin{itemize}
		\item[] {$(\dagger)$} For every $W\in\covset$ with $\phi(W)=Y$ and every $e\in\ze{W}$ we have that $e\in S(Z_{\epsilon'},Z_{\epsilon''})$ for some $\epsilon',\epsilon''\in\{0,1\}^{\pi(B)}$.
	\end{itemize}
	Let us first check that proving $(\dagger)$ is enough.
	First notice that the claim, together with the fact that there are finitely many $Z_\epsilon$, ensures immediately that there can be only finitely many $e\in E$ such that there is  $W\in\covset$ with $\phi(W)=Y$ and $\ze{W}=\{e\}$, 
	since otherwise axiom (S) would be contradicted for some pair of cocircuits of the form $Z_\epsilon$. Now, for every such $e\in E$, the induction hypothesis applied to the contraction $\covset[\Bext\cup\{e\}]/e$ shows that there are finitely many $W$ with $\phi(W)=Y$ and $\ze{W}=\{e\}$. This ensures that $\phi^{-1}(Y)$ has finitely many elements of rank $(r-1)$. 

By the second statement in Remark~\ref{thepowerofZ}, it follows that there can be at most finitely many topes in $\phi^{-1}(Y)$. 
	Moreover, (I) implies that there can be at most finitely many maximal elements below any given  $W\in\covset$. Thus $\phi^{-1}(Y)$ has finitely many elements. 
	\begin{itemize}
	\item[] {\em Proof of {$(\dagger)$}.} 
Let $W\in \covset$ be such that $\phi(W)=Y$, let $e\in\ze{W}$.
%
	%
	Then the image $W'$ of $W$ in $\covset\left[\pareti\cup\{e\} \right]$ is not maximal and we can choose $T'\in\covset\left[\pareti\cup\{e\} \right]$ maximal with $T'\geq W'$. 
	 Let then $T'':=W'\circ - T'$, which is a maximal element of $\covset\left[\pareti\cup\{e\} \right]_{\geq W'}$ by (FS). See the lower part of Figure \ref{FigBigLemma} for illustration.
	
	Then all of $W,T',T''$ map to $Y'$, and are therefore bounded. In particular, by~\cite[Proposition 3.7.2]{bjvestwhzi-93} we have that $T'$ and $T''$ can be obtained from $W'$ by composing with some cocircuits of $\covset\left[\pareti\cup\{e\} \right]$.	 Now the only cocircuits with nontrivial composition with $W'$ are those that are nonzero on $e$, hence exactly the images $Z'_\epsilon$ of the $Z_\epsilon$'s, for $\epsilon\in \{0,1\}^{\pi(B)}$. Since $T''(e)=-T'(e)\neq 0$, there must then be $\epsilon_1$ and $\epsilon_2$ with $Z'_{\epsilon_1}(e)=-Z'_{\epsilon_2}(e)\neq 0$, and hence $Z_{\epsilon_1}(e)=-Z_{\epsilon_2}(e)\neq 0$ as well.
	\end{itemize}
\end{itemize}
Since by Axiom (I) intervals in $\covset[\Bext]$ are finite, part (1) implies that $\covset_{\leq \phi^{-1}(Y)}$ is finite as well. 
	\def\cY{\mathscr P}
	Write for brevity $\cY:=\covpos(\covset)_{\leq \phi^{-1}(Y)}$. This is a convex set, 
	as it coincides with all 
	$X\in \covset$ such that
	$$(\ddagger)\left\{\begin{array}{ll}
		X(e)& \in \{0,+\}  \quad\textrm{ if } \quad e\in \Bext \quad \textrm{ and} \quad j_{\pi(e)}(e)=k_{\pi(e)},\\
		X(e)& \in\{0,-\} \quad\textrm{ if }\quad e\in \Bext \quad \textrm{ and} \quad j_{\pi(e)}(e)=k_{\pi(e)}+1.
	\end{array}\right.$$
	In particular, with Lemma~\ref{lem:finconv} we can find a finite $E'\subseteq E$ such that $\cY$ is an order ideal in in the poset of covectors of the finite affine oriented matroid $\covset[E']$.  We can then extend $\covset[E']$ to the set $\mathscr O$ of vectors of a (finite) oriented matroid with one additional element $g$ (see Remark~\ref{rem:ginf}). In $\mathscr O$, $\cY$ is a convex set defined by the above equations, in addition to $X(g)=+$. Such a convex set of covectors, with the usual partial order, is known to be a shellable poset~\cite[Proposition 4.3.6]{bjvestwhzi-93}. Moreover, since $\cY$ it is not the full poset of covectors of a contraction of $\mathscr O$ (e.g., because $\cY\supseteq\phi^{-1}(Y)$ and the latter contains at least one tope, since $\phi$ is order preserving), again by~\cite[Proposition 4.3.6]{bjvestwhzi-93} we conclude that
	\begin{center}
		$(\dagger\dagger)$ the order complex of $\cY$ is is a shellable PL-ball of dimension $r$.
	\end{center}
\begin{itemize}		
	\item[Ad (2).] The length of a poset equals the dimension of its order complex, hence (2) follows directly from $(\dagger\dagger)$.
	\item[Ad (3).]  The first part of claim (3) is exactly $(\dagger\dagger)$. From there, via Proposition~\ref{propthin}, we also deduce that $\wtb{\cY}$ is a subthin poset. We are thus left with showing that the maximal simplices in the boundary of $\Vert\cY\Vert$ are exactly given by the maximal chains in $\cY\cap \phi^{-1}(\covset[\Bext]_{<Y})$.
	
	To this end, again by Proposition~\ref{propthin}, it is enough to consider any $X\in \wtb{\cY}$ such that $[X,\top]$ has length $2$, and to prove that $[X,\top]$ has $3$ elements if and only if $\phi(X)<Y$. Notice that in this case, since $\mathscr O$ is simple, there is a single element $e\in E'$ with $\ze{X}=\{e\}$. Moreover, from the fact that $\mathscr O$ is an oriented matroid, $\covset[E']$ is an order filter in $\covpos(\mathscr O)$, and $X\in \covset[E']$, we deduce that there are exactly two elements $Z,Z'\in \covset[E']$ with $X\lessdot Z$, $X\lessdot Z'$, and these are given by $Z(f)=Z'(f)=X(f)$ for all $f\neq e$, and $Z(e)=-Z'(e)\neq 0$. Now, if $\phi(X)=Y$, then $Z,Z'\in \cY$ (for instance, in this case $\phi(Z)=\phi(Z')=Y$, thus $e\not\in \Bext$ and so $Z$ and $Z'$ both satisfy the definition of the convex set $\cY$ in $(\ddagger)$) and so $[X,\top]$ has $4$ elements. On the other hand, if $\phi(X)<Y$ then $e\in \Bext$ and thus only one among $Z$, $Z'$ agrees with $Y$ on $e$ and satisfies $(\ddagger)$, so that in this case $[X,\top]$ has $3$ elements. 
	\end{itemize}
\end{proof}

\begin{proposition}\label{homeo1}
	Let $\covpos$ be an FAOM 
	 and let $B$ be a basis of the semimatroid $\SS(\covset)$ such that $B\subseteq E^{**}$.
	The canonical restriction map $\covset \to \covset[\Bext]$ induces  a  homeomorphism  
	$$\psi : \Vert \covpos \Vert \to \Vert \fkzb\Vert $$
	that restricts to a homeomorphism $\Vert \covpos_{\leq\phi^{-1}{(Y)}} \Vert \cong \Vert \fkzbsub_{\leq Y}\Vert$ for all $Y\in \covset[\Bext]$.
\end{proposition}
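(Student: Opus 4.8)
The plan is not to argue that the simplicial map induced by the restriction $\phi\colon\covset\to\covset[\Bext]$, $X\mapsto X_{\vert\Bext}$, is itself a homeomorphism (it is not: it collapses high-dimensional simplices). Instead I would use $\phi$ to equip $\Vert\covpos(\covset)\Vert$ with a \emph{new} regular CW structure whose closed cells are exactly the PL balls $\Vert\covpos(\covset)_{\leq\phi^{-1}(Y)}\Vert$ furnished by Lemma~\ref{lemresB}, and whose face poset is $\fkzb$; the statement will then drop out of the standard identification of a regular CW complex with the order complex of its face poset. (By Remark~\ref{remsimplicity} we may assume $\covset$ simple, so that Lemma~\ref{lemresB} applies.)

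The first step is the elementary identity $\phi^{-1}(\fkzbsub_{\leq Y})=\covpos(\covset)_{\leq \phi^{-1}(Y)}$ for every $Y\in\fkzb$. Here ``$\supseteq$'' holds since $\phi$ is order preserving, and for ``$\subseteq$'', given $X$ with $X_{\vert\Bext}\leq Y$ one picks any $W_0\in\phi^{-1}(Y)$ (non-empty since $\phi$ is onto, by Lemma~\ref{lemresB}) and observes that $W:=X\circ W_0$ lies in $\covset$ by (C), satisfies $W\geq X$, and has $\phi(W)=X_{\vert\Bext}\circ Y=Y$ because $X_{\vert\Bext}\leq Y$. Consequently each $\covpos(\covset)_{\leq\phi^{-1}(Y)}$ is an order ideal, hence a full subcomplex of $\Vert\covpos(\covset)\Vert$, and $\Vert\phi^{-1}(\fkzbsub_{<Y})\Vert=\bigcup_{Y'\prec Y}\Vert\covpos(\covset)_{\leq\phi^{-1}(Y')}\Vert$.

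Next, for $Y\in\fkzb$ set $\mathring B_Y:=\Vert\covpos(\covset)_{\leq\phi^{-1}(Y)}\Vert\setminus\Vert\phi^{-1}(\fkzbsub_{<Y})\Vert$. A point in the relative interior of a simplex $X_0<\cdots<X_k$ of $\Vert\covpos(\covset)\Vert$ lies in $\mathring B_Y$ precisely when $\phi(X_k)\preceq Y$ and $\phi(X_k)\not\prec Y$, i.e.\ precisely when $Y=\phi(X_k)$; hence $\{\mathring B_Y\}_{Y\in\fkzb}$ partitions $\Vert\covpos(\covset)\Vert$. By Lemma~\ref{lemresB}.(3), $\Vert\covpos(\covset)_{\leq\phi^{-1}(Y)}\Vert$ is a PL $\rk(Y)$-ball (being a subdivision of one) whose boundary sphere is $\Vert\phi^{-1}(\fkzbsub_{<Y})\Vert$; thus $\mathring B_Y$ is an open $\rk(Y)$-ball with closure $\Vert\covpos(\covset)_{\leq\phi^{-1}(Y)}\Vert$, and a PL homeomorphism of a closed ball onto this set serves as a regular characteristic map. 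Closure-finiteness holds because $\overline{\mathring B_Y}=\bigsqcup_{Y'\preceq Y}\mathring B_{Y'}$ and the interval $\fkzbsub_{\leq Y}$ is finite by (I); the weak-topology axiom holds because $\Vert\covpos(\covset)\Vert$ carries the weak topology with respect to its simplices and every simplex lies in some $\overline{\mathring B_Y}$. Therefore $\{\mathring B_Y\}$ is a regular CW decomposition of $\Vert\covpos(\covset)\Vert$, and since $\mathring B_{Y'}\subseteq\overline{\mathring B_Y}$ iff $Y'\preceq Y$, its face poset is exactly $\fkzb$.

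Finally, $\rcw(\covset[\Bext])$ is by Corollary~\ref{cor:rcw} a regular CW complex with the same face poset $\fkzb$, so both it and $\Vert\covpos(\covset)\Vert$ with the structure above are homeomorphic to $\Vert\fkzb\Vert=\Delta(\fkzb)$ via the canonical barycentric homeomorphism of \cite[Prop.~4.7.8]{bjvestwhzi-93} (which localizes, hence applies to our locally finite complexes, and which carries each closed cell onto the order subcomplex of the corresponding principal ideal). Composing gives the desired $\psi\colon\Vert\covpos(\covset)\Vert\to\Vert\fkzb\Vert$, and by construction $\psi$ maps $\Vert\covpos(\covset)_{\leq\phi^{-1}(Y)}\Vert$ homeomorphically onto $\Vert\fkzbsub_{\leq Y}\Vert$ for every $Y$. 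The real content sits in Lemma~\ref{lemresB}, which is already in hand; the only points needing care are checking the regular-CW axioms for the fibrewise decomposition in this non-compact setting and invoking the poset/CW dictionary — both unproblematic since $\Delta(\covpos(\covset))$ is locally finite and the intervals $\fkzbsub_{\leq Y}$ are finite by (I).
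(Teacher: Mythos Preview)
Your argument is correct and takes a genuinely different route from the paper's. The paper builds the homeomorphism by hand, skeleton by skeleton: for rank~$0$ the map $\phi$ is a bijection of discrete sets, and at each higher rank one extends the already-constructed boundary homeomorphism $\psi^{j-1}$ across each pair of $j$-balls using the classical fact that a homeomorphism between boundary spheres of two closed balls extends to a homeomorphism of the balls (citing Spanier), iterating over the countably many rank-$j$ elements. You instead observe once and for all that the family $\{\mathring B_Y\}_{Y\in\fkzb}$ is a regular CW structure on $\Vert\covpos(\covset)\Vert$ with face poset $\fkzb$, and then invoke the standard identification of a regular CW complex with the order complex of its face poset (Remark~\ref{rem:faceposet}). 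The two arguments are closely related---the barycentric-subdivision theorem you quote is itself proved by exactly the skeleton-by-skeleton extension the paper carries out---but your packaging is cleaner: it avoids the explicit recursion and the appeal to countability (Corollary~\ref{cor:countable}), and it makes transparent why the restriction to each $\Vert\covpos(\covset)_{\leq\phi^{-1}(Y)}\Vert$ is a homeomorphism onto $\Vert\fkzbsub_{\leq Y}\Vert$. The paper's approach, on the other hand, requires no verification of CW axioms. Your preliminary identity $\phi^{-1}(\fkzbsub_{\leq Y})=\covpos(\covset)_{\leq\phi^{-1}(Y)}$ via $W=X\circ W_0$ is a nice explicit touch that the paper leaves implicit.
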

\begin{proof}
We prove the statement by constructing the desired homeomorphism recursively in the rank of $\fkzb$. Let then $\fkzb^j$ be the 
 set of elements of $\fkzb$ of rank at most $j$. 
For every $Y\in\fkzb^0$ we have   $\vert \phi^{-1}(Y)\vert =1$, hence $\phi $ restricts to a homeomorphism $\psi^0$ between the discrete spaces $\Vert\phi^{-1}(\fkzb^0)\Vert$ and $\Vert \fkzb^0\Vert$ that again restricts to a homeomorphism between the one-point spaces $\fkzbsub_{\leq Y}$ and $\phi^{-1}(Y)$  for every $Y\in \fkzb^0$.  Let then $j>0$ and suppose we already constructed a homeomorphism $\psi^{j-1}: \Vert \fkz_{\leq \phi^{-1}(\fkzb^{j-1})}\Vert \to \Vert \fkzb^{j-1}\Vert$ that restricts to a homeomorphism $\Vert \fkz_{\leq \phi^{-1}{(Y)}} \Vert \cong \Vert \fkzbsub_{\leq Y}\Vert$ for all $Y\in \fkzb^{j-1}$. Consider any $Y\in \fkzb^j\setminus \fkzb^{j-1}$. Then by Lemma~\ref{lemresB} the poset $\fkzbsub_{\leq Y}$ is homeomorphic to a $j$-ball whose boundary $\Vert\covpos_{< Y}\Vert\subseteq \Vert\covpos^{j-1}\Vert$ is the homeomorphic image under $\psi^{j-1}$ of $\Vert\fkz_{< \phi^{-1}(Y)} \Vert$, a subcomplex of $\Vert \fkz_{\leq \phi^{-1}(\fkzb^{j-1})}\Vert$. We can now extend the boundary homeomorphism $\psi^{j-1}$ across $\Vert \fkz_{\leq \phi^{-1}(Y)} \Vert$  to a homeomorphism $\Vert \fkz_{\leq \phi^{-1}(Y)} \Vert \to \Vert \fkzbsub_{\leq Y} \Vert$ of $j$-balls. See, e.g.,~\cite[Chapter~1.4, Corollary~13]{Spanier}.
 Doing this inductively over the countable (by Corollary \ref{cor:countable}) set  $\fkzb^j\setminus \fkzb^{j-1}$ we can extend $\psi^{j-1}$ to the desired $\psi^{j}$.
\end{proof}

\section{Group actions}
\label{sec:gas}

If a group $G$ acts on a set $E$ by permutations, for every $g\in G$ and $e\in E$ we write $g(e)$ for the image of $e$ under the action of $g$. Moreover, for every sign vector $X\in\signs^E$ we can define a sign vector $g.X$ by setting $g.X(e):=X(g^{-1}(e))$ for all $e\in E$. This extends the action of $G$ on $E$ to an action on the set of sign vectors. For $\mathscr X\subseteq \signs^E $ write $g.\mathscr X:=\{g.X\mid X\in \mathscr X\}$.


\begin{definition}\label{def:sliding}
	A group $G$ acts on an AOM $(E,\covset)$ if $G$ acts by permutations on $E$ and $g.\covset=\covset$ for all $g\in G$. An action of $G$ on $\covset$ will be denoted by $\alpha: G\circlearrowright \covset$. The action of $G$ is called \emph{\pcp} if $g(e)\in \pi(e)$ for all $e\in E$.
\end{definition}

\begin{examplenew} The notion of \pcp actions is meant to model the action of a group of translations on an arrangement. For instance consider the pseudoline arrangement of 
Figure \ref{fig:basis_inf}. The action that corresponds to a vertical "translation" of the picture sending $a_i$ to $a_{i+1}$ is \pcp. On the other hand, the reflection with respect to a vertical line passing through, e.g., the point where $b_0$ meets $a_3$ is not \pcp, since it switches pairs of diagonal pseudolines passing through that point, and so any of those will not be parallel to its image.
\end{examplenew}

\begin{lemma}
	Let $(E,\covset)$ be an AOM and let $\tau\in \{+,-\}^E$. Every $G$-action on $(E,\covset)$ induces a $G$-action on the reorientation $\covset^{(\tau)}$. If the action on $\covset$ is \pcp, so is the induced action on $\covset^{(\tau)}$.
\end{lemma}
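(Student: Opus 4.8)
The plan is to transport the given action across the reorientation bijection rather than to reuse the na\"ive coordinate action, which in general does not preserve $\covset^{(\tau)}$. Write $r_\tau\colon\signs^E\to\signs^E$ for the map $r_\tau(X):=\tau\cdot X$; since $\tau(e)^2=+$ for every $e\in E$, this map is an involution, and by Definition~\ref{def:reorientation} it restricts to a bijection $\covset\to\covset^{(\tau)}$. For $g\in G$ and $Y\in\covset^{(\tau)}$ I would then define
\[
g\cdot_\tau Y:=r_\tau\bigl(g.(r_\tau(Y))\bigr),
\]
which has the explicit form $(g\cdot_\tau Y)(e)=\tau(e)\,\tau(g^{-1}(e))\,Y(g^{-1}(e))$ for every $e\in E$. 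Thus $\cdot_\tau$ carries the same underlying permutation action of $G$ on $E$ as the given action and differs from its coordinate action only by the sign factor $\varepsilon_g(e):=\tau(e)\tau(g^{-1}(e))$.

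The first assertion then reduces to routine verifications. One checks the cocycle identity $\varepsilon_{gh}(e)=\varepsilon_g(e)\,\varepsilon_h(g^{-1}(e))$ (using $\tau(f)^2=+$), which yields $(gh)\cdot_\tau Y=g\cdot_\tau(h\cdot_\tau Y)$ for all $Y$; alternatively this, and the fact that the identity of $G$ acts trivially, is immediate once one observes that $g\cdot_\tau$ is the conjugate of the map $Y\mapsto g.Y$ by the involution $r_\tau$. Finally $g\cdot_\tau$ preserves $\covset^{(\tau)}$ because
\[
g\cdot_\tau\covset^{(\tau)}=r_\tau\bigl(g.(r_\tau(r_\tau(\covset)))\bigr)=r_\tau(g.\covset)=r_\tau(\covset)=\covset^{(\tau)}.
\]

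For the second assertion I would use that reorientation leaves zero sets unchanged: $\ze{\tau\cdot X}=\ze{X}$ for every $X$, since $\tau(e)X(e)=0$ iff $X(e)=0$. Hence the parallelism relation of $\covset^{(\tau)}$ coincides with that of $\covset$ (Definition~\ref{defpar}), so the class $\pi(e)$ is the same in both AOMs. Since $\cdot_\tau$ carries the same underlying permutation action of $G$ on $E$ as the given action, the condition $g(e)\in\pi(e)$ for all $e\in E$ holds for $\cdot_\tau$ exactly when it holds for the original action; thus the induced action is again \pcp.

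The only step that is not completely mechanical — and the reason the twist cannot be dropped — is the observation underlying the first sentence: writing $Y=\tau\cdot X$ with $X\in\covset$ one computes $\tau\cdot(g.Y)=\sigma\cdot(g.X)$ with $\sigma(e)=\tau(e)\tau(g^{-1}(e))$, and $\covset$ need not be closed under reorientation by $\sigma$, so the na\"ive action cannot be used. Once $\cdot_\tau$ is set up, all remaining steps are one-line checks and I anticipate no real difficulty.
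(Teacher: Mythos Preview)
Your approach is essentially the same as the paper's: both define the induced action on $\covset^{(\tau)}$ by conjugating with the reorientation involution, i.e.\ $g.(\tau\cdot X):=\tau\cdot(g.X)$, and both deduce the sliding claim from $\ze{\tau\cdot X}=\ze{X}$. You are simply more explicit than the paper, in particular in flagging that the na\"ive coordinate action need not preserve $\covset^{(\tau)}$ and in verifying the group-action axioms via the cocycle $\varepsilon_g$; the paper states the same formula without this discussion.
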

\begin{proof}
	Given any group action of $G$ on $\covset$ and any $h\in \{+,-\}^E$, $G$ acts on the reorientation $\covset^{(\tau)}$ by $g.(\tau\cdot X)=\tau\cdot g.X$, for all $g\in G$ (recall Definition~\ref{def:reorientation}). 
	The claim about \pcp actions follows because parallelism is preserved under reorientation, as $\ze{X}=\ze{\tau\cdot X}$ for all $X\in \covset$. 
\end{proof}

\begin{lemma}\label{lem:alphaminor}
	Let $(E,\covset)$ be an AOM with an action $\alpha$ of a group $G$. Let $A\subseteq E$.
	\begin{itemize}
	\item[(1)] The action $\alpha$ induces an action $\alpha_{/A}: \stab (A)\circlearrowright\covset/A$. 
	\item[(2)] If GA=A, then $\alpha$ restricts to an action $\alpha_{[A]}: G\circlearrowright \covset[A]$.
	\end{itemize}
	If $\alpha$ is \pcp, then so are $\alpha_{/A}$ and $\alpha_{[A]}$.
\end{lemma}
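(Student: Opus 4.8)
The plan is to treat the three assertions in sequence, reducing each to elementary verifications on sign vectors and invoking the minor-closure results already established. For part (1), I would first observe that an element $g\in\stab(A)$ maps the set $\{X\in\covset : X(A)=\{0\}\}$ to itself: if $X(A)=\{0\}$ then $(g.X)(a)=X(g^{-1}(a))$ and $g^{-1}(a)\in A$ since $g\in\stab(A)$, so $g.X$ also vanishes on $A$. Next I would check that restriction to $E\setminus A$ is equivariant in the sense that $(g.X)_{\vert E\setminus A}=g.(X_{\vert E\setminus A})$ whenever $X$ vanishes on $A$; this follows because $g$ permutes $E\setminus A$ (again using $g\in\stab(A)$), so for $e\in E\setminus A$ we have $(g.X)(e)=X(g^{-1}(e))$ with $g^{-1}(e)\in E\setminus A$. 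Hence the formula $\alpha_{/A}(g)(X_{\vert E\setminus A}):=(g.X)_{\vert E\setminus A}$ is well-defined on $\covset/A$, and the group-action axioms ($e$ acts trivially, $(gh).Y=g.(h.Y)$) are inherited coordinatewise from the action on $\covset$. Finally $g.\covset=\covset$ gives $\alpha_{/A}(g)(\covset/A)=\covset/A$, so this is genuinely an action on $\covset/A$, which is an AOM by Theorem~\ref{thm:minorclosed}.

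For part (2), the hypothesis $GA=A$ (i.e.\ $g(A)=A$ for all $g\in G$, equivalently $G=\stab(A)$) means $G$ permutes both $A$ and $E\setminus A$. Since $\covset[A]=\covset\setminus(E\setminus A)$ consists of the restrictions $X_{\vert A}$ for $X\in\covset$, the same equivariance computation as above — now applied to coordinates in $A$, where $g^{-1}(a)\in A$ — shows $\alpha_{[A]}(g)(X_{\vert A}):=(g.X)_{\vert A}$ is well-defined and is an action, and $g.\covset=\covset$ gives $\alpha_{[A]}(g)(\covset[A])=\covset[A]$. Again $\covset[A]$ is an AOM by Theorem~\ref{thm:minorclosed}.

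For the \papr\ claim: suppose $\alpha$ is \papr, so $g(e)\in\pi(e)$ for all $e\in E$ and all $g\in G$. I need to know that parallelism classes in the minor are governed by parallelism classes in $\covset$. For the contraction $\covset/A$, an element $e\in E\setminus A$ and its image under $g\in\stab(A)$ satisfy $g(e)\in\pi(e)$ in $\covset$; I claim this implies $g(e)$ and $e$ are parallel in $\covset/A$ as well, because any covector of $\covset/A$ witnessing non-parallelism would lift (via $\iota_A$, Remark~\ref{rem:injection_contraction}) to a covector of $\covset$ vanishing on both $e$ and $g(e)$, contradicting $e\parallel g(e)$ in $\covset$. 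Symmetrically, for the restriction $\covset[A]$, any covector of $\covset[A]$ vanishing on $e$ and $g(e)$ (with $e\in A$) is the restriction to $A$ of a covector of $\covset$ vanishing there, again contradicting $e\parallel g(e)$. (One should also handle the degenerate case where $e$ or $g(e)$ becomes a loop in the minor, but then the relevant parallelism-class condition is vacuous or the simplification argument of Remark~\ref{remsimplicity} applies.) Hence both $\alpha_{/A}$ and $\alpha_{[A]}$ are \papr.

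The routine part is the equivariance bookkeeping; the only place requiring a small argument is the \papr\ claim, where one must move parallelism witnesses between $\covset$ and its minors — but this is exactly the content of the canonical injections and restriction maps already recorded in Section~\ref{ssec:minors}, so I do not anticipate a genuine obstacle.
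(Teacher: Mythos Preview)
Your proposal is correct and is precisely the routine verification the paper has in mind: the paper's own proof reads in full ``The check of the claims is straightforward.'' You have supplied the details --- equivariance of restriction under $\stab(A)$, preservation of the vanishing-on-$A$ condition, and lifting parallelism witnesses from the minor back to $\covset$ via $\iota_A$ and the restriction map --- and all of them go through as you describe.
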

\begin{proof}
	The check of the claims is straightforward.
\end{proof}

\begin{lemma}\label{lem:ta}
	Every action of a group $G$ on  an FAOM $(E,\covset)$ induces an action of $G$ on the underlying semimatroid $\SS(\covset)$. If the action is \pcp, the action on $\SS(\covset)$ is translative. 
\end{lemma}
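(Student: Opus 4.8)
The plan is to handle the two assertions separately; both come down to bookkeeping with the description of $\SS(\covset)$ supplied by Theorem-Definition~\ref{thm:undersemi} and Corollary-Definition~\ref{centralunder}, so I do not expect to need any new combinatorial input.

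For the first assertion, I would start from the identity $\ze{g.X}=g(\ze{X})$, valid for every $X\in\covset$ and $g\in G$: indeed $g.X(e)=X(g^{-1}(e))$ vanishes exactly when $g^{-1}(e)\in\ze{X}$. Since $g.\covset=\covset$, this shows that the permutation $g$ of $E$ maps the family of flats $\flatset(\covset)$ bijectively onto itself; as $g$ (extended to subsets of $E$) preserves inclusion in both directions, it therefore restricts to an automorphism of the poset $\flatpos(\covset)$. By Theorem-Definition~\ref{thm:undersemi} this poset is the geometric semilattice of flats of $\SS(\covset)$, and — since, by Theorem~\ref{thm:gsl}, a finitary semimatroid is recovered up to isomorphism from its geometric semilattice — an automorphism of $\flatpos(\covset)$ is an automorphism of $\SS(\covset)$. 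Concretely, $g$ sends central sets to central sets because $A\subseteq\ze{X}$ implies $g(A)\subseteq\ze{g.X}$ (recall Corollary-Definition~\ref{centralunder}), and it preserves the rank function because for a central set $A$ the rank $\rk(A)$ is the height in $\flatpos(\covset)$ of $\cl(A)=\bigcap\{F\in\flatset(\covset)\mid A\subseteq F\}$, $g$ commutes with this intersection, and a poset automorphism preserves heights. Since the assignment $g\mapsto(g\colon\SS(\covset)\to\SS(\covset))$ is merely the restriction of the given action on $E$, it is automatically a group homomorphism, hence an action of $G$ on $\SS(\covset)$.

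For the second assertion, recall from~\cite{DeluRiedel} that a $G$-action on a semimatroid is \emph{translative} when $\rk(\{e,g(e)\})=\rk(\{e\})$ holds whenever $e\in E$, $g\in G$ and $\{e,g(e)\}$ is a central set. Assume then that $\alpha$ is \pcp and fix $e$ and $g$: by Definition~\ref{def:sliding} we have $g(e)\in\pi(e)$, so either $g(e)=e$, in which case $\{e,g(e)\}=\{e\}$ and the equality is trivial, or $g(e)\parallel e$, in which case Definition~\ref{defpar} says that no covector of $\covset$ vanishes on both $e$ and $g(e)$, i.e.\ $\{e,g(e)\}$ is \emph{not} a central set (Corollary-Definition~\ref{centralunder}) and the condition is vacuously satisfied; as $e$ and $g$ were arbitrary, the action is translative. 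I do not anticipate a genuine obstacle here: the only point requiring care is to align our objects with the conventions of~\cite{DeluRiedel} — that automorphisms of the geometric semilattice of flats coincide with automorphisms of the semimatroid, and the precise form of the translativity axiom — and neither demands anything beyond Theorem-Definition~\ref{thm:undersemi} and Corollary-Definition~\ref{centralunder}.
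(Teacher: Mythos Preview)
Your proposal is correct and follows essentially the same route as the paper. The paper's own proof is a single line that notes the \pcp{} property gives $e\parallel g(e)$, hence $\{e,g(e)\}\notin\CC(\covset)$ whenever $e\neq g(e)$ — exactly your second paragraph — and treats the first assertion as immediate, which your first paragraph simply spells out in detail via $\ze{g.X}=g(\ze{X})$.

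One small caution: the definition of \emph{translative} you attribute to~\cite{DeluRiedel} (that $\rk(\{e,g(e)\})=\rk(\{e\})$ whenever $\{e,g(e)\}$ is central) is not the one recorded in this paper's Appendix~\ref{app:GAOS}, which requires the stronger condition $\{e,g(e)\}\in\CC\Rightarrow g(e)=e$. Fortunately your argument actually proves this stronger statement — you show $\{e,g(e)\}$ is \emph{never} central unless $g(e)=e$ — so the conclusion stands; just align the quoted definition with the paper's.
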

\begin{proof}
	Let $e\in E$ and $g\in G$. Then, the \pcp property implies $e\parallel g(e)$. In particular, with Corollary~\ref{centralunder} we have  $\{e,g(e)\}\not\in\CC(\covset)$ whenever $e\neq g(e)$, and the claim follows.
\end{proof}

Every action $\alpha: G\circlearrowright \covset$ induces an action $\alpha: G\circlearrowright \covpos(\covset)$ by poset automorphisms.  We will now begin the study of quotients of this poset. Recall that every poset can be seen as an acyclic category (see Appendix~\ref{sec:ACGA}).

\begin{definition}
 Let $\alpha: G\circlearrowright \covset$ be a group action on an AOM. Let
$$
q_\alpha: \covpos(\covset) \to \covpos(\covset)\qc G
$$
be the quotient functor in the category $\AC$. Explicitly, if we regard $G$ as a one-object category $\mathcal G$ with one morphism for each group element and the group operation as composition law, then $\alpha$ defines a diagram in $\AC$ over $\mathcal G$, and $q_\alpha$ is the universal co-cone over this diagram with target the diagram's colimit $\covpos(\covset)\qc G$. 
\end{definition}
\begin{remark}\label{remexplquot}
	Since, by Corollary~\ref{Franked}, $\covpos(\covset)$ is ranked,  the category $\covpos(\covset)\qc G$ can be described explicitly via~\cite[Lemma A.18]{DePag}. It has object set $\Ob{\covpos(\covset)\qc G}=\Ob{\covpos(\covset)}/G$, the set of orbits of objects. The morphisms of $\covpos(\covset)\qc G$ are orbits of morphisms of $\covpos(\covset)$: the orbit of $\phi:X\to Y$ is a morphism $G\phi:GX\to GY$ and composition between orbits $G\phi$ and $G\psi$ is defined as the orbit of the composition of $\phi$ and $\psi$, when it exists.
\end{remark}

\begin{lemma}\label{lem:contractionaction}
	Let $\alpha: G\circlearrowright \covset$ be a group action on an AOM.  The following hold for every $A\in \central(\covset)$ and $g\in G$.
	\begin{itemize}
	\item[(1)] 	$\covset/g(A) = g^{-1} (\covset/A)$ and hence $q_\alpha(\covset/A)=q_\alpha(\covset/g(A))$.
	\item[(2)]  If $g(A)\neq A$, then $\covset/A \cap \covset/g(A) =\emptyset$.
	\item[(3)] There is a canonical isomorphism of categories $q_\alpha (\covpos(\covset/A))\cong q_{\alpha_{/A}}(\covpos(\covset/A))$.
	\item[(4)] There is a canonical isomorphism of categories $q_\alpha (\covpos(\covset/A))\cong q_\alpha (\covpos(\covset/\cl(A)))$.
	\end{itemize}
\end{lemma}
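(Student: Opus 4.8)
The plan is to read each assertion as a statement about the order ideal $I_A:=\iota_A(\covpos(\covset/A))=\{X\in\covset\mid A\subseteq\ze{X}\}$ of $\covpos(\covset)$, and about how $\alpha$ and $q_\alpha$ act on it; recall that $\iota_A$ is an order isomorphism of $\covpos(\covset/A)$ onto $I_A$ (Remark~\ref{rem:injection_contraction}), that $I_A$ is a down-set (Remark~\ref{rem:zeromap}), and that $q_\alpha$ sends each covector to its $G$-orbit (Remark~\ref{remexplquot}).

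For (1): since $g^{\pm1}.\covset=\covset$ and $\ze{g.X}=g(\ze{X})$, the assignment $X\mapsto g^{-1}.X$ restricts to a bijection from $\{X\in\covset\mid X(g(A))=\{0\}\}$ onto $\{X\in\covset\mid X(A)=\{0\}\}$; composing with restriction along the bijection $E\setminus g(A)\to E\setminus A$ induced by $g^{-1}$ gives $\covset/g(A)=g^{-1}(\covset/A)$. In particular $g$ carries $I_A$ onto $I_{g(A)}$ inside $\covpos(\covset)$, so, $q_\alpha$ identifying a covector with its $G$-translates, $q_\alpha(\covset/A)=q_\alpha(I_A)=q_\alpha(I_{g(A)})=q_\alpha(\covset/g(A))$. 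For (2): a sign vector in $\covset/A$ has domain $E\setminus A$ and one in $\covset/g(A)$ has domain $E\setminus g(A)$, so the two sets are disjoint as soon as $g(A)\neq A$ (and when $\alpha$ is \pcp one even has $I_A\cap I_{g(A)}=\varnothing$ inside $\covpos(\covset)$, since then $A\cup g(A)$ is not a central set).

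For (4): by the very definition of $\flatset(\covset)$ every zero set $\ze{X}$ is a flat, hence $A\subseteq\ze{X}$ forces $\cl(A)\subseteq\ze{X}$; since conversely $A\subseteq\cl(A)$, the covectors vanishing on $A$ are exactly those vanishing on $\cl(A)$. Thus $I_A=I_{\cl(A)}$ as subposets of $\covpos(\covset)$ (equivalently, the elements of $\cl(A)\setminus A$ are loops of $\covset/A$, and deleting loops does not change the covector poset), and this identity is $G$-equivariant, so applying $q_\alpha$ yields the canonical isomorphism $q_\alpha(\covpos(\covset/A))\cong q_\alpha(\covpos(\covset/\cl(A)))$.

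Part (3) is the substantive one. Because $\iota_A$ is $\stab(A)$-equivariant and an order isomorphism onto $I_A$, it descends to an isomorphism $q_{\alpha_{/A}}(\covpos(\covset/A))=\covpos(\covset/A)\qc\stab(A)\cong I_A\qc\stab(A)$; on the other hand the inclusion $I_A\hookrightarrow\covpos(\covset)$ together with $\stab(A)\hookrightarrow G$ induces a functor $I_A\qc\stab(A)\to\covpos(\covset)\qc G$ whose image is exactly $q_\alpha(I_A)=q_\alpha(\covpos(\covset/A))$. It remains to show that this functor is an isomorphism onto its image, i.e.\ that on $I_A$ the $G$-orbit relation coincides with the $\stab(A)$-orbit relation, both on objects and on covering pairs; composing the two displayed isomorphisms then gives~(3). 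This last point is exactly where the \pcp property enters: if $X,X'\in I_A$ with $X'=g.X$, then $X'$ vanishing on $A$ forces $X$ to vanish on $g^{-1}(A)$, so $A\cup g^{-1}(A)\subseteq\ze{X}$ is central; but for each $a\in A$ the \pcp property gives $a\parallel g^{-1}(a)$, and a central set contains no two distinct parallel elements, so $g^{-1}(a)=a$ for all $a\in A$, i.e.\ $g\in\stab(A)$, and the same computation on a covering pair $X\lessdot Y$ in $I_A$ handles morphisms. The main obstacle is precisely this orbit-collapse step: without a hypothesis such as \pcp the passage to $\qc G$ can glue together covectors of the contraction that are not $\stab(A)$-equivalent, so the \pcp assumption is essential here rather than incidental.
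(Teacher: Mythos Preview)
Your proof is correct and structurally matches the paper's: both read the claims through the embedding $\iota_A$ and, for the substantive part~(3), reduce to showing that $G$-orbits and $\stab(A)$-orbits agree on $I_A$ via the \pcp (hence translative) property. A few points of comparison are worth noting. For~(2) you give the literal reading (different ground sets, hence disjoint), while the paper tacitly intends the embedded version $I_A\cap I_{g(A)}=\varnothing$, which is what is actually used in~(3); you cover this in your parenthetical remark, so nothing is missing. For~(4) your argument via $I_A=I_{\cl(A)}$ (zero sets are flats) is slightly more direct than the paper's, which instead invokes $\stab(A)=\stab(\cl(A))$ for translative actions together with equivariance of $\covset/A\cong\covset/\cl(A)$; your route avoids appealing to translativity here. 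Finally, you are right to flag that the \pcp hypothesis is essential for~(3) (and for the embedded form of~(2)): the lemma statement does not list it, but the paper's proof uses it implicitly through Lemma~\ref{lem:ta}.
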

\begin{proof}
	The first statement can be checked straightforwardly. The second follows from Lemma~\ref{lem:ta}, as any $X\in \covset/A \cap \covset/g(A)$ would have $\ze{X}\supseteq A\cup g(A)$, but translativity of the induced action on $\SS(\covset)$ implies that $A\cup g(A)$  can only be a central set if $g(A)=A$.
	
	For the third statement, we regard $\covpos(\covset)$ as an acyclic category and we denote by $X\leq Y$ the unique morphism between any two $X,Y\in \covpos(\covset/A)$, if one exists.
	Now the assignment
	\begin{equation}\label{eq:mormap}
	\Mor{q_\alpha(\covpos(\covset/A))} \to \Mor{q_{\alpha_{/A}}(\covpos(\covset/A))}, \quad G(X\leq Y)\mapsto \stab(A)(X\leq Y)
	\end{equation}
	is well-defined since any representative $(X'\leq Y')\in G(X\leq Y)$ has the form $(X'\leq Y')=g(X\leq Y)$ for some $g\in G$, and if $g\not\in\stab(A)$ then $g.X, g.Y\not \in \covpos(\covset/A)$ by part (2).
	The proof of item (3) is complete by noticing that \eqref{eq:mormap} describes the inverse of the natural functor $q_{\alpha_{/A}}(\covpos(\covset/A))\to q_{\alpha}(\covpos(\covset/A))$ induced by the inclusion $\stab(A)\subseteq G$.
	
	For item (4), notice that for translative actions we have $\stab(A)=\stab(\cl(A))$ (see, e.g.,~\cite[Lemma 8.1]{DeluRiedel}) and the isomorphism $\covset / A \cong \covset / \cl(A)$ (see Remark {\ref{remsimplicity}}) is equivariant. This induces a natural isomorphism of diagrams, and 	hence an isomorphism of quotients. Alternatively, (4) can be verified explicitly via Remark~\ref{remexplquot}. 
\end{proof}

\subsection{Actions on parallelism classes}

\begin{lemma}\label{lem:actionorbit}
	If the action of $G$ on $(E,\covset)$ is \pcp, then the restriction of the action on each parallelism class $\pi$ is an action by order isomorphisms of the ordered set $(\pi,<_\pi)$.
\end{lemma}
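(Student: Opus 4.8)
The plan is to show that each $g\in G$ restricts to an order-preserving bijection of $(\pi,<_\pi)$, for every parallelism class $\pi$; since $g^{-1}$ then gives the inverse map, this yields the asserted action by order isomorphisms.

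\emph{The restriction is well defined.} Since the action is \pcp, we have $g(e)\in\pi(e)$ for every $e\in E$, hence $g(\pi)\subseteq\pi$; applying the same to $g^{-1}$ gives $g^{-1}(\pi)\subseteq\pi$, so $g(\pi)=\pi$. As $g$ is a bijection of $E$, it restricts to a bijection $g|_\pi\colon\pi\to\pi$, and these restrictions assemble to an action of $G$ on $\pi$.

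\emph{The betweenness relation is $G$-invariant.} Let $[\cdot,\cdot,\cdot]$ be the ternary relation of Proposition~\ref{prop:betweenness} on $\pi$. I claim $[a,b,c]\Leftrightarrow[g(a),g(b),g(c)]$ for all $a,b,c\in\pi$ and all $g\in G$. Distinctness of $g(a),g(b),g(c)$ follows from injectivity of $g$. For the quantified condition, let $X',Z'\in\covset$ satisfy $X'(g(a))=0$ and $Z'(g(c))=0$; put $X:=g^{-1}.X'$ and $Z:=g^{-1}.Z'$, which lie in $\covset$ because $g^{-1}.\covset=\covset$. From $(g^{-1}.X')(e)=X'(g(e))$ we get $X(a)=X'(g(a))=0$ and $Z(c)=Z'(g(c))=0$, so $[a,b,c]$ forces $X(b)=-Z(b)$, i.e.\ $X'(g(b))=-Z'(g(b))$. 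As $X',Z'$ were arbitrary with the stated zero conditions, this proves $[g(a),g(b),g(c)]$; the reverse implication is the same argument applied to $g^{-1}$. (Here the only place the hypothesis enters is the $G$-stability $g.\covset=\covset$ together with the formula $g.X(e)=X(g^{-1}(e))$.)

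\emph{Conclusion.} By Proposition~\ref{prop:betweenness} and Corollary~\ref{totalorder} (see \cite[\S 3.1]{Huntington}), the relation $[\cdot,\cdot,\cdot]$ on $\pi$ determines $<_\pi$ uniquely up to reversal. Now $g|_\pi$ is a bijection of $\pi$ preserving $[\cdot,\cdot,\cdot]$, so the total order $a<'b:\Leftrightarrow g^{-1}(a)<_\pi g^{-1}(b)$ induces the same betweenness relation, whence $<'$ equals $<_\pi$ or its opposite; in either case $g|_\pi$ is an order isomorphism of the ordered set $(\pi,<_\pi)$ (which, as recalled in Corollary~\ref{totalorder}, is itself only canonical up to reversal). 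In particular, when $\pi\subseteq E^{0*}$ the poset $(\pi,<_\pi)\cong\mathbb N$ is not self-dual, so $g|_\pi$ preserves $<_\pi$ on the nose; and the conventions of Definition~\ref{def:totalorder} are respected because $g|_\pi$ carries extrema to extrema. This completes the proof.

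I expect the betweenness-invariance step to be the substantive one — it is the only point where $g.\covset=\covset$ is used, and one must be careful with the action formula and with $G$-stability of $\covset$. The remaining subtlety is purely bookkeeping: since $<_\pi$ is only defined up to reversal, ``order isomorphism'' must be read in that sense; pinning the orientation down exactly (i.e.\ excluding the order-reversing alternative) would go through the reorientation normal form of Corollaries~\ref{totalorder} and~\ref{corsepdelta}, which is delicate precisely because passing to a reorientation twists the induced $G$-action on covectors.
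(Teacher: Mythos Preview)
Your argument is correct, and its computational heart --- pulling a covector $X'$ with $X'(g(a))=0$ back to $g^{-1}.X'\in\covset$ via $(g^{-1}.X')(e)=X'(g(e))$ --- is the same device the paper uses. The difference is framing. The paper first passes to the reorientation of Corollary~\ref{totalorder} so that $e<_\pi f\Leftrightarrow\sigma_f(e)=+$, and then shows $\sigma_{g(f)}(g(e))=+$ directly, concluding strict order preservation $g(e)<_\pi g(f)$. You instead work with the reorientation-invariant betweenness relation of Proposition~\ref{prop:betweenness} and obtain that $g|_\pi$ is either order-preserving or order-reversing. What your route buys is precisely what you flag in your last paragraph: after reorienting to $\covset^{(\tau)}$ the original permutation action on $E$ gives $g.\covset^{(\tau)}=\covset^{(\tau\circ g^{-1})}$, which equals $\covset^{(\tau)}$ only when $\tau$ happens to be $G$-invariant, so the paper's line ``$g^{-1}.X,g^{-1}.Y\in g^{-1}.\covset=\covset$'' (read in the reoriented system) is not automatic. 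Indeed, one can orient a $\mathbb Z\setminus\{0\}$-indexed family of parallel lines so that the involution $n\mapsto -n$ yields a sliding action on the AOM that reverses $<_\pi$; hence without extra hypotheses only your weaker conclusion holds, and your betweenness argument is the cleaner statement at this level of generality.
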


\begin{proof}
	Let $\pi$ be a parallelism class of $(E, \covset)$. Since the definition of $<_\pi$ is independent on reorientation, by passing to a suitable reorientation of $\covset$ (as in Corollary~\ref{totalorder}) and recalling Remark~\ref{constantsign} we can assume that $e<_\pi f$ if and only if $\sigma_{f}(e)=+$, for all $e,f\in \pi$. 
	Now let $g\in G$ and consider any two $X,Y$ with $X(g(f)) = Y(g(f))=0$. This means $g^{-1}.X (f) = g^{-1}.Y(f) =0$ and, since $g^{-1}.X,g^{-1}.Y\in g^{-1}.\covset=\covset$, the order relation $e<_\pi f$ implies $g^{-1}.X(e)=g^{-1}.Y(e)=\sigma_f(e)=+$. Since $X(g(e)) = g^{-1}.X(e)$ and $Y(g(e)) = g^{-1}.Y(e)$, we conclude $\sigma_{g(f)}(g(e))=X(g(e)) = Y(g(e))=+$ and so $g(e)<_\pi g(f)$.
	
	Now the assignment $e\mapsto g^{-1}(e)$ from $\pi$ to $\pi$ is order preserving as well, and it is an inverse to $e\mapsto g(e)$. The latter is thus an order isomorphism.
\end{proof}

\begin{proposition}\label{prop:jaction}
	Let $\pi$ be a parallelism class of an FAOM $(E,\covset)$ with a \pcp action of a group $G$ and recall  the  order isomorphism  $j_\pi$ from~\eqref{eq:defot}. 
	For every $g\in G$ there is $k(g)_\pi \in\mathbb Z$ such that $j_{\pi} (g(e))= k(g)_\pi+ j_{\pi} (e)$ for all $e\in \pi$. If $\pi\not\subseteq E^{*,*}$ then $k(g)_\pi=0$ for all $g$, and so $G$ acts trivially on $\pi$.
\end{proposition}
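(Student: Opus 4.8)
The plan is to transport the action through the order isomorphism $j_\pi$ and reduce the statement to the classical description of the order automorphisms of a chain. First I would invoke Lemma~\ref{lem:actionorbit}: since the action is \pcp, every $g\in G$ restricts to an order automorphism of $(\pi,<_\pi)$. Conjugating by $j_\pi$, the map $\widetilde g:=j_\pi\circ g\circ j_\pi^{-1}$ is then an order automorphism of the chain $j_\pi(\pi)$, which by~\eqref{eq:defot} equals $\mathbb Z$, $\mathbb N$, or the finite segment $\{0,\dots,n_\pi-1\}$ according to whether $\pi\subseteq E^{**}$, $\pi\subseteq E^{0*}$, or $\pi\subseteq E^{01}$. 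Here one should note that, membership being determined by $\pi(e)$, the trichotomy of Definition~\ref{def:totalorder} is constant on each parallelism class, so that $j_\pi$ really has one of these three targets.

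The key elementary observation is that an order automorphism $\phi$ of a chain preserves the covering relation, hence $\phi(m+1)=\phi(m)+1$ whenever $m,m+1$ both lie in $j_\pi(\pi)$. In the case $\pi\subseteq E^{**}$ this forces $\widetilde g(m)=m+\widetilde g(0)$ for all $m\in\mathbb Z$; setting $k(g)_\pi:=\widetilde g(0)$ and unwinding the conjugation gives $j_\pi(g(e))=k(g)_\pi+j_\pi(e)$ for all $e\in\pi$. In the two remaining cases the chain $j_\pi(\pi)$ has a least element, necessarily fixed by $\widetilde g$; together with $\widetilde g(m+1)=\widetilde g(m)+1$ and induction this yields $\widetilde g=\mathrm{id}$, i.e.\ $g$ fixes $\pi$ pointwise and one may take $k(g)_\pi=0$.

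I do not expect a genuine obstacle: once Lemma~\ref{lem:actionorbit} is available the argument is routine. The only points deserving a line of care are (i) the remark above that the partition $E^{**}\sqcup E^{0*}\sqcup E^{01}$ does not split a parallelism class, so that the target of $j_\pi$ is unambiguous, and (ii) the observation that $k(g)_\pi=j_\pi(g(e))-j_\pi(e)$ is independent of the choice of $e\in\pi$, which is exactly what the computation above establishes. If convenient for later use one can also record, although it is not needed for the statement, that $g\mapsto k(g)_\pi$ is a homomorphism $G\to\mathbb Z$, since $\widetilde{gh}=\widetilde g\circ\widetilde h$ and translations of $\mathbb Z$ compose additively.
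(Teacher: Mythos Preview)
Your argument is correct and follows essentially the same route as the paper's proof: invoke Lemma~\ref{lem:actionorbit} to get that $g$ acts as an order automorphism of $(\pi,<_\pi)$, then conjugate by $j_\pi$ and use that the only order automorphisms of $\mathbb N$ or a finite segment are the identity, while those of $\mathbb Z$ are translations. The paper states this more tersely, but the content is identical; your additional remarks on (i), (ii), and the homomorphism property are all fine and the last is in fact recorded separately in the paper.
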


\begin{proof}
  Let $g\in G$ and $\pi$ a parallelism class.  By Lemma~\ref{lem:actionorbit} the map $g$ is an order isomorphism on $\pi$. If $\pi\subseteq E^{0,1}\cup E^{0,*}$ then this implies that $g$ acts as the identity on $\pi$. Otherwise, under the  order isomorphism  $j_\pi$ from $(\pi,<_\pi)$ to $\mathbb Z$ the map $g$ carries over to an order automorphism of $\mathbb Z$, and those are exactly the maps ``addition by a constant".
\end{proof}

\begin{remark}
	The function $k: G \to \mathbb Z^{\pi(E)}$ defined by $g\mapsto k(g)_{\ast}$ is a group homomorphism.
\end{remark}

\begin{corollary}\label{coractiononI}
	The isomorphism $\fkzb\to \mathbb I(B)$ from Theorem~\ref{corIF} is equivariant with respect to the induced $G$-action on the left, and the $G$-action on $\mathbb I(B)$ defined by $i\mapsto i+k(g)$ for every $g\in G$.
\end{corollary}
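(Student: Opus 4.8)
The plan is to unwind the two identifications and show that the $G$-action commutes with each. Recall from Theorem~\ref{corIF} that the isomorphism $\fkzb\to\mathbb I(B)$ sends a covector $X\in\covset[\Bext]$ to the vector $i_X$ whose $\pi$-coordinate is $j_\pi(\separ{X}{\pi})$ or $j_\pi(\separ{X}{\pi})-\tfrac12$ according to whether $X(\separ{X}{\pi})$ is $0$ or $-$ (after the reorientation fixed in Corollary~\ref{totalorder}/Corollary~\ref{corsepdelta}). So it suffices to prove that for every $g\in G$, every $X\in\covset[\Bext]$ and every parallel class $\pi\in\pi(B)$ we have $i_{g.X}(\pi)=i_X(\pi)+k(g)_\pi$.

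First I would reduce everything to a statement about the ``separating element'' $\separ{X}{\pi}$. The key observation is that $g$ permutes each parallel class $\pi$ by an order isomorphism (Lemma~\ref{lem:actionorbit}), which by Proposition~\ref{prop:jaction} is precisely the shift $j_\pi(g(e))=j_\pi(e)+k(g)_\pi$. Since $(g.X)(e)=X(g^{-1}(e))$, the sign pattern of $g.X$ along $\pi$ is just the sign pattern of $X$ along $\pi$ transported by this shift; in particular $g.X$ has the same ``one sign change'' shape guaranteed by Corollary~\ref{corsepdelta}, and its separating element satisfies $\separ{g.X}{\pi}=g(\separ{X}{\pi})$, with $(g.X)(\separ{g.X}{\pi})=X(\separ{X}{\pi})$. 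Applying $j_\pi$ and using $j_\pi(g(e))=j_\pi(e)+k(g)_\pi$ gives, in the case $X(\separ{X}{\pi})=0$,
$$
i_{g.X}(\pi)=j_\pi\!\left(\separ{g.X}{\pi}\right)=j_\pi\!\left(g(\separ{X}{\pi})\right)=j_\pi\!\left(\separ{X}{\pi}\right)+k(g)_\pi=i_X(\pi)+k(g)_\pi,
$$
and the case $X(\separ{X}{\pi})=-$ is identical up to the additive $-\tfrac12$, which is unaffected by the shift. Running this over all $\pi\in\pi(B)$ yields $i_{g.X}=i_X+k(g)$, which is exactly equivariance of $X\mapsto i_X$, hence of its inverse $i\mapsto X_i$.

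I would also note that one must check the $G$-action on $\mathbb I(B)$ defined by $i\mapsto i+k(g)$ is genuinely an action, i.e.\ that $k$ is a homomorphism $G\to\mathbb Z^{\pi(E)}$; but this is exactly the Remark immediately preceding the corollary, so it can simply be cited. Finally I would remark that the action is well-defined on $\mathbb I(B)$ (i.e.\ lands back in $\mathbb I(B)$, not just in $\tfrac12\mathbb Z^{\pi(B)}$): this follows because $X\mapsto i_X$ already lands in $\covpos(\covset[\Bext])$, equivariance transports this, and for classes not contained in $E^{**}$ Proposition~\ref{prop:jaction} gives $k(g)_\pi=0$, so no coordinate is pushed outside its prescribed index set $\mathbb I_\pi$.

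The main obstacle, such as it is, is purely bookkeeping: making sure the reorientation normalizations of Corollary~\ref{totalorder} and Corollary~\ref{corsepdelta} are compatible with the $G$-action, i.e.\ that after reorienting we still have $g.\covset=\covset$ and the action stays \pcp. This is handled by the Lemma stating that a $G$-action induces a $G$-action on any reorientation and preserves the \pcp property, so one may fix the reorientation at the outset with no loss. Beyond that, the only care needed is to verify that $\separ{X}{\pi}$ is defined by a reorientation-and-action-equivariant recipe, which is immediate from the explicit formula $\separ{X}{\pi}=s\big(\max_{<_\pi}\{e\in\pi\mid X(e)=+\}\big)$ (or $\hat0_\pi$ when $X(\pi)\subseteq\{0,-\}$) in the proof of Corollary~\ref{corsepdelta}, together with the fact that $g$ commutes with both the order $<_\pi$ and the successor function $s$.
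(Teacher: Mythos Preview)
Your proposal is correct and follows essentially the same approach as the paper's proof: both identify $\separ{g.X}{\pi}=g(\separ{X}{\pi})$ with $(g.X)(\separ{g.X}{\pi})=X(\separ{X}{\pi})$, then apply the shift formula $j_\pi(g(e))=j_\pi(e)+k(g)_\pi$ from Proposition~\ref{prop:jaction} to each case in the definition of $i_X(\pi)$. Your version is somewhat more detailed in justifying side points (well-definedness of the action on $\mathbb I(B)$, compatibility of the reorientation with the $G$-action) that the paper leaves implicit.
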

\begin{proof}
	The stated poset isomorphism maps any $X\in \fkzb$ to $i_X\in \mathbb I(B)$. Now, for every $g\in G$ and every parallelism class $\pi$ we have $\separ{g.X}{\pi}=g(\separ{X}{\pi})$ (recall Corollary~\ref{corsepdelta}) and therefore
	\begin{align*}
	i_{g.X} (\pi) &=
	\left\{
\begin{array}{ll}
j_{\pi}(g(\separ{X}{\pi})) & \textrm{ if } g.X(\separ{g.X}{\pi})=0\\
j_{\pi}(g(\separ{X}{\pi})) -\frac{1}{2} & \textrm{ if } g.X(\separ{g.X}{\pi})=-
\end{array}
\right.\\
	&=\left\{
\begin{array}{ll}
j_{\pi}(\separ{X}{\pi}) + k(g)_\pi  & \textrm{ if } X(\separ{X}{\pi})=0\\
 j_{\pi}(\separ{X}{\pi}) -\frac{1}{2} + k(g)_\pi& \textrm{ if } X(\separ{X}{\pi})=-
\end{array}
\right\}
=i_{X} (\pi) + k(g)_\pi.
	\end{align*}
\end{proof}

\subsection{Topological aspects}

We now consider the topology of quotients of covector posets of AOMs under a group action. A preliminary remark is in order:

\begin{definition} If a group $G$ acts on an FAOM $(E,\covset)$, let
	 $$Q_\alpha:\Vert \covpos(\covset) \Vert\to \Vert \covpos(\covset) \Vert / G  $$
	  denote the topological quotient map. 
\end{definition}

We will study in particular the case where the quotient map is a topological cover. This happens for instance if the group action is free.

\begin{definition}
	Call an action $\alpha: G\circlearrowright \covset$ {\em free} if the induced action on $\covset$ is free.
\end{definition}

\begin{lemma}\label{rem:quotiso} If the action $\alpha$ is free, then $q_\alpha$ is a regular covering of acyclic categories, 
$Q_\alpha = \Vert q_\alpha \Vert$ is a topological covering map and there is an isomorphism of cell complexes $$\Vert q_\alpha(\covpos(\covset)) \Vert \cong Q_\alpha(\Vert \covpos(\covset)\Vert).$$  
\end{lemma}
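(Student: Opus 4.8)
The plan is to verify the three assertions in sequence, deriving each from the previous one together with standard facts about group actions on posets and acyclic categories.

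First I would show that $q_\alpha$ is a regular covering of acyclic categories. By Corollary~\ref{Franked} the poset $\covpos(\covset)$ is ranked, so by Remark~\ref{remexplquot} the quotient $\covpos(\covset)\qc G$ has objects the $G$-orbits of covectors and morphisms the $G$-orbits of order relations $X\leq Y$. The key point is that freeness of $\alpha$ on $\covset$ implies that $G$ acts freely on the set of morphisms of $\covpos(\covset)$ as well: if $g.(X\leq Y)=(X\leq Y)$ then $g.X=X$, forcing $g=e$. Moreover the action is \emph{without inversions} in the sense relevant to acyclic categories, since $X<Y$ and $g.X=Y$, $g.Y=X$ would contradict the fact that a poset has no nontrivial cycles (iterating $g$ would produce $X<Y<X$). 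Hence $q_\alpha$ is a covering functor in the sense of the appendix (\S\ref{sec:ACGA}), and it is regular because $G$ acts by deck transformations that are transitive on each fibre by construction.

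Next I would pass to geometric realizations. The realization functor $\Vert\cdot\Vert$ takes the nerve of an acyclic category and realizes it; it is well known (and recalled in \S\ref{sec:ACGA}) that $\Vert\cdot\Vert$ sends covering functors to covering maps, so $Q_\alpha=\Vert q_\alpha\Vert:\Vert\covpos(\covset)\Vert\to\Vert\covpos(\covset)\Vert/G$ is a topological covering map, and it is regular with deck group $G$ because the $G$-action on the nerve is free and simplicial — here one uses that the order complex $\Delta(\covpos(\covset))$ is a genuine simplicial complex on which $G$ acts freely (freeness on simplices follows, as above, from freeness on the vertex set $\covset$ together with the fact that a finite chain cannot be setwise fixed while being pointwise permuted nontrivially, again by acyclicity). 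The identification $\Vert\covpos(\covset)\Vert/G\cong Q_\alpha(\Vert\covpos(\covset)\Vert)$ is then just the statement that the image of the quotient map is the quotient space.

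Finally, the isomorphism of cell complexes $\Vert q_\alpha(\covpos(\covset))\Vert\cong Q_\alpha(\Vert\covpos(\covset)\Vert)$ follows because geometric realization commutes with the quotient in this setting: the nerve of $\covpos(\covset)\qc G$ is the quotient of the nerve of $\covpos(\covset)$ by the (free, simplicial) $G$-action, and realization commutes with such quotients (the realization of a quotient simplicial set by a free group action is the quotient of the realizations). Concretely, both sides are the CW-complex whose cells are the $G$-orbits of chains of $\covpos(\covset)$, glued according to the face maps inherited from $\Delta(\covpos(\covset))$; the regularity of the covering guarantees there is no extra identification on the boundary of a cell. I expect the main obstacle to be the bookkeeping needed to certify that the $G$-action on the \emph{nerve} (not just on $\covset$) is free and admissible in the sense required for realization to commute with the quotient — i.e., ruling out that a nontrivial $g$ fixes a simplex setwise. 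This is where acyclicity of the poset $\covpos(\covset)$ is essential, and it is the one place where a short explicit argument (rather than a citation) is needed; everything else is an application of the cited machinery on acyclic categories and their realizations from the appendix and from~\cite{DePag}.
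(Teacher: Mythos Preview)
Your proposal is correct and follows essentially the same approach as the paper: the paper's proof simply observes that the action is by rank-preserving automorphisms of $\covpos(\covset)$ and then invokes \cite[Lemma A.19, Corollary A.20]{DePag} directly, whereas you unpack in detail what those cited results assert (freeness on objects implies freeness on morphisms and on chains, realization commutes with the quotient). Your more explicit treatment of the ``no setwise-fixed chain'' issue is exactly the content absorbed into the cited lemmas, so nothing is missing or different in substance.
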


\begin{proof}
	By definition, any action of $G$ on $(E,\covset)$ induces an action by rank-preserving automorphisms on $\covpos(\covset)$. The Lemma then follows with~\cite[Lemma A.19, Corollary A.20]{DePag}.
\end{proof}

\begin{theorem}\label{tpa}
	Let $(E,\covset)$ be a nonempty FAOM with a distinguished basis $B\subseteq E^{*,*}$, and let  a free abelian group $G\cong \mathbb Z^{\vert B \vert}$ act on $(E,\covset)$ so that the induced action on $\covset[\Bext]$ is free. If the action is \pcp, then $\Vert \covpos(\covset)\qc G \Vert$ is homeomorphic to the $\vert  B \vert$-torus $(S^{1})^{\vert B \vert}$.
\end{theorem}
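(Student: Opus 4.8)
The plan is to transport the whole situation, via restriction to the basis frame $\Bext$, onto the standard cubical tessellation of $\mathbb R^d$ with $d=|B|$, carrying the $G$-action along, and then to pass to quotients.

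First I would fix the model. Since $B\subseteq E^{*,*}$, every factor $\mathbb I_{\pi(b)}$ of $\mathbb I(B)$ equals $\tfrac12\mathbb Z$, so $\mathbb I(B)=(\tfrac12\mathbb Z)^{|B|}$ and $d:=\rk(\covset)=|B|$. Theorem~\ref{corIF} gives a poset isomorphism $(\mathbb I(B),\preceq)\cong\fkzb$, where $\fkzb:=\covpos(\covset[\Bext])$; Lemma~\ref{lem_concreteR} realizes $\Vert\mathbb I(B)\Vert$ as the barycentric subdivision of the unit-cube complex on $\mathbb Z^d$ and, by affine extension of $\mathbb I(B)\hookrightarrow\mathbb R^d$, identifies it with $\mathbb R^d$; and Corollary~\ref{coractiononI} says that under these identifications $G$ acts on $\mathbb I(B)$, and on $\mathbb R^d$, by the translations $x\mapsto x+\kappa(g)$ for a homomorphism $\kappa\colon G\to\mathbb Z^d$. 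In particular all of the homeomorphisms involved are $G$-equivariant. Moreover $\kappa$ is injective: a $g\neq e$ with $\kappa(g)=0$ would fix $\mathbb I(B)$, hence $\fkzb$, pointwise, contradicting that the induced action on $\covset[\Bext]$ is free. As $G\cong\mathbb Z^{|B|}$, the image $\Lambda:=\kappa(G)$ is a full-rank lattice in $\mathbb Z^d\subseteq\mathbb R^d$, so $\mathbb R^d/\Lambda\cong(S^1)^{|B|}$.

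Next I would reduce to an equivariant comparison of $\covset$ and $\covset[\Bext]$. The action on $\covset$ is itself free: $g.X=X$ implies $g.(X_{\vert\Bext})=X_{\vert\Bext}$, hence $g=e$. So Lemma~\ref{rem:quotiso} applies to both, giving $\Vert\covpos(\covset)\qc G\Vert\cong\Vert\covpos(\covset)\Vert/G$ and $\Vert\fkzb\qc G\Vert\cong\Vert\fkzb\Vert/G$, the latter being $\mathbb R^d/\Lambda\cong(S^1)^{|B|}$. Thus it suffices to build a $G$-equivariant homeomorphism $\Vert\covpos(\covset)\Vert\cong\Vert\fkzb\Vert$. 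I note that the restriction map $\phi\colon\covset\to\covset[\Bext]$ is $G$-equivariant: since the action is \pcp, every $g$ maps each $\pi(b)$ onto itself, hence fixes $\Bext$ setwise.

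The hard part is to perform the inductive construction of Proposition~\ref{homeo1} $G$-equivariantly. Because the action on $\fkzb$ is free, it is free on each rank stratum $\fkzb^{j}\setminus\fkzb^{j-1}$, so I would pick one representative $Y$ per $G$-orbit there, extend the inductively-built (hence, by induction, $G$-equivariant) boundary homeomorphism $\psi^{j-1}$ across the PL-ball $\Vert\covpos(\covset)_{\leq\phi^{-1}(Y)}\Vert$ exactly as in Proposition~\ref{homeo1} (via Lemma~\ref{lemresB}), and then set $\psi^{j}(g.x):=g.\psi^{j}(x)$ on the rest of the orbit. This is well defined and continuous precisely because the orbit has trivial stabilizers and because $\phi$ is $G$-equivariant while $G$ acts on $\covpos(\covset)$ by poset automorphisms, so the balls attached to distinct orbit members of $Y$ meet only along the lower skeleton, on which $\psi^{j-1}$ is already $G$-equivariant. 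Taking the union over $j$ yields the desired $G$-equivariant homeomorphism $\psi$, and then $\Vert\covpos(\covset)\qc G\Vert\cong\Vert\covpos(\covset)\Vert/G\cong\Vert\fkzb\Vert/G\cong(S^1)^{|B|}$. The real obstacle is entirely here: verifying that the very non-canonical recursion of Proposition~\ref{homeo1} survives the group action, for which freeness of the action on $\covset[\Bext]$ is exactly the hypothesis that does the job. (Alternatively one can argue softly: $\Vert\covpos(\covset)\Vert\cong\mathbb R^d$ is simply connected, and since $\phi$ has finite fibres and $\covset[\Bext]$ has finitely many $G$-orbits, $\Vert\covpos(\covset)\Vert/G$ is a closed aspherical $d$-manifold with $\pi_1\cong\mathbb Z^d$; topological rigidity of the torus then gives the conclusion, at the cost of much heavier machinery.)
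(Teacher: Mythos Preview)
Your proposal is correct and follows the same route as the paper: pass to topological quotients via Lemma~\ref{rem:quotiso}, transport along the chain $\covpos(\covset)\to\fkzb\to\mathbb I(B)\to\mathbb R^d$ using Proposition~\ref{homeo1}, Theorem~\ref{corIF}, Lemma~\ref{lem_concreteR} and Corollary~\ref{coractiononI}, and conclude by identifying the $G$-action as a full-rank lattice of integer translations. You are in fact more careful than the paper on two points: you explicitly check that the action on $\covset$ itself is free (needed for Lemma~\ref{rem:quotiso}), and you spell out how freeness on $\covset[\Bext]$ lets one make the inductive construction of Proposition~\ref{homeo1} $G$-equivariant orbit-by-orbit, whereas the paper simply asserts that the homeomorphism induced by the equivariant restriction map is itself equivariant.
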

\begin{proof}
	First note that, by Remark~\ref{rem:quotiso}, there is a homeomorphism $\Vert \covpos(\covset) /G\Vert\simeq\Vert \covpos(\covset) \Vert/G$.
	Moreover, the restriction map $\covset \to \covset[\Bext]$ is $G$-equivariant, and so is the homeomorphism of  nerves that it induces (see Proposition~\ref{homeo1}). Therefore the quotients $\Vert \covpos(\covset) \Vert/G$ and $\Vert \fkzb \Vert/G$ are homeomorphic. Analogously we have a homeomorphism with $\Vert\mathbb I(B)\Vert/G$, where the action is the one described in Corollary~\ref{coractiononI}. 
	
	Now Lemma~\ref{lem_concreteR} gives a concrete realization of $\Vert \mathbb I(B)\Vert$ as $\mathbb R^{\vert B \vert}$, and on this realization the $G$-action described in Corollary~\ref{coractiononI} corresponds to the action of the subgroup $k(G)$ of the group of integer translations $\mathbb Z^{\vert B \vert}$. Since stabilizers of $k(G)$ are trivial (by the  freeness assumption), the subgroup $k(G)$ is full-rank, and so $\mathbb R^{\vert B \vert}/k(G)$ is homeomorphic to a ${\vert B \vert}$-torus, as was to be proved.
\end{proof}

\begin{corollary}\label{cor:tpa}
	Let $\alpha: G\circlearrowright \covset$, let $B$ be as in the statement of Theorem~\ref{tpa}, and let $A\in \central(\covset)$. Then, 
	$
	Q_\alpha(\Vert \covpos(\covset/A)\Vert)
	$
	is homeomorphic to a $(\vert B\vert - \rk(A))$-dimensional torus.
\end{corollary}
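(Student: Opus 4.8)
The plan is to reduce the statement to Theorem~\ref{tpa} applied to an appropriate contraction of $\covset$, after taking care of two bookkeeping issues: first, that contracting $A$ versus contracting its closure $\cl(A)$ makes no difference at the level of the quotient (this is Lemma~\ref{lem:contractionaction}.(4)), and second, that the image $Q_\alpha(\Vert\covpos(\covset/A)\Vert)$ really is a quotient of $\Vert\covpos(\covset/A)\Vert$ by the induced action of the stabilizer $\stab(A)$. The latter follows from Lemma~\ref{lem:contractionaction}.(2), which tells us that the $G$-translates of $\covset/A$ inside $\covpos(\covset)$ are pairwise disjoint, so that the canonical injection $\iota_A:\covset/A\hookrightarrow\covset$ of Remark~\ref{rem:injection_contraction} descends to an embedding of quotients, realizing $Q_\alpha(\Vert\covpos(\covset/A)\Vert)$ as $\Vert\covpos(\covset/A)\Vert/\stab(A)$ together with the homeomorphism $\Vert\covpos(\covset/A)/\stab(A)\Vert\cong\Vert\covpos(\covset/A)\Vert/\stab(A)$ coming from Lemma~\ref{rem:quotiso} (applied to $\alpha_{/A}$, which is still free).

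Concretely, I would first replace $A$ by $\cl(A)$, which is a flat of $\SS(\covset)$ of the same rank $\rk(A)$; by Lemma~\ref{lem:contractionaction}.(4) the quotient category, hence the topological quotient, is unchanged, so we may assume $A\in\flatpos(\covset)$. Next, observe that $\covset/A$ is again an FAOM: axioms (FS), (SE), (P) are preserved by Theorem~\ref{thm:minorclosed}, and the finitary axioms (S), (Z), (I) pass to contractions since zero sets, separators and lower intervals only shrink under the map $\iota_A$. Its rank is $\rk(\covset)-\rk(A)=|B|-\rk(A)$ by Corollary~\ref{minorrank}. The induced action $\alpha_{/A}$ of $\stab(A)$ on $\covset/A$ is \pcp\ by Lemma~\ref{lem:alphaminor}, and it is free because $\alpha$ is. Moreover, since the action on $\SS(\covset)$ is translative (Lemma~\ref{lem:ta}) and $A$ is a flat, $\stab(A)$ is free abelian of rank $\rk(\SS(\covset))-\rk(A)=|B|-\rk(A)$ — this is the matroid-theoretic fact that for a translative $\mathbb Z^{|B|}$-action the stabilizer of a flat of corank $c$ is a direct summand isomorphic to $\mathbb Z^{c}$ (cf.\ the reference \cite[Lemma 8.1]{DeluRiedel} already invoked in the proof of Lemma~\ref{lem:contractionaction}). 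To finish, I need a basis $B'$ of $\SS(\covset/A)$ contained in the $E^{*,*}$-part of the ground set $E\setminus\cl(A)$ and such that the induced action of $\stab(A)$ on $\covset/A[\,\Bext'\,]$ is free; then Theorem~\ref{tpa} applies to $(\covset/A,\alpha_{/A},B')$ and yields a homeomorphism of $\Vert\covpos(\covset/A)\qc\stab(A)\Vert$ with a $(|B|-\rk(A))$-torus, which combined with the identifications above gives the claim.

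The main obstacle is the last step: producing the basis $B'\subseteq E^{*,*}$ of the contracted FAOM on which $\stab(A)$ acts with the required freeness. Parallelism classes in $\covset/A$ are (images of) parallelism classes of $\covset$ that avoid $\cl(A)$, and one must check that they land in $(E\setminus\cl(A))^{*,*}$ — i.e.\ remain ``two-way infinite'' — which should follow because an element of $E^{*,*}$ parallel to no element of $\cl(A)$ keeps its $\mathbb Z$-order type after contraction (Lemma~\ref{ordertype}), while the translativity of the action on $\SS(\covset)$ guarantees $\stab(A)$ still moves each such class nontrivially in the free-action case. One then extends a basis of $\cl(A)$ inside $\Bext$ to a basis of $\SS(\covset)$, as in the Corollary following Theorem-Definition~\ref{thm:undersemi}, and takes $B'$ to be the part outside $\cl(A)$; freeness of $\stab(A)$ on $\covset/A[\Bext']$ is inherited from freeness of $G$ on $\covset[\Bext]$ via the $G$-equivariant restriction maps. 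This is the only place where a genuinely new little argument is needed; everything else is assembling results from Sections~\ref{sec:AOMs}--\ref{sec:gas}.
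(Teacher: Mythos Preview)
Your plan is correct and follows the same route as the paper: verify that $(\covset/A,\alpha_{/A})$ satisfies the hypotheses of Theorem~\ref{tpa} with respect to a suitable basis $B'$, then read off the torus dimension from Corollary~\ref{minorrank}. On the point you flag as the main obstacle the paper is more direct than your sketch: it takes $B'\subseteq B$ by completing a maximal independent $I\subseteq A$ to a basis $J$ of $\SS(\covset)$ using elements of $B$ via (CR2), sets $B':=J\setminus I$, and then argues (again by (CR2)) that for each $b'\in B'$ the entire $\covset$-parallelism class $\pi(b')$ survives into the $\covset/A$-parallelism class of $b'$, so $B'\subseteq (E\setminus A)^{*,*}$ immediately. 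Conversely, you are more careful than the paper on two bookkeeping points it leaves implicit: the identification of $Q_\alpha(\Vert\covpos(\covset/A)\Vert)$ with the $\stab(A)$-quotient (your appeal to Lemma~\ref{lem:contractionaction}.(2),(3) and Lemma~\ref{rem:quotiso}), and the requirement $\stab(A)\cong\mathbb Z^{\,|B|-\rk(A)}$ needed to invoke Theorem~\ref{tpa} --- note though that \cite[Lemma~8.1]{DeluRiedel} as used in the paper only yields $\stab(A)=\stab(\cl(A))$, so a separate short argument (e.g.\ via the homomorphism $k:G\to\mathbb Z^{\pi(B)}$ of Proposition~\ref{prop:jaction} restricted to $\stab(A)$ and the freeness hypothesis on $\covset[\Bext]$) is still required there.
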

\begin{proof}
	We check that the action $\alpha_{/A}$ satisfies the hypotheses of Theorem~\ref{tpa}; the claim about the dimension follows with Corollary~\ref{minorrank}. The action is \pcp by Lemma~\ref{lem:alphaminor} and free, e.g., by Remark~\ref{rem:injection_contraction}. We claim that there is a basis $B'$ of $\covset/A$ with $B'\subseteq B$. In fact, by (CR2) in the semimatroid $\SS(\covset)$ any maximal independent subset $I\subseteq A$ can be completed to a basis $J$ of $\SS(\covset)$ using elements from $B$, and it is enough to take $B':=J\setminus I$. Now for every $b'\in B'$ and every $b''\in \pi(b')$  we have, again by  (CR2), that $A\cup\{b'\}\in \central(\covset)$. Moreover, $b'\not\in \cl(A)$ implies $A\cup\{b''\}\in \central(\covset)$ and $b''\not\in \cl(A)$. Therefore $b'' \parallel b'$ in $\covset/A$, and so the parallelism class of $b'$ in $\covset /A$ contains the parallelism class of $b'$ in $\covset$. In particular, $B' \in (E\setminus A)^{*,*}$. We are left to check that $\stab(A)$ acts freely on $(\covset/A)[\widetilde{B'}]$ which is straightforward.

\end{proof}

\subsection{Toric pseudoarrangements}\label{sec:tpa}
\newcommand{\sT}{\mathscr T}

Throughout this section let $\alpha: G\circlearrowright \covset$ denote a free and \pcp action of a finitely generated free abelian group $G$ on an FAOM $\covset$, and suppose that $\SS(\covset)$ has a basis $B\in E^{*,*}$.

\begin{definition}\label{defarr} 	
	Via the embedding $\iota_{\{e\}}$ of Remark~\ref{rem:injection_contraction} we can identify $\Vert \covpos(\covset/e)\Vert$ with the subcomplex
$$\HH_e:= \Vert \iota_{\{e\}}(\covpos(\covset/e))\Vert \subseteq \Vert \covpos(\covset) \Vert.$$ 

	Let $T:=\Vert \covpos(\covset)\Vert/G$ and, for every $a\in E/G$, let
	$$
	\sT_a := Q_\alpha \left(\HH_e\right),
	$$
	where $e$ is any representative of $a$ (this is well-defined by Lemma~\ref{lem:contractionaction}.(1)).
	We then let $$\AA_\alpha:=\{\sT_a \mid a\in E/G\}.$$ 
	We call any such arrangement of subspaces of the torus $T$ a {\em toric pseudoarrangement}. If $\SS(\covset)$ has no loops, $\AA_\alpha$ is called {\em proper}
\end{definition}

\begin{examplenew}
Figure \ref{fig:toricpseudo} shows the toric pseudoarrangement obtained by quotienting the pseudoarrangement of Figure \ref{fig:pseudoperiodic} by the action of $\mathbb Z^2$ by translations whose fundamental region is shaded in Figure \ref{fig:pseudoperiodic}.
\end{examplenew}

\begin{lemma}\label{lem:QCW}
	Every toric pseudoarrangement $\AA_\alpha$ as in Definition~\ref{defarr} defines a CW-complex structure $\rcw(\AA_\alpha)$ on $T$, with one cell for every object in $q_\alpha(\covpos(\covset))$.

	If $\AA_\alpha$ is proper, the union $\cup\AA_\alpha$   is the $(d-1)$-skeleton of $\rcw(\AA)$, here $d=\dim T=\rk(\covset)$. In particular, the complement of $\cup\AA_\alpha$ in $T$ is a union of open $d$-cells.
\end{lemma}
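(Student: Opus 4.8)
The plan is to build the CW-structure on $T$ by descending through the quotient of the regular CW-complex $\rcw(\covset)$ on $\Vert\covpos(\covset)\Vert$ (Corollary~\ref{cor:rcw}). First I would invoke Lemma~\ref{rem:quotiso}: since $\alpha$ is free, $Q_\alpha$ is a topological covering map and $\Vert q_\alpha(\covpos(\covset))\Vert\cong Q_\alpha(\Vert\covpos(\covset)\Vert)$ as cell complexes. By Theorem~\ref{tpa} the underlying space is the $\vert B\vert$-torus, and $d=\dim T=\rk(\covset)$ equals the length of $\covpos(\covset)$ by Corollary~\ref{cor:rcw}; note $\vert B\vert=\rk(\covset)$ since $B$ is a basis of $\SS(\covset)$, so the torus indeed has the right dimension. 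The cells of $\rcw(\covset)$ are indexed by $\Ob{\covpos(\covset)}$, and the $G$-action permutes them; since $\alpha$ is free and rank-preserving, the quotient functor $q_\alpha$ identifies each $G$-orbit of cells to a single cell of the same dimension, so $Q_\alpha(\Vert\covpos(\covset)\Vert)$ carries a regular-enough CW-structure (a priori only a CW-structure, since the quotient of a regular complex need not be regular, cf.\ the footnote in \S\ref{intro:toric}) with one cell per object of $q_\alpha(\covpos(\covset))$, i.e.\ one cell per $G$-orbit in $\Ob{\covpos(\covset)}$. This gives $\rcw(\AA_\alpha)$ and the first assertion.

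For the skeleton statement, suppose $\AA_\alpha$ is proper, i.e.\ $\SS(\covset)$ has no loops. I would first identify the union $\cup\AA_\alpha$ downstairs with the image under $Q_\alpha$ of $\bigcup_{e\in E}\HH_e\subseteq\Vert\covpos(\covset)\Vert$, which is legitimate because $\HH_e$ depends only on the $G$-orbit of $e$ by Lemma~\ref{lem:contractionaction}.(1) and $\{\HH_e\}_{e\in E}$ is $G$-stable. The key point is then the upstairs identity: $\bigcup_{e\in E}\HH_e$ is exactly the $(d-1)$-skeleton of $\rcw(\covset)$. Indeed $\HH_e=\Vert\iota_{\{e\}}(\covpos(\covset/e))\Vert$ consists of those covectors $X$ with $X(e)=0$, i.e.\ $e\in\ze{X}$. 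A covector $X$ lies in some $\HH_e$ iff $\ze{X}\neq\varnothing$; and $X$ is a tope (full support, $\ze{X}=\varnothing$) iff $\rk(X)=d$, because in a simple loopless FAOM the maximal covectors are precisely the topes (Remark~\ref{topexist}, and topes have rank $d$ since $\covpos(\covset)$ has length $d$). So $\bigcup_{e}\HH_e$ is the set of all non-maximal cells, which is precisely the $(d-1)$-skeleton. Passing to the quotient, $Q_\alpha$ maps the $(d-1)$-skeleton of $\rcw(\covset)$ onto the $(d-1)$-skeleton of $\rcw(\AA_\alpha)$ (as $q_\alpha$ preserves rank), giving $\cup\AA_\alpha=(d-1)$-skeleton of $\rcw(\AA_\alpha)$. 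Consequently $T\setminus\cup\AA_\alpha$ is the union of the open $d$-cells of $\rcw(\AA_\alpha)$, i.e.\ the images of the topes.

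I expect the main obstacle to be the careful bookkeeping around \emph{simplicity} and the loopless hypothesis: Definition~\ref{defarr} and the tope machinery of \S\ref{subsec:topes} are stated for simple FAOMs, so I would either note that by Remark~\ref{remsimplicity} the passage to the simplification changes neither $\covpos(\covset)$ nor the relevant contractions up to isomorphism (and is compatible with the $G$-action on parallelism classes), or phrase the argument so that only $\covpos(\covset)$ and its rank function are used, which are simplification-invariant. The loopless assumption ``$\SS(\covset)$ has no loops'' is what guarantees $\ze{X}\subsetneq E$ fails only for topes — equivalently, that a full-support covector exists and is maximal — which is exactly what makes the skeleton identification clean; without it the bottom element $\bot_{\flatpos(\covset)}$ of shared zeros would force every $\HH_e$ with $e$ a loop to be all of $\Vert\covpos(\covset)\Vert$, and the skeleton statement would fail. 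A secondary, more technical point is verifying that the quotient cell structure is genuinely a CW-structure (attaching maps descend, the quotient is Hausdorff since $G$ acts by deck transformations of a covering of a compact space); this follows from Lemma~\ref{rem:quotiso} together with standard covering-space theory, so I would cite it rather than reprove it.
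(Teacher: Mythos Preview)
Your proposal is correct and follows essentially the same line as the paper. The only difference is in packaging: the paper builds the cells $U_{GX}$ of $\rcw(\AA_\alpha)$ directly, identifying the slice below $GX$ with $\covpos(\covset)_{\leq X}$ (via freeness) and then verifying the Munkres CW axioms by hand, whereas you descend the upstairs regular CW-structure $\rcw(\covset)$ through the covering map $Q_\alpha$ supplied by Lemma~\ref{rem:quotiso}. Both routes rest on the same two facts --- freeness of the action and the ball structure of $\covpos(\covset)_{\leq X}$ from Corollary~\ref{cor:rcw} --- and your skeleton argument (a covector lies in some $\HH_e$ iff $\ze{X}\neq\varnothing$ iff $X$ is not a tope) is exactly the paper's. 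One small point to tighten: Lemma~\ref{rem:quotiso} as stated concerns the \emph{order-complex} cell structure $\Vert\covpos(\covset)\Vert$ (cells indexed by chains), not $\rcw(\covset)$ (cells indexed by covectors); you are really using only the covering-map conclusion of that lemma together with the standard fact that a free cellular action descends to a CW-structure on the quotient, so phrase it that way rather than citing the ``isomorphism of cell complexes'' clause.
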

\begin{proof}
	 For every $GX\in \Ob{q_\alpha(\covpos(\covset))}$, by~\cite[Lemma A.19]{DePag} freeness of the action implies that the slice category below $GX$ (see~\cite[Definition A.3]{DePag}) 
	 is isomorphic to the principal order ideal $\covpos(\covset)_{\leq X}$ whose geometric realization is a $\rk(X)$-ball $B_X$ with boundary $\Vert \covpos(\covset)_{< X}\Vert$. Now~\cite[Lemma A.4]{DePag} implies that the interior of $B_X$ includes homeomorphically into an open subset $U_{GX}$ of $T$. We have $U_{GX}\cap U_{GY} = \emptyset$ whenever $GX\neq GY$, and clearly $T$ is the union of all $U_{GX}$. We claim that this is a CW-structure on $T$, and we prove it by checking the conditions given in~\cite[\S 38]{Munkres}.
	 
	 First, since nerves of categories are defined as CW-complexes, the total space $T$ is Hausdorff. The required continuous map of $B_X$ into $T$ carrying the interior of $B_X$ to $U_{GX}$ and the boundary to a finite union of cells is the canonical map from the realization of the slice below $GX$ to $T$ (see~\cite[Definition A.3]{DePag}, where this map is called $j_{GX}$). Moreover, the closure $\overline{U_{GX}}$ of $U_{GX}$ in $T$ is a subcomplex of the geometric realization $\Vert q_\alpha(\covpos(\covset))\Vert$ (i.e., the  the image of $j_{GX}$ in~\cite[Definition A.3]{DePag}), and since the latter is a CW-complex, it follows that any $V\subseteq T$ is closed  if $V\cap \overline{U_{GX}}$ is closed in $\overline{U_{GX}}$.
	
	From this description follows in particular that the $k$-skeleton of $\rcw(\AA_\alpha)$ is the union of all $U_{GX}$ with $\rk(X)\leq k$. Now,  $X$ has maximal rank if and only if it is a tope and, if $\covset$ has no loops, being a tope implies $\ze{X}=\emptyset$, hence $X\not\in\covpos(\covset/e)$ for all $e\in E$. Conversely, any $X$ that is not of maximal rank is contained in some $\HH_e$, thus $\cup\AA_\alpha$ is exactly the $(d-1)$-skeleton of $\rcw(\AA_\alpha)$, and the complement of $\cup\AA_\alpha$ in $T$ is the disjoint union of all $U_{GX}$ where $X$ is a tope. 
\end{proof}

The topological tameness expressed by Lemma~\ref{lem:QCW} allows us to apply Zaslavsky's theory of topological dissections in order to enumerate the open cells constituting the complement of a toric pseudoarrangement $\AA_\alpha$. The stepping stone is determining the poset of connected components of intersections of $\AA_\alpha$.

\begin{proposition}\label{prop_poslayers}
	The poset of connected components of intersections of $\AA_\alpha$ is isomorphic to the quotient poset $\flatpos(\covset)/G$. Moreover, every intersection is topologically a torus $(S^1)^{d-r}$ where $r$ is the rank of the corresponding element in  $\flatpos(\covset)/G$.
\end{proposition}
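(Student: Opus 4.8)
The plan is to transport everything to the universal cover. First I would recall that $\sT_a = Q_\alpha(\HH_e)$ where $\HH_e=\Vert\iota_{\{e\}}(\covpos(\covset/e))\Vert$, and more generally, for a central set $A\in\central(\covset)$, the subcomplex $\HH_A:=\Vert\iota_A(\covpos(\covset/A))\Vert\subseteq\Vert\covpos(\covset)\Vert$ is exactly the intersection $\bigcap_{e\in A}\HH_e$ (since a covector of $\covset$ lies in all the $\iota_{\{e\}}(\covpos(\covset/e))$ for $e\in A$ iff it vanishes on $A$). By Theorem~\ref{thm:PLball} applied to the contraction $\covset/A$ (an FAOM by Theorem~\ref{thm:minorclosed}), each $\Vert\covpos(\covset/A)\Vert$ is connected, hence so is $\HH_A$; therefore the connected components of intersections of the $\HH_e$ in $\Vert\covpos(\covset)\Vert$ are precisely the subcomplexes $\HH_A$ for $A$ ranging over $\central(\covset)$, and $\HH_A=\HH_B$ iff $\cl(A)=\cl(B)$, i.e. iff $A$ and $B$ determine the same flat. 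So the poset of components of intersections upstairs is isomorphic to $\flatpos(\covset)$ (ordered by reverse inclusion of subcomplexes, which matches inclusion of flats since $\HH_A\subseteq\HH_{A'}$ iff $A'\subseteq\cl(A)$).

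Next I would push this down through $Q_\alpha$. Since $\alpha$ is free, by Lemma~\ref{rem:quotiso} the map $Q_\alpha:\Vert\covpos(\covset)\Vert\to T$ is a regular covering with deck group $G$. The $G$-action permutes the subcomplexes $\{\HH_A\}$ according to $g\cdot\HH_A=\HH_{g(A)}$ (Lemma~\ref{lem:contractionaction}.(1)), and by Lemma~\ref{lem:contractionaction}.(2) two such subcomplexes $\HH_A,\HH_{g(A)}$ are either equal (when $\cl(g(A))=\cl(A)$, i.e. $g$ stabilizes the flat) or disjoint. Consequently, for each $a\in E/G$, the preimage $Q_\alpha^{-1}(\sT_a)$ is the disjoint union $\bigsqcup_{e\in a}\HH_e$ of translates, and more generally the preimage of any intersection $\sT_{a_1}\cap\cdots\cap\sT_{a_k}$ is a disjoint union of subcomplexes $\HH_A$ where $A$ runs over the central sets meeting each orbit $a_1,\dots,a_k$ in exactly one element. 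Using that $Q_\alpha$ restricted to each $\HH_A$ is itself a covering of its image $\sT_A:=Q_\alpha(\HH_A)$ with deck group $\stab(A)=\stab(\cl(A))$ (the last equality by translativity, Lemma~\ref{lem:ta} and \cite[Lemma 8.1]{DeluRiedel}), I get that the connected components of intersections downstairs are exactly the $\sT_A$, and $\sT_A=\sT_{A'}$ iff $A,A'$ lie in the same $G$-orbit of flats. This gives a bijection between components of intersections of $\AA_\alpha$ and $\flatpos(\covset)/G$; that it is an isomorphism of posets is then the observation that the covering $Q_\alpha$ is order-compatible — $\sT_A\subseteq\sT_{A'}$ iff some translate of $\HH_A$ sits inside some translate of $\HH_{A'}$ iff $GA\geq GA'$ in $\flatpos(\covset)/G$, which is the definition of the order on the quotient poset (Remark~\ref{remexplquot}).

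Finally, for the topological type of each component: fix a flat $F=\cl(A)$ of rank $r$, so that $\sT_A=Q_\alpha(\HH_A)$ with $\HH_A\cong\Vert\covpos(\covset/A)\Vert$. By Corollary~\ref{cor:tpa}, $Q_\alpha(\Vert\covpos(\covset/A)\Vert)$ is homeomorphic to a $(\vert B\vert-\rk(A))$-dimensional torus, and $\rk(A)=r$ while $\vert B\vert=d$, giving $(S^1)^{d-r}$ as claimed. I expect the main obstacle to be the bookkeeping in the second paragraph: carefully checking that the preimage of an intersection of the $\sT_a$ breaks up as the claimed disjoint union of the $\HH_A$'s (this uses translativity of the $G$-action on $\SS(\covset)$ crucially, to rule out $A\cup g(A)$ being central for $g\notin\stab(A)$), and that the quotient map identifies exactly the $G$-orbits of these pieces and nothing more — i.e. that no two components $\HH_A$, $\HH_{A'}$ with $A,A'$ in different orbits can map to the same or to nested subsets of $T$ without $A,A'$ already being comparable upstairs. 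This is where one must invoke that $Q_\alpha$ is a genuine covering (not merely a quotient), so that local homeomorphy forces the combinatorics downstairs to be the $G$-quotient of the combinatorics upstairs.
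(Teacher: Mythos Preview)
Your approach is essentially the same as the paper's: lift to the cover, identify the preimage of an intersection $\sT_{a_1}\cap\cdots\cap\sT_{a_m}$ as a disjoint union of contracted covector posets $\Vert\covpos(\covset/\{e_1,\ldots,e_m\})\Vert$ indexed by central transversals, use translativity (via Lemma~\ref{lem:contractionaction}) for disjointness, and invoke Corollary~\ref{cor:tpa} for the torus type. The paper's version is more compressed and computes the preimage directly rather than first describing the intersection poset upstairs, but the logical content is the same.

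One small citation issue: Theorem~\ref{thm:minorclosed} only states that \emph{AOMs} are closed under minors, not FAOMs; the finitary axioms (S), (Z), (I) for $\covset/A$ are immediate from those of $\covset$ via the embedding $\iota_A$, but you should say so rather than point to that theorem. Alternatively, connectedness of $\HH_A$ already follows from Corollary~\ref{cor:tpa} (its image is a torus, hence connected, and $Q_\alpha$ is continuous), so you can sidestep the issue entirely.
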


\begin{proof} Let $A:=\{a_1,\ldots,a_m\} \subseteq E/G$ and write $\CC=\CC(\covset)$ for the set of central sets of the underlying semimatroid (see Definition~\ref{centralunder}). Then
$$
Q_G^{-1}(\sT_{a_1} \cap\ldots\cap \sT_{a_m}) 
= \bigcup_{e_1\in a_1,\ldots,e_m\in a_m} \HH_{e_1}\cap\ldots \cap \HH_{e_m}
= \bigcup_{
\substack{e_1\in a_1,\ldots,e_m\in a_m \\ \{e_1,\ldots,e_m\}\in\CC}} 
\Vert\covpos(\covset/{\{e_1,\ldots,e_m\}})\Vert
.$$
The right-hand side union is disjoint by Lemma~\ref{lem:contractionaction}, and by Corollary~\ref{cor:tpa} each of its members maps under $Q_G$ to a torus 
 of dimension $d-\rk(\covset[A])$. Therefore, the connected components of $\sT_{a_1} \cap\ldots\cap \sT_{a_m}$ correspond to $G$-orbits of maximal elements of  $\CC(\covset[A])$, the index set of the r.-h.s.\ union. More precisely, the component $\Vert\covpos(\covset/{\{e_1,\ldots,e_m\}})\Vert$ corresponds to such an orbit $G\{e_1,\ldots,e_m\}$ and has dimension $\rk(\covset) - \rk(\{e_1,\ldots,e_m\})$. Let $X$ be the closure of $\{e_1,\ldots,e_m\}$ in $\SS(\covset)$. By Lemma~\ref{lem:contractionaction}.(4), we have  $\Vert\covpos(\covset/{\{e_1,\ldots,e_m\}})\Vert = \Vert\covpos(\covset/X))\Vert$ and $\rk(X)=\rk(\{e_1,\ldots,e_m\})$.
  Comparing with the definition of $\flatpos(\covset)/G$ gives the claim.  
\end{proof}

\begin{theorem}\label{MT:toric}
	Let $\alpha: G\circlearrowright \covset$ denote a free and \pcp action of a finitely generated free abelian group $G$ on an FAOM $\covset$, and suppose that $\SS(\covset)$ has a basis $B\in E^{*,*}$. We use $\alpha$ to denote the induced $G$-semimatroid $\alpha: G\circlearrowright \SS(\covset)$, and let $T_\alpha(x,y)$ be the associated Tutte polynomial (see Definition \ref{def:tutte}).
	Then $T_\alpha(x,y)$ computes the number of connected components of the arrangement's complement as follows:
	$$\left\vert \pi_0\left(T\setminus \cup\AA_\alpha\right)\right \vert = T_\alpha (1,0)$$
\end{theorem}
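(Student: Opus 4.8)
The plan is to run Zaslavsky's topological‑dissection method on the regular CW‑complex $\rcw(\AA_\alpha)$ produced by Lemma~\ref{lem:QCW}, feeding in the description of the intersection poset from Proposition~\ref{prop_poslayers}, and then to recognise the resulting Whitney‑number sum as $T_\alpha(1,0)$ by way of the combinatorics of the multiplicity Tutte polynomial of \cite{DeluRiedel}.

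First I would dispose of the degenerate case. If $\SS(\covset)$ has a loop $\ell$, then $\ell\in\ze{X}$ for every $X$, so $\HH_\ell=\Vert\covpos(\covset)\Vert$, hence $\cup\AA_\alpha=T$ and the left‑hand side is $0$; on the other hand a loop contributes a factor $y$ to $T_\alpha$, so $T_\alpha(1,0)=0$ as well. Thus we may assume $\AA_\alpha$ is proper (and, invoking Remark~\ref{remsimplicity}, simple). By Lemma~\ref{lem:QCW} the complement $T\setminus\cup\AA_\alpha$ is the disjoint union of the open top‑dimensional cells of $\rcw(\AA_\alpha)$, so $\left\vert\pi_0(T\setminus\cup\AA_\alpha)\right\vert$ equals the number of those cells. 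I would then sort \emph{all} cells of $\rcw(\AA_\alpha)$ by the $G$‑orbit $[F]\in\flatpos(\covset)/G$ of the zero set of the corresponding covector, writing $c([F])$ for the number of cells whose covector has zero set in $[F]$. Since $\ze{\cdot}$ carries $\rk(X)$ to $\rk(\covset)-\rk(\ze X)$, a cell indexed by $GX$ with $[\ze X]=[F]$ has dimension $d-\rk(F)$ where $d:=\rk(\covset)$; by Proposition~\ref{prop_poslayers} and Corollary~\ref{cor:tpa} the closure $\sT_{[F]}$ of the corresponding stratum is a $(d-\rk F)$‑torus, which as a subcomplex of $\rcw(\AA_\alpha)$ consists exactly of the cells $GX$ with $[\ze X]\geq[F]$. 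In particular $c(\hat 0)=\left\vert\pi_0(T\setminus\cup\AA_\alpha)\right\vert$.

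Next I would use additivity of the cellular Euler characteristic on each subcomplex $\sT_{[F_0]}$. Summing $(-1)^{\dim}$ over its cells and sorting by stratum gives, for every $[F_0]\in\flatpos(\covset)/G$,
\begin{equation*}
\sum_{[F]\geq[F_0]}(-1)^{\,d-\rk F}\,c([F])\;=\;\chi(\sT_{[F_0]})\;=\;
\begin{cases}1,&[F_0]\ \text{maximal},\\[2pt]0,&\text{otherwise},\end{cases}
\end{equation*}
because a positive‑dimensional torus has vanishing Euler characteristic and, the arrangement being essential ($\dim T=\rk(\covset)$), the maximal elements of $\flatpos(\covset)/G$ are precisely the rank‑$d$ ones, i.e.\ the $0$‑dimensional layers. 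Möbius inversion in the finite poset $\flatpos(\covset)/G$ turns this into
\begin{equation*}
(-1)^{\,d-\rk F_0}\,c([F_0])\;=\;\sum_{\substack{[F]\,\geq\,[F_0]\\ [F]\ \text{maximal}}}\mu_{\flatpos(\covset)/G}([F_0],[F]),
\end{equation*}
and evaluating at $[F_0]=\hat 0$ gives
\begin{equation*}
\left\vert\pi_0(T\setminus\cup\AA_\alpha)\right\vert\;=\;c(\hat 0)\;=\;(-1)^{d}\sum_{[F]\in\max(\flatpos(\covset)/G)}\mu_{\flatpos(\covset)/G}(\hat 0,[F]).
\end{equation*}

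Finally I would match this with $T_\alpha(1,0)$. The multiplicity Tutte polynomial of \cite{DeluRiedel} has a corank–nullity expansion over $G$‑orbits of central sets governed by the rank function of the geometric semilattice $\flatpos(\covset)/G$; exactly as in the matroid identity $T_M(1,0)=(-1)^{r(M)}\chi_M(0)$ with $\chi_M(t)=\sum_F\mu_{L(M)}(\hat0,F)\,t^{r(M)-r(F)}$, one has $T_\alpha(1,0)=(-1)^{d}\chi_\alpha(0)$, where $\chi_\alpha(t)=\sum_{[F]\in\flatpos(\covset)/G}\mu_{\flatpos(\covset)/G}(\hat0,[F])\,t^{\,d-\rk F}$ is the characteristic polynomial of the $G$‑semimatroid, and $\chi_\alpha(0)$ is exactly the sum over maximal $[F]$ found above. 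I expect the main obstacle to be the bookkeeping in the middle step: one must verify that the restriction of $\AA_\alpha$ to each layer‑torus is again an \emph{essential} and \emph{proper} toric pseudoarrangement, so that Lemma~\ref{lem:QCW} applies there and $\sT_{[F_0]}$ really is a torus with the expected cell structure — this is where closedness of the flats $F$, hence looplessness of the contraction $\covset/F$, is used — and that $\flatpos(\covset)/G$, with its Möbius function, genuinely is the poset of layers underlying the characteristic polynomial of \cite{DeluRiedel}, i.e.\ that the multiplicities of the $G$‑semimatroid are faithfully recorded by the combinatorics of orbits of flats. (Equivalently, one may note that the Zaslavsky‑type enumeration already carried out in \cite{DeluRiedel} for representable $G$‑semimatroids depends only on the poset of layers and on the tameness expressed by Lemma~\ref{lem:QCW}, and therefore persists verbatim in the non‑representable, pseudoarrangement setting.)
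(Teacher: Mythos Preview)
Your proposal is correct and follows essentially the same route as the paper: both apply Zaslavsky's dissection method to the CW-structure of Lemma~\ref{lem:QCW}, identify the poset of layers with $\flatpos(\covset)/G$ via Proposition~\ref{prop_poslayers}, exploit that every non-maximal layer is a positive-dimensional torus (hence has vanishing Euler characteristic), and conclude via the identity $\chi_{\flatpos(\covset)/G}(0)=(-1)^{d}T_\alpha(1,0)$ of Theorem~\ref{thm:TC}. The only difference is cosmetic: the paper invokes \cite[Lemma~1.1 and Theorem~1.2]{ZasDis} as a black box (phrased with the combinatorial Euler number $\kappa$), whereas you re-derive that step by summing cellular Euler characteristics over the layer tori and M\"obius-inverting by hand.
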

\begin{remark}
	In analogy with the case of enumeration of faces of arrangements \cite{Zold}, by passing to contractions and using the properties of Tutte polynomials of group actions derived in \cite{DeluRiedel}, Theorem \ref{MT:toric} leads to enumerating faces of  $\AA_\alpha$ of any dimension.
\end{remark}
\begin{proof}[Proof of Theorem \ref{MT:toric}]
	This is an application of Zaslavsky's dissection theory and of the theory of group actions on semimatroids. In fact, if $K$ is a finite $CW$-complex, $\{H_e\}_{e\in E}$ is any set of (proper) subcomplexes and $\{R_j\}_{j\in J}$ is the set of connected components of the complement $K\setminus \bigcup_{e\in E} H_e $, then~\cite[Lemma 1.1 and Theorem 1.2]{ZasDis} states that	 
	$$
	\sum_{j\in J} \kappa(R_j) = 	\sum_{p\in P } \mu(p)\kappa(p)
	$$
	where $P$ is the poset of connected components of intersections of the $H_e$s ordered by reverse inclusion, $\mu$ is the M\"obius function of $P$, and $\kappa$ is the ``combinatorial Euler number'' ($\kappa(X)$ equals Euler characteristic of $X$ if $X$ is compact, and otherwise it equals the Euler characteristic of the one-point compactification, see \cite[\S1]{ZasDis}).
	
	Now we can apply this to the CW-complex $K(\AA_\alpha)$ and the collection of subcomplexes $\AA_\alpha$. By Lemma~\ref{lem:QCW}, all $R_j$s are open $d$-cells and thus  
	$\sum_{j\in J} \kappa(R_j) =  (-1)^{d}\left\vert \pi_0\left(T\setminus \cup\AA_\alpha\right)\right \vert$. Moreover, by Proposition~\ref{prop_poslayers} the poset of connected components of intersections is $F(\SS)/G$ and every $GX\in F(\SS)/G$ is a torus $(S^1)^{d-r}$ where $r$ is the rank of $GX$ in $F(\SS)/G$. Thus, $\kappa(GX)\neq 0$ only if $GX$ is a maximal element of $F(\SS)/G$ and, in that case, $GX$ is a single point, whence $\kappa(GX)=1$. In summary, we can
	express the desired number of connected components as
 $$
 (-1)^{d}\left\vert \pi_0\left(T\setminus \cup\AA_\alpha\right)\right \vert = \sum_{GX\in \max F(\SS)/G} \mu_{F(\SS)/G}(GX)
 = \chi_{F(\SS)/G}(0),
  $$
  where the last equality is the definition of the characteristic polynomial (see \S\ref{a:posets}).
 
 Now, $\alpha$ is translative by Lemma~\ref{lem:ta}, and by Theorem~\ref{thm:TC} the characteristic polynomial of the poset $F(\covset)/G$ equals $(-1)^dT_\alpha (1-t,0)$, thus the claim follows.
\end{proof}

\section{Open questions}
\label{sec:OQ}

\subsection{Pseudoarrangements in Euclidean space} \label{oq_pseudo} 
Considerable work has been devoted to the study of arrangements of (and in) general manifolds - see, e.g., the introduction of \cite{EhReManifold} for a recent account. An open problem in this context is the one
 discussed in~\cite{FoZa}, namely to give a topological characterization of the arrangements that can appear as realizations of a (finite) AOM. Our work suggests the following reformulation.

We say that $Y$ is a pseudohyperplane in $X$ if there is a homeomorphism of $X$ with $\mathbb R^n$, for some $n$, that carries $Y$ to a coordinate hyperplane.  In particular, $X\setminus Y$ consists of two connected components, which we can label as the ``positive" and ``negative" side of $Y$ in $X$.
With this, call  {\em pseudoarrangement} in $\mathbb R^d$ any collection $\mathscr A=\{H_e\}_{e\in E}$ of pseudohyperplanes of $\mathbb R^n$ such that
(1) $\AA$ is finitary (every $p\in \mathbb R^d$  has a neighborhood that intersects only finitely many $H_e$), and chambers have finitely many walls; 
(2) if $Y=\bigcap_{e\in A}H_e$ is the intersection of some of the pseudohyperplanes and $e\in E$ is such that $H_e\cap Y\neq \emptyset$, then either $Y\subseteq H_e$ or $Y\cap H_e$ is a pseudohyperplane in $Y$;
(3) the intersection of any family of elements of $\mathscr A$, when nonempty, is {\em clean} in the sense of Bott (i.e., every point of $\mathbb R^d$ has an open neighborhood $U$ with a homeomorphism $U\to \mathbb R^n$ that sends the intersections of the $H_e$ with $U$ to a finite arrangement of linear hyperplanes, see \cite[Section 3]{EhReManifold} for a precise statement);
(4)  parallelism ($H_e\Vert H_f$ if and only if $H_e\cap H_f=\emptyset$) is an equivalence relation.

An ``oriented'' pseudoarrangement is a pseudoarrangement with a choice of a ``positive" and ``negative" side of every $H_e$. An oriented pseudoarrangement $\mathscr A$ gives rise to a set $\covset(\mathscr A)$ of sign vectors on $E$ as in \S\ref{intro:hyperplanes}. 

\begin{enumerate}[label=(Q\arabic*)]
	\item {\bf Conjecture:} For every oriented pseudoarrangement $\mathscr A$,  $\covset(\mathscr A)$ is the set of covectors of a unique simple FAOM. Conversely, every simple FAOM  arises this way.
\end{enumerate}
\begin{remark} \label{rem:FoZa}
Conditions (2) and (3) above define what Forge and Zaslavsky call an ``affine topoplane arrangement''. In the finite case, our conjecture  amounts to the converse of their \cite[Lemma 11]{FoZa}  and is proved in rank 2 as \cite[Theorem 13]{FoZa}. A positive answer to (Q1) would also solve the questions stated in \cite{FoZa} about topology of faces and structure of intersection semilattices of pseudoarrangements.
\end{remark}

\subsection{Toric oriented matroids and pseudoarrangements}\label{tomsp} 
The results of {Section~\ref{sec:gas}} suggest the categories $q_\alpha(\covpos(\covset))$ associated to a free and \pcp action on an FAOM $\covset$ as the counterpart of covector posets for toric arrangements, see Section~\ref{intro:toric} for an outline of this context. In order to fully develop an oriented matroid theory for arrangements on the torus we ask the following questions.

\begin{enumerate}[resume,label=(Q\arabic*)]
	\item Find an intrinsic axiomatic description of the class of acyclic categories that can be obtained as $q_\alpha(\covpos(\covset))$ for a free and \pcp action on an FAOM $\covset$. This framework should include for instance Aguiar and Petersen's posets of labeled necklaces \cite{AgPe}.
	\item Find a topological characterization of the pseudoarrangements on the torus that arise as $\AA_\alpha=\{\sT_a\}_{a\in E/G}$ in Section~\ref{sec:tpa}.
\end{enumerate}
In this context, Pagaria \cite{PagOM} proposed an algebraic notion of {\em orientable arithmetic matroid} and asked whether it can be interpreted in terms of pseudoarrangements in the torus. Every toric pseudoarrangement in the sense of Definition \ref{sec:tpa} has an associated matroid with multiplicity (via the induced group action on the underlying semimatroid), but the multiplicity does not have to be arithmetic  -- for example, the one associated to the pseudoarrangement in \cite[Fig. 11]{DeluRiedel} is not. On the other hand, for example the multiplicity matroid underlying to the non-stretchable infinite pseudoarrangement in \cite[Fig. 2, left-hand side]{DeluRiedel} is arithmetic, and indeed orientable -- take any orientation of the underlying uniform matroid $U_{2,5}$ -- (however, of course it has not the ``GCD'' property from \cite[\S 8]{PagOM}).

\begin{enumerate}[resume,label=(Q\arabic*)]
	\item Does every orientable arithmetic matroid as defined by Pagaria \cite{PagOM} arise from a toric pseudoarrangement in the sense of \S\ref{sec:tpa}? 
\end{enumerate}

\subsection{Further topological interpretation: Salvetti complexes} To every oriented matroid one can associate a cell complex that, in case the oriented matroid comes from a finite arrangement of hyperplanes, carries the homotopy type of the complement of the arrangement's complexification. This cell complex is usually called ``Salvetti complex", and its cohomology is described by the underlying matroid's Orlik-Solomon algebra~{\cite{GeRy}}. The construction can be carried out also for infinite affine arrangements and, indeed, for every FAOM $\covset$, yielding a regular CW-complex with poset of cells $M(\covset)$. Moreover, given a group action $\alpha: G\circlearrowright \covset$,  we can consider the category $\mathscr M(\alpha):=M(\covset)\qc G$. If $\alpha$ is the action by deck transformations on the periodic hyperplane arrangement that is obtained by lifting a toric arrangement to the universal cover of the $d$-torus, then $\mathscr M(\alpha)$ is the category studied in~\cite[\S2.5]{DaDeJEMS}. In particular, the Poincar\'e polynomial of $\Vert\mathscr M(\alpha)\Vert$ is $t^dT_\alpha(2+1/t,0)$ (recall Remark~\ref{rem:ArMat} and see~\cite[Theorem 3.5.(2)]{MoDa}), the space $\Vert\mathscr M(\alpha)\Vert$ is minimal~\cite[Corollary 6.10]{DaDeJEMS} and its cohomology algebra can be described by an Orlik-Solomon type presentation~\cite{a5}. Now let $\alpha$ be any free and \pcp action of a finitely generated free abelian group $G$ on a FAOM with a basis $B\subseteq E^{*,*}$.

\begin{enumerate}[resume,label=(Q\arabic*)]
	\item Is $t^{\vert B \vert}T_\alpha(2+1/t,0)$ the Poincar\'e polynomial of $\Vert\mathscr M(\alpha)\Vert$? Is  $\Vert\mathscr M(\alpha)\Vert$  minimal?
	\item Is there a presentation of the cohomology algebras $H^*(\Vert\mathscr M(\alpha)\Vert,\mathbb Q)$ and $H^*(\Vert\mathscr M(\alpha)\Vert,\mathbb Z)$ that extends the one for the case of toric arrangements given in~\cite{a5}?
\end{enumerate}

\subsection{Beyond FOAMs}
In many of our results one can observe that indeed not all axioms of FAOMs are needed. This leads to questions concerning the generalizability of the theory. We want to single out the study of finitary and general COMs and their group actions as a worthwhile endeavour. A natural problem to attack in this spirit is the following.
\begin{enumerate}[resume,label=(Q\arabic*)]
	\item Characterize tope graphs of finitary or general COMs.
\end{enumerate}
Note that some classes of tope graphs of finitary COMs have been studied already, e.g., median graphs~\cite{Ban-83,Imr-09,Mar-14} and more generally hypercellular graphs~\cite{Che-16}.

\appendix
\section{Appendix}

\subsection{Simplicial complexes and posets}\label{sec:simplicial}

We collect here some basic terminology and background material about combinatorial topology. We will only mention what is strictly necessary in order to make our treatment self-contained, and refer the reader to the  literature for more.

\subsubsection{Posets}\label{a:posets}
A partially ordered set (or poset for short) is a set $P$ with a reflexive, antisymmetric and transitive binary relation on $P$, usually denoted by $\leq$. 
The relation $\leq$ is a total order, and  $P$ is a totally ordered set if, for all $p,q\in P$ one of $p\leq q$ or $q\leq p$ holds. If $P$ has a unique maximal (resp.\ minimal) element, this is denoted $\top_P$ (resp.\ $\bot_P$) and $P$ is called ``bounded above'' (resp.\ ``bounded below''). A bounded poset is one that is both bounded above and below. Write $\overline{P}:=\{P\setminus \bot_P\}$ (so that $\overline{P}=P$ if $P$ is not bounded below), let $\wtop{P}:= P\uplus \top$, resp.\ $\wbot{P}$, denote the poset $P$ extended by a new, maximal (resp.\ minimal) element. Moreover, write $\wtb{P}$ for $\wbot{(\wtop{P})}$.

A subset $Q\subseteq P$ is called an {\em order ideal} if $x\in Q$ and $y\leq x$ imply $y\in Q$; dually, an {\em order filter} is any $Q\subseteq P$ such that $x\in Q$ and $y\geq x$ imply $y\in Q$. To every $A \subseteq P$ we associated the order ideal $P_{\leq A}:=\{x\in P\mid \exists a\in A :\: x\leq a\}$ and the order filter $P_{\geq A}:=\{x\in P\mid \exists a\in A:\: x\geq p\}$. The {\em meet}, resp.\ {\em join}, of $p,q\in P$ is defined as $p\vee q:= \top_{P_{\leq p}\cap P_{\leq q}}$, resp.\  $p\wedge q :=\bot_{P_{\geq p}\cap P_{\geq q}}$, if this exists. We call $P$ a {\em meet-semilattice}, resp.\ {\em join-semilattice} if $p\vee q$, resp.\ $p\wedge q$, exists for all $p,q\in P$. A {\em lattice} is a meet-semilattice that is also a join-semilattice.

A poset is {\em pure} if every maximal chain $p_0<p_1<\cdots < p_\ell$ in $P$ has the same length $\ell$; this number is called the {\em length} of $P$. If for a given $x\in P$ the poset $P_{\leq x}$ is pure, then its length is the {\em rank} of $x$. If the rank of every $x\in P$ is defined and finite, $P$ is called {\em ranked}. The poset $P$ is {\em graded} if it is bounded and ranked. Write $p\lessdot q$, and say ``$q$ covers $p$'', if $p\leq x < q$ implies $p=x$. A {\em saturated chain} is one of the form $p_0\lessdot p_1\lessdot \cdots \lessdot p_\ell$. If $P$ is bounded-below, the {\em atoms} of $P$ are the elements that cover $\bot_P$.

The characteristic polynomial of a bounded-below, ranked poset $P$ of finite length $\ell$ is 
$$
\chi_P(t):=\sum_{p\in P} \mu_P(p)t^{\ell-\operatorname{rank}(p)},
$$
where $\mu_P(p)$ is the M\"obius Function of $P$, defined recursively as $\mu_P(\bot)=1$ and, for all $p\in P$, $\sum_{x\leq p} \mu_P(x)=0$.

\subsubsection{Simplicial complexes}\label{app:sc}
An abstract simplicial complex on the vertex set $V$ is any family $\Sigma$ of finite subsets  of $V$ 
such that every subset of a member of the family is again a member of the family (i.e., $\sigma\in \Sigma$ and $\tau\subseteq \sigma$ implies $\tau\in \Sigma$). 

A geometric simplicial complex is a collection of  simplices (i.e., convex hulls of finite, affinely independent sets of points) in Euclidean space that is closed under taking faces of simplices, and such that the intersection of any two simplices in the collection is a face of both. The union of all simplices of a geometric simplicial complex is called the {\em underlying space} of the complex. The collection of all sets of vertices of simplices of a geometric simplicial complex is an abstract simplicial complex, which we say is ``realized" by the given geometric complex. Since any two geometric complexes realizing the same abstract complex have homeomorphic underlying spaces, it makes topological sense to talk about ``the" geometric realization of an abstract simplicial complex. The {\em dimension} of an abstract simplex $\sigma\in \Sigma$ is one less than its cardinality, which corresponds to the dimension of any geometric simplex realizing the abstract complex $2^\sigma$.
A geometric (resp.\ abstract) simplicial  complex is called {\em pure} if all its maximal simplices (resp.\ its maximal elements) have the same dimension (resp.\ cardinality). It is customary to call {\em facets} of a simplicial complex its nontrivial faces of maximal dimension. A facet's facet is a {\em ridge} of the given complex.

A powerful tool for the study of the topology of simplicial complexes is the theory of shellability introduced by Bj\"orner.  

\begin{definition}[See {\cite[Definition 1.1 and Remark 4.21]{BjoInf}}] An abstract simplicial complex  $\Sigma$ pure of dimension $d$ is called shellable the set of its maximal elements can be well-ordered in a way that, for every $\sigma$ that is not the initial element in the order, $\sigma \cap \left(\bigcup_{\sigma'<\sigma} 2^{\sigma'} \right)$ is a pure, $(d-1)$-dimensional complex. Shellable complexes have very restricted homotopy types (see, e.g.,~\cite{BjoInf}, and Remark~\ref{shellingrem}).
\end{definition}

The {\em link} of a simplex $\sigma\in \Sigma$ is the abstract simplicial complex $\operatorname{Lk}(\sigma):=\{\tau\in \Sigma \mid \tau\cap \sigma=\emptyset,\, \tau\cup \sigma\in \Sigma\}$. If $\Sigma$ is pure of dimension $d$, then $\operatorname{Lk(\sigma)}$ is pure of dimension $d-\vert \sigma\vert$. Moreover, every shelling order for $\Sigma$ induces a shelling order for $\operatorname{Lk(\sigma)}$. 

\subsubsection{Order complexes of posets}\label{app:oc}
To every partially ordered set $P$ one can associate the family $\Delta(P)$ of all finite subsets of $P$ that are totally ordered as sub-posets of $P$ (such finite subsets are called {\em chains} of $P$). The collection $\Delta(P)$ is called the {\em order complex} of $P$, and it is naturally an abstract simplicial complex, with $P$ as vertex set. We write $\Vert P \Vert$ for the geometric realization of the order complex of $P$. 

\begin{remark} Notice that the geometric realization of a chain is a simplex with one vertex for each element of the chain. Therefore, the length of the chain corresponds to the dimension of the simplex. The length of a poset $P$ is the maximum length of a chain in $P$ (if this maximum exists), and thus corresponds to the dimension of $\Vert P \Vert$. If $P$ is a ranked poset, for every $p\in P$ the dimension of $\Vert P_{\leq p}\Vert$ equals the rank of $p$.
\end{remark}

There are several techniques available in order to determine whether the order complex of a poset is shellable. We recall the notion of a recursive coatom ordering.

\begin{definition}[{\cite[\S 7]{WW}}]
Let $P$ be a graded poset of finite length $\ell$ and let $\prec$ be a well-ordering of the set $U$ of coatoms of $P$ (i.e., the elements of $P$ covered by $\top$).  This ordering is a {\em recursive coatom ordering} for $P$ if either $\ell\leq 2$, or $\ell > 2$ and, for every coatom $a$ of $P$ there is a recursive coatom ordering of $P_{\leq a}$ in which the elements of the set $Q_a$ of coatoms of $P_{\leq a} \cap \left( \bigcup_{a'\prec a}P_{\leq a'}\right)$ come first. 
\end{definition} 

\begin{remark}
Notice that, for every $p\in P$, the set $Q_p$ is completely determined by the ordering $\prec$ (see discussion after~\cite[Definition 4.7.17]{bjvestwhzi-93}).
\end{remark}

\begin{remark}[Recursive coatom orderings and shellings]\label{shellingrem}
A recursive coatom ordering on a graded poset $P$ induces a total order on the chains of $P$ that is a  shelling order for $\Delta(P)$, and implies that $\Vert P \Vert$ is homotopy equivalent to a wedge of spheres. Those spheres are indexed by ``critical chains", i.e.,  maximal chains  $\omega\subseteq P$ such that for every $p\in \omega$ the set $\omega\setminus \{p\}$ is contained in some chain that comes before $\omega$ in the ordering. Notice that whether a maximal chain $\omega$ is critical or not only depends on the ordering of the coatoms up to $\max\omega$.
 \end{remark}

 \begin{remark}[Face posets of regular CW-complexes]\label{rem:faceposet} To every regular CW-complex $K$ we can associate the poset $\FF(K)$ of all (closed) cells of $K$, partially ordered by inclusion. Then, $\Vert \FF(K)\Vert$ is homeomorphic to $\FF(K)$ (in fact, the order complex of $\FF$ is the barycentric subdivision of $K$). 
 \end{remark}

Let $P$ be a graded poset. Following~\cite[\S 4.7]{bjvestwhzi-93} we call $P$ {\em thin} if every interval of length $2$ in $P$ has exactly $4$ elements. We call $P$ {\em subthin} if intervals of length $2$ have either $3$ or $4$ elements, where the first case enters only if the interval contains $\top$, and indeed does enter at least once. Recall that the poset of cells of a regular CW complex is the set of all (closed) cells, ordered by inclusion.
\begin{proposition}\label{propthin}
Let $P$ be a finite, graded poset of length $\ell+2$.
\begin{itemize}
\item[(1)] $\overline{P}$ is the poset of cells of a shellable, regular cell decomposition of the $\ell$-sphere if and only if $P$ is thin and admits a recursive coatom ordering.
\item[(2)] $\overline{P}$ is the poset of cells of a shellable, regular cell decomposition of the $\ell$-ball if and only if $P$ is subthin and admits a recursive coatom ordering. Moreover, the maximal simplices in the boundary of $\Vert P \Vert$ correspond exactly to saturated chains $\omega'\subseteq P$ of length $\ell-1$ and such that the interval $P_{\geq \max\omega'}$ has only $3$ elements.
\end{itemize}
\end{proposition}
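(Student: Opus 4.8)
The plan is to reduce both statements to the finite machinery of \cite[\S4.7]{bjvestwhzi-93}: part (1) is, up to restating, \cite[Proposition 4.7.24]{bjvestwhzi-93}, and part (2) is its ball analogue, which one derives from the same ingredients if it is not recorded there verbatim. So the real task is to assemble the pieces and keep the formal bounds $\bot$ and $\top$ straight. I would use two off-the-shelf inputs. First, Remark~\ref{shellingrem}: a recursive coatom ordering of the graded poset $P$ induces a shelling of the order complex $\Sigma:=\Delta(P\setminus\{\bot,\top\})$ of its proper part, a pure shellable complex of dimension $\ell$. Second, Bj\"orner's recognition criterion \cite[Proposition 4.7.23]{bjvestwhzi-93}: $\overline P$ is the face poset of a regular CW complex precisely when $\Vert P_{<x}\Vert$ is a sphere of the appropriate dimension for every $x\in P$.

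For the ``if'' direction of (1) I would follow \cite[Proposition 4.7.24]{bjvestwhzi-93}: by induction on $\rk(x)$, every interval $[\bot,x]$ is again thin and inherits a recursive coatom ordering --- the latter being built into the recursive definition --- so that $\Vert P_{<x}\Vert$ is a PL sphere of the right dimension (the base cases $\rk(x)\le 2$ being exactly thinness). Then \cite[Proposition 4.7.23]{bjvestwhzi-93} makes $\overline P$ the face poset of a regular CW complex, and the global step identifies which one: thinness says precisely that every ridge of $\Sigma$ lies in exactly two facets, so $\Sigma$ is a shellable closed pseudomanifold and \cite[Proposition 4.7.22]{bjvestwhzi-93} forces $\Sigma$ to be a PL $\ell$-sphere. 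The converse is the other half of \cite[Proposition 4.7.24]{bjvestwhzi-93}: for a shellable regular CW $\ell$-sphere, thinness is just the statement that every codimension-one cell bounds exactly two top cells, and a recursive coatom ordering of $\overline P$ is read off from a CW-shelling, the recursive clause holding because links in a shellable regular CW-sphere are again shellable regular CW-spheres whose face posets are the corresponding lower intervals.

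Part (2) I would obtain by the same route, substituting ``ball'' for ``sphere''. Running the argument above, $\Sigma$ is shellable and now subthinness forces one ridge of $\Sigma$ into a single facet, so \cite[Proposition 4.7.22]{bjvestwhzi-93} yields a PL $\ell$-ball rather than a sphere; since every interval $[\bot,x]$ with $x\neq\top$ of a subthin poset is thin (the $3$-element length-$2$ intervals all contain $\top$), part (1) still gives $\Vert P_{<x}\Vert$ a sphere of the right dimension for $x\neq\top$, so \cite[Proposition 4.7.23]{bjvestwhzi-93} certifies $\overline P$ as a genuine face poset of a regular CW $\ell$-ball. For the boundary description I would use (as in the proof of Lemma~\ref{lem:diamond}; cf.\ \cite[p.\ 7]{Rourke-Sanderson}) that the maximal simplices of $\partial\Vert P\Vert$ are exactly the ridges of $\Sigma$ contained in a single facet. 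Such a ridge arises from a facet $p_1<\cdots<p_{\ell+1}$ of $\Sigma$ (a maximal chain of $P\setminus\{\bot,\top\}$) by deleting one $p_i$, and the number of facets containing it equals the cardinality of the open interval $(p_{i-1},p_{i+1})$ of $P$, with the convention $p_0:=\bot$, $p_{\ell+2}:=\top$; by subthinness this is $2$ unless $i=\ell+1$ and the length-$2$ interval $[\,p_\ell,\top\,]$ has only $3$ elements. Hence the boundary facets are exactly the saturated chains $\omega'=(p_1<\cdots<p_\ell)$ of length $\ell-1$ in $P$ avoiding $\bot$ and $\top$ for which $P_{\ge\max\omega'}$ has only $3$ elements, which is the claim.

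The main obstacle I expect is bookkeeping rather than mathematics: making the identification between ``$\overline P$ is the poset of cells of a CW $\ell$-complex'' and the face-lattice conventions of \cite[\S4.7]{bjvestwhzi-93} fully precise (where exactly $\bot$ and $\top$ sit, and how length $\ell+2$ matches dimension $\ell$), and verifying that the induction of part (1) is genuinely available for the intervals invoked in part (2) without circularity. The topological core --- ``shellable pseudomanifold $\Rightarrow$ sphere or ball'' and Bj\"orner's recognition theorem --- is entirely standard.
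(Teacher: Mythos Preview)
Your proposal is correct and takes essentially the same approach as the paper: both reduce directly to \cite[Proposition 4.7.24]{bjvestwhzi-93}, with the boundary description in (2) coming from the observation that a ridge of $\Sigma$ lies in a unique facet iff the corresponding saturated chain $\omega'$ has a unique completion to a maximal chain of $\overline P$. The paper records only this one-line citation, whereas you have unpacked the argument behind it, but the content is the same.
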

\begin{proof} This is~\cite[Proposition 4.7.24]{bjvestwhzi-93}, the claim about the boundary in item (2) easily following from the fact, mentioned in the referred proof, that $\omega'$ indexes a boundary simplex if and only if it can be completed in exactly one way to a maximal chain of $\overline{P}$.  
\end{proof}

 \subsubsection{Group actions on posets and acyclic categories} \label{sec:ACGA}
 The geometric realization of the order complex of a poset can be generalized as follows. A small category $\mathcal C$ is called ``acyclic'' if the only invertible morphisms in $\mathcal C$ are endomorphisms, and the only endomorphisms are the identities.  \footnote{Two comprehensive references for this point of view are~\cite{kozlov} (in the context of computational algebraic topology) and~\cite{briha} (in the context of geometric group theory, and in particular without finiteness assumptions).} The geometric realization $\Vert \mathcal C \Vert$ of $\mathcal C$ is a regular CW-complex whose cells are indexed by composable chains of morphisms of $\mathcal C$~\cite[Appendix A1]{DePag}.  The category  $\mathcal C$ may arise from a poset $P$ by taking $P$ as the set of objects of $\mathcal C$ and declaring that there is at most a morphism between any two elements, and one morphism $p\to q$ exists when $p\leq q$: in this case the geometric realization of
 $\mathcal C$ is $\Vert P \Vert$. Conversely, to every acyclic category $\mathcal C$ is associated a poset $\underline{\mathcal C}$, i.e., the set of objects of $\mathcal C$ partially ordered by $x\leq y$ if and only if $\operatorname{Mor}_{\mathcal C}(x,y)\neq \emptyset$. We call $\mathcal C$ {\em ranked} if $\underline{\mathcal C}$ is.

  The category-theoretic point of view is useful when dealing with group actions, as we explain next. Let $\SmC$ denote the category of small categories. We call $\AC$ the full subcategory of $\SmC$ with objects all acyclic categories. Although $\AC$ is cocomplete, its colimits do not coincide with colimits taken in $\SmC$. In particular, given $\mathcal C\in \Ob{\AC}$ and an action of a group $G$ on $\mathcal C$, we write $\mathcal C \qc G$ for the quotient object in $\AC$. See~\cite[\S 5.1]{DaDeJEMS} for a study of colimits in $\AC$ and~\cite[Appendix A.4]{DePag} for an explicit treatment of quotients by group actions. In particular, if a group $G$ acts on an acyclic category $\mathcal C$ then it acts on the cell complex $\Vert \mathcal C\Vert$ and, under certain conditions there is an isomorphism of cell complexes $\Vert \mathcal C \qc G \Vert \simeq \Vert \mathcal C \Vert / G$ (see, e.g.,~\cite[Corollary A.20]{DePag}). 
 \begin{remark}[$P/G$ and $P\qc G$]\label{rem:QQ}
A poset $P$ can be thus considered as an object in the category $\POS$ of partially ordered sets and order preserving maps, or as an acyclic category, as described above. This distinction entails that on posets we may consider two different types of ``quotients''. Let a group $G$ act on a poset $P$. Then on the set $P/G$ of orbits of elements of $P$ we can consider a binary relation $\leq$ defined by $Gp\leq Gq$ if and only if $p\leq gq$ for some $g\in G$. Under some mild conditions, this defines a partial order relation  (e.g., when $P$ is of finite length, see~\cite[\S 2.1]{DaDeSRR}) and we can speak of the {\em quotient poset} $P/G$. It is important to underscore that in general  the poset $P/G$, viewed as an acyclic category, is different from the quotient category $P\qc G$.
\end{remark}

\subsection{Semimatroid theory}\label{sec:semimatroids}

We recall some basic definitions and set up some notations from the theory of semimatroids and geometric semilattices, and group actions thereon. We follow mainly~\cite{DeluRiedel}, to which we refer for a more thorough treatment as well as for a brief historical sketch of the subject.

\subsubsection{Finitary semimatroids}

\begin{definition}\label{def:FS} A \textit{finitary semimatroid} is a
  triple $\SS=(\ground,\CC,\rc)$ consisting of a (possibly infinite) set
  $\ground$, a non-empty simplicial complex \thinspace$\CC$ on $\ground$ and a function $\rc:\CC\rightarrow\nn$ satisfying the following conditions.
\begin{itemize}
  \item[(R1)] If $X\in\CC,$ then $0\leq \rc(X)\leq |X|.$
  \item[(R2)] If $X,Y\in\CC$ and $X\ssq Y,$ then $\rc(X)\leq\rc(Y).$
  \item[(R3)] If $X,Y\in\CC$ and $X\cup Y\in\CC,$ then $\rc(X)+\rc(Y)\geq\rc(X\cup Y)+\rc(X\cap Y).$
  \item[(CR1)] If $X,Y\in\CC$ and $\rc(X)=\rc(X\cap Y),$ then $X\cup Y\in\CC.$
  \item[(CR2)] If $X,Y\in\CC$ and $\rc(X)<\rc(Y),$ then $X\cup y\in\CC$ for some $y\in Y-X.$
\end{itemize}
\label{def:lrt}
\end{definition}

\begin{definition}\label{df:simple}
A {\em loop} of a semimatroid $\SS=(\ground,\CC,\rk)$ is any $s\in
\ground$ with $\rk(s)=0$. 
We call $\SS$ \textit{simple} if it has no loops and if  $\rk(x,y)=2$ for all $\{x,y\}\in\CC$ with $x\neq y.$
\end{definition}

\begin{definition}\label{def:latticeofflats} Let $\SS=(S,\CC,\rk)$ be a finitary semimatroid and
  $X\in\CC$. The \textit{closure of $X$ in $\CC$} is 
\begin{align*}
  \cl(X) :=\{x\in S\mid X\cup x\in\CC,\rk(X\cup x)=\rk(X)\}. 
\end{align*}

 A \textit{flat} of a finitary semimatroid $\SS$ is a set $X\in\CC$
 such that $\cl(X)=X.$ The set of flats of $\SS$ ordered by containment forms the \textit{poset of flats of} $\SS$.
  \label{defi:lc}
\end{definition}

\subsubsection{Geometric semilattices}

Geometric semilattices  offer a poset-theoretic cryptomorphism for semimatroids, just as geometric lattices do for matroids. However, the study of geometric semilattices goes back to work of Wachs and Walker~\cite{WW} that predates the introduction of semimatroids and does not restrict to the finite case.   We review the definition and prove a cryptomorphism, slightly improving on~\cite[Theorem E]{DeluRiedel}.
Recall that a set $A$ of atoms of a ranked, bounded-below meet-semilattice is called {\em independent} if $\vee A$ exists and its the poset-rank equals the cardinality of $A$. This is equivalent to saying that $A$ is minimal under all $A'$ such that $\vee A'=\vee A$.
\begin{definition}[See Theorem 2.1 in {\cite{WW}}]\label{df:GS}

  A  \textit{geometric semilattice} is a chain-finite ranked meet-semilattice $\LL$ 
 satisfying the following conditions.
\begin{itemize}
  \item[(GSL1)] Every (maximal) interval in $\mathcal L$ is a finite geometric lattice.
  \item[(GSL2)] For every independent set $A$ of atoms of $\LL$ and every $x\in\LL$ such that $\rl(x)<\rl(\vee\! A)$, there is $a\in A$ with $a\nleq x$ and such that $x\vee a$ exists. 
\end{itemize}\end{definition}

\begin{theorem} \label{thm:gsl}
 A poset $\LL$ is a  geometric semilattice if and only if it is isomorphic to the poset of flats of a finitary semimatroid. 
More precisely, there is a one-to-one correspondence between
\begin{itemize}
\item[(1)] Subposets $\LL\subseteq \PF(E)$  of the poset of finite subsets of a set $E$ ordered by inclusion, such that $\LL$ is a chain-finite ranked meet-semilattice with respect to set intersection,  such that (GSL2) holds and such that, for all $X\in \LL$,  $\LL_{\leq X}$ is the (geometric) lattice of flats of a matroid. 
\item[(2)] Finitary semimatroids $(E,\CC,\rl)$.
\end{itemize}
The correspondence is as follows: given $\LL$ as in (1) let $\CC:=\bigcup_{X\in \LL} 2^X$ and for every $A\in \CC$ define $\overline{A} := \min_{\LL}\{X\in \LL\mid X\supseteq A\}$ (this is well-defined because $\LL$ is a meet-semilattice). Let then $\rk(A):=\rl(\overline{A})$, where $\rl$ denotes the rank function of $\LL$. Then $(E,\CC,\rl)$ is a finitary semimatroid with poset of flats isomorphic to $\LL$. 
\label{thm:fsl}\end{theorem}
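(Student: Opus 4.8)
The plan is to establish first the explicit correspondence asserted between (1) and (2), and then to read off the abstract equivalence. The technical backbone is the following remark about any $\LL$ as in (1): for every $X\in\LL$ the lower interval $\LL_{\leq X}$ is, by hypothesis, a geometric lattice, i.e.\ the lattice of flats of a matroid $M_X$ on ground set $X$, and these local matroids cohere --- for $X\leq Y$ in $\LL$, intersecting the flats of $M_Y$ with $X$ yields exactly the flats of $M_X$, because in $\LL$ intersection is the meet. Consequently the element $\overline A:=\min_\LL\{X\in\LL\mid X\supseteq A\}$ of the statement is well defined for every $A\in\CC$ even when $E$ is infinite: choosing $X_0\in\LL$ with $A\subseteq X_0$, the matroid closure $\cl_{M_{X_0}}(A)$ lies in $\LL_{\leq X_0}$, contains $A$, and is below every $Y\in\LL$ with $Y\supseteq A$ (since $Y\wedge X_0\in\LL_{\leq X_0}$ is an $M_{X_0}$-flat containing $A$, hence $\geq\cl_{M_{X_0}}(A)$, hence $Y\geq\cl_{M_{X_0}}(A)$); thus $\overline A=\cl_{M_{X_0}}(A)$, independently of $X_0$. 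I also note that a chain-finite meet-semilattice has a least element, so $\LL$ is bounded below and $\overline\emptyset=\bot_\LL$.

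\emph{From (1) to (2).} With $\CC:=\bigcup_{X\in\LL}2^X$ and $\rk(A):=\rl(\overline A)$ I would verify the axioms of Definition~\ref{def:FS} by localizing each in a suitable geometric lattice $\LL_{\leq X}$. Axioms (R1),(R2) are immediate: $A$ spans $M_{\overline A}$, so $\rk(A)=\rl(\overline A)=\rk(M_{\overline A})\leq|A|$, and $\overline{\,\cdot\,}$ is monotone with $\LL$ ranked. For (R3), work inside $\LL_{\leq\overline{X\cup Y}}$, where $\overline X\vee\overline Y=\overline{X\cup Y}$ and $\overline X\wedge\overline Y=\overline X\cap\overline Y\supseteq\overline{X\cap Y}$, and apply submodularity of the rank of a geometric lattice. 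For (CR1), $\rk(X)=\rk(X\cap Y)$ together with $\overline{X\cap Y}\subseteq\overline X$ and $\LL$ ranked forces $\overline{X\cap Y}=\overline X$, so $X\subseteq\overline X=\overline{X\cap Y}\subseteq\overline Y$ and $X\cup Y\in\CC$. For (CR2) with $\rk(A)<\rk(B)$: $B$ spans $M_{\overline B}$, so pick a basis $B'\subseteq B$ of $M_{\overline B}$; the atoms $\cl_{M_{\overline B}}(\{b\})$, $b\in B'$, are pairwise distinct and form an independent set of atoms of $\LL$ with join $\overline B$, of size $\rl(\overline B)>\rl(\overline A)$, so (GSL2) yields $b\in B'$ with $\cl_{M_{\overline B}}(\{b\})\not\leq\overline A$ (hence $b\notin\overline A\supseteq A$) and with $\overline A\vee\cl_{M_{\overline B}}(\{b\})$ existing in $\LL$; then $A\cup\{b\}\subseteq\overline A\vee\cl_{M_{\overline B}}(\{b\})\in\LL$, so $A\cup\{b\}\in\CC$ with $b\in B\setminus A$. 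Finally I would check that the poset of flats of $(E,\CC,\rk)$ is $\LL$: one shows $\cl_\SS(Z)=\overline Z$ for all $Z\in\CC$ (both inclusions follow from the rank characterization of $\overline{\,\cdot\,}$, since $Z\cup z\in\CC$ with $\rk(Z\cup z)=\rk(Z)$ forces $\overline{Z\cup z}=\overline Z$ and hence $z\in\overline Z$, and conversely $z\in\overline Z$ gives $Z\cup z\subseteq\overline Z$ and $\rk(Z\cup z)=\rk(Z)$), so the flats are precisely the members of $\LL$, identically ordered, with matching rank; the assignments (1)$\to$(2) and (2)$\to$(1) are then visibly mutually inverse.

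\emph{From (2) to (1), and the abstract statement.} For a finitary semimatroid $(E,\CC,\rk)$ I would recall from~\cite[Theorem E]{DeluRiedel} and~\cite{WW} that its poset of flats $\LL$ is a geometric semilattice, sitting in $\PF(E)$ (flats are finite, as they lie in $\CC$), a chain-finite ranked meet-semilattice under intersection satisfying (GSL1) and (GSL2); since a lower interval of a geometric lattice is again a geometric lattice, each $\LL_{\leq X}$ is geometric, hence by Birkhoff's theorem a lattice of flats of a matroid, so $\LL$ satisfies (1). For the abstract equivalence, every $\LL$ as in (1) is a geometric semilattice in the sense of Definition~\ref{df:GS} because its maximal intervals are among the $\LL_{\leq X}$ and (GSL2) is assumed; conversely any geometric semilattice $\LL$ embeds as one in (1) via $x\mapsto\{a\mid a\text{ an atom of }\LL,\ a\leq x\}\subseteq\PF(\operatorname{atoms}(\LL))$, a map that is injective and carries meets to intersections because within the atomistic lattice $\LL_{\leq M}$ ($M$ maximal, $M\geq x\wedge y$) every element is the join of the atoms below it and $a\leq x\wedge y\iff(a\leq x$ and $a\leq y)$; the image inherits the requirements of (1), with (GSL1) furnishing the matroid-flat condition on lower intervals. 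Combining the two directions gives the stated one-to-one correspondence and the equivalence.

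The step I expect to be the main obstacle is the disciplined handling of the family of local matroids $\{M_X\}_{X\in\LL}$ and of $\overline{\,\cdot\,}$ in the possibly infinite setting --- ensuring that closures, joins and meets computed in the various intervals $\LL_{\leq X}$ agree and assemble into well-defined global operations on $\CC$ --- and, within that, checking that (CR2) really does follow from (GSL2) and not only from the a priori stronger (GSL1). The remaining verifications are a routine dictionary between matroidal and lattice-theoretic language, paralleling~\cite[Theorem E]{DeluRiedel}.
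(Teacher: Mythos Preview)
Your proposal is correct and follows essentially the same approach as the paper: reduce (R1)--(R3) to the matroid rank function on the geometric lattice $\LL_{\leq\overline{X\cup Y}}$, prove (CR1) via $\overline{X\cap Y}=\overline X\subseteq\overline Y$, and derive (CR2) by producing an independent set of atoms joining to $\overline B$ (you take closures of a basis of $M_{\overline B}$, the paper takes a minimal generating subset of $\{\overline{\{y\}}:y\in Y\}$) and invoking (GSL2). You are more explicit than the paper in two places---the well-definedness of $\overline A$ in the infinite setting via coherence of the local matroids $M_X$, and the verification that $\cl_\SS=\overline{\,\cdot\,}$ so the flat poset is literally $\LL$---and you additionally sketch the atom-embedding needed for the first, ``abstract'' sentence of the statement, which the paper's proof leaves implicit.
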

\begin{proof} 
The proof that the poset of flats of a semimatroid is a geometric semilattice can be found in ~\cite{DeluRiedel} (Notice also Remark 5.3. there).

Now let $\LL\subseteq\PF(E)$ be as in the claim and consider the triple $(E,\CC,\rl)$. The family $\CC$ is by definition a simplicial complex on $E$. Since $\LL$ is chain-finite, the function $\rl$, that measures length of maximal chains in intervals, takes value in $\mathbb N$, and so does $\rk$.
Now, (R1), (R2) and (R3) are statements about the value of $\rl$ on intervals of $\LL$ (specifically: (R1) on the interval $\LL_{\leq \overline{X}}$, (R2) on the interval $\LL_{\leq \overline{Y}}$, (R3) on the interval $\LL_{\leq \overline{X\cup Y}}$). By assumption these intervals are geometric lattices of flats of matroids, and hence the corresponding restriction of $\rk$ is a matroid rank function (on $\overline{X}$, resp.\ $\overline{Y}$, $\overline{X\cup Y}$) -- in particular, $\rk$ satisfies (R1), (R2), (R3), see, e.g.,~\cite{oxley}. 

For (CR1) Let $X,Y\in \CC$. Trivially $\overline{X\cap Y} \subseteq \overline{X}$, and so $\rk(X)=\rk(X\cap Y)$ implies $\overline{X\cap Y} = \overline{X}$. Now, $\overline{X\cap Y}\subseteq \overline{Y}$ and thus $\overline{X}=\overline{Y}$. In particular, $\LL$ contains an element (e.g., $\overline{X}$) that contains $X\cup Y$, whence $X\cup Y\in \CC$.

For (CR2), let $X,Y\in \CC$ with $\rk(X)<\rk(Y)$ and consider $A_Y:=\left\{ \overline{\{y\}} \mid y\in Y\right\}$. Then, $\vee A_Y=\overline{Y}$ (the inclusion $\supseteq$ is trivial, and since $ \overline{\{y\}}\subseteq \overline{Y}$ for all $y\in Y$ we have $\overline{Y}\supseteq \bigcup_{y\in Y} \overline{\{y\}}$  hence  $\vee A_Y\subseteq \overline{Y}$). Choose $A\subseteq A_Y$ minimal such that $\vee A = Y$. Then $A$ is independent.
Now (GSL2) implies that there is $a\in A$, $a\not\subseteq \overline{X}$, such that $\overline{X}\vee a$ exists. Choose $y\in Y$ such that $\overline{\{y\}}=a$. Then, $X\cup\{y\}\in \CC$, $y\in Y$ and $y\not \in X$ (the latter since $a\wedge \overline{X}=\hat{0}$, the set of loops, and since $y$ is not a loop $y\not \in \overline{X}$).
\end{proof}

\subsubsection{Group actions on semimatroids}\label{app:GAOS}

\begin{definition}
An action of a group $G$ on a semimatroid $\SS=(E,\CC,\rk)$ is an action of $G$ on $E$ by permutations that preserves $\CC$ and $\rk$. I.e., for every $g\in G$ and every $X\in \CC$ we have $gX\in \CC$ and $\rk(gX)=\rk(X)$.
	The action is called {\em translative} if, for every $e\in E$,
		$\{e,g(e)\}\in \CC$ implies $g(e) =e$.
\end{definition}

Given such an action, for every $X\in \CC$ we can define $\lfloor X\rfloor :=\{ Gx\mid x\in X\}\subseteq E/G$. With this notation, for each $A\subseteq E/G$ let us write

\def\rka{\rk_\alpha}
\begin{equation}\label{eq:r-m}
\rka(A):=\max\{\rk(X)\mid X\in \CC, \, \lfloor X \rfloor = A\},\quad
m_\alpha(A):=\vert \{X\in CC\mid \lfloor X \rfloor = A\}/G\vert
\end{equation}

\begin{definition}\label{def:tutte}	
Consider an action $\alpha: G\circlearrowright \SS$ such that $E/G$ is finite. The associated {\em Tutte polynomial} is
$$
T_\alpha(x,y):=\sum_{A\subseteq E/G} m_\alpha(A)(x-1)^{\rka(E/G) - \rka(A) }(y-1)^{\vert A \vert - \rka(A)}.
$$
\end{definition}

Recall that an action $\alpha: G\circlearrowright \SS$ induces an action of $G$ on the geometric semilattice $\LL(\SS)$ of closed sets. This action is by semilattice automorphisms, and these are in particular rank-preserving. It follows that the set $\LL/G$ of orbits of elements $\LL$ has a natural partial order: given $X,Y\in \LL$ let $GX\leq GY$ if $X\leq gY$ for some $g\in G$.

\begin{theorem}[{\cite[Theorem F]{DeluRiedel}}]\label{thm:TC} If $\alpha$ is translative and $\LL/G$ is finite, then
$$
\chi_{\LL/G}(t) = (-1)^{\rka(E/G)} T_\alpha (1-t,0).
$$
\end{theorem}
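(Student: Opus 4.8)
The statement is \cite[Theorem F]{DeluRiedel}; I outline the argument one would carry out to establish it. The plan is to reduce the claimed identity to a Whitney-type rank expansion of the Tutte polynomial and then match coefficients via M\"obius inversion on $\LL/G$.

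First I would expand the right-hand side. Writing $\ell := \rka(E/G)$ for the length of $\LL/G$ (which is bounded below, with $\hat 0$ the orbit of the set of loops, and ranked by the discussion in Remark~\ref{rem:QQ}), substituting $x=1-t$ and $y=0$ into Definition~\ref{def:tutte} and simplifying signs gives
$$
(-1)^{\rka(E/G)}\,T_\alpha(1-t,0) \;=\; \sum_{A\subseteq E/G} m_\alpha(A)\,(-1)^{|A|}\,t^{\,\ell-\rka(A)}.
$$
On the other side, $\chi_{\LL/G}(t)=\sum_{q\in\LL/G}\mu_{\LL/G}(\hat 0,q)\,t^{\,\ell-\rk(q)}$ by definition (\S\ref{a:posets}). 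Hence it suffices to prove, for every orbit of flats $q\in\LL/G$, an identity of the form
$$
\mu_{\LL/G}(\hat 0,q) \;=\; \sum_{A}\varepsilon_q(A)\,m_\alpha(A)\,(-1)^{|A|},
$$
where $\varepsilon_q(A)$ records that the central lifts of $A\subseteq E/G$ close up into the orbit $q$, collected with the correct orbit weights; making this bookkeeping precise is the heart of the matter.

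The inductive engine I would use is the classical Whitney theorem for the finite matroids inside $\LL$: for each flat $Z\in\LL$ the interval $\LL_{\leq Z}$ is the lattice of flats of a finite matroid (the content of the geometric-semilattice cryptomorphism, Theorem~\ref{thm:gsl}), on which the plain subset expansion computes the characteristic polynomial. Translativity of $\alpha$ forces $\stab(X)=\stab(\cl X)$ (as already exploited in Lemma~\ref{lem:contractionaction}(4)) and $\{e,g(e)\}\notin\CC$ for $e\neq g(e)$, which is exactly what is needed so that the $G$-action on central sets restricts compatibly to these intervals and so that orbit counts of central sets with prescribed image in $E/G$ multiply correctly along covers of $\LL/G$. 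Lifting these local identities through the quotient and performing M\"obius inversion on $\LL/G$ yields the displayed coefficient match. An equivalent route is to check that both sides obey the same deletion--contraction recursion at a non-loop, non-coloop orbit $Ge$, using Lemma~\ref{lem:alphaminor} and Lemma~\ref{lem:contractionaction} to identify the deletion and contraction of the induced $G$-semimatroid and the easy base cases (an empty ground set, a single loop orbit, a single coloop orbit).

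I expect the main obstacle to be precisely the multiplicity bookkeeping: $m_\alpha(A)$ counts all $G$-orbits of central sets with image $A\subseteq E/G$ irrespective of their rank, so these sets may close up to several distinct flats of $\LL$, and the Whitney grouping cannot be taken literally ``by closure'' as in the matroid case. Reconciling this requires the structure theory of group actions on semimatroids from \cite{DeluRiedel}, in particular the interplay between $\rka$, $m_\alpha$ and the closure operator under translativity; in practice I would invoke \cite[Theorem F]{DeluRiedel} directly rather than reprove it, the above being the conceptual skeleton of that argument.
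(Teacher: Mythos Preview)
The paper gives no proof of this theorem: it is stated as a citation to \cite[Theorem~F]{DeluRiedel} and used as a black box (notably in the proof of Theorem~\ref{MT:toric}). Your proposal correctly recognizes this and ultimately defers to the reference; the heuristic sketch you include is extra and not required to match the paper.
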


\begin{remark}\label{rem:ArMat}
	When $\alpha$ is the group action on the semimatroid associated to a toric arrangement (via the associated periodic hyperplane arrangement, see Section~\ref{intro:toric}), then $T_\alpha(x,y)$ is the arithmetic Tutte polynomial of the given toric arrangement, see~\cite{MoDa,DeluRiedel}.
\end{remark}

\subsection{Finite oriented matroids, regular and affine}\label{FOMs}

We summarize some basics of the  theory of finite (affine) oriented matroids. See ~\cite{bjvestwhzi-93} for more.

\newcommand{\allzero}{\mathbf{0}}
\begin{definition}\label{def:OM} Let $E$ be a finite set. A system of sign vectors $\mathscr O\subseteq \signs^E$ is the set of covectors of an oriented matroid (OM) if and only if it satisfies the following axioms.
\begin{itemize}
\item[($0$)] $\allzero:=(0,\ldots,0)\in \mathscr O$.
\item[(Sym)] If $X\in \mathscr O$ then $-X\in \mathscr O$.
\item[(C)] If $X,Y\in \mathscr O$ then $X\circ Y \in \mathscr O$.
  \item[(SE$^=$)] $X,Y\in\mathscr O,\underline{X}=\underline{Y}\implies \forall e\in S(X,Y):I^=_e(X,Y)\neq\emptyset$.
\end{itemize}
\end{definition}
It is easy to see that (also for arbitrary ground sets $E$) the above definition is equivalent to requiring ($0$), (FS), and (SE), see~\cite{Ban-18}.

A fundamental theorem in oriented matroid theory says that $\mathscr O$ is an OM if and only if it is the set of sign vectors of an arrangement of pseudospheres (i.e., a set of centrally symmetric, tame embeddings of codimension-$1$ spheres in a sphere, with some conditions on their intersections, see~\cite[\S 5.1]{bjvestwhzi-93}). In this interpretation, finite affine oriented matroids appear as sign vectors of the arrangement induced on one of the the open hemispheres obtained by removing one pseudosphere from the ambient sphere. 

\begin{proposition}[{\cite[Theorem 2.1]{Bau-16}}]\label{prop:coningOM}
	Let $E$ be a finite set. A subset $\covset\subseteq \signs^E$ is the set of covectors of a finite AOM if and only if there is an oriented matroid $\mathscr O  \subseteq \signs^{E\uplus \{g\}}$ such that $\covset=\{X\in \mathscr O \mid X(g)=+\}$. 
\end{proposition}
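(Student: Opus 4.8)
This is \cite[Theorem~2.1]{Bau-16}. The proof splits into a routine direction and a substantial one; the plan is as follows.

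\textbf{Deconing: an oriented matroid gives an AOM.} Given an OM $\mathscr O$ on $E\uplus\{g\}$, set $\covset:=\{X\in\mathscr O\mid X(g)=+\}$ and verify the axioms of Proposition~\ref{prop:AOMCOM}. If $X,Y\in\covset$ then $g\notin S(X,Y)$, so $X\circ(-Y)\in\mathscr O$ (composition and symmetry in $\mathscr O$) has $g$-value $X(g)=+$ — this is (FS); and any strong-elimination witness $Z$ for the pair $X,Y$ in $\mathscr O$ satisfies $Z(g)=X(g)\circ Y(g)=+$, so $Z\in\covset$ — this is (SE). The real point is (P). First reduce, exactly as inside the proof of Proposition~\ref{prop:AOMCOM}, to writing an arbitrary element of $\mathcal P(\covset)$ as $X\oplus(-Y)$ with $X,Y\in\covset$, $\underline X=\underline Y$, and $I(X,-Y;\covset)=I(-X,Y;\covset)=\emptyset$. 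Since $X(g)=+$ and $(-Y)(g)=-$ we get $g\in S(X,-Y)$, hence $(X\oplus(-Y))(g)=0$; therefore it is enough to show $X\oplus(-Y)\in\mathscr O$, because then $(X\oplus(-Y))\circ Z\in\mathscr O$ for every $Z\in\covset$ and its $g$-value is $Z(g)=+$, giving (P). To prove $X\oplus(-Y)\in\mathscr O$ I would run strong elimination in $\mathscr O$ over the finitely many elements of $S(X,-Y)\setminus\{g\}$: for such an $e$, a strong-elimination witness $Z$ for $X,-Y$ in $\mathscr O$ agrees with $X\oplus(-Y)$ off $S(X,-Y)$, and the hypotheses force $Z(g)=0$ — if $Z(g)=+$ then $Z\in I_e(X,-Y;\covset)$, while if $Z(g)=-$ then $-Z\in I_e(-X,Y;\covset)$, both impossible. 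Iterating this (maintaining the sign pattern off $S(X,-Y)$ while zeroing out $S(X,-Y)$, which is the delicate bookkeeping point) stabilizes at a covector of $\mathscr O$ that is $0$ on $S(X,-Y)$ and equals $X$ on $\underline X\setminus S(X,-Y)$, i.e.\ at $X\oplus(-Y)$. By Remark~\ref{finiteok} this identifies $\covset$ as a finite affine oriented matroid in the classical sense.

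\textbf{Coning: an AOM gives an oriented matroid.} Conversely, given a finite AOM $\covset$ on $E$, build $\mathscr O\subseteq\signs^{E\uplus\{g\}}$ as the disjoint union of a positive side $\{(X,+)\mid X\in\covset\}$, its antipode $\{(X,-)\mid -X\in\covset\}$, and a part at infinity $\{(Z,0)\mid Z\in\mathscr O_0\}$, where $\mathscr O_0\subseteq\signs^E$ is the recession system read off from $\covset$ — essentially the system $\mathcal P(\covset)$ governed by axiom (P), made symmetric so that it is the covector set of an OM on $E$. Then check the OM axioms (Sym), (C), (SE$^=$) of Definition~\ref{def:OM} for $\mathscr O$: symmetry is built in, while composition and strong elimination split according to the $g$-coordinates of the vectors involved; the cases internal to the positive side come straight from the AOM axioms of $\covset$, and the cases linking the positive side with the part at infinity are exactly where (P) — which says precisely that a covector of $\covset$ composed with a recession direction stays in $\covset$ — together with Lemma~\ref{locOM}-style localizations to finite OMs, is used. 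By construction $\covset=\{X\in\mathscr O\mid X(g)=+\}$, and $\mathscr O_0$ and the antipodal part are determined by $\covset$, which also yields the essential uniqueness of $\mathscr O$.

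\textbf{Main obstacle.} The difficulty is entirely in the coning direction: isolating the correct recession system $\mathscr O_0$ and checking strong elimination of $\mathscr O$ in the ``mixed'' cases — eliminating an element $e\in E$ between a covector of the positive side and one at infinity, and eliminating $g$ itself. This is precisely where Karlander's original argument was flawed and had to be repaired by Baum and Zhu, so I expect essentially all of the work to be a careful case analysis there, leaning on (P) and on reduction to finite OM computations. (One could alternatively argue topologically: by Theorem~\ref{thm:PLball}(1) the covector poset of a finite AOM is the face poset of a regular PL-ball; doubling it along its boundary yields a PL-sphere that one would then realize as an arrangement of pseudospheres. But that route amounts to re-proving a form of the Folkman--Lawrence representation theorem and is no shorter.)
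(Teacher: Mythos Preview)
The paper does not give its own proof of this proposition; it is quoted as \cite[Theorem~2.1]{Bau-16} and used only as background in the appendix. So there is no in-paper argument to compare against, and your opening line already acknowledges this.

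As an outline of the Baum--Zhu argument your sketch is broadly on target, and you correctly locate the real content in the coning direction (building $\mathscr O$ from $\covset$): this is precisely the step where Karlander's original proof was defective and where Baum and Zhu's repair lies. Your description of the recession system $\mathscr O_0$ as essentially $\mathcal P(\covset)$ symmetrized is the right intuition; in \cite{Bau-16} the part at infinity is characterized exactly as in the paper's Remark~\ref{rem:ginf}, and the mixed SE cases are the heart of the case analysis.

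One caution on the deconing direction: your plan to obtain $X\oplus(-Y)\in\mathscr O$ by ``iterating strong elimination over $S(X,-Y)\setminus\{g\}$'' is not obviously sound as stated. A single SE step between $X$ and $-Y$ for some $e$ produces a $Z$ with $Z(e)=0$ and the correct values off $S(X,-Y)$, and your argument does force $Z(g)=0$; but $Z$ may carry arbitrary nonzero signs on the remaining elements of $S(X,-Y)$, and there is no evident pair to which one can reapply SE so as to kill a further coordinate while preserving the zeros already obtained. That is more than bookkeeping. In practice one either verifies the weaker axiom (P$^=_{\mathrm{asym}}$) directly --- where the asymmetry hypothesis gives additional leverage --- or argues via the face/cocircuit structure of $\mathscr O$ rather than by naive iteration of SE. Since this is the routine direction the gap is certainly repairable, but the iteration as you describe it would not go through without further work.
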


\begin{remark}
	Notice that in the setting of Proposition~\ref{prop:coningOM} we have that $\covpos(\covset)$  is an order filter in $\covpos(\mathscr O)$. In particular, it is ranked (see also~\cite[Proposition 4.5.3]{bjvestwhzi-93}).
\end{remark}

Borrowing some terminology from the theory of arrangements, we say that $\mathscr O$ is obtained by {\em coning} $\covset$ with respect to the new element $g$. It is noteworthy that the set of sign vectors in $\mathscr O$ is fully determined by $\covset$ as in the following remark.

\begin{remark}[{\cite[Lemma 2.2]{Bau-16}}]\label{rem:ginf}
	Let $\covset$ be  a finite AOM on the ground set $E$. Let $\mathscr O$ be the set of covectors of the finite OM obtained by coning $\covset$ with respect to an additional element $g\not\in E$. Then $\mathscr O\setminus \left(\covset\cup -\covset\right) = \{N\in \signs^{E\cup g} \mid N(g)=0, (\pm N\setminus g)\circ \covset \subseteq \covset\}$.
\end{remark}

\begin{definition}\label{bdfc}
A {\em bounded face} of a finite AOM $\covset$ that arises from an OM $\mathscr O$ is any $X\in \covset$ such that $\covpos(\covset)_{\leq X}= \covpos(\mathscr O)_{\leq X}$, compare~\cite[Definition 4.5.1]{bjvestwhzi-93}. 
\end{definition}

\begin{lemma}\label{lem:bounded}
	Let $\covset$ be  a finite AOM on the ground set $E$, let $A\subseteq E$ and let $Y'$ be a bounded face of $\covset[A]$. Then every $Y\in \covset$ such that $Y_{\vert A}=Y'$ is a bounded face of $\covset$.
\end{lemma}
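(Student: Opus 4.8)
The plan is to pass to the coned oriented matroid. By Proposition~\ref{prop:coningOM} fix the oriented matroid $\mathscr O\subseteq\signs^{E\cup\{g\}}$ with $\covset=\{X\in\mathscr O\mid X(g)=+\}$, and for $X\in\covset$ write $\widehat X\in\mathscr O$ for the covector with $\widehat X|_E=X$ and $\widehat X(g)=+$. First I would check that $\mathscr O[A\cup\{g\}]$ is the oriented matroid coning $\covset[A]$: deletions of OMs are OMs, and one verifies directly that $\{Z\in\mathscr O[A\cup\{g\}]\mid Z(g)=+\}=\{X|_{A}\mid X\in\covset\}=\covset[A]$, so the claim follows from the uniqueness of the coning OM recorded after Proposition~\ref{prop:coningOM} (and Remark~\ref{rem:ginf}). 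Note also $\widehat{Y|_A}=\widehat Y|_{A\cup\{g\}}$ for every $Y\in\covset$.

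Next I would reformulate what it means to be a bounded face. The canonical inclusion $\covpos(\covset)_{\le X}\hookrightarrow\covpos(\mathscr O)_{\le\widehat X}$ becomes an isomorphism once one adjoins to its source the zero covector $\mathbf 0$ (which is the bottom element of $\covpos(\mathscr O)_{\le\widehat X}$ and does not lie in $\covset$) precisely when no covector $N$ of $\mathscr O$ with $N(g)=0$ and $N\neq\mathbf 0$ satisfies $N\le\widehat X$; passing to a cocircuit below such an $N$ (still with vanishing $g$-coordinate, since $0$ is minimal among signs) this is equivalent to asking that no cocircuit of $\mathscr O$ with $g$-coordinate $0$ lie below $\widehat X$. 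By Remark~\ref{rem:ginf} the covectors of $\mathscr O$ with $g$-coordinate $0$ are exactly the $(W,0)$ with $W\in\signs^E$ and $\pm W\circ\covset\subseteq\covset$, and $(W,0)\le\widehat X$ amounts to $W\le X$. Thus $X$ is a bounded face of $\covset$ if and only if $\mathbf 0$ is the only $W\in\signs^E$ with $W\le X$, $W\circ\covset\subseteq\covset$ and $(-W)\circ\covset\subseteq\covset$.

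With this in hand I would argue by contraposition. Suppose $Y\in\covset$ is \emph{not} a bounded face, and fix $W\neq\mathbf 0$ with $W\le Y$ and $\pm W\circ\covset\subseteq\covset$. Since restriction commutes with composition, for every $Z\in\covset$ one has $(W|_A)\circ(Z|_A)=(W\circ Z)|_A\in\covset[A]$ and likewise $(-W|_A)\circ(Z|_A)\in\covset[A]$, while $W|_A\le Y|_A=Y'$; hence, as soon as $W|_A\neq\mathbf 0$, the sign vector $W|_A$ certifies that $Y'$ is not a bounded face of $\covset[A]$, contradicting the hypothesis. The step I expect to be the main obstacle is the remaining case $W|_A=\mathbf 0$, i.e.\ $\underline W\subseteq E\setminus A$: geometrically this is a recession direction of $Y$ that is parallel to every hyperplane indexed by $A$, so on its own it is invisible to the restriction to $A$ and does not immediately contradict boundedness of $Y'$. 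I would close this gap with an essentiality input: in the situations where Lemma~\ref{lem:bounded} is applied (e.g.\ in the proof of Lemma~\ref{lemresB}, where $\covset[\pareti]$ is the face complex of a cube, hence an essential arrangement) $Y'$ being bounded forces $\SS(\covset[A])$ to have full rank $|A|$ on $A$, so by (CR2) the support of a covector of $\mathscr O$ at infinity disjoint from $A\cup\{g\}$ would have to enlarge a basis of $\SS(\covset)$ to a central set, which is impossible by Corollary-Definition~\ref{centralunder}. I would isolate this rank statement about $\mathscr O$ as a short auxiliary lemma and feed it back to rule out the case $W|_A=\mathbf 0$, completing the contraposition.
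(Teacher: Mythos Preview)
Your approach is exactly the paper's: pass to the coning oriented matroid $\mathscr O$, characterize ``bounded'' as the absence of a nonzero covector at infinity below $\widehat Y$, and push a witnessing $W$ down to $A$. You are also right that $\mathscr O[A\cup\{g\}]$ is the cone of $\covset[A]$, and your reformulation of boundedness is cleaner than the paper's (which writes ``$\mathscr O_{\le Y}\subseteq\covset$'' without excluding $\mathbf 0$). The paper's proof does precisely what you do in the case $W|_A\neq\mathbf 0$: it checks $(\pm X')\circ\covset[A]\subseteq\covset[A]$ and $X'\le Y'$, then invokes Remark~\ref{rem:ginf}. It never addresses the case $X'=\mathbf 0$.

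That case is not a removable technicality: the lemma is false as stated. Take $E=\{1,2\}$, $\covset=\signs^E$ (the coordinate axes in $\mathbb R^2$), $A=\{1\}$, $Y=(0,+)$, $Y'=Y|_A=0$. Then $Y'$ is the unique vertex of $\covset[A]$ and is bounded, while $Y$ is a half-line, hence unbounded: the witnessing covector $(0,+,0)\in\mathscr O$ at infinity restricts to $\mathbf 0$ on $A\cup\{g\}$.

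Your proposed patch does not pin down the right extra hypothesis. The condition ``$\SS(\covset[A])$ has rank $|A|$'' holds in the counterexample and does not help, and nothing about $Y'$ being bounded forces it. What is actually needed is $\rk(\covset[A])=\rk(\covset)$ (equivalently, $A$ contains a basis of $\SS(\covset)$). Under that hypothesis $\rk_{\mathscr O}(A\cup\{g\})=\rk(\covset[A])+1=\rk(\mathscr O)$, so $\mathscr O/(A\cup\{g\})$ has rank $0$ and no nonzero $W\in\mathscr O$ can vanish on all of $A\cup\{g\}$, ruling out the bad case. This hypothesis is satisfied in the only application (Lemma~\ref{lemresB}, where $A=\pareti$ contains each basis $B_\epsilon$), so the paper's use of the lemma survives with that amendment; but neither your argument nor the paper's proves the lemma as written.
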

\begin{proof}
	Let $\mathscr O$ be the set of covectors of the finite OM obtained by coning $\covset$ with respect to an additional element $g\not\in E$. Let $Y'$ be bounded in $\covset[A]$ and let $Y$ be such that $Y_{\vert A}=Y'$. By definition, $Y$ is bounded if and only if $\mathscr O_{\leq Y} \subseteq \covset$. In view of Remark~\ref{rem:ginf}, this means that there is no $X\in \mathscr O_{\leq Y}$ such that $(\pm X)\circ \covset \subseteq \covset$. By way of contradiction suppose that such an $X$ exists and consider $X':=X_{\vert A}$. Then, for every $Z\in \covset[A]$ we can choose $\overline{Z}\in\covset$ such that $\overline{Z}_{\vert A}=Z$ and then in particular, $(\pm X')\circ Z = \left((\pm X)\circ \overline{Z}\right)_{\vert A} \in \covset[A] $  (since $(\pm X)\circ \overline{Z}\in \covset$). Since obviously $X'\leq Y'$ in $\covset[A]$, again by Remark~\ref{rem:ginf} we conclude that $Y'$ is unbounded in $\covset[A]$, a contradiction.
\end{proof}

\bibliographystyle{my-siam}
{\footnotesize \bibliography{minors}}

\subsection*{Data availability statement:} This manuscript has no associated data.
\end{document}